\address{Simons Center for Geometry and Physics,
State University of New York, Stony Brook, NY 11794-3636 U.S.A.
\& Riken Interdisciplinary Theoretical and Mathematical
Sciences Program (iTHEMS), Wako 351-0198, Japan
} \email{kfukaya@scgp.stonybrook.edu}
\address{Center for Geometry and Physics, Institute for Basic Sciences (IBS), Pohang, Korea \& Department of Mathematics,
POSTECH, Pohang, Korea} \email{yongoh1@postech.ac.kr}
\address{Graduate School of Mathematics,
Nagoya University, Nagoya, Japan} \email{ohta@math.nagoya-u.ac.jp}
\address{Research Institute for Mathematical Sciences, Kyoto University, Kyoto, Japan}
\email{ono@kurims.kyoto-u.ac.jp}
\def\l@section{\@tocline{1}{0pt}{3mm}{8mm}{}}
\def\l@subsection{\@tocline{2}{0pt}{6mm}{10mm}{}}
\def\l@subsubsection{\@tocline{3}{0pt}{9mm}{11mm}{}}
\def\E{\ifmmode{\mathbb E}\else{$\mathbb E$}\fi} 
\def\N{\ifmmode{\mathbb N}\else{$\mathbb N$}\fi} 
\def\R{\ifmmode{\mathbb R}\else{$\mathbb R$}\fi} 
\def\Q{\ifmmode{\mathbb Q}\else{$\mathbb Q$}\fi} 
\def\C{\ifmmode{\mathbb C}\else{$\mathbb C$}\fi} 
\def\H{\ifmmode{\mathbb H}\else{$\mathbb H$}\fi} 
\def\Z{\ifmmode{\mathbb Z}\else{$\mathbb Z$}\fi} 
\def\P{\ifmmode{\mathbb P}\else{$\mathbb P$}\fi} 
\def\T{\ifmmode{\mathbb T}\else{$\mathbb T$}\fi} 
\def\SS{\ifmmode{\mathbb S}\else{$\mathbb S$}\fi} 
\def\DD{\ifmmode{\mathbb D}\else{$\mathbb D$}\fi} 
\def\K{\ifmmode{\mathbb K}\else{$\mathbb K$}\fi}
\theoremstyle{theorem}
\newtheorem{thm}{Theorem}[section]
\newtheorem{cor}[thm]{Corollary}
\newtheorem{lem}[thm]{Lemma}
\newtheorem{sublem}[thm]{Sublemma}
\newtheorem{prop}[thm]{Proposition}
\theoremstyle{definition}
\newtheorem{defn}[thm]{Definition}
\newtheorem{rem}[thm]{Remark}
\newtheorem{conds}[thm]{Condition}
\newtheorem{conven}[thm]{Convention}
\newtheorem{shitu}[thm]{Situation}
\newtheorem{notation}[thm]{Notation}
\numberwithin{equation}{section}
\begin{document}

\title[Moduli spaces of pseudoholomorphic disks]{
Construction of Kuranishi structures
on the moduli spaces of pseudo-holomorphic disks: II}
\author{Kenji Fukaya, Yong-Geun Oh, Hiroshi Ohta, Kaoru Ono}

\thanks{Kenji Fukaya is supported partially by Simons Collaboration on homological Mirror symmetry, NSF 1406423,
Yong-Geun Oh by the IBS project IBS-R003-D1, Hiroshi Ohta by JSPS Grant-in-Aid
for Scientific Research Nos. 23340015, 15H02054, 
21H00983, 21K18576, 21H00985 and Kaoru Ono by JSPS Grant-in-Aid for
Scientific Research, Nos. 26247006, 23224001, 19H00636.}

\begin{abstract}
This is the second of a series of two articles
in which we provide detailed and
self-contained account of the construction of a system
of Kuranishi structures on the moduli spaces of pseudo-holomorphic
disks.
Using the notion of obstruction bundle data introduced in \cite{diskconst1},
we give a systematic way of constructing a system of
Kuranishi structures on the moduli spaces of pseudo-holomorphic
disks which are compatible at the boundary and corners.
More specifically, it defines a tree-like K-system
in the sense of \cite[Definition 21.9]{foootech21}, 
\cite[Definition 21.9]{Springer}.
The method given in this paper does not only simplify the
description of the constructions in the earlier literature, but also
is designed to provide a systematic utility tool for the construction of
a system of Kuranishi structures in the future research.
We also establish its uniqueness.
\end{abstract}
\maketitle
\par
\date{Aug 17th, 2018}

\keywords{}

\maketitle

\tableofcontents
\newpage

\section{Introduction}
\label{sec;intro}
This article is a sequel to \cite{diskconst1}. In the latter article, we gave a
detailed construction of a Kuranishi structure on each \emph{individual} moduli space of pseudo-holomorphic
disks. In the present article we construct a \emph{system} of Kuranishi structures
on the moduli spaces of pseudo-holomorphic disks. More specifically,
we will construct a tree-like K-system as defined in \cite[Definition 21.9]{foootech21}, 
\cite[Definition 21.9]{Springer}.
Some explanation of such a construction has been given already in
the earlier literature such as \cite{fooobook2,foootech} focusing more on its
applications to Lagrangian Floer theory. In this article and \cite{diskconst1}, we aim
at focusing more on explaining minute details of the construction of a tree-like K-system.
The present article adopts terminologies of \cite{diskconst1}.
In particular, systematically using the notion of {\it obstruction bundle data}
introduced in \cite{diskconst1},
we give a systematic way of constructing a system of Kuranishi structures on the moduli spaces of pseudo-holomorphic disks.
We also disseminate the construction into various
pieces so that the outcome of each piece can be stated as an individual theorem
which can be used by other researchers.
Therefore the method we address in this paper does not only simplify the
description of the constructions in the earlier literature, but also
is designed to provide a systematic utility tool for the construction of
a system of Kuranishi structures in the future research.
We also establish uniqueness of the resulting system of Kuranishi structures 
(Theorem \ref{thm219}).
\par
Let $(X,\omega)$ be a compact (or tame) symplectic manifold and $L$ its
compact relatively spin Lagrangian submanifold.
We consider the compactified moduli space $\mathcal M_{k+1}(X,L;\beta)$ of
pseudo-holomorphic disks in $X$ bounding $L$ with $k+1$ boundary marked points
in a given homology class $\beta \in H_2(X,L;\Z)$. Our previous article \cite{diskconst1}
concerns the construction of a Kuranishi structure on this single moduli space individually.

In the present article we consider the whole collection of $\mathcal M_{k+1}(X,L;\beta)$ over
$k, \, \beta \in H_2(X,L;\Z)$ and construct a system of Kuranishi structures thereon
so that the next equality holds as an isomorphism of spaces with Kuranishi structures, i.e.,
of K-spaces:
\begin{equation}\label{form111}
\aligned
\partial\mathcal M_{k+1}(X,L;\beta)
=
\bigcup_{k_1+k_2=k+1}&\bigcup_{i=1,\dots,k_1}\bigcup_{\beta_1+\beta_2=\beta} \\
& \mathcal M_{k_1+1}(X,L;\beta_1) {}_{{\rm ev}_i} \times_{{\rm ev}_0} \mathcal M_{k_2+1}(X,L;\beta_2).
\endaligned
\end{equation}
Here the fiber product is taken over $L$ using the evaluation maps at the $i$-th and the
$0$-th boundary marked points.
Construction of such a system of Kuranishi structures is crucial for the construction
of  an $A_{\infty}$ structure associated to a Lagrangian submanifold.
The issue of compatibility between  various Kuranishi structures on
various moduli spaces is more serious in the case of disks than in the case of closed Riemann surfaces,
for example, in the study of Gromov-Witten invariants (\cite{FO}). This is the reason why the theory of open Gromov-Witten
invariants takes a rather different shape from that of closed Gromov-Witten invariants.
In fact, each $A_{\infty}$ operation $\frak m_{k,\beta}$ itself,
which is defined by  a single
moduli space $\mathcal M_{k+1}(X,L;\beta)$, depends on various choices
involved, and so we have to construct the operators $\frak m_{k,\beta}$ simultaneously
in the way that they satisfy certain compatibility between one another. Only after that the totality of
$\{\frak m_{k,\beta}\}$ forms an $A_{\infty}$ structure that is well-defined up to certain
homotopy equivalence.
\par
In \cite{diskconst1}, we carried out the construction of a Kuranishi structure on
each moduli space $\mathcal M_{k+1}(X,L;\beta)$ in the following order:
\begin{enumerate}
\item[(i)]
We define the notion of  obstruction bundle data for each $\mathcal M_{k+1}(X,L;\beta)$.
\cite[Definition 5.1]{diskconst1}. (See also Section \ref{subsec;obstbundledata} of this paper.)
\item[(ii)]
We prove that we can extract a Kuranishi structure from the obstruction bundle data
in a canonical way at the level of germs. (\cite[Theorem 7.1]{diskconst1})
\item[(iii)]
We prove the existence of  obstruction bundle data.  (\cite[Theorem 11.2]{diskconst1})
\end{enumerate}
In the present paper we perform the construction of a compatible system of
Kuranishi structures in the following order:
\begin{enumerate}
\item
We define the notion of a {\it disk-component-wise system of obstruction bundle data}. (Definition \ref{defn5151}.)
Such a system assigns obstruction bundle data to each $\mathcal M_{k+1}(X,L;\beta)$
for which we require certain compatibility conditions.
\item
We prove that from each disk-component-wise system of obstruction bundle data
we can extract a system of Kuranishi structures that is compatible
with the decomposition of the boundary (\ref{form111}). (Theorem \ref{thm42}.)
\item
We prove the existence of a disk-component-wise system of obstruction bundle data.  (Theorem \ref{thm43}.)
\end{enumerate}
Item (1) is the content of Section \ref{subsec;obstbundledata}.
We first define stratifications
of the moduli spaces and their ambient `sets' in Section \ref{subsec;stratifi}.
The stratifications are used to define the notion of \emph{compatible system},
called {\it disk-component-wise system}, of obstruction bundle data
in Section \ref{subsec;obstbundledata}.
Item (2) is carried out in Section \ref{subsec;cornercompa}.
Item (3), the proof of existence of a disk-component-wise system,
is technically the most involved one. It is carried out in Sections
\ref{subsec;exi1} and \ref{subsec;exi2}.
In Section \ref{subsec;unique1} we prove that the system of Kuranishi structures is
independent of the choice of the system of obstruction bundle data from which it is extracted.
We previously formulated the notion of pseudo-isotopy between two systems of Kuranishi structures in \cite[Definition 21.19]{foootech21}, \cite[Definition 21.19]{Springer}.
We prove in Theorem \ref{thm96} that if we are given two disk-component-wise systems of obstruction bundle data
then the resulting systems of Kuranishi structures are pseudo-isotopic. (See Definition \ref{def218}.)\footnote{We actaully prove that they are isotopic (Definition \ref{defn913}) which is 
stronger than pseudo-isotopic.}
We also prove the resulting system of Kuranishi structures is independent of the choices of almost complex structures up to pseudo-isotopy,
but depends only on $(X,\omega)$ and $(L,\sigma)$,
where $\sigma$ is a relative spin structure on $L$.
(See Theorems \ref{pisotopyexixsts},\ref{thm219}
and also Corollary \ref{cormainA}. A similar result is proved in \cite{DF} by a slightly 
different method.)

\begin{rem}
This paper and \cite{diskconst1} describe the case of moduli spaces of
pseudo-holomorphic disks.
However many of the constructions of this paper and \cite{diskconst1}
can be {easily} adapted to the case of other moduli spaces of pseudo-holomorphic curves. For example, the construction in \cite{diskconst1} can be used
to define a Kuranishi structure on the moduli space of marked stable maps (without boundary)
of arbitrary genus.
Therefore we can use it to define Gromov-Witten invariants of arbitrary genus.
The argument of Section \ref{subsec;unique1} of this
paper together with \cite[Corollary 14.28]{foootech2}, \cite[Proposition 14.13]{Springer}
can be used to prove their independence of various choices.
\end{rem}
The way  Kuranishi structure is associated to given
obstruction bundle data provided in this paper and \cite{diskconst1}
is explicit and simple. (See \eqref{Knhd}.)
Therefore when one wants to construct a Kuranishi structure with
certain additional properties, one can do it by finding
obstruction bundle data with corresponding additional properties.
This method is useful in various applications.
The fact that disk-component-wise-ness of  the obstruction bundle
data implies compatibility of Kuranishi structures with
the isomorphism \eqref{form111} (Theorem \ref{thm42}) is just one example.
Other examples where such method can be applied are
the compatibility with the forgetful map, the
compatibility with a group action on the target space
or anti-symplectic involution, cyclic symmetry of
boundary marked points and etc.

\par
\smallskip\noindent
{\it Acknowledgments:}
KF, HO and KO thank IBS Center for Geometry and Physics for hospitality
where a part of this work was done.
The authors thank the anonymous referee for careful reading and useful comments.

\section{Statement of the results}
\label{subsec;discsstatement}

In this section, we precisely state the main result of this article. (We consider not only boundary marked points but also
interior marked points.)

\begin{shitu}\label{situ271}
$(X,\omega)$ is a symplectic manifold that is tame (at infinity).\footnote{
Namely we assume that there exists a compatible 
almost complex structure which is tame at infinity. See \cite[Definition 4.1.1]{Si}, for example, for the definition of tameness (at infinity). We also fix a connected component of compatible tame almost complex structures.}
$(L,\sigma)$ is a compact Lagrangian submanifold of $X$ equipped with a relative spin
structure $\sigma$.\footnote{
See \cite[Definition 1.6]{fooobook} and \cite[Definition 8.1.2]{fooobook2} for the definition of relative spin structure.}
$J$ is an almost complex structure on $X$, which is tamed by $\omega$.
$\beta \in H_2(X,L;\Z)$. $\diamond$\footnote{The mark $\diamond$ indicates the end of Situation.}
\end{shitu}

\begin{defn}\label{defn24222}
Let $k,\ell \in \Z_{\ge 0}$.
We denote by $\mathcal M_{k+1,\ell}(X,L,J;\beta)$ the set of all
$\sim$ equivalence classes of
$((\Sigma,\vec z,\vec{\frak z}),u)$ with the following properties.
\begin{enumerate}
\item
$\Sigma$ is a genus $0$ bordered curve with one boundary component
that has only (boundary or interior) nodal singularities.
\item
$\vec z = (z_0,z_1,\dots,z_k)$ is a $(k+1)$-tuple of boundary marked points.
We assume that they are distinct and are not nodal.
Moreover we assume that they are numbered so that it respects the counter-clockwise cyclic
ordering of the boundary.
\item
$\vec{\frak z} = ({\frak z}_1,\dots,{\frak z}_{\ell})$ are $\ell$ interior
marked points. We assume that they are distinct and are not nodal.
\item
$u : (\Sigma,\partial\Sigma) \to (X,L)$ is a continuous map
which is pseudo-holomorphic on each irreducible component.
The homology class  $u_*([\Sigma,\partial\Sigma])$  is
$\beta$.
\item
$((\Sigma,\vec z,\vec{\frak z}),u)$ is stable in the sense of
Definition \ref{stabilitydefn27} below.
\end{enumerate}
We define the equivalence relation $\sim$ as follows.
$((\Sigma,\vec z,\vec{\frak z}),u) \sim ((\Sigma',\vec z^{\,\prime},\vec{\frak z'}),u')$
if there exists a homeomorphism $v : \Sigma \to \Sigma'$ such that
\begin{enumerate}
\item[(i)]
$v$ is biholomorphic on each irreducible component.
\item[(ii)]
$u' \circ v = u$.
\item[(iii)]
$v(\vec z) = \vec z^{\,\prime}$. $v(\vec{\frak z}) = \vec{\frak z}'$.
\end{enumerate}
Here and hereafter $v(\vec{\frak z}) = \vec{\frak z}$
(resp. $v(\vec z) = \vec z$)
means
$v({\frak z}_i) = {\frak z}_i$ (resp. $v(z_i) = z_i$).
\par
In case $\ell = 0$, we write $\mathcal M_{k+1}(X,L,J;\beta)$
in place of $\mathcal M_{k+1,0}(X,L,J;\beta)$.
\end{defn}
\begin{defn}\label{stabilitydefn27}
Suppose $((\Sigma,\vec z,\vec{\frak z}),u)$ satisfies
Definition \ref{defn24222} (1)(2)(3)(4).
The group ${\rm Aut}^+((\Sigma,\vec z,\vec{\frak z}),u)$
of its {\it extended automorphisms} consists
of homeomorphisms $v : \Sigma \to \Sigma$ such that:
\begin{enumerate}
\item[(i)]
$v$ is biholomorphic on each of the irreducible components.
\item[(ii)]
$u \circ v = u$.
\item[(iii)]
$v(\vec z) = \vec z$ and there exists $\sigma \in {\rm Perm}(\ell)$
such that $(v({\frak z}_1),\dots,v({\frak z}_{\ell}))$
coincides with $({\frak z}_{\sigma(1)},\dots,{\frak z}_{\sigma(\ell)})$.
Here ${\rm Perm}(\ell)$ is the group of permutations 
of the set $\{1,\dots,\ell\}$.
\end{enumerate}
(iii) defines a group homomorphism
${\rm Aut}^+((\Sigma,\vec z,\vec{\frak z}),u)
\to {\rm Perm}(\ell)$.
The group of {\it automorphisms}
${\rm Aut}((\Sigma,\vec z,\vec{\frak z}),u) $ is its kernel.
\par
The object $((\Sigma,\vec z,\vec{\frak z}),u)$ is said
to be {\it stable} if ${\rm Aut}^+((\Sigma,\vec z,\vec{\frak z}),u)$ is a finite group.
\end{defn}
\begin{notation}
For ${\bf p} \in \mathcal M_{k+1,\ell}(X,L,J;\beta)$,
we denote its representative by
$$
((\Sigma_{\bf p},\vec z_{\bf p},\vec{\frak z}_{\bf p}),u_{\bf p}).
$$
\end{notation}

In \cite[Definition 7.1.42 and Theorem 7.1.43]{fooobook2},
we defined a topology on $\mathcal M_{k+1,\ell}(X,L,J;\beta)$
and proved that it is compact and Hausdorff with respect to this
topology.
We call this topology the {\it stable map topology}.
The main result of this paper is as follows.

\begin{thm}\label{therem273}
In Situation \ref{situ271},
there exists a  tree-like K-system\footnote{
In the previous literature we used 
the terminology ``$A_{\infty}$ correspondence'', which is the same as ``tree-like K-system''.} whose moduli spaces of operations are $\mathcal M_{k+1}(X,L,J;\beta)$.
\end{thm}

A tree-like K-system is  defined in Definition \ref{defn2167}.
More precisely it is the $\ell =0$ case of Definition \ref{defn2167}.
See \cite[Definition 21.9]{foootech21}, \cite[Definition 21.9]{Springer}.
The notion of  the moduli spaces of operations of a tree-like K-system 
is defined in \cite[Condition 21.6 (III)]{Springer}.

We also remark that Theorems \ref{therem273} and \cite[Theorem 21.35 (1)]{foootech21},
\cite[Theorem 21.35 (1)]{Springer}
(using an algebraic  lemma \cite[Theorem 5.4.2]{fooobook}) imply
the following:
\begin{cor}{\rm(\cite[Theorem A]{fooobook})}\label{ThemAinfooobook}
To each $(X,\omega,J)$ and $(L,\sigma)$ as in Situation \ref{situ271},
we can associate a filtered $A_{\infty}$ structure
on $H(L;\Lambda^{\R}_{\rm 0,nov})$.
\end{cor}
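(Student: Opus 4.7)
The plan is to deduce the corollary formally from Theorem \ref{therem273} together with the two pieces of algebraic machinery cited in its statement: \cite[Theorem 21.35 (1)]{foootech21}, which converts the filtered $A_\infty$ correspondence produced by a tree-like K-system into chain-level operations, and \cite[Theorem 5.4.2]{fooobook}, which transfers a filtered $A_\infty$ structure from a chain complex to its cohomology. No further geometric input beyond Theorem \ref{therem273} is required; the corollary is an algebraic consequence.

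First, I would apply Theorem \ref{therem273} to the data $(X,\omega,J)$ and $(L,\sigma)$, obtaining a tree-like K-system on the family $\{\mathcal M_{k+1}(X,L,J;\beta)\}_{k,\beta}$ in which the boundary is identified with the union of fiber products in (\ref{form111}), equipped with the evaluation maps ${\rm ev}_i$ to $L$ at the boundary marked points, orientations induced by the relative spin structure $\sigma$, and filtration supplied by the symplectic area $\omega\cap\beta$ and the Maslov index. This is precisely the input to \cite[Theorem 21.35 (1)]{foootech21}. Concretely, one selects a compatible system of CF-perturbations on the $\mathcal M_{k+1}(X,L,J;\beta)$ respecting the decomposition (\ref{form111}), and defines operations on a chain-level model $C(L;\Lambda^{\R}_{0,\rm nov})$ by fiber integration
\[
\frak m_{k,\beta}(h_1,\dots,h_k) = ({\rm ev}_0)_{!}\bigl(({\rm ev}_1,\dots,{\rm ev}_k)^{*}(h_1\otimes\cdots\otimes h_k)\bigr),
\]
setting $\frak m_k = \sum_\beta T^{\omega\cap\beta}\frak m_{k,\beta}$. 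The boundary identification (\ref{form111}), combined with Stokes' theorem for CF-perturbed K-spaces, then yields the filtered $A_\infty$ relations $\sum \frak m_{k_2}\circ_i \frak m_{k_1} = 0$ on $C(L;\Lambda^{\R}_{0,\rm nov})$.

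Second, I would pass from chains to cohomology. By the low-energy normalisation built into the construction, $\frak m_{1,0}$ coincides up to sign with the classical boundary operator on $C(L;\Lambda^{\R}_{0,\rm nov})$, so its cohomology is $H(L;\Lambda^{\R}_{0,\rm nov})$. The algebraic lemma \cite[Theorem 5.4.2]{fooobook}, a filtered version of the Kadeishvili homotopy-transfer theorem, then produces a filtered $A_\infty$ structure on $H(L;\Lambda^{\R}_{0,\rm nov})$ canonically homotopy equivalent to the chain-level structure. The uniqueness results recalled in the introduction (Theorem \ref{thm96} and Theorem \ref{pisotopyexixsts}) show that the pseudo-isotopy class of the K-system, and hence the $A_\infty$ homotopy type of the induced structure on $H(L;\Lambda^{\R}_{0,\rm nov})$, depends only on $(X,\omega)$ and $(L,\sigma)$.

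The real work in this program is Theorem \ref{therem273} itself; once it is in place, Corollary \ref{ThemAinfooobook} is purely formal. The only point requiring attention is the bookkeeping matching the combinatorial pattern of fiber products in (\ref{form111}) with the sum of compositions $\frak m_{k_2}\circ_i \frak m_{k_1}$ appearing in the $A_\infty$ relations, together with the signs induced by $\sigma$. This matching is already axiomatised in the definition of tree-like K-system \cite[Definition 21.9]{foootech21}, which is precisely the structure that the disk-component-wise compatibility of Theorem \ref{thm42} is designed to deliver, so that no additional argument beyond quoting \cite[Theorem 21.35 (1)]{foootech21} and \cite[Theorem 5.4.2]{fooobook} is needed.
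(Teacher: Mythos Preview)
Your proposal is correct and follows exactly the approach indicated in the paper, which simply states that Theorem \ref{therem273} together with \cite[Theorem 21.35 (1)]{foootech21} and the algebraic lemma \cite[Theorem 5.4.2]{fooobook} imply the corollary; you have merely unpacked what those citations contain. One minor remark: the uniqueness discussion invoking Theorem \ref{thm96} and Theorem \ref{pisotopyexixsts} is not needed for this corollary (which fixes $J$ as part of the data); that independence is the content of Corollary \ref{cormainA}, not Corollary \ref{ThemAinfooobook}.
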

We can prove well-defined-ness of the  tree-like K-system 
in Theorem \ref{therem273} and also its independence of the
choice of  almost complex structure $J$ as follows.
\begin{shitu}\label{familyamcoxstru}
Let $(X,\omega), (L,\sigma)$ be as in Situation \ref{situ271}.
Suppose that $J_1$ and $J_2$ are  compatible and  tame almost complex structures on
$X$.
We take a  smooth family of  compatible and  tame almost complex structures $\mathcal J
= \{J_s\}$, $s\in[1,2]$
joining $J_1$ and $J_2$. $\diamond$
\end{shitu}
\begin{thm}\label{pisotopyexixsts}
Suppose we are in Situation \ref{familyamcoxstru}.
By Theorem \ref{therem273} we have  a
tree-like K-system  whose
moduli spaces of operations are
$\mathcal M_{k+1}(X,L,J_j;\beta)$,
for $j=1,2$.
\par
Then there exists a pseudo-isotopy of
tree-like K-systems
between them in the sense of
\cite[Definition 21.15]{foootech21}
\cite[Definition 21.15]{Springer}.
The moduli space of a $[1,2]$-parameterized family of
$A_{\infty}$ operations are given by
$$
\mathcal M_{k+1}(X,L,\mathcal J;\beta;[1,2])
=
\bigcup_{s \in [1,2]} \{s\} \times  \mathcal M_{k+1}(X,L,J_s;\beta)
.
$$
\end{thm}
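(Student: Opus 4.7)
The plan is to adapt the three-step strategy used to prove Theorem \ref{therem273} to the $[1,2]$-parameterized moduli space
$$
\mathcal M_{k+1}(X,L,\mathcal J;\beta;[1,2])
=\bigcup_{s\in[1,2]}\{s\}\times\mathcal M_{k+1}(X,L,J_s;\beta),
$$
where now the construction must be carried out relative to the already-fixed systems of obstruction bundle data that produced the $A_\infty$ correspondences for $J_1$ and $J_2$. The boundary of the parameterized moduli space consists of two kinds of codimension-one strata: (a) the two endpoint strata $\{s=j\}\times\mathcal M_{k+1}(X,L,J_j;\beta)$ for $j=1,2$, and (b) the fiber-product strata obtained, for each $s$, from the decomposition (\ref{form111}) applied to $\mathcal M_{k+1}(X,L,J_s;\beta)$. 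A tree-like K-system on the parameterized space whose boundary restricts correctly to these two types of strata is exactly the data of a pseudo-isotopy in the sense of \cite[Definition 21.19]{foootech21}.

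First, I would define a parameterized analogue of the notion of obstruction bundle data introduced in \cite{diskconst1}, allowing the obstruction spaces and the associated choices to depend smoothly on the parameter $s\in[1,2]$. Next, I would define the notion of a disk-component-wise system of such parameterized obstruction bundle data, imposing the same compatibility along fiber products (as in Definition \ref{defn5151}) and in addition requiring the system to restrict at $s=1$ and $s=2$ to the previously chosen disk-component-wise systems used in the proof of Theorem \ref{therem273} for $J_1$ and $J_2$. A parameterized version of Theorem \ref{thm42} then extracts a Kuranishi structure on $\mathcal M_{k+1}(X,L,\mathcal J;\beta;[1,2])$ whose boundary is compatible with both kinds of strata described above. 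The formula \eqref{Knhd} for associating a Kuranishi chart to an obstruction bundle datum carries over almost verbatim once one includes the $s$-direction as a trivial additional factor of the Kuranishi neighborhood.

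Next, I would establish the parameterized analogue of Theorem \ref{thm43}: existence of a disk-component-wise system of parameterized obstruction bundle data that restricts to the prescribed systems at $s=1,2$. The argument runs by induction on $(\omega(\beta),k)$ together with the combinatorial type of the source curve, following Sections \ref{subsec;exi1} and \ref{subsec;exi2} essentially word for word. At each inductive step one must extend obstruction data already specified on the boundary of a certain parameter region (the two endpoints together with the fiber-product strata for $s\in[1,2]$) to the whole interior; this is done by the same patching-with-cutoff-functions argument as in the unparameterized case, noting that $[1,2]$ is contractible so no topological obstruction arises in the $s$-direction.

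The main obstacle is the matching at $s=1,2$. The difficulty is that the data originally chosen for $J_1$ and $J_2$ were constructed completely independently, and a priori it is not clear that any parameterized family interpolates between them while remaining compatible with every fiber-product stratum. To handle this I would invoke the uniqueness apparatus of Section \ref{subsec;unique1}, in particular Theorem \ref{thm96}, which already produces pseudo-isotopies between systems of Kuranishi structures arising from any two disk-component-wise systems of obstruction bundle data for the \emph{same} almost complex structure. Combining this with a standard concatenation of pseudo-isotopies and a shrinking-of-the-parameter-interval argument, one is reduced to the case where the data at $s=1$ and $s=2$ are \emph{any} convenient choices compatible with the family $\mathcal J$, at which point the inductive construction sketched above goes through. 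This yields the required pseudo-isotopy and completes the proof.
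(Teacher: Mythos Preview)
Your proposal is correct and follows essentially the same route as the paper. The paper's proof (end of Section~\ref{subsec;unique2}) proceeds exactly as you outline: it introduces parametrized obstruction bundle data and a disk-component-wise condition for them (Definition~\ref{defn5151para}), proves existence by rerunning the inductive argument of Sections~\ref{subsec;exi1}--\ref{subsec;exi2} (Proposition~\ref{prop925}), and then handles the endpoint-matching problem precisely as you suggest, by invoking Theorem~\ref{thm96} and concatenating the resulting pseudo-isotopies. One small refinement the paper makes explicit is the notion of \emph{collared} obstruction bundle data (Definition~\ref{defcollared}), which guarantees the pseudo-isotopies are product-like near $s=1,2$ so that the concatenation step is literally a gluing; your ``shrinking-of-the-parameter-interval argument'' is pointing at the same thing. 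Also note that the $s$-direction is not quite a trivial factor of the Kuranishi neighborhood: the Cauchy--Riemann operator in the defining equation uses $J_{\frak t}$ rather than a fixed $J$, so the chart is $\{(\frak t,{\bf x}) : \overline\partial_{J_{\frak t}} u_{\bf x}\in E_{(t,{\bf p})}(\frak t,{\bf x})\}$---but this does not affect the structure of the argument.
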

Theorems \ref{pisotopyexixsts} and \cite[Theorem 21.35 (3)]{foootech21}, 
\cite[Theorem 21.35 (3)]{Springer}
(using the algebraic lemma \cite[Theorem 5.4.2]{fooobook} again) imply the following.
\begin{cor}{\rm(\cite[Theorem A]{fooobook})}\label{cormainA}
The filtered $A_{\infty}$ structure
on $H(L;\Lambda^{\R}_{\rm 0,nov})$
given in Corollary \ref{ThemAinfooobook}
depends only on $(X,\omega)$ and $(L,\sigma)$ up to isomorphism.
In particular, it is independent of the
choices of almost complex structures and of obstruction bundle data up to isomorphism.
\end{cor}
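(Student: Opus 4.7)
The plan is to deduce the Corollary as a direct combination of the pseudo-isotopy result just stated (Theorem \ref{pisotopyexixsts}) together with its companion Theorem \ref{thm96} for different choices of obstruction bundle data, pushed through the homological-algebra machinery of \cite[Theorem 21.35 (3)]{foootech21} and the algebraic lemma \cite[Theorem 5.4.2]{fooobook}. No further analytic input is needed; the content is bookkeeping.

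First I would fix $(X,\omega)$ and $(L,\sigma)$ and consider two choices of auxiliary data $(J_1,\Xi_1)$ and $(J_2,\Xi_2)$, where $\Xi_j$ denotes a disk-component-wise system of obstruction bundle data on the family $\{\mathcal M_{k+1}(X,L,J_j;\beta)\}_{k,\beta}$. Each such choice produces, via Theorem \ref{therem273} followed by \cite[Theorem 21.35 (1)]{foootech21} and \cite[Theorem 5.4.2]{fooobook}, a filtered $A_{\infty}$ structure on $H(L;\Lambda^{\R}_{\rm 0,nov})$, and it suffices to exhibit an isomorphism between these two. Since isomorphism of filtered $A_\infty$ structures is an equivalence relation, I would change the data in two separate moves: first vary $\Xi$ with $J$ fixed, then vary $J$.

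For the first move, with $J$ fixed, Theorem \ref{thm96} gives a pseudo-isotopy of systems of Kuranishi structures between the two outputs. By \cite[Theorem 21.35 (3)]{foootech21} this promotes to a pseudo-isotopy of the induced filtered $A_\infty$ correspondences, and then \cite[Theorem 5.4.2]{fooobook} converts this pseudo-isotopy into an isomorphism of the filtered $A_\infty$ structures on $H(L;\Lambda^{\R}_{\rm 0,nov})$. For the second move, I would use connectedness of the space of tame almost complex structures to pick a family $\mathcal J=\{J_s\}_{s\in[1,2]}$ joining $J_1$ to $J_2$, apply Theorem \ref{pisotopyexixsts} to obtain a pseudo-isotopy of filtered $A_\infty$ correspondences, and again invoke \cite[Theorem 5.4.2]{fooobook} to produce the desired isomorphism. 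Composing the two isomorphisms yields the statement of the Corollary.

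The main obstacle, such as it is, is purely a matter of matching frameworks: one must check that the notion of pseudo-isotopy output by Theorems \ref{thm96} and \ref{pisotopyexixsts} (in the sense of Kuranishi structures, i.e.\ \cite[Definition 21.19]{foootech21}, and its refinement Definition \ref{defn913}) is exactly the hypothesis required by \cite[Theorem 21.35 (3)]{foootech21} before the algebraic lemma can be fed in, and that the family of moduli spaces $\mathcal M_{k+1}(X,L,\mathcal J;\beta;[1,2])$ used as the geometric model for the pseudo-isotopy carries a compatible tree-like K-system. Both are automatic from the way the preceding sections are set up, so this bookkeeping step should not pose a real difficulty.
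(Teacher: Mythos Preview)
Your proposal is correct and matches the paper's approach. The paper's one-line deduction before Corollary \ref{cormainA} cites only Theorem \ref{pisotopyexixsts}, but that is because the independence of the obstruction bundle data (Theorem \ref{thm96}) is already absorbed into the proof of Theorem \ref{pisotopyexixsts}/\ref{thm219} in the final paragraph of Section \ref{subsec;unique2}; you have simply unpacked this into two explicit moves, which is harmless and arguably clearer.
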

\begin{rem}
$ $
\begin{enumerate}
\item
The isomorphism of filtered $A_{\infty}$ structure
in Corollary \ref{cormainA} means a
filtered $A_{\infty}$ homomorphism that has an inverse.
Note it may not be linear.
(In that sense it is called sometimes a quasi-isomorphism.)
\item
\cite[Theorem A]{fooobook} is mostly the same result
as Corollaries \ref{ThemAinfooobook} and \ref{cormainA},
but the ground ring in \cite[Theorem A]{fooobook} is $\Q$.
We used singular homology to construct a filtered $A_{\infty}$
structure over $\Q$ coefficients
in place of $\R$ coefficients.
\end{enumerate}
\end{rem}
\par
We can also prove the version with interior marked points.
To state this version we need to prepare some notations.
We first recall the following:
\begin{defn}\label{defnGEM}
We put $\frak G = H_2(X,L;\Z)$,
$E(\beta) = \omega(\beta)$ and denote $\mu(\beta)$ by the Maslov index
associated to $\beta \in \frak G$.
\end{defn}
\begin{defn}\label{defn192}
A {\it decorated rooted ribbon tree}
\index{decorated rooted ribbon tree}
\index{ribbon tree ! {\it see: decorated rooted ribbon tree}}
is a pair $(\mathcal T,\beta(\cdot))$
such that:
\begin{enumerate}
\item $\mathcal T$ is a connected tree.
Let $C_0(\mathcal T)$, $C_1(\mathcal T)$ be the sets of
all vertices and edges of $\mathcal T$, respectively.
\item
For each ${\rm v} \in C_0(\mathcal T)$
we fix a cyclic order
of the set of edges containing ${\rm v}$.
This is equivalent to fixing an isotopy type of an embedding
of $\mathcal T$ to the plane $\R^2$.
(Namely, the cyclic order of the edges is given by the orientation
of the plane so that the edges are enumerated according to the counter clockwise orientation. We call it a {\it ribbon structure} at the vertex ${\rm v}$.)
\item
$C_0(\mathcal T)$ is divided into the set of {\it exterior vertices}
$C_{0}^{{\rm ext}}(\mathcal T)$ and
the set of {\it interior vertices} $C_{0}^{{\rm int}}(\mathcal T)$.
\item
We fix one element of $C_{0}^{{\rm ext}}(\mathcal T)$, which we call the
{\it root}.
\item
The valency of all the exterior vertices are $1$.\footnote{
A vertex of valency $1$ may not be exterior.}
\item
$\beta(\cdot) : C_{0}^{{\rm int}}(\mathcal T) \to \frak G$ is a map.
We require $E(\beta({\rm v})) \ge 0$.
Moreover if $E(\beta({\rm v})) = 0$ then $\beta({\rm v})$ is
required to be $0 \in H_2(X,L;\Z)$.
\item (Stability)
For each ${\rm v} \in C_{0}^{{\rm int}}(\mathcal T)$
we assume that one of the following holds.
\begin{enumerate}
\item
$E(\beta({\rm v})) > 0$.
\item
The valency of ${\rm v}$ is not smaller than $3$.
\end{enumerate}
\end{enumerate}
We denote by $\mathcal G(k+1,\beta)$ the set of all decorated rooted
ribbon trees $(\mathcal T,\beta(\cdot))$ such that:
\begin{enumerate}
\item[(I)]
$\# C_{0}^{{\rm ext}}(\mathcal T) = k+1$.
\item[(II)]
$\sum_{{\rm v} \in C_{0}^{{\rm int}}(\mathcal T)}(\beta({\rm v})) = \beta.$
\end{enumerate}
\end{defn}

We decompose the set of edges $C_{1}(\mathcal T)$ into two types.
If an edge ${\rm e}$ contains an exterior vertex,
we call ${\rm e}$ an {\it exterior edge}.
Otherwise we call
${\rm e}$ an {\it interior edge}.
We denote by $C_{1}^{{\rm int}}(\mathcal T)$,
(resp. $C_{1}^{{\rm ext}}(\mathcal T)$) the set of all interior (resp.
exterior) edges.

Now we add interior marked points to $\mathcal G(k+1,\beta)$
and define the following set.

\begin{defn}\label{defn274}
The set $\mathcal G(k+1,\ell,\beta)$
consists of objects
$$
\frak T = (\mathcal T,\beta(\cdot),l(\cdot))
$$
with the following properties.
We call $\frak T$ a {\it marked decorated rooted ribbon tree}.
\begin{enumerate}
\item
$(\mathcal T,\beta(\cdot))$ satisfies
Definition \ref{defn192} (1)-(6),
which is a  part of the definition of the set
$\mathcal G(k+1,\beta)$.
\item $l({\rm v}) \subset \{1,\dots,\ell\}$
such that $\{1,\dots,\ell\}$ is a disjoint union of
$\{l({\rm v}) \mid {\rm v} \in C_{0}^{{\rm int}}(\mathcal T)\}$.
\item
Instead of Definition \ref{defn192} (7)
we assume the following.
For each ${\rm v} \in C_{0}^{{\rm int}}(\mathcal T)$
we assume that one of the following holds.
\begin{enumerate}
\item
$E(\beta({\rm v})) > 0$.
\item
The valency of ${\rm v}$ is not smaller than $3$.
\item
$l({\rm v}) \ne \emptyset$.
\end{enumerate}

\end{enumerate}
\end{defn}
Using this, we incorporate the data of interior marked points into the definition of tree-like K-system (\cite[Definition 21.9]{foootech21}, \cite[Definition 21.9]{Springer}) as in
Theorem \ref{therem274}.
We also study isotopy etc. between them. For that purpose we include the
parametrized version in the next definition.
\begin{shitu}\label{situ213}
Let $(X,\omega), L$ be as in Situation \ref{situ271}
and $P$ a smooth compact oriented manifold with corners.
We consider $\mathcal J = \{J_t \mid t \in P\}$,
the smooth family of tame almost complex structures on $X$
parametrized by $P$.
$\diamond$
\end{shitu}
\begin{defn}
We put
$$
\mathcal M_{k+1,\ell}(X,L,\mathcal J;\beta)
=
\bigcup_{t \in P} \{t\}  \times \mathcal M_{k+1,\ell}(X,L,J_t;\beta).
$$
We can define a stable map topology on this space in the same way as the
case when $P$ is a point.
There exists a map
$$
{\rm ev}_P : \mathcal M_{k+1,\ell}(X,L,\mathcal J;\beta) \to P
$$
which sends
$\{t\} \times \mathcal M_{k+1,\ell}(X,L,J_t;\beta)$ to $t$.
\end{defn}

\begin{thm}\label{therem274}
In Situation \ref{situ213}, there exists a system of Kuranishi structures on
$\{\mathcal M_{k+1,\ell}(X,L,\mathcal J;\beta) \mid k, \ell, \beta\}$
with the following properties:
\par\smallskip
\noindent {\rm (I)}
$\frak G = H_2(X,L;\Z)$,
$E : \frak G \to \R$ and $\mu : \frak G \to \Z$ are
as in Definition \ref{defnGEM}.
\par\smallskip
\noindent {\rm (II)}
Nothing to add.\footnote{Each item (I)-(X) corresponds to
the item of \cite[Condition 21.11]{foootech21}, 
\cite[Condition 21.11]{Springer} with the same number.
We leave item (II) void for this consistency.}
\par\smallskip
\noindent {\rm (III)}
$$
\aligned
({\rm ev}_P,{\rm ev},{\rm ev}^{\rm int}) =
&({\rm ev}_P,({\rm ev}_0,\dots,{\rm ev}_k),({\rm ev}^{\rm int}_1,\dots,{\rm ev}^{\rm int}_{\ell})) \\
&: {\mathcal M}_{k+1,\ell}(X,L,\mathcal J;\beta) \to P \times L^{k+1} \times X^{\ell}
\endaligned
$$
is a strongly smooth map such that
$({\rm ev}_P,{\rm ev}_0)$ is weakly submersive.

\par\smallskip
\noindent {\rm (IV)} {\bf (Positivity of energy)}
We assume  $
{\mathcal M}_{k+1,\ell}(X,L,\mathcal J;\beta) = \emptyset$
if $E(\beta) < 0$.
\par\smallskip
\noindent {\rm (V)} {\bf (Energy zero part)}
In case $E(\beta) = 0$, we have $
{\mathcal M}_{k+1,\ell}(X,L,\mathcal J;\beta) = \emptyset$ unless $\beta = 0$.
In case
$\beta = \beta_0 = 0$, we assume
${\mathcal M}_{k+1,\ell}(X,L,\mathcal J;\beta_0) = \emptyset$ if
$k+1+2\ell < 3$ and
${\mathcal M}_{k+1,\ell}(X,L,\mathcal J;\beta_0) = P \times L \times
{\mathcal M}_{k+1,\ell}$ otherwise.
Here ${\mathcal M}_{k+1,\ell}$ is the compactified moduli space
of stable marked bordered curve of genus $0$ with one boundary component,
$\ell$ interior marked points and $k+1$ boundary marked points.
\par\smallskip
\noindent {\rm (VI)} {\bf (Dimension)}
The dimension of the moduli space $\mathcal M_{k+1,\ell}(X,L,\mathcal J;\beta)$ is given by
\begin{equation}
\dim
\mathcal M_{k+1,\ell}(X,L,\mathcal J;\beta)
=
\mu(\beta) + \dim L + k -2 + 2\ell + \dim P.
\end{equation}
\par\smallskip
\noindent {\rm (VII)} {\bf (Orientation)}
${\mathcal M}_{k+1,\ell}(X,L,\mathcal J;\beta)$ is oriented.
\par\smallskip
\noindent {\rm (VIII)} {\bf (Gromov compactness)}
For any $E_0$ the set
\begin{equation}
\{\beta \in \frak B \mid \exists k \,\, \exists \ell\,\,\,
{\mathcal M}_{k+1,\ell}(X,L,\mathcal J;\beta)
\ne \emptyset,
\,\, E(\beta) \le E_0\}
\end{equation}
is a finite set.
\par\smallskip
\noindent {\rm (IX)} {\bf (Compatibility at the boundary)}
The normalized boundary\footnote{
See Remark \ref{rem217} and reference therein for the notion of 
a normalized boundary and normalized corners.}
of
${\mathcal M}_{k+1,\ell}(X,L,\mathcal J;\beta)$
is decomposed to the disjoint union of fiber products as follows.
\begin{equation}\label{formula1660rev}
\aligned
&\widehat\partial {\mathcal M}_{k+1,\ell}(X,L,\mathcal J;\beta) \\
&\cong
\coprod_{\beta_1,\beta_2,k_1,k_2,i,l_1,l_2} (-1)^{\epsilon}
{\mathcal M}_{k_1+1,\#l_1}(X,L,\mathcal J;\beta_1) \\
&\qquad\qquad\qquad\qquad\qquad\qquad\,\, {}_{({\rm ev}_P,{\rm ev}_i)}\times_{({\rm ev}_P,{\rm ev}_0)}
{\mathcal M}_{k_2+1,\#l_2}(X,L,\mathcal J;\beta_2)
\\
&\qquad\qquad\cup {\mathcal M}_{k+1,\ell}(X,L,\mathcal J\vert_{\widehat\partial P};\beta)
\endaligned
\end{equation}
where the union of the first term of the right hand side is taken over
$\beta_1,\beta_2,k_1,k_2$ such that $\beta_1 + \beta_2 = \beta$,
$k_1 + k_2 = k+1$, $i=1,\dots,k_1$
and $l_1,l_2$ that are subsets of $\{1,\dots,\ell\}$
such that $l_1 \cap l_2 = \emptyset$,
$l_1 \cup l_2 =\{1,\dots,\ell\}$.
$\mathcal J\vert_{\widehat\partial P}$ is the restriction of the family
$\mathcal J$ to the normalized boundary of $P$.
\par
We refer \cite[(21.15),(21.16)]{foootech21},
\cite[(21.15),(21.16)]{Springer} for the description of the sign ${\epsilon}$. 
(c.f. \cite[Section 8.5]{fooobook2}) See also \cite[Remark 16,2]{foootech21}, \cite[Remark 16,2]{Springer}
for the order of fiber product and orientation etc.
\par
This isomorphism is compatible with orientation.
It is compatible also with the evaluation maps. (Compatibility with the
evaluation maps at the boundary marked points is the same
as that of \cite[Formula (21.8)]{foootech21}, \cite[Formula (21.8)]{Springer}.
Compatibility at the interior marked points can be
formulated in the similar way by using the indexing set
$l_1 \sqcup l_2 =\{1,\dots,\ell\}$.)
\par\smallskip
\noindent {\rm (X)} {\bf (Corner compatibility isomorphism)}
Let $\widehat{S}_m( {\mathcal M}_{k+1,\ell}(X,L,\mathcal J;\beta))$ be the
normalized corner of the K-space\footnote{Following
\cite[Definition 3.11]{foootech2}, \cite[Definition 3.11]{Springer}, 
we use the terminology `K-space' as a
paracompact metrizable space equipped with a Kuranishi structure.}
${\mathcal M}_{k+1,\ell}(X,L,\mathcal J;\beta)$
in the sense of \cite[Definition 24.17]{foootech21}, \cite[Definition 24.18]{Springer}.
Then it is isomorphic to the
disjoint union of
\begin{equation}\label{cornecomAinf1rev}
\prod_{(\mathcal T,\beta(\cdot),l(\cdot))} {\mathcal M}_{k_{\rm v}+1,
\#l({\rm v})}(X,L,\mathcal J\vert_{\widehat S_{m'}P};\beta({\rm v})).
\end{equation}
Here the union is taken over all
$(\mathcal T,\beta(\cdot),l(\cdot))
\in \mathcal G(k+1,\ell,\beta)$, $m'$
such that $\#  C_{1}^{{\rm int}}(\mathcal T) + m'= m$,
and
$\prod_{(\mathcal T,\beta(\cdot),l(\cdot))}$ means a fiber product
defined in Definition \ref{defn3131}.
$\mathcal J\vert_{\widehat S_{m'}P}$ is the restriction
of the family $\mathcal J$ to the codimension $m'$
normalized corner $\widehat S_{m'}P$ of $P$.
We call the isomorphism
\begin{equation}\label{cornercompiso}
\aligned
& \widehat{S}_m( {\mathcal M}_{k+1,\ell}(X,L,\mathcal J;\beta)) \\
& \xrightarrow{\sim}
\coprod_{\substack{(\mathcal T,\beta(\cdot),l(\cdot))
\in \mathcal G(k+1,\ell,\beta),\\m'}}
\prod_{(\mathcal T,\beta(\cdot),l(\cdot))} {\mathcal M}_{k_{\rm v}+1,
\#l({\rm v})}(X,L,\mathcal J\vert_{\widehat S_{m'}P};\beta({\rm v}))
\endaligned
\end{equation}
{\rm the corner compatibility isomorphism}.
It is compatible with the evaluation maps.
\par\smallskip
\noindent {\rm (XI)} {\bf (Consistency of corner compatibility isomorphisms)}
We iterate the construction of normalized corner and
obtain a space $\widehat{S}_{m_2}(\widehat{S}_{m_1}( {\mathcal M}_{k+1,\ell}(X,L,\mathcal J;\beta)))$.
Condition (X) implies that
$\widehat{S}_{m_2}(\widehat{S}_{m_1}( {\mathcal M}_{k+1,\ell}(X,L,\mathcal J;\beta)))$
is a disjoint union of $(m'+m_1+m_2)!/m'!m_1!m_2!$ copies
of (\ref{cornecomAinf1rev}),
where the union is taken over all
$(\mathcal T,\beta(\cdot),l(\cdot))
\in \mathcal G(k+1,\ell,\beta)$
such that $\#  C_{1}^{{\rm int}}(\mathcal T) = m_1+m_2$,
$m = m' + m_1+m_2$.
\par
The map
$$
\pi_{m_2,m_1} :
\widehat{S}_{m_2}(\widehat{S}_{m_1}( {\mathcal M}_{k+1,\ell}(X,L,\mathcal J;\beta)))
\to \widehat{S}_{m_1+m_2}( {\mathcal M}_{k+1,\ell}(X,L,\mathcal J;\beta))
$$
in \cite[Proposition 24.16]{foootech21}, \cite[Proposition 24.17]{Springer}
is identified with the identity map
on each component of (\ref{cornecomAinf1rev}).
\par\smallskip
\noindent {\rm (XII)}  {\bf (Exchange symmetry of the interior marked points)}
There exists an action\footnote{See \cite[Subsection 24.4]{foootech21},
\cite[Chapter 24.4]{Springer}
for the definition of finite group action on K-spaces.} of symmetric group ${\rm Perm}(\ell)$
on the K-space ${\mathcal M}_{k+1,\ell}(X,L,\mathcal J;\beta)$, whose underlying action
on the topological space ${\mathcal M}_{k+1,\ell}(X,L,\mathcal J;\beta)$
is by exchanging the interior marked points.
This action is compatible with the evaluation map and the
corner compatibility isomorphisms given in (X)(XI).
It is orientation preserving.
\end{thm}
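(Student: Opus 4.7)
The plan is to follow the three-step strategy outlined in the introduction, but enriched so as to cover the parametrized version with interior marked points and with all the extra structure required by items (I)--(XII).

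First I would extend the formalism of Section \ref{subsec;obstbundledata} (which is written for $\mathcal M_{k+1}(X,L,J;\beta)$ in the unparametrized case) to the setting of Situation \ref{situ213}. Concretely, I would enlarge the notion of obstruction bundle data so that: (a) it incorporates interior marked points and depends on the combinatorial type recorded by an element of $\mathcal G(k+1,\ell,\beta)$; (b) it varies smoothly with the parameter $t \in P$ for the family $\mathcal J = \{J_t\}$; and (c) it is ${\rm Perm}(\ell)$-equivariant in the sense that obstruction spaces at $((\Sigma,\vec z,\vec{\frak z}),u)$ and at its image under a permutation of interior marked points are identified by the tautological isomorphism. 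The disk-component-wise-ness condition of Definition \ref{defn5151} would then be upgraded so that, at each boundary stratum indexed by an edge of the tree $\mathcal T$, the obstruction bundle data is the direct sum of the obstruction bundle data on the two component moduli spaces, and similarly at deeper strata indexed by any $\mathcal T \in \mathcal G(k+1,\ell,\beta)$.

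Next I would apply the analogue of Theorem \ref{thm42} to extract a system of Kuranishi structures on the family $\{\mathcal M_{k+1,\ell}(X,L,\mathcal J;\beta)\}$ from such a parametrized, symmetric, disk-component-wise system. The extraction formula \eqref{Knhd} is explicit, local, and functorial in all the auxiliary data, and this makes the verification of items (III), (VI), (VII), (IX), (X), (XI), (XII) essentially automatic: the dimension formula (VI) follows from the usual index computation with the contribution $\dim P$ coming from the $P$-direction; orientation (VII) follows from the standard spin/relative-spin orientation machinery of \cite[Chapter 8]{fooobook2}; the boundary decomposition (IX) follows from the disk-component-wise-ness applied to codimension-one strata together with the boundary contribution of $P$; the corner compatibility isomorphism (X) and its consistency (XI) follow from applying disk-component-wise-ness iteratively at each internal edge and each corner stratum of $P$, i.e., by combining \cite[Proposition 24.16]{foootech21} with the combinatorics of $\mathcal G(k+1,\ell,\beta)$ and Definition \ref{defn3131}; and the ${\rm Perm}(\ell)$-action (XII) follows from the equivariance imposed on the obstruction bundle data. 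Items (IV), (V), (VIII) are inherited from the stable map topology and Gromov compactness of each individual moduli space (\cite[Theorem 7.1.43]{fooobook2}).

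The substantive step is the existence statement, namely the analogue of Theorem \ref{thm43} in the present enriched setting. I would carry this out by induction on the energy $E(\beta)$ using the finiteness provided by Gromov compactness. At each step, I would construct obstruction bundle data on $\mathcal M_{k+1,\ell}(X,L,\mathcal J;\beta)$ by first specifying it near each corner stratum in a way dictated by the already-constructed data on lower energy moduli spaces via the fiber product \eqref{formula1660rev}, and then extending to the interior. The construction in Sections \ref{subsec;exi1} and \ref{subsec;exi2} of the existence theorem for a single moduli space goes through with the following modifications: the obstruction spaces must be chosen $\text{Perm}(\ell)$-equivariantly (which is achieved by averaging over the stabilizer of each point, thanks to compatibility between the obstruction bundle data at a point and its permutation image), and the construction must be performed simultaneously in the parameter $t \in P$, using relative versions of the transversality and extension arguments. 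The construction at corner strata indexed by $(\mathcal T,\beta(\cdot),l(\cdot)) \in \mathcal G(k+1,\ell,\beta)$ is made consistent by a downward induction on the codimension of the stratum (equivalently, on $\# C_1^{\mathrm{int}}(\mathcal T) + \mathrm{codim}$ in $P$), which is precisely where (XI) enters as a constraint on the construction.

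The main obstacle will be to arrange symmetry, parametric smoothness, and corner-compatibility simultaneously. Corner-compatibility alone is already the technical heart of \cite{diskconst1}; adding the ${\rm Perm}(\ell)$-equivariance is delicate because permutations of interior marked points can interchange different strata of the stratification used to build the data, so the inductive construction must be performed along ${\rm Perm}(\ell)$-orbits of strata at once. Once this orbit-wise inductive construction is set up, the actual verification reduces to checking that the equivariant averaging does not destroy the transversality already established for the non-equivariant model of \cite{diskconst1}, which is standard since ${\rm Perm}(\ell)$ is finite and all operations involved are linear in the obstruction spaces.
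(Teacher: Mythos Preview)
Your proposal is essentially correct and follows the same three-step architecture as the paper: extend the notion of (disk-component-wise) obstruction bundle data to the $P$-parametrized setting with interior marked points (this is exactly what the paper does in Subsection~\ref{subsec;unique2}, see Definition~\ref{defn5151para} and Lemma~\ref{lem919}), then deduce the corner compatibility properties (IX)--(XI) from disk-component-wise-ness as in the proof of Theorem~\ref{thm42}, and finally prove existence by the inductive construction of Sections~\ref{subsec;exi1}--\ref{subsec;exi2} (the parametrized version is Proposition~\ref{prop925}).

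Two small technical points deserve sharpening. First, induction on $E(\beta)$ alone is not quite the right bookkeeping: at a boundary stratum one of the two pieces may have $\beta({\rm v})=0$ and hence the same energy as the total, so the paper instead uses the lexicographic order on $(k,\ell,\beta)$ (energy first, then $\ell$, then $k$; see Lemma~\ref{lem72} and the proof of Proposition~\ref{prop722}). Second, the ${\rm Perm}(\ell)$-symmetry (XII) is not obtained by averaging after the fact; rather, invariance under the \emph{extended} automorphism group (which by Definition~\ref{stabilitydefn27} already contains the permutations of interior marked points) is one of the axioms of obstruction bundle data (Definition~\ref{defn51}\,(5)) and is built into every step of the construction (Definitions~\ref{defn62}\,(5), \ref{defn63}\,(10), \ref{defn631}\,(4), and Condition~\ref{cond717}). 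Your concern that permutations interchange strata is precisely why the paper imposes this invariance on $\Xi_{\frak p}$ and on the quasi-component choice maps $\mathscr F_{k+1,\ell,\beta}$ from the outset, rather than symmetrizing at the end.
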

\begin{rem}\label{rem217}
The notion of a normalized boundary and normalized corners 
of a manifold (or an orbifold) with corners 
are defined in \cite[Definitions 8.4 and 24.18]{Springer}
\cite[Lemma-Definition 8.8]{Springer}.
For example the normalized boundary of the subspace 
$\{(x,y) \mid x,y \in \R, x,y\ge 0\}$ of $\R^2$ is 
a {\it disjoint} union of two half lines 
$\{ x \mid x \ge 0\} \sqcup \{ y \mid y \ge 0\}$.
Two points of the normalized boundary, $x=0$ in the first summand and $y=0$ in the second summand,  
become the same point in the usual boundary.
A normalized boundary of a manifold with corners 
becomes a manifold with corners. (This is not the 
case for the usual boundary.)
\end{rem}
\begin{defn}\label{defn2167}
We call the system satisfying (I)-(XII) of Theorem \ref{therem274} with $P=$ point,
a {\it tree-like K-system with interior marked points}.
\end{defn}
Theorem \ref{therem274} implies the existence of
bulk deformations of Lagrangian Floer cohomology.
(See \cite[Subsection 3.8.5]{fooobook}.) Namely it induces
\begin{equation}\label{operatorq}
\frak q :
H(X;\Lambda^{\R}_{\rm 0,nov})
\to HH(H(L;\Lambda^{\R}_{\rm 0,nov}),H(L;\Lambda^{\R}_{\rm 0,nov}))
\end{equation}
from the cohomology of the ambient space $X$ to the
Hochschild cohomology of the filtered $A_{\infty}$ algebra
in Corollary \ref{ThemAinfooobook}.
More precisely (\ref{operatorq}) is a filtered $L_{\infty}$
homomorphism, where the $L_{\infty}$ structure of the domain is
trivial and the $L_{\infty}$ structure of the target is induced by the
Gerstenhaber bracket.
See \cite[Theorem Y and Theorem 3.8.32]{fooobook},
\cite[Corollary 7.4.40]{fooobook2}
for the precise
statement.
There is a similar well-definedness statement as
Theorem \ref{pisotopyexixsts} in the situation of
Theorem \ref{therem274}.
It implies that the map (\ref{operatorq}) is independent of the choices
up to homotopy of filtered $L_{\infty}$ homomorphism.
\begin{rem}
The map $\frak q$ is called the {\it closed-open  map}\footnote{
In the physics literature, it is called the {\it bulk-boundary map}.}.
It is a ring homomorphism when we use quantum cup product on
$H(X;\Lambda^{\R}_{\rm 0,nov})$.
See \cite[Subsection 4.7]{foootoric32} for various works related to this map.
\end{rem}

\begin{defn}\label{def218}
(\cite[Definition 21.15]{foootech21}, \cite[Definition 21.15]{Springer}) 
A {\it pseudo isotopy} between two tree-like K-systems with interior marked points is
a system satisfying (I)-(XII) of Theorem \ref{therem274} with $P=[1,2]$
such that its restriction to $1$ and $2$ becomes the given
two tree-like K-systems with interior marked points.
\end{defn}

\begin{thm}\label{thm219}
Any two tree-like K-systems with interior marked points
obtained by Theorem \ref{therem274} with $P=$ point are pseudo-isotopic.
\end{thm}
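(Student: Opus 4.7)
\emph{Overall strategy.} Let $\mathcal{K}_1$ and $\mathcal{K}_2$ denote the two given tree-like K-systems with interior marked points, each produced by Theorem \ref{therem274} at $P = \mathrm{pt}$ from respective choices $(J_i, \Xi_i)$, where $J_i$ is a tame almost complex structure and $\Xi_i$ is a disk-component-wise system of obstruction bundle data (Definition \ref{defn5151}, in its version with interior marked points) on the collection $\{\mathcal{M}_{k+1,\ell}(X,L,J_i;\beta)\}$. The plan is to obtain the desired pseudo-isotopy as the output of Theorem \ref{therem274} applied at $P = [1,2]$ to a pair $(\mathcal{J}, \widetilde{\Xi})$ which interpolates the two endpoint pairs. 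Here $\mathcal{J} = \{J_s\}_{s \in [1,2]}$ is any smooth path of tame almost complex structures joining $J_1$ and $J_2$, putting us in Situation \ref{situ213}, and $\widetilde{\Xi}$ is a disk-component-wise system of obstruction bundle data over the parametrized moduli spaces whose restriction to $\{i\} \subset [1,2]$ equals $\Xi_i$.

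\emph{The key step.} The critical ingredient is a relative existence statement: given $\mathcal{J}$ and the pair $(\Xi_1,\Xi_2)$, there exists a disk-component-wise system $\widetilde{\Xi}$ on $\{\mathcal{M}_{k+1,\ell}(X,L,\mathcal{J};\beta)\}$ with the prescribed endpoint values. This is the parametrized analogue, with prescribed boundary data at $s = 1,2$, of the existence Theorem \ref{thm43}. I would carry it out by the same double induction on the energy $E(\beta)$ and on the combinatorial complexity of $(\mathcal{T}, \beta(\cdot), l(\cdot)) \in \mathcal{G}(k+1,\ell,\beta)$ as in Sections \ref{subsec;exi1}--\ref{subsec;exi2}. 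At each inductive stage $\widetilde{\Xi}$ is already pinned down on three closed subsets of $\mathcal{M}_{k+1,\ell}(X,L,\mathcal{J};\beta)$: the two endpoint slices at $s = 1,2$, supplied by $\Xi_1$ and $\Xi_2$, and the codimension-one broken-configuration strata of the boundary decomposition \eqref{formula1660rev}, supplied by the inductive hypothesis via the disk-component-wise condition. Because each $\Xi_i$ is itself disk-component-wise, the three prescriptions agree on their pairwise intersections, so the partition-of-unity extension of Section \ref{subsec;exi1} extends them over the whole parametrized moduli space while preserving the disk-component-wise property. Feeding $(\mathcal{J}, \widetilde{\Xi})$ into the machinery of Theorems \ref{thm42} and \ref{therem274} yields a system of Kuranishi structures over $P = [1,2]$ satisfying axioms (I)--(XII); its restriction to $\{i\}$ agrees with $\mathcal{K}_i$ on the nose by construction, and by Theorem \ref{thm96} any discrepancy would anyway be absorbable into a pseudo-isotopy, so the resulting object is a pseudo-isotopy in the sense of Definition \ref{def218}.

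\emph{Main obstacle.} The delicate point is verifying that the corner compatibility imposed on $\widetilde{\Xi}$ by the disk-component-wise condition is simultaneously consistent with the given endpoint data at $s = 1,2$. A corner of the $[1,2]$-parametrized moduli space is indexed by a marked decorated rooted ribbon tree $(\mathcal{T}, \beta(\cdot), l(\cdot))$ together with a face of $[1,2]$, and $\widetilde{\Xi}$ must factor through the appropriate fiber product of component obstruction bundle data at every such corner; at a corner lying over $s = i$ this factorization must coincide with the corner decomposition of $\Xi_i$ inherited from axiom (X). Matching these prescriptions simultaneously across all trees and all codimensions is a genuine combinatorial bookkeeping problem, but because the induction proceeds by non-decreasing energy and non-decreasing corner codimension, and because the prescribed endpoint data are themselves disk-component-wise, the verification reduces at each step to an extension problem for obstruction bundle sections over a contractible interval with pre-specified endpoints—this is solvable by the same neighborhood construction used in Section \ref{subsec;exi2}. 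Smooth dependence in the $s$-parameter, transversality of $\widetilde{\Xi}$ to the Cauchy--Riemann operator, and the axioms concerning evaluation maps, orientations, and the ${\rm Perm}(\ell)$-action are inherited verbatim from the construction of Theorem \ref{therem274}, completing the pseudo-isotopy.
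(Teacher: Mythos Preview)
Your overall plan---building a $[1,2]$-parametrized disk-component-wise system $\widetilde{\Xi}$ that restricts exactly to the given $\Xi_i$ at the endpoints---is natural but is \emph{not} what the paper does, and your sketch of the relative existence step has a real gap.

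The paper's route avoids your main obstacle entirely. Rather than extending prescribed endpoint data, it first invokes Proposition \ref{prop925} to produce \emph{some} collared disk-component-wise system over $[1,2]$, yielding a pseudo-isotopy whose endpoint restrictions are whatever Kuranishi structures that construction happens to give. These need not coincide with the given $\mathcal K_i$; but since both the constructed endpoint and $\mathcal K_i$ live over the same $J_i$, Theorem \ref{thm96} supplies an isotopy between them at fixed almost complex structure. Because all three pieces---the isotopy at $J_1$, the pseudo-isotopy over $[1,2]$, and the isotopy at $J_2$---are collared (Definition \ref{defcollared}, Lemma \ref{lem922}, and the product form in the proof of Lemma \ref{lem8686}), they glue to a single pseudo-isotopy. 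You mention Theorem \ref{thm96} only as a throwaway fallback, but in the paper it is load-bearing; and collaring, which you do not mention at all, is what makes the concatenation rigorous.

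The gap in your direct approach is the relative existence claim itself. The construction in Sections \ref{subsec;exi1}--\ref{subsec;exi2} is not a partition-of-unity argument: the obstruction bundle datum $E_{\bf p}({\bf x})$ is a \emph{direct sum} (Condition \ref{cond719}) of pieces $E_{{\bf p},\xi,\frak p}({\bf x})$ rigidly transported from a fixed finite set $\frak P(k,\ell,\beta)$ via the maps (\ref{form88})--(\ref{form6969revrev}); there is no convex-combination or bump-function blending available. To make a parametrized version restrict at $s=i$ exactly to a given $\Xi_i$, you would need the finite set $\hat{\frak P}(k,\ell,\beta)$ of Proposition \ref{prop925} to restrict over $s=i$ to the specific finite set underlying $\Xi_i$, with matching type I strong stabilization data, obstruction spaces $E_{\frak p}$, compact sets $K(\frak p)$, and quasi-component choice maps $\mathscr F_{k+1,\ell,\beta}$. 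That is far stronger than anything Proposition \ref{prop925} asserts, and your ``extension over a contractible interval'' reduction does not address how to match all of these discrete choices. The paper sidesteps this by not attempting to match endpoints at the level of obstruction bundle data, matching instead at the level of Kuranishi structures via the equivalence machinery of Subsection \ref{subsec:unique2}.
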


The proofs of Theorems \ref{therem273},  \ref{pisotopyexixsts} and \ref{therem274}, \ref{thm219}
occupy the rest of this article.

\section{Obstruction bundle data: Review}
\label{subsec;obsbudat}

We first review the definition of obstruction bundle data
in \cite{diskconst1}.

We considered the set ${\mathcal X}_{k+1,\ell}(X,L;\beta)$
of all isomorphism classes of $((\Sigma,\vec z,\vec {\frak z}),u)$
which satisfy the same condition as in Definition \ref{defn24222}
except we do not require $u$ to be pseudo-holomorphic.
We require $u$ to be continuous and of $C^2$ class on each irreducible component.
(See \cite[Definition 4.2]{diskconst1}.)
This is a set which contains  ${\mathcal M}_{k+1,\ell}(X,L,J;\beta)$
set-theoretically. We {\it emphasize that we do not put structures
on ${\mathcal X}_{k+1,\ell}(X,L;\beta)$ such as topology}.
We use {\it partial topology} of the pair
$({\mathcal X}_{k+1,\ell}(X,L;\beta),{\mathcal M}_{k+1,\ell}(X,L,J;\beta))$:
It assigns $B_{\epsilon}(\mathcal X,{\bf p}) \subset {\mathcal X}_{k+1,\ell}(X,L;\beta)$
to each ${\bf p} \in {\mathcal M}_{k+1,\ell}(X,L,J;\beta)$ and $\epsilon >0$.
Here $B_{\epsilon}(\mathcal X,{\bf p})$ is
the set of
${\bf x} \in {\mathcal X}_{k+1,\ell}(X,L;\beta)$
such that
$\bf x$ is {\it $\epsilon$-close to} $\bf p$.
See \cite[Definition 4.12]{diskconst1} for the definition that
`$\bf x$ is $\epsilon$-close to $\bf p$,'\footnote{One can easily see that this notion is independent of choices of representatives of $\bf x$ and
$\bf p$.} and
\cite[Definition 4.1]{diskconst1} for the definition of the notion of
partial topology and \cite[Subsection 4.3]{diskconst1} for the definition
of the partial topology in the case of the pair
$({\mathcal X}_{k+1,\ell}(X,L;\beta),{\mathcal M}_{k+1,\ell}(X,L,J;\beta))$.
A subset
$$\mathscr U_{\bf p} \subset {\mathcal X}_{k+1,\ell}(X,L;\beta)$$
is called a {\it neighborhood} of ${\bf p}$ in ${\mathcal X}_{k+1,\ell}(X,L;\beta)$ if it contains
$B_{\epsilon}(\mathcal X,{\bf p})$ for sufficiently small $\epsilon > 0$.
Now we recall:

\begin{defn}\label{defn51}{\rm (\cite[Definition 5.1]{diskconst1})}
{\it Obstruction bundle data of (or for) the \index{obstruction bundle data}
moduli space} ${\mathcal M}_{k+1,\ell}(X,L,J;\beta)$
assign to each
${\bf p} \in {\mathcal M}_{k+1,\ell}(X,L,J;\beta)$
a neighborhood $\mathscr U_{\bf p}$ of ${\bf p}$ in ${\mathcal X}_{k+1,\ell}(X,L;\beta)$
and an object
$E_{\bf p}(\bf x)$ to each ${\bf x} \in \mathscr U_{\bf p}$ . We require that they have the following properties.
\begin{enumerate}
\item
We put ${\bf x} = ((\Sigma_{\bf x},\vec z_{\bf x},\vec {\frak z}_{\bf x}),u_{\bf x})$.
Then $E_{\bf p}({\bf x})$
\index{00E1_{p}({\bf x})@$E_{\bf p}({\bf x})$} is a finite dimensional linear subspace of
the set of $C^2$ sections
$$
E_{\bf p}({\bf x}) \subset C^2(\Sigma_{\bf x};u_{\bf x}^*TX \otimes \Lambda^{0,1}),
$$
whose supports are away from nodal or marked points and the boundary.
\item {\bf (Smoothness)}
$E_{\bf p}({\bf x})$ depends smoothly on ${\bf x}$ as defined in
\cite[Definition 8.7]{diskconst1}.
\item {\bf (Transversality)}
$E_{\bf p}({\bf x})$ satisfies the transversality condition as in
\cite[Definition 5.5]{diskconst1}.
\item {\bf (Semi-continuity)}
$E_{\bf p}({\bf x})$ is semi-continuous on  ${\bf p}$ as defined in \cite[Definition 5.2]{diskconst1}.
\item {\bf (Invariance under extended automorphisms)}
$E_{\bf p}({\bf x})$ is invariant under the extended automorphism group of
${\bf x}$ as in \cite[Condition 5.6]{diskconst1}.
\item[(6)]{\bf (Effectivity)}
The action of ${\rm Aut}({\bf p})$ on $(D_{u_{\bf p}}\overline{\partial})^{-1}(E_{\bf p})/ {\frak {aut}} (\Sigma_{\bf p}, \vec z_{\bf p}, \vec {\frak z}_{\bf p})$ is effective.
\end{enumerate}
\end{defn}

In \cite[Theorem 7.1]{diskconst1} we associated a Kuranishi structure
of ${\mathcal M}_{k+1,\ell}(X,L,J;\beta)$ to the 
obstruction bundle data, which is determined canonically in the sense of
germ of Kuranishi structures.

\section{Stratification of the moduli space}
\label{subsec;stratifi}

In the next section, Section \ref{subsec;obstbundledata}, we spell out the condition
of the obstruction bundle data which enables us to relate
those Kuranishi structures on ${\mathcal M}_{k+1,\ell}(X,L,J;\beta)$
one another
on the boundaries and the corners by appropriate fiber products.
In this section,
we describe  stratifications of ${\mathcal M}_{k+1,\ell}(X,L,J;\beta)$,
${\mathcal X}_{k+1,\ell}(X,L;\beta)$,\footnote{This stratification is written in
\cite[Subsection 7.1]{fooobook2}.}
which we use for this purpose.
\par
Let
$(\mathcal T,\beta(\cdot),\ell(\cdot)) \in \mathcal G(k+1,\ell,\beta)$.
We define the fiber product (\ref{cornecomAinf1rev}) as follows.
We first consider the direct product
\begin{equation}\label{form31111}
\prod_{{\rm v} \in C_{0}^{{\rm int}}(\mathcal T)} {\mathcal M}_{k_{\rm v}+1,
\#l({\rm v})}(X,L,J;\beta({\rm v})).
\end{equation}
We will define a map
$$
\mathcal{EV} : \prod_{{\rm v} \in C_{0}^{{\rm int}}(\mathcal T)} {\mathcal M}_{k_{\rm v}+1,
\#l({\rm v})}(X,L,J;\beta({\rm v}))
\to (L \times L)^{\# C_{1}^{{\rm int}}(\mathcal T)}
$$
below. Here the target is a direct product of
$\# C_{1}^{{\rm int}}(\mathcal T)$  copies of $L\times L$.
\par
Let ${\rm e} \in C_{1}^{{\rm int}}(\mathcal T)$
and $\{t({\rm e}),s({\rm e})\} = \partial {\rm e}$.
Here we require the vertex $t({\rm e})$ to be in the same
connected component of $\mathcal T \setminus \{s({\rm e})\}$
as the root $\frak v_0$.
(See Figure \ref{zu1}.)
For each ${\rm v} \in C_{0}^{{\rm int}}(\mathcal T)$ there is a unique edge
${\rm e}_{0}(\rm v) \in C_{0}(\mathcal T)$ adjacent to ${\rm v}$
such that ${\rm e}_{0}(\rm v)$ is contained in the same connected component of
$\mathcal T \setminus {\rm v}$ as the root.
Let ${\rm e}_{1}({\rm v}),\dots, {\rm e}_{k_{\rm v}}({\rm v})$ be the
edges containing ${\rm v}$ such that
$({\rm e}_{0}({\rm v}),{\rm e}_{1}({\rm v}),\dots, {\rm e}_{k_{\rm v}}({\rm v}))$
respects the counterclockwise order induced by the ribbon structure.
By definition
$$
{\rm e} = {\rm e}_0(s({\rm e})).
$$
Let $k_{{\rm e}}$ be a positive integer such that
$$
{\rm e} = {\rm e}_{k_{{\rm e}}}(t({\rm e})).
$$
\begin{figure}[h]
\centering
\includegraphics[scale=0.7]{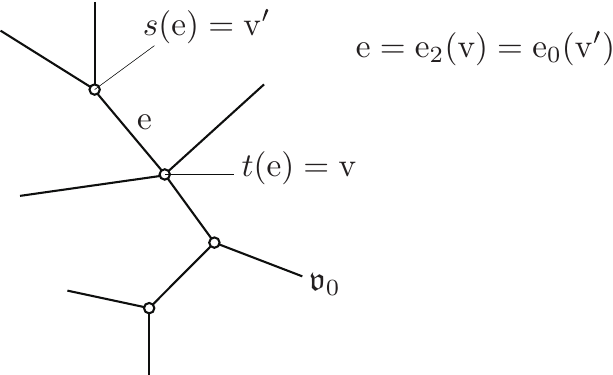}
\caption{$s({\rm e})$ and $t({\rm e})$}
\label{zu1}
\end{figure}
Let $\vec{\bf p} = ({\bf p}_{{\rm v}})_{{\rm v} \in C_{0}^{{\rm int}}}$ be an element of (\ref{form31111}).
We put
$$
\mathcal{EV}_{{\rm e}}(\vec{\bf p})
= ({\rm ev}_{0}({\bf p}_{s({\rm e})}),{\rm ev}_{k_{\rm e}}({\bf p}_{t({\rm e})})) \in L \times L
$$
and
$$
\mathcal{EV} = (\mathcal{EV}_{{\rm e}}(\vec{\bf p}))_{{\rm e} \in
C_{1}^{{\rm int}}(\mathcal T)}.
$$
\begin{defn}\label{defn3131}
We put
$$
\aligned
&\prod_{(\mathcal T,\beta(\cdot),l(\cdot))} {\mathcal M}_{k_{\rm v}+1,
\#l({\rm v})}(X,L,J;\beta({\rm v})) \\
&=
\prod_{{\rm v} \in C_{0}^{{\rm int}}(\mathcal T)} {\mathcal M}_{k_{\rm v}+1,
\#l({\rm v})}(X,L,J;\beta({\rm v}))
{}_{\mathcal{EV}}\times \Delta^{\# C_{1}^{{\rm int}}(\mathcal T)}.
\endaligned
$$
Here $\Delta \subset L \times L$ is the diagonal.
In other words, it is the set of all elements
$\vec{\bf p} = ({\bf p}_{{\rm v}})_{{\rm v} \in C_{0}^{{\rm int}}}({\mathcal T})$ of (\ref{form31111})
such that
\begin{equation}\label{equation3230}
{\rm ev}_{0}({\bf p}_{s({\rm e})}) = {\rm ev}_{k_{\rm e}}({\bf p}_{t({\rm e})})
\end{equation}
for all ${\rm e} \in C_{1}^{{\rm int}}(\mathcal T)$.
To simplify the notation we write
\begin{equation}\label{eq43}
{\mathcal M}_{k+1,\ell}(X,L,J;\beta)(\frak T) :=
\prod_{(\mathcal T,\beta(\cdot),l(\cdot))} {\mathcal M}_{k_{\rm v}+1,
\#l({\rm v})}(X,L,J;\beta({\rm v}))
\end{equation}
with $\frak T = (\mathcal T,\beta(\cdot),l(\cdot))$.
\end{defn}
\begin{lem}\label{lem3232}
There exists a homeomorphism onto its image
\begin{equation}\label{stratumemb}
{\mathcal M}_{k+1,\ell}(X,L,J;\beta)(\frak T) \to {\mathcal M}_{k+1,\ell}(X,L,J;\beta).
\end{equation}
\end{lem}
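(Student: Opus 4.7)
The plan is to construct the map \eqref{stratumemb} explicitly by gluing the components along the combinatorial recipe encoded in $\frak T$, and then verify that it is a continuous injection whose inverse on its image is continuous with respect to the stable map topology.

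\textbf{Step 1 (Construction of the map).} Given $\vec{\bf p} = ({\bf p}_{\rm v})_{{\rm v}\in C_0^{\rm int}(\mathcal T)}$ in the fiber product, pick representatives $((\Sigma_{{\bf p}_{\rm v}},\vec z_{{\bf p}_{\rm v}},\vec{\frak z}_{{\bf p}_{\rm v}}),u_{{\bf p}_{\rm v}})$. Build a bordered nodal curve $\Sigma$ as the quotient of $\coprod_{\rm v}\Sigma_{{\bf p}_{\rm v}}$ in which, for each interior edge ${\rm e}\in C_1^{\rm int}(\mathcal T)$, the boundary marked point $z_{0}$ of $\Sigma_{{\bf p}_{s({\rm e})}}$ is identified with the boundary marked point $z_{k_{\rm e}}$ of $\Sigma_{{\bf p}_{t({\rm e})}}$ to form a boundary node. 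The remaining boundary marked points (together with $z_{0}$ of the root component) give a $(k+1)$-tuple $\vec z$ whose cyclic order is prescribed by the ribbon structure of $\mathcal T$, while the interior marked points of the pieces, relabelled by $l(\cdot)$, form the tuple $\vec{\frak z}=({\frak z}_1,\dots,{\frak z}_\ell)$. Condition \eqref{equation3230} says that the maps $u_{{\bf p}_{\rm v}}$ agree at every identified pair, so they descend to a continuous map $u:\Sigma\to X$ with $u(\partial\Sigma)\subset L$ that is pseudo holomorphic on each irreducible component.

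\textbf{Step 2 (Well-definedness, image, injectivity).} A change of representative of each ${\bf p}_{\rm v}$ induces a biholomorphism of $\Sigma$ matching marked points, so $((\Sigma,\vec z,\vec{\frak z}),u)$ is well-defined up to the equivalence $\sim$ of Definition \ref{defn24222}. Its homology class equals $\sum_{\rm v}\beta({\rm v})=\beta$ by condition (II) in Definition \ref{defn274}, and stability of the result follows immediately from stability of each ${\bf p}_{\rm v}$ because no component is collapsed by the gluing. Hence the assignment lands in ${\mathcal M}_{k+1,\ell}(X,L,J;\beta)$. For injectivity: from a glued curve one recovers $\mathcal T$ as the subgraph of the dual graph of $\Sigma$ consisting of those boundary nodes that arose from edges of $\mathcal T$, together with the maps $\beta(\cdot)$ and $l(\cdot)$ read off from the homology classes of, and interior marked points on, the corresponding components. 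Disassembling $\Sigma$ at those boundary nodes and restoring the marked points $z_{0},z_{k_{\rm e}}$ at the two half-nodes recovers each ${\bf p}_{\rm v}$ uniquely.

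\textbf{Step 3 (Bicontinuity).} By the definition of the stable map topology recorded in \cite[Definition 7.1.42]{fooobook2}, a sequence $\vec{\bf p}^{(n)}$ converges to $\vec{\bf p}$ in the fiber product exactly when ${\bf p}^{(n)}_{\rm v}\to{\bf p}_{\rm v}$ in ${\mathcal M}_{k_{\rm v}+1,\#l({\rm v})}(X,L,J;\beta({\rm v}))$ for every ${\rm v}$. Since the gluing identifies only finitely many pairs of marked points and introduces no new degenerations, the corresponding glued curves converge in the ambient stable map topology; conversely, along a sequence in the image the identified boundary nodes persist, so one can disassemble at them and obtain componentwise convergence.

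\textbf{Main obstacle.} The routine substance is Step 3: matching the product/diagonal topology on the fiber product with the subspace topology inherited from $({\mathcal M}_{k+1,\ell}(X,L,J;\beta),\text{stable map topology})$. This comes down to tracking $\epsilon$-closeness through the gluing map in both directions, which is precisely the local description of the stable map topology near a boundary-nodal curve worked out in \cite[Subsection 7.1]{fooobook2}; once that description is invoked, both implications are essentially tautological, but one must check carefully that no bubbling off or additional boundary node creation can occur without breaking the combinatorial type $\frak T$, so that the inverse is defined and continuous on the whole image.
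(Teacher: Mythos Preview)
Your proof is correct and follows essentially the same route as the paper's: construct the gluing map explicitly from the combinatorics of $\frak T$ and then appeal to the definition of the stable map topology (the paper cites \cite[Definition 7.1.39]{fooobook2}) to conclude that it is a homeomorphism onto its image. Your treatment is in fact more detailed than the paper's one-paragraph argument; the slightly circular phrasing in Step~2 (identifying the gluing nodes as ``those that arose from edges of $\mathcal T$'') is harmless, since the labeled boundary marked points uniquely determine the edge-contraction map from the dual tree of $\Sigma$ to $\mathcal T$, and hence which boundary nodes to cut.
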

\begin{proof}
Let $\vec{\bf p} = ({\bf p}_{{\rm v}})_{{\rm v} \in C_{0}^{{\rm int}}}(\mathcal T)$ be an element of (\ref{form31111})
satisfying (\ref{equation3230}).
We put ${\bf p}_{{\rm v}} = ((\Sigma_{{\rm v}},\vec z_{{\rm v}},\vec{\frak z}_{{\rm v}}),u_{\rm v})$.
We glue $(\Sigma_{{\rm v}},\vec z_{{\rm v}},\vec{\frak z}_{{\rm v}})$ to obtain
$(\Sigma,\vec z,\vec{\frak z})$ as follows.
Consider the disjoint union $\bigcup \Sigma_{{\rm v}}$. For each ${\rm e} \in C_{1}^{{\rm int}}(\mathcal T)$
we identify the $0$-th (boundary) marked point of $\Sigma_{s({\rm e})}$
with $k_{\rm e}$-th (boundary) marked point of $\Sigma_{t({\rm e})}$.
We thus obtain $\Sigma$.
We define  $k+1$-boundary marked points $\vec z$ of $\Sigma$ as follows.
Let $i \in \{0,\dots,k\}$. We consider the $i$-th exterior vertex ${\rm v}_i$ of $\mathcal T$.
Let ${\rm e}_i$ be the edge containing it and ${\rm v}'_i$ be the other vertex
contained in ${\rm e}_i$. Suppose ${\rm e}_i = {\rm e}_j({\rm v}'_i)$.
Then $z_i$ is the $j$-th boundary marked point of $\Sigma_{{\rm v}'_i}$.
We define interior marked points $\vec{\frak z}_{{\rm v}}$ using $l(\cdot)$ in a similar way.
\par
We define $u : \Sigma \to X$ so that $u\vert_{\Sigma_{\rm v}} = u_{\rm v}$.
By (\ref{equation3230}) $u$ is continuous.
It is easy to see that $((\Sigma,\vec z,\vec{\frak z}),u)$ represents an element of
${\mathcal M}_{k+1,\ell}(X,L,J;\beta)$.
We have thus defined a map (\ref{stratumemb}).
By the definition of stable map topology (see \cite[Definition 7.1.39]{fooobook2}) this is a homeomorphism
onto its image.
\end{proof}
Note ${\mathcal M}_{k+1,\ell}(X,L,J;\beta)(\frak T)$ is compact. So its image in
${\mathcal M}_{k+1,\ell}(X,L,J;\beta)$ is a closed subset.
\par
Let
$$
{\mathcal M}^{\circ}_{k+1,\ell}(X,L,J;\beta)
$$
be a subset of
${\mathcal M}_{k+1,\ell}(X,L,J;\beta)$
consisting of those elements $((\Sigma,\vec z,\vec{\frak z}),u)$ whose domain $\Sigma$
contains only one irreducible disk component.
We put
\begin{equation}\label{stratumemb2}
\aligned
&{\mathcal M}^{\circ}_{k+1,\ell}(X,L,J;\beta)(\frak T) \\
&:=
\prod_{{\rm v} \in C_{0}^{{\rm int}}(\mathcal T)} {\mathcal M}^{\circ}_{k_{\rm v}+1,
\#l({\rm v})}(X,L,J;\beta({\rm v}))
{}_{\mathcal{EV}}\times \Delta^{\# C_{1}^{{\rm int}}(\mathcal T)},
\endaligned
\end{equation}
which is a subset of ${\mathcal M}_{k+1,\ell}(X,L,J;\beta)(\frak T)$.
We regard it as a subset of ${\mathcal M}_{k+1,\ell}(X,L,J;\beta)$
by Lemma \ref{lem3232}.
The next lemma is immediate from the construction.
\begin{lem}\label{lem333}
${\mathcal M}_{k+1,\ell}(X,L,J;\beta)$
is a {\bf disjoint} union of
${\mathcal M}^{\circ}_{k+1,\ell}(X,L,J;\beta)(\frak T)$ over various
$\frak T \in \mathcal G(k+1,\ell,\beta)$.
\end{lem}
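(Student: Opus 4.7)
The plan is to produce, for every element of $\mathcal M_{k+1,\ell}(X,L,J;\beta)$, a canonically associated \emph{combinatorial type} $\frak T \in \mathcal G(k+1,\ell,\beta)$, and then to identify the set of elements sharing a given combinatorial type with $\mathcal M^{\circ}_{k+1,\ell}(X,L,J;\beta)(\frak T)$. Since $\frak T$ is canonical, this immediately gives both assertions: the union is exhaustive and the pieces are disjoint.

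First I would construct the tree $\mathcal T$ from $((\Sigma,\vec z,\vec{\frak z}),u)$. Because $\Sigma$ is a genus $0$ bordered nodal curve with one boundary component, its normalization decomposes into disk components and sphere components, meeting at boundary and interior nodes; moreover every sphere component lies on a well defined \emph{home disk}, namely the unique disk component one reaches by walking along the interior-node tree of sphere bubbles. Take $C_{0}^{\rm int}(\mathcal T)$ to be the set of disk components, and take $C_{0}^{\rm ext}(\mathcal T)$ to be $\{z_0,\dots,z_k\}$. Declare ${\rm v}_i \in C_{0}^{\rm ext}(\mathcal T)$ to be joined by an edge to the disk component containing $z_i$, and join two interior vertices by an edge for every boundary node between the corresponding disk components. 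The root is the exterior vertex for $z_0$. The counter-clockwise orientation of the boundary of each disk component induces a cyclic order of the edges at the corresponding interior vertex, giving the ribbon structure. Finally, for each interior vertex ${\rm v}$ with disk component $D_{\rm v}$, let $\beta({\rm v})$ be the sum of $u_*[D_{\rm v},\partial D_{\rm v}]$ and the homology classes of all sphere bubbles whose home disk is $D_{\rm v}$, and let $l({\rm v}) \subset \{1,\dots,\ell\}$ index those interior marked points lying on $D_{\rm v}$ or on its sphere bubbles.

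Next I would verify that $\frak T = (\mathcal T,\beta(\cdot),l(\cdot)) \in \mathcal G(k+1,\ell,\beta)$. Connectedness of $\mathcal T$ is equivalent to connectedness of $\Sigma$; the genus $0$ and single boundary component ensure $\mathcal T$ is actually a tree. Conditions (I) and (II) of Definition \ref{defn192}, together with the bijection $C_{0}^{\rm ext}(\mathcal T) \leftrightarrow \vec z$ and $\sum l({\rm v}) = \{1,\dots,\ell\}$, are immediate from the construction. The valency of ${\rm v}$ equals $k_{\rm v}+1$, the number of special boundary points on $D_{\rm v}$. Stability (Definition \ref{defn274} (3)) then follows directly from stability of $((\Sigma,\vec z,\vec{\frak z}),u)$ in the sense of Definition \ref{stabilitydefn27}: if $D_{\rm v}$ carries no marked or nodal point datum making it stable on its own, it must have a sphere bubble of positive energy attached, contributing to $\beta({\rm v})$, so $E(\beta({\rm v})) > 0$. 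Independence of $\frak T$ from the choice of representative is clear because any biholomorphism respecting marked points permutes irreducible components in a way that preserves the labels of exterior vertices (and hence, by the tree structure, is the identity on $\mathcal T$).

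Finally, I would verify that an element lies in $\mathcal M^{\circ}_{k+1,\ell}(X,L,J;\beta)(\frak T)$ precisely when its combinatorial type equals $\frak T$. Given the construction above, restrict $u$ to the union of $D_{\rm v}$ and its sphere bubbles to obtain $({\bf p}_{\rm v})_{{\rm v}\in C_0^{\rm int}(\mathcal T)}$ with each ${\bf p}_{\rm v} \in \mathcal M^{\circ}_{k_{\rm v}+1,\#l({\rm v})}(X,L,J;\beta({\rm v}))$; the matching condition \eqref{equation3230} holds tautologically because the glued boundary node is mapped to a single point of $L$. Conversely, the gluing map of Lemma \ref{lem3232} sends $\mathcal M^{\circ}_{k+1,\ell}(X,L,J;\beta)(\frak T)$ into the stratum of elements of combinatorial type exactly $\frak T$ (not $\frak T'$ with more bubbling), by the requirement that each factor lies in $\mathcal M^{\circ}$. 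Since the combinatorial type assignment is single-valued, distinct $\frak T$ give disjoint subsets, completing the proof. The only delicate point is the canonical matching between disk components and interior vertices across the equivalence relation on stable maps, and this is handled by the rigidity of the root label together with the ribbon structure.
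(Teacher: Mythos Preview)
Your proof is correct and is precisely the elaboration of what the paper declares ``immediate from construction'': you extract the dual tree of disk components, decorate it with the homology classes and the distribution of interior marked points, check it lands in $\mathcal G(k+1,\ell,\beta)$, and invert the gluing map of Lemma~\ref{lem3232} to identify the stratum. The paper gives no further argument, so there is nothing to compare.
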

Let $(\mathcal T,\beta(\cdot),l(\cdot)) \in \mathcal G(k+1,\ell,\beta)$
and $\rm e$  its interior edge.
We define $(\mathcal T',\beta'(\cdot),l'(\cdot)) \in \mathcal G(k+1,\ell,\beta)$
as follows:
\begin{enumerate}
\item[(a)]
$\mathcal T'$ is obtained by contracting ${\rm e}$ to a point in $\mathcal T$.
\item[(b)]
We have $\beta'({\rm v}) = \beta({\rm v})$ if ${\rm v}$ is not a vertex
corresponding to ${\rm e}$, and $\beta'({\rm v}) = \beta({\rm v}_1) + \beta({\rm v}_2)$
if ${\rm v}$ is the vertex corresponding to the contracted edge ${\rm e}$
with $\partial{\rm e} = \{{\rm v}_1, {\rm v}_2\}$.
\item [(c)] We set $l'({\rm v}) = l({\rm v})$ if ${\rm v} \in C_0^{\rm int}(\mathcal T)$ is neither ${\rm v}_1$ nor ${\rm v}_2$
(vertices of ${\rm e}$).
(Note such ${\rm v}$ can be regarded as an  interior vertex of $\mathcal T'$.)
We also put  $l'({\rm v}) = l({\rm v}_1) \cup l({\rm v}_2)$
if ${\rm v}$ is the new vertex obtained by collapsing ${\rm e}$.
\end{enumerate}
We say $(\mathcal T',\beta'(\cdot),l'(\cdot))$
is obtained from $(\mathcal T,\beta(\cdot),l(\cdot))$ by an {\it edge
contraction}.
We say $\frak T' \ge \frak T$ if  $\frak T'$ is obtained from
$\frak T$ by finitely many times of edge contractions.
(The case $\frak T = \frak T'$ is included.)
\par
The next lemma is obvious from definition.

\begin{lem}\label{lema3434}
Suppose ${\mathcal M}_{k+1,\ell}(X,L,J;\beta)(\frak T)$ is nonempty. Then
$\frak T \le \frak T'$ if and only if
${\mathcal M}_{k+1,\ell}(X,L,J;\beta)(\frak T) \subseteq {\mathcal M}_{k+1,\ell}(X,L,J;\beta)(\frak T')$.
\end{lem}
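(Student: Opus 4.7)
I would prove the two implications separately, relying on the disjoint stratification from Lemma \ref{lem333} and the fiber-product description in Definition \ref{defn3131}. The forward direction will be an inductive unpacking of the gluing construction, while the reverse direction amounts to reading off the combinatorial type of a carefully chosen representative.

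For the direction $\frak T \le \frak T' \Rightarrow {\mathcal M}(\frak T) \subseteq {\mathcal M}(\frak T')$, I would induct on $\# C_{1}^{{\rm int}}(\mathcal T) - \# C_{1}^{{\rm int}}(\mathcal T')$, reducing to the case in which $\frak T'$ is obtained from $\frak T$ by contracting a single interior edge ${\rm e}$ whose endpoints ${\rm v}_1, {\rm v}_2 \in C_{0}^{{\rm int}}(\mathcal T)$ merge into one vertex ${\rm v} \in C_{0}^{{\rm int}}(\mathcal T')$ with $\beta({\rm v}) = \beta({\rm v}_1) + \beta({\rm v}_2)$ and $l({\rm v}) = l({\rm v}_1) \cup l({\rm v}_2)$. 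The same gluing construction used in Lemma \ref{lem3232} then realizes the local fiber product
$$
{\mathcal M}_{k_{{\rm v}_1}+1,\#l({\rm v}_1)}(X,L,J;\beta({\rm v}_1)) \,{}_{{\rm ev}_0}\!\times_{{\rm ev}_{k_{\rm e}}} {\mathcal M}_{k_{{\rm v}_2}+1,\#l({\rm v}_2)}(X,L,J;\beta({\rm v}_2))
$$
as the closed subspace of ${\mathcal M}_{k_{\rm v}+1,\#l({\rm v})}(X,L,J;\beta({\rm v}))$ consisting of stable maps whose domain has a node at the prescribed marked point. Imposing the remaining diagonal conditions, one per edge of $C_{1}^{{\rm int}}(\mathcal T) \setminus \{{\rm e}\} = C_{1}^{{\rm int}}(\mathcal T')$, and identifying the remaining vertex factors (which are common to $\mathcal T$ and $\mathcal T'$) upgrades this to the desired inclusion ${\mathcal M}(\frak T) \subseteq {\mathcal M}(\frak T')$.

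For the reverse direction, suppose ${\mathcal M}(\frak T)$ is nonempty and contained in ${\mathcal M}(\frak T')$. Given $p \in {\mathcal M}(\frak T)$, Lemma \ref{lem333} produces a unique $\frak T_0 \in \mathcal G(k+1,\ell,\beta)$ with $p \in {\mathcal M}^{\circ}(\frak T_0)$; this $\frak T_0$ records the dual graph of $p$. The inclusion $p \in {\mathcal M}(\frak T)$ corresponds, via Definition \ref{defn3131}, to a uniquely determined edge contraction $\frak T_0 \to \frak T$ that groups the irreducible disk components of $p$ into the interior vertices of $\mathcal T$, witnessing $\frak T_0 \le \frak T$. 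Likewise $p \in {\mathcal M}(\frak T')$ yields a canonical edge contraction $\frak T_0 \to \frak T'$. To promote this to $\frak T \le \frak T'$, I would choose $p$ in the top stratum ${\mathcal M}^{\circ}(\frak T) \subset {\mathcal M}(\frak T)$, so that $\frak T_0 = \frak T$ and the second contraction becomes the sought edge contraction $\frak T \to \frak T'$.

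The main obstacle is justifying the choice $p \in {\mathcal M}^{\circ}(\frak T)$ from the assumption ${\mathcal M}(\frak T) \ne \emptyset$, since a priori every element of ${\mathcal M}(\frak T)$ could have a strictly more degenerate dual graph. I plan to settle this by a vertex-by-vertex argument: the nonemptiness of the fiber product forces each factor ${\mathcal M}_{k_{\rm v}+1,\#l({\rm v})}(X,L,J;\beta({\rm v}))$ to be nonempty, and one then selects representatives in the respective open strata ${\mathcal M}^{\circ}_{k_{\rm v}+1,\#l({\rm v})}(X,L,J;\beta({\rm v}))$ compatible with the diagonal conditions along $C_{1}^{{\rm int}}(\mathcal T)$. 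Combined with the fact that the contraction $\frak T_0 \to \frak T$ between any two comparable marked decorated ribbon trees is uniquely determined by the rigid exterior vertex labeling and the partition $l(\cdot)$ of interior marked points, this supplies the element of ${\mathcal M}^{\circ}(\frak T)$ needed to close out the argument.
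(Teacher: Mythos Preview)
Your forward implication is correct and is exactly the unwinding the paper has in mind when it declares the lemma ``obvious from definition'' (no further argument is given there).

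For the reverse implication you have correctly located the crux --- producing a point of ${\mathcal M}^{\circ}(\frak T)$ from the hypothesis ${\mathcal M}(\frak T)\ne\emptyset$ --- and correctly flagged it as the obstacle, but the vertex-by-vertex fix you sketch does not close it. Two genuine issues remain. First, nonemptiness of a factor ${\mathcal M}_{k_{\rm v}+1,\#l({\rm v})}(X,L,J;\beta({\rm v}))$ does not, for an arbitrary tame $J$, imply nonemptiness of its open stratum ${\mathcal M}^{\circ}_{k_{\rm v}+1,\#l({\rm v})}(X,L,J;\beta({\rm v}))$: smoothing a boundary node of a $J$-holomorphic configuration is an analytic gluing statement requiring transversality, and without it every holomorphic representative in a given class may well be nodal. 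Second, even granting nonempty open strata at each vertex, you still have to select elements satisfying all the diagonal constraints $\mathrm{ev}_0=\mathrm{ev}_{k_{\rm e}}$ simultaneously, which is a separate unaddressed condition on a fiber product. Once one allows ${\mathcal M}^{\circ}(\frak T)=\emptyset$, it is easy to envisage incomparable $\frak T,\frak T'$ for which every point of ${\mathcal M}(\frak T)$ has combinatorial type some common refinement $\frak T_0\le\frak T'$, so that ${\mathcal M}(\frak T)\subseteq{\mathcal M}(\frak T')$ holds without $\frak T\le\frak T'$; the reverse implication as literally written thus appears to need the stronger hypothesis ${\mathcal M}^{\circ}(\frak T)\ne\emptyset$. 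Note that the paper's only use of this lemma (in the proof of Lemma~\ref{lem52}) is the weaker pointwise fact ``$p\in{\mathcal M}^{\circ}(\frak T')$ and $p\in{\mathcal M}(\frak T)$ imply $\frak T'\le\frak T$'', which \emph{is} immediate from the definitions and which your argument does establish.
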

We also remark the following.
\begin{lem}
Let ${\bf p} \in {\mathcal M}^{\circ}_{k+1,\ell}(X,L,J;\beta)(\frak T)
\subset {\mathcal M}_{k+1,\ell}(X,L,J;\beta)$.
Then ${\bf p}$ is a point in the codimension $m$ corner
with respect to the Kuranishi structure of ${\mathcal M}_{k+1,\ell}(X,L,J;\beta)$
if and only if $\frak T$ has at least $m$ interior edges.
\end{lem}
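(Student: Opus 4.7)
The plan is to show that the corner codimension at $\bf p$ in the Kuranishi structure equals the number of boundary nodes of $\Sigma_{\bf p}$, and that for ${\bf p} \in {\mathcal M}^{\circ}_{k+1,\ell}(X,L,J;\beta)(\frak T)$ this number is exactly $\# C_{1}^{{\rm int}}(\mathcal T)$. Once both facts are in hand, membership of ${\bf p}$ in the codimension $m$ corner $\widehat{S}_m({\mathcal M}_{k+1,\ell}(X,L,J;\beta))$ is equivalent to $\# C_{1}^{{\rm int}}(\mathcal T) \ge m$, which is the claim.

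First I would invoke the local model of the Kuranishi chart at ${\bf p}$ produced from obstruction bundle data by \cite[Theorem 7.1]{diskconst1} (and the gluing analysis of \cite[Subsection 7.1]{fooobook2}). That chart is built from three kinds of parameters: deformations of the map on each irreducible component, complex smoothing parameters at each interior (sphere) node, and real nonnegative smoothing parameters at each boundary node. Only the boundary-node parameters contribute corner directions: each boundary node supplies a factor $[0,\epsilon)$, the value $0$ recovering the unsmoothed configuration, whereas each sphere node contributes an open disk in $\C$. Hence the chart around ${\bf p}$ is locally of the form $[0,\epsilon)^{N_{\partial}({\bf p})} \times V$, where $N_{\partial}({\bf p})$ denotes the number of boundary nodes of $\Sigma_{\bf p}$ and $V$ is an open subset of some Euclidean space.

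Second I would identify $N_{\partial}({\bf p})$ combinatorially. Because ${\bf p}$ lies in ${\mathcal M}^{\circ}_{k+1,\ell}(X,L,J;\beta)(\frak T)$, each interior vertex ${\rm v}$ of $\mathcal T$ corresponds to a \emph{unique} irreducible disk component of $\Sigma_{\bf p}$ (any sphere bubbles being absorbed into the data attached to that vertex). The gluing recipe used in the proof of Lemma \ref{lem3232} then identifies the $0$-th boundary marked point of $\Sigma_{s({\rm e})}$ with the $k_{\rm e}$-th boundary marked point of $\Sigma_{t({\rm e})}$ for each ${\rm e} \in C_{1}^{{\rm int}}(\mathcal T)$, producing a bijection between interior edges of $\mathcal T$ and boundary nodes of $\Sigma_{\bf p}$. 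Thus $N_{\partial}({\bf p}) = \# C_{1}^{{\rm int}}(\mathcal T)$, and combining with the previous step yields the stated equivalence via \cite[Definition 24.17]{foootech21}.

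The one point requiring care is the verification that the canonical chart produced in \cite[Theorem 7.1]{diskconst1} genuinely realizes this expected corner shape, i.e., that neither the sphere bubbles within a fixed disk component nor the obstruction bundle sections $E_{\bf p}({\bf x})$ spuriously generate extra corner directions or collapse some of those coming from boundary nodes. Both are routine: sphere-node smoothings are parametrized by an open disk in $\C$ and hence manifestly yield no corner, while the support condition in Definition \ref{defn51}(1), that each element of $E_{\bf p}({\bf x})$ vanishes near nodal points and the boundary, guarantees that $E_{\bf p}$ does not interact with the boundary-gluing parameters. I do not anticipate any substantive obstacle beyond making these compatibility checks precise.
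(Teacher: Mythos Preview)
Your proposal is correct and follows essentially the same approach as the paper: both arguments identify the Kuranishi chart at ${\bf p}$ as a product of $[0,1)^{\#C_1^{\rm int}(\mathcal T)}$ (one real nonnegative gluing parameter per boundary node, equivalently per interior edge of $\mathcal T$) with a factor having no boundary, and conclude that the corner codimension is exactly $\#C_1^{\rm int}(\mathcal T)$. The paper's proof is terser, simply asserting that the Kuranishi neighborhood of ${\bf p}$ is the fiber product of the boundaryless Kuranishi neighborhoods of the ${\bf p}_{\rm v}$ times $[0,1)^{\#C_1^{\rm int}(\mathcal T)}$, whereas you spell out more explicitly why sphere nodes and the obstruction bundle do not alter the corner structure; the substance is the same.
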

\begin{proof}
Let ${\bf p} = ((\Sigma,\vec z,\vec{\frak z}),u)
= ({\bf p}_{{\rm v}})_{{\rm v} \in C_{0}^{{\rm int}}(\mathcal T)}$.
By construction, the Kuranishi neighborhoods of ${\bf p}$ is diffeomorphic to the fiber product of
the Kuranishi neighborhoods of  various ${\bf p}_{{\rm v}}$ in
${\mathcal M}^{\circ}_{k_{\rm v}+1,
\#l({\rm v})}(X,L,J;\beta({\rm v}))$ times
$[0,1)^{\# C_{1}^{{\rm int}}(\mathcal T)}$.
The Kuranishi neighborhood of ${\bf p}_{{\rm v}}$ has no boundary.
Therefore ${\bf p}$ is in the codimension $\# C_{1}^{{\rm int}}(\mathcal T)$ corner.
\end{proof}
We next discuss the `ambient set'
${\mathcal X}_{k+1,\ell}(X,L;\beta)$.
We first remark that the evaluation map ${\rm ev}_i$ ($i=0,\dots,k$)
on ${\mathcal M}_{k+1,\ell}(X,L,J;\beta)$
at the $i$-th boundary point extends to a map
$$
{\rm ev}_i : {\mathcal X}_{k+1,\ell}(X,L;\beta)
\to L
$$
in an obvious way.
\begin{defn}\label{defn3131rev}
We put
$$
\aligned
&\prod_{(\mathcal T,\beta(\cdot),l(\cdot))} {\mathcal X}_{k_{\rm v}+1,
\#l({\rm v})}(X,L;\beta({\rm v})) \\
&=
\prod_{{\rm v} \in C_{0}^{{\rm int}}(\mathcal T)} {\mathcal X}_{k_{\rm v}+1,
\#l({\rm v})}(X,L;\beta({\rm v}))
{}_{\mathcal{EV}}\times \Delta^{\# C_{1}^{{\rm int}}(\mathcal T)}.
\endaligned
$$
In other words, it is the set of all elements
$\vec{\bf x} = ({\bf x}_{{\rm v}})_{{\rm v} \in C_{0}^{{\rm int}}}$ of
the direct product
\begin{equation}
\prod_{{\rm v} \in C_{0}^{{\rm int}}(\mathcal T)} {\mathcal X}_{k_{\rm v}+1,
\#l({\rm v})}(X,L;\beta({\rm v}))
\end{equation}
satisfying
\begin{equation}\label{equation323}
{\rm ev}_{0}({\bf x}_{s({\rm e})}) = {\rm ev}_{k_{\rm e}}({\bf x}_{t({\rm e})})
\end{equation}
for all ${\rm e} \in C_{1}^{{\rm int}}(\mathcal T)$.
To simplify the notation, we write
\begin{equation}
{\mathcal X}_{k+1,\ell}(X,L;\beta)(\frak T) :=
\prod_{(\mathcal T,\beta(\cdot),l(\cdot))} {\mathcal X}_{k_{\rm v}+1,
\#l({\rm v})}(X,L;\beta({\rm v}))
\end{equation}
with $\frak T = (\mathcal T,\beta(\cdot),l(\cdot))$.
\end{defn}
We again emphasize that the fiber product and etc. in the above definition are
taken in the category of sets.
\begin{lem}
There exists a (set theoretical) map
\begin{equation}\label{stratumembset}
{\mathcal X}_{k+1,\ell}(X,L;\beta)(\frak T) \to {\mathcal X}_{k+1,\ell}(X,L;\beta).
\end{equation}
which extends the map (\ref{stratumemb}).
The map (\ref{stratumembset}) is injective.
\end{lem}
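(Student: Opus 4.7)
The plan is to define the map by repeating verbatim the gluing recipe used in the proof of Lemma \ref{lem3232}, simply dropping the pseudo-holomorphicity assumption. Given $\vec{\bf p} = ({\bf p}_{\rm v})_{{\rm v} \in C_{0}^{{\rm int}}(\mathcal T)}$ with ${\bf p}_{\rm v} = ((\Sigma_{\rm v}, \vec z_{\rm v}, \vec{\frak z}_{\rm v}), u_{\rm v})$ satisfying the matching condition (\ref{equation323}), one glues the domains by identifying, for each interior edge ${\rm e} \in C_{1}^{{\rm int}}(\mathcal T)$, the $0$-th boundary marked point of $\Sigma_{s({\rm e})}$ with the $k_{\rm e}$-th boundary marked point of $\Sigma_{t({\rm e})}$; one then assigns the boundary marked points $\vec z$ using the $k+1$ exterior vertices of $\mathcal T$ in accordance with its ribbon structure, assigns interior marked points via $l(\cdot)$, and defines $u\vert_{\Sigma_{\rm v}} = u_{\rm v}$. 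The matching condition ensures continuity at each new node, and the irreducible components of $\Sigma$ are exactly those of the various $\Sigma_{\rm v}$, so the piecewise $C^2$ regularity of $u$ is inherited. Thus $((\Sigma,\vec z,\vec{\frak z}),u)$ represents an element of ${\mathcal X}_{k+1,\ell}(X,L;\beta)$. Since pseudo-holomorphicity is nowhere used in this construction, it applies to the ambient set $\mathcal X$ exactly as it does to $\mathcal M$, and by construction restricts to (\ref{stratumemb}) on the subset ${\mathcal M}_{k+1,\ell}(X,L,J;\beta)(\frak T)$, establishing the extension property.

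For injectivity, suppose $\vec{\bf p}$ and $\vec{\bf p}'$ map to the same isomorphism class in ${\mathcal X}_{k+1,\ell}(X,L;\beta)$, represented by some $((\Sigma,\vec z,\vec{\frak z}),u)$. Since the decorated tree $\frak T = (\mathcal T, \beta(\cdot), l(\cdot))$ is fixed, one can recover the decomposition of $\Sigma$ from its combinatorial structure: each interior edge ${\rm e}$ of $\mathcal T$ corresponds to a distinguished node of $\Sigma$ (the node created by the gluing at ${\rm e}$), characterized by the fact that cutting there separates $\Sigma$ into two pieces whose respective homology classes and marked-point distributions match the partition of $\mathcal T \setminus \{{\rm e}\}$ prescribed by $\beta(\cdot)$ and $l(\cdot)$. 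Normalizing $\Sigma$ at these selected nodes produces one connected bordered nodal curve per interior vertex of $\mathcal T$; the restriction of $u$ together with the canonical assignment of boundary marked points via the ribbon structure and interior marked points via $l(\cdot)$ recovers each ${\bf p}_{\rm v}$ as an element of ${\mathcal X}_{k_{\rm v}+1,\#l({\rm v})}(X,L;\beta({\rm v}))$. Any homeomorphism between two representatives of $((\Sigma,\vec z,\vec{\frak z}),u)$ must preserve these distinguished nodes, hence restricts to a family of equivalences on each factor, yielding $\vec{\bf p} = \vec{\bf p}'$.

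The main subtlety lies in the combinatorial identification of the gluing nodes: since each $\Sigma_{\rm v}$ may itself carry internal nodes (elements of ${\mathcal X}_{k_{\rm v}+1,\#l({\rm v})}$ are not required to have irreducible domain), merely enumerating the nodes of $\Sigma$ does not recover the decomposition. The decoration $\frak T$ provides precisely the rigidity needed: the homology classes $\beta({\rm v})$, the label sets $l({\rm v})$, and the ribbon structure at each vertex collectively pin down a unique partition of the nodes of $\Sigma$ into those in bijection with $C_{1}^{{\rm int}}(\mathcal T)$ and those internal to the pieces. Once this combinatorial step is handled, both the construction and the injectivity argument are routine.
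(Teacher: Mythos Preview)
Your construction of the map matches the paper's approach exactly: the paper simply states that the proof is the same as that of Lemma \ref{lem3232}, and your gluing recipe is precisely that proof with the pseudo-holomorphicity hypothesis dropped. The extension property is immediate.

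Your injectivity argument, however, has a genuine gap at precisely the place you flag as the ``main subtlety''. You assert that the decoration $\frak T$ pins down a unique partition of the boundary nodes of $\Sigma$ into gluing nodes versus internal ones, via the induced splitting of homology classes, boundary marked points, and label sets. In the ambient set ${\mathcal X}$ this characterization can fail. Take $\mathcal T$ with two interior vertices ${\rm v}_1,{\rm v}_2$ joined by a single interior edge, $\ell=0$, and suppose $\Sigma$ consists of three disk components $D,D',D''$ attached in a chain, with $z_0,z_1\in D$, the remaining $z_i\in D''$, $u_*[D]=\beta({\rm v}_1)$, $u_*[D'']=\beta({\rm v}_2)$, and $u_*[D']=0$. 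If $u\vert_{D'}$ is a non-constant $C^2$ map with trivial isotropy, then $D'$ (carrying only two boundary special points) is stable, so this configuration lies in ${\mathcal X}_{k+1,0}(X,L;\beta)$. Cutting at the $D$--$D'$ node and cutting at the $D'$--$D''$ node now produce the \emph{same} partition of marked points, homology classes, and labels; hence two distinct elements $\vec{\bf p}\ne\vec{\bf p}'$ of the fiber product with the same image. Your proposed invariant does not separate them.

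This obstruction is invisible in the pseudo-holomorphic setting of Lemma \ref{lem3232}: there $u_*[D']=0$ forces $u\vert_{D'}$ constant, making $D'$ unstable and excluding the configuration. The paper's one-line proof (``same as Lemma \ref{lem3232}'') does not indicate how this difference is to be handled, so you are not missing an argument the paper supplies; but your claimed resolution of the subtlety is not correct as written.
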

The proof is the same as the proof of Lemma \ref{lem3232}.
\begin{rem}
The equality
\begin{equation}
{\mathcal X}_{k+1,\ell}(X,L;\beta)
=
\bigcup_{\frak T \in \mathcal G(k+1,\ell,\beta)}
{\mathcal X}_{k+1,\ell}(X,L;\beta)(\frak T)
\end{equation}
does {\it not} hold. This is because we do not assume that
the restriction of the map to an unstable component has
positive energy
for an element of ${\mathcal X}_{k+1,\ell}(X,L;\beta)$.
In the case of pseudo-holomorphic map, the stability in the sense of
Definition \ref{stabilitydefn27} implies that the restriction of the map to an unstable component has
positive energy.
\end{rem}
\begin{rem}
In our situation of the disk, the group of automorphisms of an element $\frak T$
of $\mathcal T(k+1,\ell,\beta)$ is trivial.
This is the main reason why (\ref{stratumembset}) is injective.
In various other situations, for example, when we consider the
bordered curve of higher genus, the group of automorphisms
of  the graph preserving the additional data (describing the combinatorial
structure of the  object) can be nontrivial.
\end{rem}
The next lemma describes the relationship between the stratification, fiber product
and the partial topology.
\begin{lem}\label{lem393939}
Let ${\bf p} = ({\bf p}_{{\rm v}})_{{\rm v} \in C_{0}^{{\rm int}}(\mathcal T)}
\in {\mathcal M}_{k+1,\ell}(X,L,J;\beta)(\frak T)$.
Then for any $\epsilon > 0$ there exists $\epsilon' > 0$
with the following properties:
\begin{enumerate}
\item
We have an inclusion
$$
\prod_{(\mathcal T,\beta(\cdot),l(\cdot))} B_{\epsilon'}({\mathcal X},{\bf p}_{{\rm v}})
\subset
B_{\epsilon}({\mathcal X},{\bf p}).
$$
Here $B_{\epsilon'}({\mathcal X},{\bf p}_{{\rm v}})
\subset {\mathcal X}_{k_{\rm v}+1,
\#l({\rm v})}(X,L,J;\beta({\rm v}))$
is the $\epsilon'$-neighborhood of  ${\bf p}_{{\rm v}}$
and
$B_{\epsilon}({\mathcal X},{\bf p}) \subset {\mathcal X}_{k+1,\ell}(X,L;\beta)$
is the  $\epsilon$-neighborhood of  ${\bf p}$.
\item
We have an inclusion
$$
B_{\epsilon'}({\mathcal X},{\bf p})
\cap {\mathcal X}_{k+1,\ell}(X,L;\beta)(\frak T)
\subset
\prod_{(\mathcal T,\beta(\cdot),l(\cdot))} B_{\epsilon}({\mathcal X},{\bf p}_{{\rm v}}).
$$
\end{enumerate}
\end{lem}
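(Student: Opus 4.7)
The plan is to reduce both inclusions to the definition of the partial topology on the pair $({\mathcal X}_{k+1,\ell}(X,L;\beta),{\mathcal M}_{k+1,\ell}(X,L,J;\beta))$ from \cite[Definition 4.12 and Subsection 4.3]{diskconst1}. Recall that ${\bf x}$ is $\epsilon$-close to ${\bf p}$ if one can identify $\Sigma_{\bf p}$ away from $\epsilon$-neighborhoods of its nodes with a subsurface of $\Sigma_{\bf x}$, under which $u_{\bf x}$ and $u_{\bf p}$ differ by at most $\epsilon$ in $C^2$, and the complementary neck regions carry total energy less than $\epsilon$. The crucial observation is that because $\bf p$ and any ${\bf x}$ appearing in the stratum ${\mathcal X}_{k+1,\ell}(X,L;\beta)(\frak T)$ share the same combinatorial type $\frak T$, no interior node is smoothed. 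Hence the identifications witnessing $\epsilon$-closeness can be, and under the same-stratum hypothesis automatically are, made component-wise with respect to the decomposition indexed by $C_0^{\rm int}(\mathcal T)$.

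First I would prove (1). Given $\vec{\bf x} = ({\bf x}_{{\rm v}})$ with each ${\bf x}_{{\rm v}} \in B_{\epsilon'}({\mathcal X},{\bf p}_{{\rm v}})$ and the matching condition (\ref{equation323}) at every interior edge, the gluing procedure used in Lemma \ref{lem3232} assembles these into an element ${\bf x} \in {\mathcal X}_{k+1,\ell}(X,L;\beta)(\frak T) \subset {\mathcal X}_{k+1,\ell}(X,L;\beta)$. The per-component identifications of domain subsurfaces and the $C^2$-bounds between $u_{{\bf x}_{{\rm v}}}$ and $u_{{\bf p}_{{\rm v}}}$ provided by the hypothesis patch together (the evaluation matching (\ref{equation323}) ensuring continuity across each node corresponding to an interior edge) into a global identification and a global $C^2$-bound. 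Choosing $\epsilon'$ small enough compared to $\epsilon$ and the cardinality $\# C_0^{\rm int}(\mathcal T) + \# C_1^{\rm int}(\mathcal T)$, these certify ${\bf x} \in B_\epsilon({\mathcal X},{\bf p})$.

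For (2) I would argue in reverse: any ${\bf x} \in B_{\epsilon'}({\mathcal X},{\bf p})$ lying in the stratum indexed by $\frak T$ has domain $\Sigma_{\bf x}$ whose irreducible components are in canonical bijection with those of $\Sigma_{\bf p}$, and the identification of subsurfaces witnessing $\epsilon'$-closeness respects this decomposition since no node has been smoothed. Restricting the identification and the $C^2$-estimate to each component ${\rm v}$ produces ${\bf x}_{{\rm v}} \in B_\epsilon({\mathcal X},{\bf p}_{{\rm v}})$ once $\epsilon' \le \epsilon$. The hard part in both inclusions is bookkeeping rather than analysis: one must verify that the sizes of the node neighborhoods, the subsurface identifications, and the way the global tolerance is distributed among the components in \cite[Subsection 4.3]{diskconst1} are compatible with the per-component versions of the same data. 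Once this compatibility is spelled out, no gluing analysis enters, because we are working entirely inside the closed stratum where every interior edge of $\frak T$ is preserved and the partial topology restricts to the product partial topology on the fiber product (\ref{eq43}).
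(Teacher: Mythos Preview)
Your proposal identifies the right phenomenon but skips the step that actually makes the argument work. The neighborhoods $B_{\epsilon}({\mathcal X},{\bf p})$ and $B_{\epsilon'}({\mathcal X},{\bf p}_{\rm v})$ are not defined by a bare notion of ``identifying subsurfaces and comparing maps in $C^2$''; they are defined in \cite[Definition 4.12]{diskconst1} relative to fixed stabilization and trivialization data $\frak W_{\bf p}$ and $\frak W_{{\bf p}_{\rm v}}$ (additional interior marked points, analytic families of coordinates at the nodes, $C^{\infty}$ trivializations of the universal families, Riemannian metrics). There is no a priori reason for the data $\frak W_{\bf p}$ chosen for the glued object to restrict to, or be assembled from, the data $\frak W_{{\bf p}_{\rm v}}$ chosen for the pieces. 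Consequently the ``per-component identifications'' you want to patch in (1), and the restriction you perform in (2), need not be the identifications that the definition of $B_{\epsilon}$ actually uses. Your final paragraph concedes this (``one must verify that \dots\ are compatible with the per-component versions of the same data'') without saying how.

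The paper resolves this in one stroke: by \cite[Lemma 4.14]{diskconst1} the partial topology is independent, up to equivalence, of the choice of $\frak W_{\bf p}$. Hence one is free to \emph{choose} $\frak W_{\bf p}$ to be the data assembled from the $\frak W_{{\bf p}_{\rm v}}$ (take $\vec{\frak w}_{\bf p}=\bigcup_{\rm v}\vec{\frak w}_{{\bf p}_{\rm v}}$, inherit the analytic coordinates and trivializations on each irreducible component, and pick any analytic coordinates at the new boundary nodes). With this choice the two inclusions are immediate from the definition; no patching or bookkeeping is needed. If you want to keep your direct approach, you must invoke exactly this independence-of-choices lemma anyway, so it is cleaner to use it at the outset rather than to leave the compatibility verification as an unproved claim.
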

\begin{proof}
We recall that when we define the $\epsilon$-neighborhood
$B_{\epsilon}({\mathcal X},{\bf p})$ we choose and fix a
stabilization and trivialization data $\frak W_{\bf p}$
defined as in \cite[Definition 4.9]{diskconst1}.
(See \cite[Definition 4.12]{diskconst1}.)
However the partial topology is independent of such choices
up to equivalence by \cite[Lemma 4.14]{diskconst1}.
(See \cite[Definition 4.1]{diskconst1} for the definition of
equivalence of partial topology.)
In particular, it implies that the
validity of Lemma \ref{lem393939} is independent of the choice
of the stabilization and trivialization data $\frak W_{\bf p}$
of ${\bf p}$ and of $\frak W_{\bf p_{\rm v}}$
of ${\bf p_{\rm v}}$.
\par
We take the following choice. Recall that $\frak W_{\bf p}$ consists of the following data:
\begin{enumerate}
\item
The additional (interior) marked points $\vec{\frak w}_{\bf p}$.
\item
An analytic family of coordinates at each node of ${\bf p} \cup \vec{\frak w}_{\bf p}$.
\item
A $C^{\infty}$ trivialization of the universal family of the deformation
of the source curve of (each irreducible component) of ${\bf p} \cup \vec{\frak w}_{\bf p}$.
It is assumed to be compatible with the analytic family of coordinates at each node
in Item (2).
\item
A Riemannian metric on each irreducible component of $\Sigma_{\bf p}$.
\end{enumerate}
See \cite[Definition 4.9]{diskconst1}.
Suppose the choices of (1)-(4) are given for each ${\bf p}_{\rm v}$.
Then
we can define such a choice for  ${\bf p}$ as follows:
\begin{enumerate}
\item
$\vec{\frak w}_{\bf p} = \cup_{{\rm v} \in C_{0}^{{\rm int}}(\mathcal T)}\vec{\frak w}_{{\bf p}_{\rm v}}$,
where $\vec{\frak w}_{{\bf p}_{\rm v}}$ is the choice for ${\bf p}_{\rm v}$ we have taken.
\item
If a node of $\Sigma_{\bf p}$ is a node of $\Sigma_{\bf p_{\rm v}}$ then we take
the analytic family of coordinates at that node of $\Sigma_{\bf p_{\rm v}}$
which we fixed as a part of stabilization and trivialization data $\frak W_{\bf p_{\rm v}}$. If a node of $\Sigma_{\bf p}$ is not a node of any $\Sigma_{\bf p_{\rm v}}$
we take any analytic family of coordinates at that node.
\item
Any irreducible component of $\Sigma_{\bf p}$ is an irreducible component of
$\Sigma_{\bf p_{\rm v}}$ for some ${\rm v}$. The $C^{\infty}$ trivialization of the
universal family of the deformation of the source curve of this irreducible component
is the one we have taken as a part of stabilization and trivialization data $\frak W_{\bf p_{\rm v}}$.
\item
We can fix a Riemannian metric of each irreducible component of $\Sigma_{\bf p}$
using the Riemannian metric of irreducible components of $\Sigma_{\bf p_{\rm v}}$.
\end{enumerate}
When we take this choices of $\frak W_{\bf p}$
of ${\bf p}$ and of $\frak W_{\bf p_{\rm v}}$,
the conclusions (1)(2) of Lemma \ref{lem393939} are obvious from
the definition. (\cite[Definition 4.12]{diskconst1}.)
\end{proof}

\section{Disk-component-wise-ness of obstruction bundle data}
\label{subsec;obstbundledata}

In this section we use the discussion in the previous two sections to spell out the
condition we require for the obstruction bundle data so that the induced
Kuranishi structures satisfy the conclusions of Theorem \ref{therem273}
and of the $P=$ point case of Theorem \ref{therem274}.
\par
\begin{defn}\label{defn5151}
Suppose we are given obstruction bundle data $\{E_{\bf p}({\bf x})\}$
of the moduli space ${\mathcal M}_{k+1,\ell}(X,L,J;\beta)$ for each $\beta$.
We say that they consist of a {\it disk-component-wise system of obstruction bundle data}
if the following holds:
Let ${\bf p} \in {\mathcal M}_{k+1,\ell}(X,L,J;\beta)$ and
${\bf p} = ({\bf p}_{{\rm v}})_{{\rm v} \in C_{0}^{{\rm int}}(\mathcal T)} \in \mathcal M_{k+1,\ell}(X,L,J;\beta)(\frak T)$
with $\frak T =({\mathcal T},\beta(\cdot), l(\cdot))$.
Then for sufficiently small neighborhoods
$$
\mathscr U_{\bf p}
\subset {\mathcal X}_{k+1,\ell}(X,L;\beta), \quad
\mathscr U_{\bf p_{\rm v}}
\subset {\mathcal X}_{k_{\rm v}+1,l({\rm v})}(X,L;\beta_{\rm v})
$$
with
$$
\prod_{(\mathcal T,\beta(\cdot),l(\cdot))} \mathscr U_{{\bf p}_{\rm v}}
\subseteq
\mathscr U_{{\bf p}},
$$
(see Lemma \ref{lem393939})
the equality
\begin{equation}\label{form410}
E_{\bf p}({\bf x}) = \bigoplus_{{\rm v} \in C_{0}^{{\rm int}}(\mathcal T)}
E_{{\bf p}_{\rm v}}({\bf x}_{\rm v})
\end{equation}
holds, where ${\bf x} = ({\bf x}_{{\rm v}})_{{\rm v} \in C_{0}^{{\rm int}}(\mathcal T)}$
is an arbitrary element of $\prod_{(\mathcal T,\beta(\cdot),l(\cdot))} \mathscr U_{{\bf p}_{\rm v}}$.
\end{defn}
We elaborate on the equality (\ref{form410}) below.
Let
${\bf x}_{{\rm v}} = ((\Sigma_{{\bf x}_{{\rm v}}},\vec z_{{\bf x}_{{\rm v}}},\vec{\frak z}_{{\bf x}_{{\rm v}}}),
u_{{\bf x}_{{\rm v}}})$,
${\bf x} = ((\Sigma_{{\bf x}},\vec z_{{\bf x}},\vec{\frak z}_{{\bf x}}),
u_{{\bf x}})$.
Then
$$
E_{\bf p}({\bf x}) \subset C^{2}(\Sigma_{{\bf x}},u_{{\bf x}}^*TX \otimes \Lambda^{0,1}),
\quad
E_{\bf p_{\rm v}}({\bf x}_{{\rm v}}) \subset C^{2}(\Sigma_{{\bf x}_{{\rm v}}},u_{{\bf x}_{{\rm v}}}^*TX \otimes \Lambda^{0,1}).
$$
More precisely,
$C^{2}(\Sigma_{{\bf x}},u_{{\bf x}}^*TX \otimes \Lambda^{0,1})$ and
etc, is the direct sum of the spaces of $C^2$-sections of irreducible components.
Since the normalization of $\Sigma_{{\bf x}}$ is the disjoint union of the
normalizations of $\Sigma_{{\bf x}_{\rm v}}$ and the restriction of $u_{{\bf x}}$
to the irreducible components coincides with the restriction of some
$u_{{\bf x}_{{\rm v}}}$, we have
$$
C^{2}(\Sigma_{{\bf x}},u_{{\bf x}}^*TX \otimes \Lambda^{0,1})
=
\bigoplus_{{\rm v} \in C_{0}^{{\rm int}}(\mathcal T)}
C^{2}(\Sigma_{{\bf x}_{{\rm v}}},u_{{\bf x}_{{\rm v}}}^*TX \otimes \Lambda^{0,1}).
$$
Thus (\ref{form410}) makes sense.
\begin{rem}
The notion of disk-component-wiseness appeared
in \cite[Definition 4.2.2]{foootoric32}.
\end{rem}
Now the proofs of Theorem \ref{therem273}
and the $P=$ point case of Theorem \ref{therem274} are divided into the proofs of the next two theorems.
Theorem \ref{thm42} is proved in Section \ref{subsec;cornercompa},
and Theorem \ref{thm43} is proved in
Sections \ref{subsec;exi1} and \ref{subsec;exi2}.

\begin{thm}\label{thm42}
Let $\{E_{\bf p}({\bf x})\}$ be a disk-component-wise system of obstruction bundle data of $\{{\mathcal M}_{k+1,\ell}(X,L,J;\beta) \mid k,\ell,\beta\}$.
It induces a Kuranishi structure on ${\mathcal M}_{k+1,\ell}(X,L,J;\beta)$
for each $k,\ell,\beta$ by \cite[Theorem 7.1]{diskconst1}.
Then the system of obtained Kuranishi structures satisfies
Theorem \ref{therem274} (IX)(Compatibility at the boundary),
(X)(Corner compatibility isomorphism),
(XI)(Consistency of corner compatibility isomorphisms).
In other words,
the Kuranishi structures are compatible with boundary and corners.
\end{thm}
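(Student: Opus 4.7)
Fix ${\bf p} \in \mathcal{M}_{k+1,\ell}(X,L,J;\beta)$ lying in the open stratum ${\mathcal M}^{\circ}_{k+1,\ell}(X,L,J;\beta)(\frak T)$ for some marked tree $\frak T = (\mathcal T,\beta(\cdot),l(\cdot))$, so that ${\bf p} = ({\bf p}_{{\rm v}})_{{\rm v} \in C_{0}^{{\rm int}}(\mathcal T)}$. By the lemma preceding Section 4, ${\bf p}$ sits in the codimension-$m$ corner of the Kuranishi structure with $m = \# C_{1}^{{\rm int}}(\mathcal T)$. The Kuranishi chart at $\bf p$ constructed via (\ref{Knhd}) of \cite[Theorem 7.1]{diskconst1} arises from solving the $\bar\partial$-equation projected modulo $E_{\bf p}$, with deformation parameters consisting of map deformations, interior-node smoothing parameters, and (interior) gluing parameters $T_{\rm e} \in [0,\epsilon)$ for ${\rm e} \in C_1^{\rm int}(\mathcal T)$. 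My plan is to show that under the disk-component-wise assumption this chart is canonically identified with the fiber product of vertex-wise Kuranishi charts glued by the parameters $T_{\rm e}$, and then to repackage this identification as the isomorphisms required in (IX)--(XI).

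\textbf{Vertex-wise splitting via (\ref{form410}).} Choose stabilization-trivialization data $\frak W_{\bf p}$ as in the proof of Lemma \ref{lem393939}, inherited from the data $\frak W_{{\bf p}_{\rm v}}$ together with a choice of analytic coordinates at each interior-edge node. Lemma \ref{lem393939} shows that for ${\bf x} \in \mathcal X_{k+1,\ell}(X,L;\beta)(\frak T)$ near $\bf p$, the partial-topology neighborhoods agree, so the disk-component-wise identity
\begin{equation}
E_{\bf p}({\bf x}) = \bigoplus_{{\rm v} \in C_{0}^{{\rm int}}(\mathcal T)} E_{{\bf p}_{\rm v}}({\bf x}_{\rm v})
\end{equation}
decouples the projected $\bar\partial$-equation defining the Kuranishi map at $\bf p$ into one equation per vertex. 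At $T_{\rm e} = 0$ for all ${\rm e}$, the solution space is therefore the set-theoretic product of the vertex solution spaces subject only to the nodal matching conditions
$$
{\rm ev}_{0}({\bf x}_{s({\rm e})}) = {\rm ev}_{k_{\rm e}}({\bf x}_{t({\rm e})}),
$$
which is exactly the fiber product $\prod_{\frak T} V_{{\bf p}_{\rm v}}$ of Definition \ref{defn3131} (at the level of Kuranishi charts). For $T_{\rm e} > 0$ the gluing construction of \cite{diskconst1} pastes these vertex-wise solutions via the chosen analytic coordinates, and the direct-sum splitting of $E_{\bf p}$ guarantees that the obstruction term remains supported on the vertex components and does not interact with the gluing neck. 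Consequently the full Kuranishi chart at $\bf p$ is diffeomorphic, as a Kuranishi chart, to
$$
\Bigl(\prod_{\frak T} V_{{\bf p}_{\rm v}}\Bigr) \times [0,\epsilon)^{\# C_{1}^{{\rm int}}(\mathcal T)}
$$
with the obstruction bundle, Kuranishi map and evaluation maps being the corresponding products and fiber products.

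\textbf{Items (IX) and (X).} Passing to the normalized boundary, respectively normalized corner, of ${\mathcal M}_{k+1,\ell}(X,L,J;\beta)$ is local: it amounts to replacing $[0,\epsilon)^{\# C_{1}^{{\rm int}}(\mathcal T)}$ by its codimension-$m$ corner. Summing over $\frak T \in \mathcal G(k+1,\ell,\beta)$ (Lemma \ref{lem333}) and using the identification above yields the isomorphism (\ref{cornercompiso}), hence (X). The codimension-one case, where $\mathcal T$ has a single interior edge, gives the two-vertex fiber-product decomposition (\ref{formula1660rev}) of (IX); compatibility with evaluation maps follows since the identification above respects $\mathcal{EV}$ by construction, and the sign discussion reduces, via the standard gluing orientation, to \cite[(21.15),(21.16)]{foootech21}.

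\textbf{Item (XI) and the main obstacle.} Item (XI) is combinatorial once (X) is established: iterating $\widehat S_{m_2} \circ \widehat S_{m_1}$ corresponds to choosing an ordered pair of disjoint subsets of $C_1^{\rm int}(\mathcal T)$ of sizes $m_1$ and $m_2$, while $\widehat S_{m_1+m_2}$ chooses an unordered subset of size $m_1 + m_2$; accounting for the remaining $m'$ edges yields the factor $(m'+m_1+m_2)!/(m'! m_1! m_2!)$, and $\pi_{m_2,m_1}$ is the identity on each piece since the local product decomposition $[0,\epsilon)^{\# C_1^{\rm int}(\mathcal T)} \times \prod V_{{\bf p}_{\rm v}}$ does not see the ordering of the gluing parameters. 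The main obstacle lies not in any single such identification but in the \emph{global} coherence: one must verify that the germ of Kuranishi chart at $\bf p$ furnished by the disk-component-wise data agrees, when restricted to a neighborhood of a more-degenerate point ${\bf p}' \in \mathcal M(\frak T')$ with $\frak T' \ge \frak T$ (Lemma \ref{lema3434}), with the iterated fiber-product chart constructed at $\bf p'$. This is precisely the point where (\ref{form410}) is essential: it ensures that gluing at $\bf p$ and gluing at $\bf p'$ use \emph{the same} obstruction space summand on each vertex component, so that the coordinate-change diffeomorphisms between nearby Kuranishi charts compose to the identity on the product factors and to the standard gluing profile on the $T_{\rm e}$ factors. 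Verifying this compatibility up to the germ equivalence of Kuranishi structures (cf.\ \cite[Theorem 7.1]{diskconst1}) is the technical heart of the proof.
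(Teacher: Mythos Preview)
Your proposal is correct and follows essentially the same route as the paper: use the disk-component-wise identity \eqref{form410} together with \eqref{form51} to get the set-theoretic identification $U_{\bf p}\cap\mathcal X_{k+1,\ell}(X,L;\beta)(\frak T)=\prod_{\frak T}U_{{\bf p}_{\rm v}}$, then invoke the product structure $U_{\bf p}\cong[0,1)^{m'}\times\overline U_{\bf p}$ (coming from the construction in \cite{diskconst1}, with smoothness via \cite[Theorem 6.4]{foooexp}) to identify each stratum with a component of the normalized corner; (IX) is the codimension-one case and (XI) is the combinatorics of the product.

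Two small points of precision are worth noting. First, \eqref{form410} is only stated for ${\bf x}$ in the stratum (all $T_{\rm e}=0$); your sentence about $T_{\rm e}>0$ and the obstruction ``not interacting with the gluing neck'' is neither justified by \eqref{form410} nor needed---the product decomposition of $U_{\bf p}$ away from the corner is already part of the chart construction in \cite{diskconst1}, and the corner isomorphism only concerns the locus where the relevant $T_{\rm e}$ vanish. Second, you promote the coordinate-change compatibility between charts at ${\bf p}$ and at a more degenerate ${\bf p}'$ to ``the technical heart,'' whereas the paper treats it as routine: once one has the explicit description \eqref{Knhd} and the set-theoretic identification above, the coordinate changes are literally restrictions of the identity on the ambient set, and smoothness follows by the same argument as in \cite[Subsection 12.2]{diskconst1}.
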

\begin{thm}\label{thm43}
There exists a disk-component-wise system of obstruction bundle data
of $\{{\mathcal M}_{k+1,\ell}(X,L,J;\beta) \mid k,\ell,\beta\}$.
\end{thm}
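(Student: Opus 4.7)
I will proceed by induction on the energy $E(\beta)$. Since Gromov compactness ensures that only finitely many classes satisfy $E(\beta) \le E_0$ for any $E_0$, the values of $E(\beta)$ form a well-ordered discrete set and the induction is well-posed; at each inductive step all pairs $(k,\ell)$ are treated simultaneously. The base case is $\beta = 0$: the moduli space consists of constant maps, whose linearization is surjective, so setting $E_{\bf p}({\bf x}) \equiv 0$ satisfies all axioms of Definition \ref{defn51} and the disk-component-wise condition trivially.

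For the inductive step at $\beta$ with $E(\beta) > 0$, Lemma \ref{lem333} decomposes $\mathcal M_{k+1,\ell}(X,L,J;\beta)$ into strata $\mathcal M^\circ_{k+1,\ell}(X,L,J;\beta)(\frak T)$ indexed by $\frak T \in \mathcal G(k+1,\ell,\beta)$. When $\frak T$ has at least one interior edge, every vertex class $\beta({\rm v})$ has $E(\beta({\rm v})) < E(\beta)$ (or is zero, in which case the stability condition forces non-trivial valency or interior marked points and the inductively constructed data is zero). The inductive hypothesis therefore provides $E_{{\bf p}_{\rm v}}({\bf x}_{\rm v})$, and the disk-component-wise compatibility forces
$$
E_{\bf p}({\bf x}) := \bigoplus_{{\rm v} \in C_0^{\rm int}(\mathcal T)} E_{{\bf p}_{\rm v}}({\bf x}_{\rm v})
$$
on the product neighborhood $\prod_{\frak T}\mathscr U_{{\bf p}_{\rm v}}$, which by Lemma \ref{lem393939} sits inside a partial-topology neighborhood $\mathscr U_{\bf p}$ of ${\bf p}$. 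The five axioms of Definition \ref{defn51} pull back from the factors.

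It remains to extend the definition to the top stratum $\mathcal M^\circ_{k+1,\ell}(X,L,J;\beta)$ of single-disk configurations, where the disk-component-wise condition is vacuous (the tree has one vertex) and the only constraint is compatibility with the already-defined data on the boundary via semi-continuity. I take a finite partial-topology cover of the (compact) boundary by neighborhoods of finitely many boundary points ${\bf p}_\infty$; on the intersection of the top stratum with each such neighborhood, I take $E_{\bf p}({\bf x})$ to be the transferred boundary direct sum $\bigoplus E_{{\bf p}_{\infty,{\rm v}}}({\bf x}_{\rm v})$. This transfer is canonical because obstruction-space sections are supported away from the nodes of $\Sigma_{{\bf p}_\infty}$ (Definition \ref{defn51}(1)), so they pull back unambiguously through any smoothing to $\Sigma_{\bf x}$. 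On the complement, a compact subset of the top stratum bounded away from the boundary, I invoke \cite[Theorem 11.2]{diskconst1} to add further sections ensuring transversality, matching the boundary-derived definition on the overlap. Finite-group averaging over the extended automorphism group of each ${\bf p}$ restores axiom \ref{defn51}(5).

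The main obstacle is arranging semi-continuity of $E_{\bf p}({\bf x})$ at the boundary strata; this is precisely why the construction is organized as above. By defining the top-stratum obstruction space to coincide with (or be compatibly related to) the boundary direct sum throughout a fixed partial-topology neighborhood of the boundary, semi-continuity holds by construction. Transversality in this boundary neighborhood is inherited from the transversality of each $E_{{\bf p}_{\infty,{\rm v}}}$ by a standard gluing estimate, namely that a direct sum of obstruction spaces transversal on individual factors remains transversal after gluing once the neck parameters are sufficiently small. Smoothness of the family $\{E_{\bf p}({\bf x})\}$ is inherited from the smooth dependence built into the parallel-transport construction of \cite{diskconst1}, and the inductive step thus produces a disk-component-wise system for the classes of energy $E(\beta)$, completing the induction.
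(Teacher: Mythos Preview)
Your overall architecture---induct over strata, force the disk-component-wise formula on boundary strata, then extend to the top stratum---is the same as the paper's, but two genuine gaps prevent the argument from going through as written.

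\textbf{The induction is not well-founded.} Inducting on $E(\beta)$ alone is insufficient. A boundary stratum of $\mathcal M_{k+1,\ell}(X,L,J;\beta)$ can have a vertex ${\rm v}$ with $\beta({\rm v}) = \beta$ (the remaining vertices carrying $\beta = 0$); for that vertex the energy has not decreased, only $k_{\rm v}$ or $\ell_{\rm v}$ has. So defining the data for all $(k,\ell)$ ``simultaneously'' at a given energy level is circular: the boundary formula for $(k,\ell,\beta)$ already requires the data for $(k_{\rm v},\ell_{\rm v},\beta)$ with $k_{\rm v} < k$. The paper fixes this by inducting on the lexicographic order on $(\omega(\beta),\ell,k)$, which does strictly decrease on every nontrivial factor (Lemma \ref{lem72}).

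\textbf{The extension step is where the real content lies, and it is not addressed.} You write that near the boundary you set $E_{\bf p}({\bf x})$ equal to ``the transferred boundary direct sum $\bigoplus E_{{\bf p}_{\infty,{\rm v}}}({\bf x}_{\rm v})$'' for some nearby boundary point ${\bf p}_\infty$. Two problems arise. First, when ${\bf x}$ lies in the top stratum it has no decomposition into ${\bf x}_{\rm v}$, so $E_{{\bf p}_{\infty,{\rm v}}}({\bf x}_{\rm v})$ is not even defined; one must specify how to evaluate the factor obstruction data at a smoothed element, and this requires exactly the type I stabilization data and the embeddings $\widehat\Phi$ of Section \ref{subsec;exi1}. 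Second, and more seriously, the result depends on the choice of ${\bf p}_\infty$: if ${\bf p}$ lies in the overlap of two charts of your finite cover, the two transferred sums need not agree, and semi-continuity fails. The inductively defined $E_{{\bf p}_{\infty,{\rm v}}}$ is itself built from finitely many centers $\frak p$ in the factor moduli space, so the obstruction at ${\bf p}$ ultimately depends on an entire chain ${\bf p},{\bf q}_1,\dots,{\bf q}_n,\frak p$ of iterated degenerations (see Situation \ref{situ69} and Remark \ref{rem714}). The paper handles this by introducing \emph{quasi-components}---equivalence classes of such chains---and building $E_{\bf p}({\bf x})$ as a direct sum over a carefully chosen open, proper set of quasi-components (the maps $\mathscr F_{k+1,\ell,\beta}$ of Section \ref{subsec;exi2}), with Conditions \ref{cond717}--\ref{cond720} guaranteeing semi-continuity and transversality. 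Your sketch recognizes that something must be transferred from the boundary but does not supply the bookkeeping that makes the transfer consistent; that bookkeeping is the substance of the proof.
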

 $P=$ point case of Theorem \ref{therem274} follows from Theorems \ref{thm42} and \ref{thm43}.
Theorem  \ref{therem273} is its special case where $\ell = 0$.

\section{Disk-component-wise-ness implies corner compatibility condition}
\label{subsec;cornercompa}

In this section we prove Theorem \ref{thm42}.
We first fix a representative of the Kuranishi structure of
${\mathcal M}_{k+1,\ell}(X,L,J;\beta)$  for each
$(k,\ell,\beta)$.
Recall from \cite[Theorem 7.1 (2)]{diskconst1} that the germs of the
Kuranishi structures of $\{{\mathcal M}_{k+1,\ell}(X,L,J;\beta)\}$ are canonically
determined by the system of obstruction bundle data we start with.
A germ of Kuranishi structure is an equivalence class of the
set of Kuranishi structures. We fix its representative so that
the associated obstruction bundle data $(\{\mathscr U_{{\bf p}}\}, \{E_{{\bf p}}({\bf x})\})$
satisfy
\begin{equation}\label{form51}
\mathscr U_{{\bf p}}\cap {\mathcal X}_{k+1,\ell}(X,L;\beta)(\frak T)=
\prod_{(\mathcal T,\beta(\cdot),l(\cdot))} \mathscr U_{{\bf p}_{\rm v}}
\end{equation}
for ${\bf p}  = ({\bf p}_{{\rm v}})_{{\rm v} \in C_{0}^{{\rm int}}(\mathcal T)} \in \mathcal M_{k+1,\ell}(X,L,J;\beta)(\frak T)$.

\begin{rem}
Here the isomorphism in Theorem \ref{therem274} (IX)(Compatibility at the boundary), (X)(Corner compatibility isomorphism) and the coincidence of the maps
in Theorem \ref{therem274} (XI)(Consistency of corner compatibility isomorphisms)
are taken in the sense of germs of Kuranishi structures and maps between them.
So it suffices to choose representatives satisfying (\ref{form51}) for a fixed choice of $k,\ell,\beta$.
We can easily choose the representatives so that (\ref{form51})
holds for any finitely many choices of $k,\ell,\beta$.
Then the isomorphism in Theorem \ref{therem274} (IX)(Compatibility at the boundary), (X)(Corner compatibility isomorphism) and the coincidence of maps in Theorem \ref{therem274} (XI)(Consistency of corner compatibility isomorphisms)
hold exactly (not as germs).
It seems difficult to choose the representatives so that (\ref{form51}) holds for all
(infinitely many) $k,\ell,\beta$ simultaneously.
This does not matter when applying Theorem \ref{therem274} and
\cite[Theorem 21.35 (1)]{foootech21}, \cite[Theorem 21.35 (1)]{Springer} to prove Theorem \ref{therem274}.
This is because we use the `homotopy inductive limit' in the proof of
\cite[Theorem 21.35 (1)]{foootech21}, \cite[Theorem 21.35 (1)]{Springer}.
\end{rem}
\begin{proof}[Proof of Theorem \ref{thm42}]
We first show Theorem \ref{therem274} (X)(Corner compatibility isomorphism).
We recall that in \cite[Definition 7.2]{diskconst1} the Kuranishi neighborhood
$U_{\bf p}$ of ${\bf p} \in \mathcal M_{k+1,\ell}(X,L,J;\beta)$ is set-theoretically defined by
\begin{equation}\label{Knhd}
U_{\bf p} = \{ {\bf x} = [(\Sigma_{{\bf x}},\vec z_{{\bf x}},\vec{\frak z}_{{\bf x}}),
u_{{\bf x}}]  \in \mathscr U_{{\bf p}} \mid
\overline\partial u_{{\bf x}} \in E_{\bf p}({\bf x})\}.
\end{equation}
Let
${\bf p} = ({\bf p}_{{\rm v}})_{{\rm v} \in C_{0}^{{\rm int}}(\mathcal T)} \in \mathcal M_{k+1,\ell}(X,L,J;\beta)(\frak T)$
with $\frak T = ({\mathcal T},\beta(\cdot), l(\cdot))$ as in
Definition \ref{defn5151}.
Then we also have
$$
U_{{\bf p}_{\rm v}} = \{ {\bf x}_{\rm v} = [(\Sigma_{{\bf x}_{\rm v}},\vec z_{{\bf x}_{\rm v}},\vec{\frak z}_{{\bf x}_{\rm v}}),
u_{{\bf x}_{\rm v}}]  \in \mathscr U_{{\bf p}_{\rm v}} \mid
\overline\partial u_{{\bf x}_{\rm v}} \in E_{{\bf p}_{\rm v}}({\bf x}_{\rm v})\}.
$$
By the equality (\ref{form410}) the identification (\ref{form51}) induces a
set-theoretical bijection:
\begin{equation}\label{form52}
\prod_{(\mathcal T,\beta(\cdot),l(\cdot))} U_{{\bf p}_{\rm v}}
=
U_{{\bf p}}\cap {\mathcal X}_{k+1,\ell}(X,L;\beta)(\frak T).
\end{equation}
\par
We next discuss the relationship between the right hand side of (\ref{form52}) and the
{\it normalized} corner of $U_{{\bf p}}$.
By Lemma \ref{lem333} there exists a unique $\frak T'$ such that ${\bf p} \in {\mathcal M}^{\circ}_{k+1,\ell}(X,L,J;\beta)(\frak T')$.
Let $m'$ be the number of boundary nodes of ${\bf p}$. Then $m'$ coincides with the number of interior edges
of $\frak T'$.
Since ${\bf p} \in {\mathcal M}_{k+1,\ell}(X,L,J;\beta)(\frak T)$, Lemma \ref{lema3434} implies
$\frak T \ge \frak T'$.
\begin{lem}\label{lem52}
The codimension $m''$ normalized corner $\widehat S_{m''}U_{\bf p}$ of $U_{\bf p}$ is the disjoint union
$U_{{\bf p}}\cap {\mathcal X}_{k+1,
\ell}(X,L,J;\beta)(\frak T'')$ over $\frak T''$ such that
$\frak T'' \le \frak T'$ and $\frak T''$ has exactly $m''$ interior edges.
\end{lem}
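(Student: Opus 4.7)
The plan is to transport the combinatorial stratification of $\mathcal X$ to the explicit gluing chart for $U_{\bf p}$ from \cite[Theorem 7.1]{diskconst1}, and then read off the normalized corner directly from \cite[Definition 24.17]{foootech21}. Specifically, near $\bf p = ({\bf p}_{\rm v})_{{\rm v} \in C_0^{\rm int}(\mathcal T')}$ the Kuranishi neighborhood $U_{\bf p}$ is built by gluing: the $m'$ boundary nodes of $\Sigma_{\bf p}$ are in bijection with the interior edges of $\frak T'$, and to each ${\rm e} \in C_1^{\rm int}(\frak T')$ is attached a real smoothing parameter $\rho_{\rm e} \in [0,1)$; the remaining deformation data (complex smoothings of interior nodes, positions of auxiliary marked points, trivialization coordinates, and the parameters solving $\overline\partial u_{\bf x} \in E_{\bf p}({\bf x})$) sweep out an open set without boundary. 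The locus $\rho_{\rm e} = 0$ corresponds to keeping the corresponding boundary node un-smoothed.

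The key step is to match the set-theoretic strata with the $\rho$-strata. Given a subset $A \subseteq C_1^{\rm int}(\frak T')$, the locus in $U_{\bf p}$ where $\rho_{\rm e} = 0$ for ${\rm e} \in A$ and $\rho_{\rm e} > 0$ for ${\rm e} \notin A$ consists of configurations whose combinatorial type is obtained from $\frak T'$ by contracting precisely the edges outside $A$; denote this decorated ribbon tree by $\frak T'_A$. By construction $\frak T'_A \ge \frak T'$ and $\# C_1^{\rm int}(\frak T'_A) = \# A$. The assignment $A \mapsto \frak T'_A$ is a bijection from $m''$-element subsets of $C_1^{\rm int}(\frak T')$ onto the set of $\frak T'' \ge \frak T'$ with $m''$ interior edges: injectivity and surjectivity both follow from the rigidity of decorated rooted ribbon trees in the disk case (no nontrivial automorphisms of the decoration $\beta(\cdot)$, $l(\cdot)$ and ribbon structure), as recorded just before Lemma \ref{lem393939}. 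Via the identification (\ref{form51}), this locus coincides with $U_{\bf p} \cap \mathcal X_{k+1,\ell}(X,L;\beta)(\frak T'_A)$.

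To finish, apply the definition of normalized corner to the local model: in a space locally modeled on $[0,1)^{m'} \times V$ with $V$ boundaryless, $\widehat S_{m''}$ is the disjoint union, over $m''$-subsets $A$ of $\{1,\ldots,m'\}$, of the closed face $\{\rho_{\rm e} = 0 : {\rm e} \in A\}$, each face appearing as a distinct component of $\widehat S_{m''}U_{\bf p}$ labelled by $\frak T'_A$. Using the open-stratum identification of the previous paragraph and letting $A$ vary over all $m''$-subsets of $C_1^{\rm int}(\frak T')$, we obtain the claimed disjoint union indexed by $\frak T'' \ge \frak T'$ with $\#C_1^{\rm int}(\frak T'') = m''$.

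The main obstacle, once the gluing model is accepted, is essentially combinatorial bookkeeping: verifying that (i) the boundary-node smoothing parameters really sit inside the Kuranishi chart as independent $[0,1)$-coordinates (rather than being coupled with each other or with the interior-node parameters), which is built into the gluing construction of \cite[Theorem 7.1]{diskconst1} and reflected in (\ref{form51}); and (ii) the correspondence $A \mapsto \frak T'_A$ between subsets of interior edges of $\frak T'$ and decorated ribbon trees $\frak T'' \ge \frak T'$ is a clean bijection with no overcounting, which uses the triviality of ribbon-tree automorphism groups in the disk setting.
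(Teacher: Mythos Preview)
Your proposal is correct and follows essentially the same approach as the paper's proof: both use the local product model $U_{\bf p} \cong [0,1)^{m'} \times \overline U_{\bf p}$ with $\overline U_{\bf p}$ boundaryless, apply the definition of normalized corner to obtain a disjoint union over $m''$-element subsets of $C_1^{\rm int}(\frak T')$, and then identify these subsets bijectively with the $\frak T'' \ge \frak T'$ having exactly $m''$ interior edges. Your treatment is somewhat more detailed on the bijection $A \mapsto \frak T'_A$ and on the role of automorphism triviality (which the paper relegates to a footnote), but the argument is the same.
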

\begin{proof}
By construction
$U_{\bf p}$ is diffeomorphic to the product of $[0,1)^{m'}$ and an orbifold $\overline U_{\bf p}$ without boundary
or corner. The $[0,1)^{m'}$ factor parametrizes the way to smooth $m'$ boundary nodes.
So each of the $m'$ factors of $[0,1)^{m'}$ canonically corresponds to a boundary node.
By the definition of a normalized corner (see \cite[Definition 24.17]{foootech21}, 
\cite[Definition 24.18]{Springer}) $\widehat S_{m''}U_{\bf p}$
is identified with the {\it disjoint} union
$$
\bigcup_{I}
\left(\{(t_1,\dots,t_{m'}) \in [0,1)^{m'} \mid t_i = 0,\,\,\,\, \text{if $i \in I$}\} \times \overline U_{\bf p}
\right)
$$
where $I \subset \{1,\dots,m'\}$ runs over the set of subsets of order $m''$.\footnote
{We remark that the action of the group of (extended) isomorphisms of ${\bf p}$
on the $[0,1)^{m'}$ factor is trivial, because an extended automorphism is
the identity map on disk components.}
Therefore the connected components of $\widehat S_{m''}U_{\bf p}$ correspond one to one to an
order $m''$ subsets of the set of interior edges of $\frak T'$.
Such subsets correspond one to one to
those $\frak T''$ with $\frak T'' \le \frak T'$ that has $m''$ interior edges.
The lemma follows.
\end{proof}
By Lemma \ref{lem52} the identification (\ref{form52}) can be regarded as a
map
\begin{equation}\label{map5353}
\prod_{(\mathcal T'',\beta''(\cdot),l''(\cdot))} U_{{\bf p}_{\rm v}}
\to
\widehat S_{m''}U_{{\bf p}}
\end{equation}
which is a bijection to a connected component of the normalized corner.
Using the characterization of the smooth structures of $U_{{\bf p}_{\rm v}}$ and of
$U_{{\bf p}}$ (see \cite[Subsection 12.1]{diskconst1}),
the map (\ref{map5353}) is a diffeomorphism onto the
connected component.
Here we use \cite[Theorem 6.4]{foooexp}.
By (\ref{form410}) the map (\ref{map5353}) is covered by an isomorphism of
obstruction bundles.
The compatibility  of the diffeomorphism (\ref{map5353})
with the Kuranishi map ${\bf x} \mapsto s_{\bf p}({\bf x}) = \overline\partial u_{\bf x}
\in E_{{\bf p}}({\bf x})$, the parametrization map $s_{\bf p}^{-1}(0) \to {\mathcal M}_{k+1,\ell}(X,L,J;\beta)$ and the coordinate change
is obvious from the construction.
The proof of Theorem \ref{therem274} (X)(Corner compatibility isomorphism) is complete.
\par
Theorem \ref{therem274} (IX)(Compatibility of the boundary) is a special case of Theorem \ref{therem274} (X)(Corner compatibility isomorphism).
Theorem \ref{therem274} (XI)(Consistency of corner compatibility isomorphisms) is immediate from the description of
normalized corner we gave in the proof of Lemma \ref{lem52}.
The proof of Theorem \ref{thm42} is complete.
\end{proof}

\section{Existence of a disk-component-wise system of obstruction bundle data 1}
\label{subsec;exi1}

\subsection{The idea of the construction}
\label{subsec:idea}
In Sections \ref{subsec;exi1} and \ref{subsec;exi2} we prove Theorem \ref{thm43}.
We first explain the idea of the construction.
Let ${\bf q} \in \mathcal M^{\circ}_{k+1,\ell}(X,L,J;\beta)(\frak T')$
and ${\rm v}$ an interior vertex of $\frak T'$.
We denote by ${\bf q}_{\rm v} \in \mathcal M^{\circ}_{k_{\rm v}+1,\ell_{\rm v}}(X,L,J;\beta({\rm v}))$
the irreducible component of ${\bf q}$ corresponding to ${\rm v}$.
Let ${\bf x} \in {\mathcal X}_{k+1,\ell}(X,L;\beta)$ be an element 
close to ${\bf q}$.
The disk-component-wise-ness of the 
obstruction bundle data implies that $E_{{\bf q}}({\bf x})$
is a direct sum of the subspaces $E_{\rm v}({\bf x}) = E_{{\bf q}_{\rm v}}({\bf x})$, assigned to each 
${\rm v}$.  It is important that 
$E_{{\rm v}}({\bf x})$ is independent of ${\bf q}_{{\rm v}'}$ for ${\rm v}'\ne {\rm v}$.
To define 
$E_{{\rm v}}({\bf x})$ we take several elements 
$\frak p$ in $\mathcal M^{\circ}_{k_{\rm v}+1,\ell_{\rm v}}(X,L,J;\beta({\rm v}))$ close to ${\bf q}_{\rm v}$
and  $E_{{\rm v}}({\bf x})$ is a direct sum of 
$E_{\frak p}({\bf x})$ for those $\frak p$'s.
\par
Semi-continuity of the obstruction bundle data
implies the following.
If ${\bf p} \in \mathcal M_{k+1,\ell}(X,L,J;\beta)$ 
is sufficiently close to ${\bf q}$ then 
$E_{{\bf p}}({\bf x})$ is also 
decomposed into the sum of $E_{{\rm v}}({\bf x})$ with various ${\rm v}$.
Note that there may not exist an irreducible 
component of ${\bf p}$ corresponding to ${\rm v}$.
(See Definition \ref{defn3131} and Lemma \ref{lem3232}.)
For example in ${\bf p}$ the irreducible component
corresponding to ${\rm v}$ and that of ${\rm v}'$ may already 
be glued. Neverthless 
$E_{{\rm v}}({\bf x})$ should be independent of ${\bf q}_{{\rm v}'}$.
\par
We introduce the notion of `quasi-component' to handle such a 
situation.
{\it Roughly speaking} a quasi-component of ${\bf p}$ 
is an element $\frak p$ which is close to an irreducible component of ${\bf q}$ such that 
${\bf p}$ is sufficiently close to ${\bf q}$.
We then define $E_{{\bf p}}({\bf x})$ as a 
direct sum of the subspaces 
associated to quasi-components.
\par
We formulate this situation below.
Let $d$ be a metric on $\mathcal M_{k+1,\ell}(X,L,J;\beta)$.
\begin{shitu}\label{situ61}
Let ${\bf p} \in \mathcal M_{k+1,\ell}(X,L,J;\beta)$,
${\bf q} \in \mathcal M^{\circ}_{k+1,\ell}(X,L,J;\beta)$. 
We take
$\frak T,\frak T' = (\mathcal T,\beta(\cdot),l(\cdot)) \in \mathcal G(k+1,\ell,\beta)$
such that ${\bf p} \in \mathcal M^{\circ}_{k+1,\ell}(X,L,J;\beta)(\frak T)$
and ${\bf q} \in \mathcal M^{\circ}_{k+1,\ell}(X,L,J;\beta)(\frak T')$.
We remark that there exists $\epsilon({\bf q})>0$ depending on ${\bf q}$ 
but independent of ${\bf p}$ such that the following holds.
If
\begin{equation}\label{formnew71}
d({\bf p},{\bf q}) < \epsilon_1
\end{equation}
with $\epsilon_1 < \epsilon({\bf q})$
then $\frak T' \le \frak T$.
Let ${\rm v}$ be an interior vertex of $\frak T'$.
We denote by ${\bf q}_{\rm v} \in \mathcal M^{\circ}_{k_{\rm v}+1,\ell_{\rm v}}(X,L,J;\beta({\rm v}))$
 the irreducible component of ${\bf q}$ corresponding to ${\rm v}$.
Suppose
\begin{equation}\label{formnew72}
d({\bf q}_{\rm v},{\frak p}) < \epsilon_2
\end{equation}
for ${\frak p} \in
\mathcal M_{k_{\rm v}+1,\ell_{\rm v}}^{\circ}(X,L,J;\beta({\rm v}))$.
See Figure \ref{zu2}.
Here $\epsilon_2 > 0$ is a sufficiently small number depending on $\frak p$.
\par
Recall from the definition of
$\mathcal M^{\circ}_{k_{\rm v}+1,\ell_{\rm v}}(X,L,J;\beta({\rm v}))$ that
 $\frak p$ has only one disk component but may have sphere components.
We decompose
\begin{equation}\label{form61}
\Sigma_{\frak p} = \Sigma_{\frak p}^{{\rm d}} \cup \bigcup_{\frak v}
\Sigma_{\frak p,\frak v}^{\rm s}
\end{equation}
where $\Sigma_{\frak p}^{{\rm d}}$ is the disk component and
$\Sigma_{\frak p,\frak v}^{{\rm s}}$
is a sphere component.
$\diamond$
\begin{figure}[h]
\centering
\includegraphics[scale=0.5]{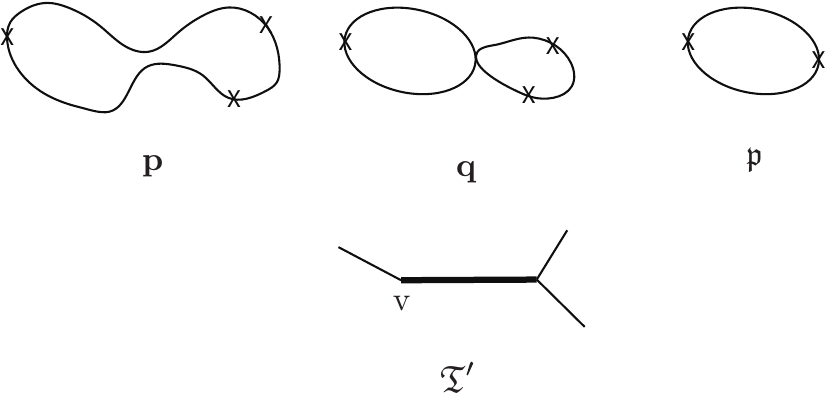}
\caption{${\bf p},{\bf q},{\frak p}$}
\label{zu2}
\end{figure}
\end{shitu}

\begin{conven}
We use specific letters/font in the notations of this paper.
\begin{enumerate}
\item[$\bullet$] ${\bf p, q} \in {\mathcal M}_{k+1,\ell}(X,L,J;\beta)$
for suitable $k,\ell,\beta$ depending on ${\bf p, q}$.
\item[$\bullet$] ${\bf x} \in {\mathcal X}_{k+1,\ell}(X,L;\beta)$, not necessarily
pseudo-holomorphic,
for suitable $k,\ell,\beta$ depending on ${\bf x}$.
\item[$\bullet$] $\frak p \in {\frak P}(k,\ell,\beta)$
for suitable $k,\ell,\beta$ depending on ${\frak p}$.
Here ${\frak P}(k,\ell,\beta)$ is a finite subset of ${\mathcal M}_{k+1,\ell}(X,L,J;\beta)$. See, for example, (ob1) in Section \ref{subsec;exi2}.
\end{enumerate}
\end{conven}
The study of Situation \ref{situ61} starts with
Situation \ref{situ64} in the next subsection after preparing and reviewing
several notions.
\par
For the actual construction, we need to specify 
how close ${\bf p}$ and ${\bf q}$ should be 
for an irreducible component of ${\bf q}$ 
to be a quasi-component of ${\bf p}$.
We need to make such a choice inductively 
on $k+1,\ell$ and $\beta \cap[\omega]$.

To work out this induction process we need to consider 
also the following situation:
``${\bf p}$ is close to ${\bf q}_1$ and 
${\bf q}'_1$ is an irreducible component 
of ${\bf q}_1$. 
${\bf q}'_1$ is close to ${\bf q}_2$
and ${\bf q}'_2$ is an irreducible component 
of ${\bf q}_2$.
${\bf q}_3$ etc may appear in a similar way.''
This iterated construction is carried out 
in Subsection \ref{subsec:itera2}.

\subsection{Stabilization by interior marked points}
\label{subsec:staint}

\par
\smallskip
The next definition is a variant of \cite[Definition 9.7]{diskconst1}.
\begin{defn}\label{defn62}
Suppose we are in Situation \ref{situ61}.
{\it Type I stabilization data}\footnote{Here $I$ stands for `interior marked points'.}
$(\vec{\frak w}_{\frak p},\vec{\mathcal N}_{\frak p})$ at ${\frak p}$ is the
following data.
\begin{enumerate}
\item
$\vec{\frak w}_{\frak p} = ({\frak w}_{{\frak p},1},\dots,{\frak w}_{{\frak p},\ell'})$
are distinct points in $\rm{Int}(\Sigma_{\frak p})$ away from $\vec{\frak z}_{\frak p}$,
the set of interior marked points of $\frak p$.
It is also away from 
nodal points.
\item
$(\Sigma_{\frak p},\vec{\frak w}_{\frak p})$ is stable, that is,
the group
$$
{\rm Aut}(\Sigma_{\frak p},\vec{\frak w}_{\frak p})
= \{ v : \Sigma_{\frak p} \to \Sigma_{\frak p} : \text{biholomorphic},\,\,\, v({\frak w}_{{\frak p},i})
={\frak w}_{{\frak p},i}\}
$$
is finite,
except the case when the unique disk component of $\Sigma_{\frak p}$ is unstable
and the map $u_{\frak p}$ is constant on it.
In such a case the connected
component of ${\rm Aut}(\Sigma_{\frak p},\vec{\frak w}_{\frak p})$ is $S^1$ and
$\pi_0{\rm Aut}(\Sigma_{\frak p},\vec{\frak w}_{\frak p})$ is a finite group.
See Figure \ref{Figure2-5}.
We explain this exceptional case more in Remark \ref{rem73}.
\begin{figure}[h]
\centering
\includegraphics[scale=0.5]{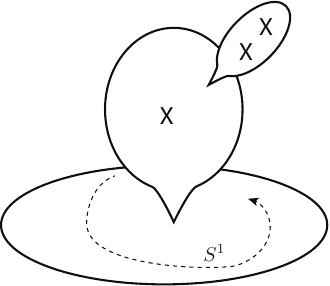}
\caption{Unstable element in $\mathcal M_{0,3}^{\rm d}$}
\label{Figure2-5}
\end{figure}
\item
$\vec{\mathcal N}_{\frak p} = (\mathcal N_{{\frak p},1},\dots,\mathcal N_{{\frak p},\ell'})$.
Here $\mathcal N_{{\frak p},i}$ is a codimension 2 submanifold of $X$.
\item There exists a neighborhood $U_i$ of $\frak w_{{\frak p},i}$ in $\Sigma_{\frak p}$
such that $u_{\frak p}(U_i)$ intersects transversally with $\mathcal N_{{\frak p},i}$ at
the unique point $u_{\frak p}(\frak w_{{\frak p},i})$.
Moreover, the restriction of $u_{\frak p}$ to $U_i$ is a smooth embedding.
We require $\{U_i\}$ are disjoint.
\item
Suppose that $v : \Sigma_{\frak p} \to \Sigma_{\frak p}$ is an extended automorphism
of $\frak p$. Then there exists a permutation $\sigma : \{1,\dots,\ell'\} \to \{1,\dots,\ell'\}$
such that
$v({\frak w}_{{\frak p},i}) = {\frak w}_{{\frak p},\sigma(i)}$,
$\mathcal N_{{\frak p},i} = \mathcal N_{{\frak p},\sigma(i)}$, and
$v(U_i) = U_{\sigma(i)}$.
\end{enumerate}
\end{defn}
We decompose $\Sigma_{\frak p}$ as in (\ref{form61}).
For each irreducible component $\Sigma_{\frak p}^{\rm d}$, (resp.
$\Sigma_{{\frak p},\frak v}^{\bf s}$)
we put $\vec{\frak w}_{\frak p} \cap \Sigma_{{\frak p}}^{{\rm d}}$,
(resp. $\vec{\frak w}_{\frak p} \cap \Sigma_{{\frak p},\frak v}^{\bf s}$) together with (necessarily interior)
nodes on it. We denote it by $(\Sigma_{{\frak p}}^{{\rm d}},
\vec{\frak w}_{{\frak p}}^{\rm d})$
(resp. $(\Sigma_{{\frak p},\frak v}^{\rm s},
\vec{\frak w}_{{\frak p},\frak v}^{\rm s})$).
They are stable\footnote{except the case explained in Remark \ref{rem73}.} and so determine an element of $\mathcal M_{0,\ell_{\rm d}}^{\rm d}$
(resp. elements of $\mathcal M^{\rm s}_{\ell_{\frak v}}$).
Here $\mathcal M_{0,\ell_{\rm d}}^{\rm d}$ is the compactified
moduli space of disks with $\ell_{\rm d}$ interior marked points
and $\mathcal M^{\rm s}_{\ell_{\frak v}}$ is the compactified
moduli space of spheres with $\ell_{\frak v}$ marked points.
We denote by
$\pi : \mathcal C^{\rm s}_{\ell} \to \mathcal M_{\ell}^{\rm s}$
the universal families of deformation.
\begin{rem}\label{rem73}
Here we consider the moduli space  $\mathcal M_{0,\ell_{\rm d}}^{\rm d}$
of genus zero curves with one boundary component and {\it without boundary marked points}.
If we define it in the same way as in Definition \ref{defn24222} assuming the stability
in the sense of Definition \ref{stabilitydefn27}, then such space is {\it not} compact.
To compactify it we need to add isomorphism classes of
elements $(\Sigma,\vec{\frak z})$ such that
$\Sigma$ has a disk component that has only one double point and that
none of the marked points $\vec{\frak z}$ are on this disk components.
See Figure \ref{Figure2-5}.
The group of extended automorphisms of such an element $(\Sigma,\vec{\frak z})$ contains
$S^1$ consisting of rotations of the disk component.
Therefore it is not stable in the sense
of Definition \ref{stabilitydefn27}.
The moduli space of such objects is diffeomorphic to
$\mathcal M_{\ell_{\rm d}+1}^{\rm s}$,
the compactified moduli space of spheres with $\ell_{\rm d}+1$ marked points.
We {\it add} the isomorphism classes of such objects
to  $\mathcal M_{0,\ell_{\rm d}}^{\rm d}$ and call the resulting moduli space
a compactified moduli space of stable disks with $\ell_{\rm d}$ interior marked points
by an abuse of notation. (Since the isotropy group $S^1$ is compact, this space is still Hausdorff.)
This is an orbifold with boundary and corners. The object described in Figure \ref{Figure2-5},
determines a boundary component of it. See \cite[Subsection 7.4.1]{fooobook2} for the discussion of this
extra boundary component.
\par
The universal family $\pi : \mathcal C_{0,\ell}^{\rm d} \to \mathcal M_{0,\ell}^{\rm d}$
is not well-defined on this boundary component for the part of disk component, because of the
automorphism $S^1$.  The universal family 
is defined outside of this disk component, which is in the fiber of the above mentioned 
component of the boudary.
\par
The condition we required in Definition \ref{defn62} (2) implies that
$(\Sigma_{{\frak p}}^{\rm d},\vec{\frak w}_{{\frak p}}^{\rm d})$ is in this extra boundary
component only when $u_{\frak p}$ is constant on the disk component.
\end{rem}
The next definition is a variant of \cite[Situation 9.8]{diskconst1}.
\begin{defn}\label{defn63}
{\it Type I strong stabilization data} at ${\frak p}$
are type I stabilization data $(\vec{\frak w}_{\frak p},\vec{\mathcal N}_{\frak p})$
together with the following data
$((\vec{\mathcal V}_{\frak p}, \vec{\phi_{{\frak p}}}),\vec{\varphi}_{\frak p})$.
\begin{enumerate}
\item[(6)]
A neighborhood $\mathcal V_{{\frak p}}^{{\rm d}}$ of
$[(\Sigma_{{\frak p}}^{{\rm d}},\vec{\frak w}_{{\frak p}}^{{\rm d}})]$
in $\mathcal M_{0,\ell_{\rm d}}^{\rm d}$ and
neighborhoods $\mathcal V_{{\frak p},\frak v}^{\rm s}$ of
$[(\Sigma_{{\frak p},\frak v}^{\rm s},\vec{\frak w}_{{\frak p},{\frak v}}^{\rm s})]$
in $\mathcal M^{\rm s}_{\ell_{\frak v}}$.
\item[(7)]
Diffeomorphisms $\phi_{{\frak p}}^{{\rm d}} : \mathcal V_{{\frak p}}^{{\rm d}} \times \Sigma_{{\frak p}}^{{\rm d}}
\to \pi^{-1}(\mathcal V_{{\frak p}}^{{\rm d}})$,
$\phi_{{\frak p},\frak v}^{\rm s} : \mathcal V_{{\frak p},\frak v}^{\rm s} \times \Sigma_{{\frak p},\frak v}^{\rm s}
\to \pi^{-1}(\mathcal V_{{\frak p},\frak v}^{\rm s})$ which commute with projections.
\item[(8)]
Analytic families of coordinates $\vec{\varphi}_{\frak p}$ at (interior) nodes which are compatible with
the trivialization in (7) in the sense of \cite[Definition 3.7]{diskconst1}.
\item[(9)]
Let $\frak s_{i,{\frak p}}^{{\rm d}} : \mathcal V_{{\frak p}}^{{\rm d}} \to \mathcal C_{0,\ell_{\rm d}}^{\rm d}$,
$\frak s_{i,{\frak p},{\frak v}}^{\rm s} : \mathcal V_{{\frak p},{\frak v}}^{\rm s} \to
\mathcal C_{\ell_{\frak v}}^{\rm s}$
be the sections assigning the $i$-th marked point. (See \cite[Section 2]{diskconst1}.)
Then
$$\aligned
\frak s_{i,{\frak p}}^{{\rm d}}({\frak p}')
&= \phi_{{\frak p}}^{{\rm d}}({\frak p}',\frak s_{i,{\frak p}}^{{\rm d}}({\frak p})),\\
\frak s_{i,{\frak p},{\frak v}}^{\rm s}({\frak p}')
&= \phi_{{\frak p},{\frak v}}^{\rm s}({\frak p}',\frak s_{i,{\frak p},{\frak v}}^{\rm s}({\frak p})).
\endaligned$$
In other words, the trivialization (7) respects the marked points.
\item[(10)]
The trivialization (7) is compatible with the action of extended automorphism
group of $\frak p$, which is induced by (5).
\end{enumerate}
\end{defn}

\begin{defn}\label{defn631}
Let type I strong stabilization data
$((\vec{\frak w}_{\frak p},\vec{\mathcal N}_{\frak p}), (\vec{\mathcal V_{\frak p}},\vec{\phi_{\frak p}}),\vec{\varphi}_{\frak p})$
be chosen. Then
an {\it obstruction space} $E_{\frak p}$ at ${\frak p}$ is defined to be
a finite dimensional subspace of
$C^{\infty}(\Sigma_{\frak p},u_{\frak p}^*TX\otimes \Lambda^{0,1})$ satisfying the following properties:
\begin{enumerate}
\item[(1)]
The union of the supports of the elements of $E_{\frak p}$, which we denote by
${\rm Supp}(E_{\frak p})$, is disjoint from the boundary, nodes, and marked points,
but contains $\vec{\frak w}_{\frak p}$.
\item[(2)]
We consider the operator
$$
D_{u_{\frak p}}\overline{\partial} :
W^2_{m+1}((\Sigma_{\frak p},\partial\Sigma_{\frak p});u_{\frak p}^*TX,u_{\frak p}^*TL)
\to L^2_m(\Sigma_{\frak p},u_{\frak p}^*TX\otimes \Lambda^{0,1})
$$
as in \cite[(5.1)]{diskconst1}.
Then
$$
{\rm Im} D_{u_{\frak p}}\overline{\partial} + E_{\frak p}
= L^2_m(\Sigma_{\frak p},u_{\frak p}^*TX\otimes \Lambda^{0,1}).
$$
\item[(3)]
Let
$$
{\mathcal{EV}} : W^2_{m+1}((\Sigma_{\frak p},\partial\Sigma_{\frak p});u_{\frak p}^*TX,u_{\frak p}^*TL)
\to T_{u_{\frak p}(z_0)}L
$$
be the linearized evaluation map at $z_0$. Then its restriction
$$
{\mathcal{EV}} : (D_{u_{\frak p}}\overline{\partial})^{-1}(E_{\frak p}) \to T_{u_{\frak p}(z_0)}L
$$
is surjective.
\item[(4)]
$E_{\frak p}$ is invariant under the action of the extended automorphism group
${\rm Aut}^+({\frak p})$.
\item[(5)]
The action of ${\rm Aut}({\frak p})$ on $(D_{u_{\frak p}}\overline{\partial})^{-1}(E_{\frak p})/ {\frak {aut}} (\Sigma_{\frak p}, \vec z_{\frak p}, \vec {\frak z}_{\frak p})$ is effective.
\item[(6)]
If $u_{\frak p}$ is constant on an irreducible component of
$\Sigma_{\frak p}$,
then the support ${\rm Supp}(E_{\frak p})$ is disjoint from this
irreducible component.
\end{enumerate}
We consider type I strong stabilization data
$((\vec{\frak w}_{\frak p},\vec{\mathcal N}_{\frak p}),(\vec{\mathcal V_{\frak p}},\vec{\phi_{\frak p}}),\vec{\varphi}_{\frak p})$
together
with an obstruction space $E_{\frak p}$ and write
$$
\Xi_{\frak p}
= ((\vec{\frak w}_{\frak p},\vec{\mathcal N}_{\frak p}),(\vec{\mathcal V_{\frak p}},\vec{\phi_{\frak p}}),
\vec{\varphi}_{\frak p},E_{\frak p}).
$$
\end{defn}
\begin{shitu}\label{situ64}
Suppose we are in Situation \ref{situ61} and $\Xi_{\frak p}$ is given.
Let
\begin{equation}\label{formnew74}
{\bf x} \in B_{\epsilon_0}({\mathcal X},{\bf p}).
\end{equation}
\par
Let $\epsilon_2$, $\epsilon_1$ be the constants given as in (\ref{formnew71}), (\ref{formnew72})
and $\epsilon_0$ be as in (\ref{formnew74}).
For a positive number $\delta$ we will take
positive constants $\epsilon_2(\delta,{\frak p},\Xi_{\frak p})$,
$\epsilon_1(\delta,{\bf q},{\frak p},\Xi_{\frak p})$,
$\epsilon_0(\delta,{\bf p},{\bf q},{\frak p},\Xi_{\frak p})$
which depend on the data in the parenthesis.
We will assume
\begin{equation}\label{formula62}
\aligned
\epsilon_2 &< \epsilon_2(\delta,{\frak p},\Xi_{\frak p}), \\
 \epsilon_1 &< \epsilon_1(\delta,{\bf q},{\frak p},\Xi_{\frak p}), \\
\epsilon_0 &< \epsilon_0(\delta,{\bf p},{\bf q},{\frak p},\Xi_{\frak p}).
\endaligned
\end{equation}
We denote by $\Sigma_{{\bf q},{\rm v}}$ the source curve 
of the irreducible 
component ${\bf q}_{\rm v}$.  Here ${\rm v}$ and ${\bf q}_{\rm v}$ 
are as in Situation \ref{situ61}.
$\diamond$
\end{shitu}
\begin{lem}\label{lem65}
For any sufficiently small $\delta > 0$ there exist
positive numbers $\epsilon_2(\delta,{\frak p},\Xi_{\frak p})$,
$\epsilon_1(\delta,{\bf q},{\frak p},\Xi_{\frak p})$,
$\epsilon_0(\delta,{\bf p},{\bf q},{\frak p},\Xi_{\frak p})$
with the following properties:
\par
Suppose we are in Situation \ref{situ64}, especially  we assume (\ref{formula62}). Then
there exists a unique collection of marked points
$$
\vec{\frak w}_{{\bf x};{\frak p}},
\quad
\vec{\frak w}_{{\bf p};{\frak p}},
\quad
\vec{\frak w}_{{\bf q};{\frak p}}
$$
such that
\begin{enumerate}
\item
${\frak w}_{{\bf x};{\frak p},i} \in \Sigma_{\bf x}$,
${\frak w}_{{\bf p};{\frak p},i} \in \Sigma_{\bf p}$,
${\frak w}_{{\bf q};{\frak p},i} \in \Sigma_{{\bf q},{\rm v}}$ for $i=1, \ldots, \ell'$.
\item
$u_{\bf x}({\frak w}_{{\bf x};{\frak p},i}) \in \mathcal N_{{\frak p},i}$,
$u_{\bf p}({\frak w}_{{\bf p};{\frak p},i}) \in \mathcal N_{{\frak p},i}$,
$u_{{\bf q}}({\frak w}_{{\bf q};{\frak p},i}) \in \mathcal N_{{\frak p},i}$.
\item
$$
\aligned
d((\Sigma_{\frak p},z_{\frak p},\frak z_{\frak p}\cup \vec {\frak w}_{\frak p}),(\Sigma_{{\bf q},{\rm v}},
z_{{\bf q},{\rm v}},\frak z_{{\bf q},{\rm v}}\cup \vec {\frak w}_{{\bf q};{\frak p}})) &< \delta, \\
d((\Sigma_{{\bf q}},z_{{\bf q}},\frak z_{{\bf q}}\cup\vec {\frak w}_{{\bf q};{\frak p}}),(\Sigma_{\bf p},
z_{{\bf p}},\frak z_{{\bf p}}\cup\vec {\frak w}_{{\bf p};{\frak p}})) &< \delta, \\
d((\Sigma_{{\bf p}},z_{{\bf p}},\frak z_{{\bf p}}\cup\vec {\frak w}_{{\bf p},{\frak p}}),(\Sigma_{\bf x},
z_{{\bf x}},\frak z_{{\bf x}}\cup\vec {\frak w}_{{\bf x},{\frak p}})) &< \delta.
\endaligned
$$
Here $d$'s are the metrics on various moduli spaces of marked stable curves. We fix such metrics.
$z_{{\bf q},{\rm v}}$ (resp. $\frak z_{{\bf q},{\rm v}}$) are boundary  (resp. interior)
marked or nodal  points of ${\bf q}$ contained in $\Sigma_{{\bf q},{\rm v}}$.
\end{enumerate}
\end{lem}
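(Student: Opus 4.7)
The plan is to construct the three collections of marked points by a chain of implicit function theorem applications, transferring the marked point structure from $\frak p$ first to $\Sigma_{{\bf q},{\rm v}}$, then to $\Sigma_{\bf p}$, and finally to $\Sigma_{\bf x}$. The crucial local fact is supplied by Definition \ref{defn62}(4): $u_{\frak p}\vert_{U_i}$ is a smooth embedding meeting $\mathcal N_{{\frak p},i}$ transversally at the single point $u_{\frak p}({\frak w}_{{\frak p},i})$. Since $\mathcal N_{{\frak p},i}$ has (real) codimension $2$ in $X$ and $U_i$ is a $2$-real-dimensional disk, this is a $0$-dimensional transverse intersection that persists uniquely under $C^1$-small perturbations of the map and of the source.

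For the first step I would use the trivializations $\phi_{\frak p}^{{\rm d}}, \phi_{{\frak p},\frak v}^{{\rm s}}$ and the analytic coordinates $\vec\varphi_{\frak p}$ of the strong stabilization data to identify $U_i \subset \Sigma_{\frak p}$ with a corresponding disk in $\Sigma_{{\bf q},{\rm v}}$; because the $U_i$ are, by Definition \ref{defn62}(1), disjoint from nodes, this identification is a genuine diffeomorphism once $\epsilon_2$ is small. Transferring $u_{{\bf q},{\rm v}}$ via this diffeomorphism produces a map on $U_i$ that is $C^1$-close to $u_{\frak p}\vert_{U_i}$, and the inverse function theorem yields the unique ${\frak w}_{{\bf q};{\frak p},i} \in \Sigma_{{\bf q},{\rm v}}$ near ${\frak w}_{{\frak p},i}$ whose image lies on $\mathcal N_{{\frak p},i}$, depending smoothly on the perturbation. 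The same argument, invoking the identification between the non-nodal parts of $\Sigma_{\bf q}$ and $\Sigma_{\bf p}$ (which is provided by the partial topology of \cite[Definition 4.12]{diskconst1} together with Lemma \ref{lem393939}) and then between $\Sigma_{\bf p}$ and $\Sigma_{\bf x}$, yields ${\frak w}_{{\bf p};{\frak p},i}$ and finally ${\frak w}_{{\bf x};{\frak p},i}$. Uniqueness within each step follows from the implicit function theorem, since transversality forbids a second preimage of $\mathcal N_{{\frak p},i}$ inside the (small) transferred image of $U_i$.

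Conclusion (3) on the distance in the various moduli spaces of marked stable curves is then immediate: the unmarked curves are close by the hypotheses (\ref{formnew71}), (\ref{formnew72}), (\ref{formnew74}), and the newly added marked points are close to their counterparts by construction. One fixes a metric on each moduli space of marked stable curves and picks $\epsilon_2(\delta,{\frak p},\Xi_{\frak p})$ small enough that passing from $\frak p$ to ${\bf q}_{\rm v}$ moves the stable curve by less than $\delta$, then $\epsilon_1(\delta,{\bf q},{\frak p},\Xi_{\frak p})$ for the pair $({\bf q},{\bf p})$, and finally $\epsilon_0(\delta,{\bf p},{\bf q},{\frak p},\Xi_{\frak p})$ for $({\bf p},{\bf x})$, in each case using the smooth dependence provided by the previous step.

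The main technical obstacle is to keep the neighborhoods $U_i$ inside the non-degenerating part of all four surfaces $\Sigma_{{\bf q},{\rm v}}$, $\Sigma_{{\bf q}}$, $\Sigma_{{\bf p}}$, $\Sigma_{{\bf x}}$ simultaneously and to match the trivializations furnished by $\Xi_{\frak p}$ with those implicit in the partial topology of $({\mathcal X}_{k+1,\ell}(X,L;\beta),{\mathcal M}_{k+1,\ell}(X,L,J;\beta))$ of \cite{diskconst1}, so that the $C^1$ smallness of the transferred map is uniform at each stage. This is precisely the reason for the cascade of constants $\epsilon_0 \ll \epsilon_1 \ll \epsilon_2$, with each constant chosen after the data of the previous layer has been fixed. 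The equivariance under extended automorphisms implicit in Definition \ref{defn62}(5) is then automatic from the uniqueness, because any element of ${\rm Aut}^+(\frak p)$ that permutes the ${\frak w}_{{\frak p},i}$ must, by uniqueness, permute the constructed points in the same way.
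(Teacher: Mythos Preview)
Your proposal is correct and follows essentially the same route as the paper: one first produces $\vec{\frak w}_{{\bf q};{\frak p}}$ from the transversality in Definition \ref{defn62}(4) via the Implicit Function Theorem (the paper refers to \cite[Lemma 9.9]{diskconst1} here), and then propagates to $\vec{\frak w}_{{\bf p};{\frak p}}$ and $\vec{\frak w}_{{\bf x};{\frak p}}$ using the closeness hypotheses (\ref{formnew71}), (\ref{formnew72}), (\ref{formnew74}). Your write-up is considerably more detailed than the paper's three-line proof, but the underlying mechanism and the order in which the constants $\epsilon_2,\epsilon_1,\epsilon_0$ are chosen are the same.
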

\begin{proof}
We first find $\vec {\frak w}_{{\bf q};{\frak p}}$ by the Implicit Function Theorem
and the property ${\bf q}_{\rm v} \in B_{\epsilon_2}({\mathcal X},{\frak p})$.
(See \cite[Lemma 9.9]{diskconst1}).
Then using  $\vec {\frak w}_{{\bf q};{\frak p}}$ we can find
$\vec {\frak w}_{{\bf x};{\frak p}}$,
$\vec {\frak w}_{{\bf p};{\frak p}}$ by
the Implicit Function Theorem
and (\ref{formnew71}), (\ref{formnew72}) and (\ref{formnew74}).
\end{proof}
We consider the forgetful map
$$
\frak{forget}_{k+1,\ell+\ell';0,\ell'} : \mathcal M_{k+1,\ell+\ell'}^{\rm d} \to \mathcal M_{0,\ell'}^{\rm d}
$$
which forgets all the boundary marked points and the {\it first} $\ell$ interior
marked points. (Here $\ell'$ is the cardinality of $\vec{\frak w}_{\frak p}$.)
\begin{lem}\label{lem666}
Under the assumption of Lemma \ref{lem65}\footnote{
We may replace positive numbers $\epsilon_2(\delta,{\frak p},\Xi_{\frak p})$,
$\epsilon_1(\delta,{\bf q},{\frak p},\Xi_{\frak p})$,
$\epsilon_0(\delta,{\bf p},{\bf q},{\frak p},\Xi_{\frak p})$ by 
smaller numbers if necessary. The same remark applies to 
Lemma \ref{lem6767}.}
we have
$$
\aligned
d((\Sigma_{\frak p},\vec {\frak w}_{\frak p}),\frak{forget}_{k_{\rm d}+1,\ell_{\rm d};0,\ell'}(\Sigma_{{\bf q},{\rm v}},
z_{{\bf q},{\rm v}},\vec {\frak w}_{{\bf q};{\frak p}})) &< \delta, \\
d((\Sigma_{\frak p},\vec {\frak w}_{\frak p}),
\frak{forget}_{k+1,\ell+\ell';0,\ell'}(\Sigma_{{\bf q}},z_{{\bf q}},\frak z_{{\bf q}}
\cup\vec {\frak w}_{{\bf q},{\frak p}})) &< \delta, \\
d((\Sigma_{\frak p},\vec {\frak w}_{\frak p}),\frak{forget}_{k+1,\ell+\ell';0,\ell'}(\Sigma_{{\bf p}},z_{{\bf p}},\frak z_{{\bf p}}\cup\vec {\frak w}_{{\bf p},{\frak p}})) &< \delta, \\
d((\Sigma_{\frak p},\vec {\frak w}_{\frak p}),\frak{forget}_{k+1,\ell+\ell';0,\ell'}(\Sigma_{{\bf x}},z_{{\bf x}},\frak z_{{\bf x}}\cup\vec {\frak w}_{{\bf x},{\frak p}})) &< \delta.
\endaligned
$$
\end{lem}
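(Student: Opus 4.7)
The plan is to derive all four inequalities as immediate corollaries of Lemma \ref{lem65}(3) by postcomposing with the continuous forgetful map $\frak{forget}_{\star;0,\ell'} \colon \mathcal M_{\star}^{\rm d} \to \mathcal M_{0,\ell'}^{\rm d}$ (and, for the first inequality where the domain is the smaller moduli space, the analogous forgetful map with fewer marked points to forget). Since forgetful maps between the (compact, orbifold) moduli spaces of stable marked bordered curves are continuous with respect to the fixed metrics, for the fixed element $\frak p$ there is a modulus of continuity $\omega_{\frak p} \colon \R_{>0} \to \R_{>0}$, vanishing at $0$, such that
\[
d_{\mathcal M_{0,\ell'}^{\rm d}}(\frak{forget}(A), \frak{forget}(B)) \le \omega_{\frak p}(d(A,B))
\]
for all $A, B$ lying in some compact neighborhood of $[(\Sigma_{\frak p}, z_{\frak p}, \frak z_{\frak p} \cup \vec{\frak w}_{\frak p})]$ in the appropriate source moduli space.

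The next step is to identify the forgetful-map image $\frak{forget}_{k_{\rm d}+1,\ell_{\rm d};0,\ell'}[(\Sigma_{\frak p}, z_{\frak p}, \frak z_{\frak p} \cup \vec{\frak w}_{\frak p})]$ with $[(\Sigma_{\frak p}, \vec{\frak w}_{\frak p})]$. This holds because $(\Sigma_{\frak p}, \vec{\frak w}_{\frak p})$ is already stable by Definition \ref{defn62}(2), with the convention of Remark \ref{rem73} covering the exceptional case where $u_{\frak p}$ is constant on the disk component. Hence no stabilization-contraction occurs on the $\frak p$-side, and the underlying curve together with its marking is unchanged. Consequently, the distance appearing on the left-hand side of each of the four inequalities in Lemma \ref{lem666} is exactly the $\mathcal M_{0,\ell'}^{\rm d}$-distance between the forgetful-map images of the two sides of the corresponding inequality in Lemma \ref{lem65}(3), and so it is bounded by $\omega_{\frak p}(\delta)$.

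To obtain the conclusion as stated with the same symbol $\delta$, we shrink thresholds. Given the target $\delta > 0$, pick $\delta_1 > 0$ with $\omega_{\frak p}(\delta_1) < \delta$, apply Lemma \ref{lem65} with $\delta_1$ in place of $\delta$, and declare the resulting $\epsilon_2(\delta_1,\frak p,\Xi_{\frak p})$, $\epsilon_1(\delta_1,{\bf q},\frak p,\Xi_{\frak p})$, $\epsilon_0(\delta_1,{\bf p},{\bf q},\frak p,\Xi_{\frak p})$ to be the thresholds used in the statement of Lemma \ref{lem666}. This is permissible because these constants are at our disposal. There is no essential obstacle: the only technical point is the verification of continuity of the forgetful map at the elements in question, together with the routine observation that the stability of $(\Sigma_{\frak p}, \vec{\frak w}_{\frak p})$ guaranteed by Definition \ref{defn62}(2) prevents any combinatorial collapse on the image side.
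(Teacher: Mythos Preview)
Your argument has a genuine gap. Lemma~\ref{lem65}(3) contains only \emph{three} inequalities, and they form a chain
\[
(\frak p)\leftrightarrow({\bf q}_{\rm v}),\qquad ({\bf q})\leftrightarrow({\bf p}),\qquad ({\bf p})\leftrightarrow({\bf x}),
\]
whereas the four inequalities of Lemma~\ref{lem666} all have $(\Sigma_{\frak p},\vec{\frak w}_{\frak p})$ on the left. So your claim that each inequality here is ``exactly the $\mathcal M_{0,\ell'}^{\rm d}$-distance between the forgetful-map images of the two sides of the corresponding inequality in Lemma~\ref{lem65}(3)'' cannot be right: inequalities two and three of Lemma~\ref{lem65}(3) do not involve $\frak p$ at all, and the first one lives in the smaller moduli space $\mathcal M_{k_{\rm v}+1,\ell_{\rm v}+\ell'}^{\rm d}$, not in $\mathcal M_{k+1,\ell+\ell'}^{\rm d}$ where the full ${\bf q}$ sits. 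Applying $\frak{forget}$ separately to each of the three yields three estimates in $\mathcal M_{0,\ell'}^{\rm d}$, but these do not directly give the second, third, and fourth inequalities of the lemma.

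The missing step, which the paper isolates explicitly, is the identity
\[
\frak{forget}_{k+1,\ell+\ell';0,\ell'}\bigl(\Sigma_{\bf q},z_{\bf q},\frak z_{\bf q}\cup\vec{\frak w}_{{\bf q};\frak p}\bigr)
=\bigl(\Sigma_{{\bf q},{\rm v}},\vec{\frak w}_{{\bf q};\frak p}\bigr).
\]
This holds because all of the added marked points $\vec{\frak w}_{{\bf q};\frak p}$ lie on the single disk component $\Sigma_{{\bf q},{\rm v}}$; once the original marked points are forgotten, every other disk component of $\Sigma_{\bf q}$ becomes unstable and collapses. (This is precisely where the genus-zero, one-boundary-component hypothesis is used.) With this identity the second inequality follows from the first, and then the third and fourth follow by applying $\frak{forget}$ to the remaining two inequalities of Lemma~\ref{lem65}(3) together with the triangle inequality (absorbing the accumulated constants by shrinking $\delta$ as you describe). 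Your stability observation about the $\frak p$-side is correct and needed, but it is not the whole story; you must also justify the collapse on the ${\bf q}$-side to bridge the gap between the chain in Lemma~\ref{lem65}(3) and the common-endpoint form of Lemma~\ref{lem666}.
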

\begin{figure}[h]
\centering
\includegraphics[scale=0.8]{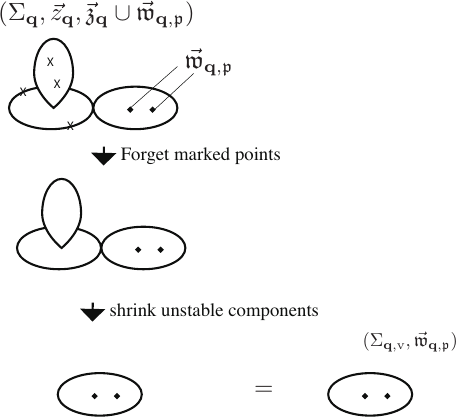}
\caption{Forgetting marked points $\vec z$, $\vec{\frak z}$}
\label{zu3}
\end{figure}
\begin{proof}
The first inequality is a consequence of Lemma \ref{lem65} (3) and the continuity of
the forgetful map.
The second inequality is a consequence of the first inequality  and the equality
$$
\frak{forget}_{k+1,\ell+\ell';0,\ell'}(\Sigma_{{\bf q}},z_{{\bf q}},\frak z_{{\bf q}}\cup\vec {\frak w}_{{\bf q},{{\frak p}}})
= (\Sigma_{{\bf q},{\rm v}},
\vec {\frak w}_{{\bf q};{\frak p}}).
$$
See Figure \ref{zu3}.
We remark that this is the place where we use the fact we are studying bordered Riemann surfaces of genus 0.
\par
The third and fourth inequalities then follow from the continuity of
$\frak{forget}_{k+1,\ell+\ell';0,\ell'}$.
\end{proof}
 
We use the data given in Definition \ref{defn63} (7), (8) to define a smooth open embedding $\Phi$ below:
\begin{equation}\label{fprmmapF}
\Phi : \mathcal V_{{\frak p}}^{{\rm d}} \times
\prod_{\frak v}\mathcal V_{{\frak p},{\frak v}}^{\rm s}
\times (D^2(c))^{m_{\rm s}} \to \mathcal M_{0,\ell'}^{\rm d}
\end{equation}
as in \cite[(3.5)]{diskconst1}.
The first and second factors parametrize the 
deformation of each irreducible component 
and the third factor is the gluing parameter, where
$m_{\rm s}$ is the number of interior nodes of $\Sigma_{\frak p}$.

By Lemma \ref{lem666}  there exist elements
$\frak x \in \mathcal V_{{\frak p}}^{{\rm d}} \times
\prod_{\frak v}\mathcal V_{{\frak p},{\frak v}}^{\rm s}$ and
$\vec{\epsilon} \in (D^2(c))^{m_{\rm s}}$
such that
$$
\frak{forget}_{k+1,\ell+\ell';0,\ell'}(\Sigma_{{\bf x}},z_{{\bf x}},\frak z_{{\bf x}}
\cup\vec {\frak w}_{{\bf x},{\frak p}}) = \Phi(\frak x;\vec{\epsilon}).
$$

Let $\Sigma_{{\frak p}}(\epsilon)$ be the $\epsilon$-thick part of $\Sigma_{{\frak p}}$.
Using the data given in Definition \ref{defn63} (7), (8),
we can apply \cite[Lemma 3.9]{diskconst1} to obtain 
$$
\widehat \Phi_{\frak r,\vec{\epsilon}}: \mathcal V \times \Sigma_{{\frak p}}(\epsilon)
\to \mathcal C^{\rm d}_{0,\ell}
$$
and smooth embeddings,
\begin{equation}\label{form6363}
\aligned
\widehat{\Phi}_{{\bf q},{\frak p};\Xi_{\frak p}} &:
{\rm Supp}(E_{\frak p}) \to \Sigma_{{\bf q}}, \\
\widehat{\Phi}_{{\bf p},{\frak p};\Xi_{\frak p}} &:
{\rm Supp}(E_{\frak p}) \to \Sigma_{{\bf p}}, \\
\widehat{\Phi}_{{\bf x},{\frak p};\Xi_{\frak p}} &:
{\rm Supp}(E_{\frak p}) \to \Sigma_{{\bf x}}.
\endaligned
\end{equation}
For example, the third map is a restriction of the map $\widehat{\Phi}_{\frak x;\vec{\epsilon}}$
appearing in \cite[Lemma 3.9]{diskconst1}.

\begin{rem}\label{rem78}
In the case where $(\Sigma_{\frak p},\vec{\frak w}_{\frak p})$ is the point appearing in
Remark \ref{rem73} and Figure \ref{Figure2-5},  the map
(\ref{fprmmapF}) becomes
\begin{equation}\label{fprmmapF2}
\Phi : \prod_{\frak v}\mathcal V_{{\frak p},{\frak v}}^{\rm s}
\times [0,c) \times (D^2(c))^{m_{\rm s}-1} \to \mathcal M_{0,\ell'}^{\rm d}.
\end{equation}
Here $[0,c)$ is the space of smoothing parameters of the node which lies on the disk component.
(See \cite[pages 590-591]{fooobook2}.)
By Definition \ref{defn631} (6),  ${\rm Supp}(E_{\frak p})$, the support of the
obstruction bundle, is disjoint from the disk component.
(This is because $u_{\frak p}$ is constant on the disk component as we explained in
Remark \ref{rem73}.)
We use this fact to show that (\ref{form6363}) is also defined in this case.
\end{rem}
\begin{lem}\label{lem6767}
We can choose $\epsilon_2(\delta,{\frak p},\Xi_{\frak p})$,
$\epsilon_1(\delta,{\bf q},{\frak p},\Xi_{\frak p})$,
$\epsilon_0(\delta,{\bf p},{\bf q},{\frak p},\Xi_{\frak p})$ so that,
under the assumption of Lemma \ref{lem65} the maps  (\ref{form6363})
have the following properties:
\begin{enumerate}
\item
Denote by $d_{C^2,{\rm Supp}(E_{\frak p})}$ the $C^2$ distance
between two maps: ${\rm Supp}(E_{\frak p}) \to X$.
Then the following inequalities hold:
\begin{equation}\label{form64640}
\aligned
&d_{C^2,{\rm Supp}(E_{\frak p})}
\left(
u_{{\bf q}} \circ \widehat{\Phi}_{{\bf q},{\frak p};\Xi_{\frak p}},u_{{\frak p}}
\right)
< \delta,
\\
&d_{C^2,{\rm Supp}(E_{\frak p})}
\left(
u_{{\bf p}} \circ \widehat{\Phi}_{{\bf p},{\frak p};\Xi_{\frak p}},u_{\frak p}
\right)
< \delta,
\\
&d_{C^2,{\rm Supp}(E_{\frak p})}
\left(
u_{{\bf x}} \circ \widehat{\Phi}_{{\bf x},{\frak p};\Xi_{\frak p}},u_{\frak p}
\right)
< \delta.
\endaligned
\end{equation}
\item
The next inequalities hold for each $x \in {\rm Supp}(E_{\frak p})$:
\begin{equation}\label{form65650}
\aligned
\vert \partial \, \widehat{\Phi}_{{\bf q},{\frak p};\Xi_{\frak p}} (x)\vert & >
10\vert \overline\partial \, \widehat{\Phi}_{{\bf q},{\frak p};\Xi_{\frak p}}(x)\vert, \\
\vert \partial \, \widehat{\Phi}_{{\bf p},{\frak p};\Xi_{\frak p}} (x)\vert & >
10\vert \overline\partial \, \widehat{\Phi}_{{\bf p},{\frak p};\Xi_{\frak p}} (x)\vert,
\\
\vert \partial \, \widehat{\Phi}_{{\bf x},{\frak p};\Xi_{\frak p}}(x)\vert
& >
10\vert \overline\partial \, \widehat{\Phi}_{{\bf x},{\frak p};\Xi_{\frak p}}(x)\vert.
\endaligned
\end{equation}
\end{enumerate}
\end{lem}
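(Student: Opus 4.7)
The plan is to verify both assertions by exploiting the smoothness of the universal family trivialization $\widehat\Phi_{\frak r,\vec\epsilon}$ from \cite[Lemma 3.9]{diskconst1}, noting that ${\rm Supp}(E_{\frak p})$ is a fixed compact subset of $\Sigma_{\frak p}$ that is disjoint from all nodes and marked points (Definition \ref{defn631}(1)). Thus on ${\rm Supp}(E_{\frak p})$ we stay uniformly away from the regions where the trivialization $\widehat\Phi$ differs from a biholomorphic identification, which is the analytic content that will make both estimates work.

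For assertion (1), I would treat the three estimates sequentially, beginning with ${\bf q}_{\rm v}$. By Lemma \ref{lem666} the image $\frak{forget}_{k_{\rm d}+1,\ell_{\rm d};0,\ell'}(\Sigma_{{\bf q},{\rm v}},z_{{\bf q},{\rm v}},\vec{\frak w}_{{\bf q};\frak p})$ lies within $\delta$ of $(\Sigma_{\frak p},\vec{\frak w}_{\frak p})$ in $\mathcal M_{0,\ell'}^{\rm d}$. Smooth dependence of the map $\Phi$ of \eqref{fprmmapF} on $(\frak x,\vec\epsilon)$ then implies that $\widehat\Phi_{{\bf q},{\frak p};\Xi_{\frak p}}$ is $C^2$-close on ${\rm Supp}(E_{\frak p})$ to the natural inclusion $\Sigma_{\frak p} \supset {\rm Supp}(E_{\frak p}) \hookrightarrow \Sigma_{{\bf q},{\rm v}}$ arising from the identification of strata; combined with the $\epsilon_2$-closeness of $u_{{\bf q}_{\rm v}}$ to $u_{\frak p}$ built into the partial topology (\cite[Definition 4.12]{diskconst1}), this yields the first inequality of \eqref{form64640}, provided $\epsilon_2(\delta,\frak p,\Xi_{\frak p})$ is chosen small enough. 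The estimates for ${\bf p}$ and ${\bf x}$ then follow by the same mechanism using the triangle chain of Lemma \ref{lem65}(3) together with $\epsilon_1(\delta,{\bf q},\frak p,\Xi_{\frak p})$ and $\epsilon_0(\delta,{\bf p},{\bf q},\frak p,\Xi_{\frak p})$, chosen successively small.

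For assertion (2), the key point is that on a compact subset disjoint from the gluing necks, the universal family trivialization $\widehat\Phi_{\frak r,\vec\epsilon}$ is the identity in suitable holomorphic coordinates in the limit where all deformation parameters vanish, and is therefore biholomorphic on ${\rm Supp}(E_{\frak p})$ up to an error that is controlled by the moduli deformation parameters and the gluing parameters $\vec\epsilon$. Hence $\lvert\bar\partial\widehat\Phi\rvert/\lvert\partial\widehat\Phi\rvert$ can be made smaller than any prescribed $\delta$ by shrinking the relevant $\epsilon_i$. Again this is applied first to $\widehat\Phi_{{\bf q},\frak p;\Xi_{\frak p}}$, and the other two follow from Lemma \ref{lem666} combined with the further shrinking of $\epsilon_1$ and $\epsilon_0$.

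The main obstacle I expect is bookkeeping rather than analysis: one must keep the hierarchy of the $\epsilon$ constants of Situation \ref{situ64} consistent (first $\epsilon_2$ depending only on $\delta,\frak p,\Xi_{\frak p}$; then $\epsilon_1$ additionally on $\bf q$; then $\epsilon_0$ additionally on $\bf p,\bf x$) and propagate the estimates through each step without circular dependencies. A separate subtlety is the exceptional case described in Remarks \ref{rem73} and \ref{rem78}, where the unique disk component of $\Sigma_{\frak p}$ is unstable because $u_{\frak p}$ is constant on it and the universal family over $\mathcal M_{0,\ell}^{\rm d}$ fails to admit a trivialization on that stratum. Here one invokes Definition \ref{defn631}(6): the support of $E_{\frak p}$ then avoids the disk component entirely, so only the sphere-component trivializations—which are well defined—enter the construction of $\widehat\Phi$, and the above arguments go through unchanged.
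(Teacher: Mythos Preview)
Your overall strategy for assertion (1) --- prove the ${\bf q}$ estimate directly from the partial-topology closeness, then propagate to ${\bf p}$ and ${\bf x}$ by a ``triangle chain'' --- is exactly the strategy of the paper.  However, the step you describe as ``the same mechanism using the triangle chain of Lemma~\ref{lem65}(3)'' hides the main difficulty, and this is where the paper spends its effort.

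The issue is the following.  The closeness $d({\bf p},{\bf q})<\epsilon_1$ in the partial topology gives you $u_{\bf p}\circ\widehat\Phi_{{\bf p},{\bf q};\Xi_{\bf q}}$ close to $u_{\bf q}$, where $\Xi_{\bf q}$ is some stabilization data \emph{at ${\bf q}$}.  Combined with the first inequality you obtain
\[
u_{\bf p}\circ\bigl(\widehat\Phi_{{\bf p},{\bf q};\Xi_{\bf q}}\circ\widehat\Phi_{{\bf q},\frak p;\Xi_{\frak p}}\bigr)\ \text{close to}\ u_{\frak p}.
\]
But the second inequality of~\eqref{form64640} concerns $u_{\bf p}\circ\widehat\Phi_{{\bf p},\frak p;\Xi_{\frak p}}$, and there is no a~priori reason why the composition $\widehat\Phi_{{\bf p},{\bf q};\Xi_{\bf q}}\circ\widehat\Phi_{{\bf q},\frak p;\Xi_{\frak p}}$ should coincide with (or even be $C^2$-close to) the directly constructed map $\widehat\Phi_{{\bf p},\frak p;\Xi_{\frak p}}$.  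These maps are built from trivializations of \emph{different} universal families, centered at different points.  The paper resolves this by choosing the analytic families of coordinates at ${\bf q}$ to be \emph{extendable} (Definition~\ref{defn122812}) and then proving Sublemma~\ref{sublem777}: with extendable coordinates, in the case where only gluing parameters vary, the composition \emph{equals} the direct map on ${\rm Supp}(E_{\frak p})$.  The general case then follows by continuity in the remaining deformation parameter $\vec{\frak x}$.  This is a geometric fact, not bookkeeping, and your proposal omits it entirely.

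Your treatment of assertion (2) is fine and matches the paper's one-line argument.  Your remark on the exceptional case (Remarks~\ref{rem73},~\ref{rem78}) is also correct.
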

\begin{proof}
The inequalities
(\ref{form65650}) are easy consequences of the definition of  $\widehat{\Phi}_{*,\frak p,\Xi_{\frak p}}$
in (\ref{form6363})
and Lemma \ref{lem666}. The inequalities
(\ref{form64640}) are consequences of Lemma \ref{lem666}, and the fact that
${\rm Supp}(E_{\frak p})$ is away from the node. The proof of (\ref{form64640}) is the same as 
that of \cite[Lemma 4.3.75]{foootoric32}.
We repeat the proof for completeness' sake.
We prove the second inequality only. The proofs of other two inequalities
are similar.
Lemma \ref{lem666} implies that
$(\Sigma_{\bf p},\vec{\frak w}_{{\bf p},\frak p}) \in B_{\epsilon}((\Sigma_{\bf q},\vec{\frak w}_{{\bf q},\frak p}))$
for some $\epsilon$ going to zero as $\epsilon_1,\epsilon_2$ go to zero,
where we fix and use a metric on the compactified moduli space
${\mathcal M}_{0,\ell_{\frak p}}^{\rm d}$ of disks
with $\ell_{\frak p}$ interior marked points
to define $B_{\epsilon}((\Sigma_{\bf q},\vec{\frak w}_{{\bf q},\frak p}))$.
(Here $\ell_{\frak p}$ is the cardinality of the set $\vec{\frak w}_{\frak p}$.)
We take an analytic family of coordinates at nodal points which has the following properties.
\begin{defn}\label{defn122812}
$ $
\begin{enumerate}
\item
A holomorphic embedding
$D^2 \to S^2$ is said to be {\it extendable}
\index{extendable} if it is a restriction
of a biholomorphic map $S^2 \to S^2$.
\item
A holomorphic embedding
$D^2 \to D^2$ is said to be {\it extendable}
\index{extendable} if it is a restriction
of biholomorphic map $D^2(R) \to D^2$
for some $R > 1$.
\item
A holomorphic embedding
$(D_{\ge 0}^2,\partial D_{\ge 0}^2) \to (D^2,\partial D^2)$
is said to be {\it extendable} if its double is extendable
in the sense of $(1)$.
Here $D_{\ge 0}^2=\{ z \in D^2 ~\vert~ {\rm Im}\, z \ge 0 \}$.
\item
An analytic family of coordinates is said to be {\it extendable}
if its members are extendable in the sense of (1),(2) or (3).
\end{enumerate}
\end{defn}
Note that $d({\bf q}_{\rm v},\frak p)$ is assumed to be small.
So there exists an embedding
$$
\widehat{\Phi}_{{\bf q}_{\rm v},{\frak p};\Xi_{\frak p}} :
{\rm Supp}(E_{\frak p}) \to \Sigma_{{\bf q}_{\rm v}}
$$
such that
\begin{equation}\label{form711}
d_{C^2,{\rm Supp}(E_{\frak p})}
\left(
u_{{\bf q}} \circ \widehat{\Phi}_{{\bf q}_{\rm v},{\frak p};\Xi_{\frak p}},u_{{\frak p}}
\right)
< \delta/10.
\end{equation}
(Actually
$\widehat{\Phi}_{{\bf q}_{\rm v},{\frak p};\Xi_{\frak p}} =
\widehat{\Phi}_{{\bf q},{\frak p};\Xi_{\frak p}}$.)
Using $(\Sigma_{\bf p},\vec{\frak w}_{{\bf p},\frak p}) \in B_{\epsilon}((\Sigma_{\bf q},\vec{\frak w}_{{\bf q},\frak p}))$ and
data (7)(8) of Definition \ref{defn63}, $\Xi_{(\Sigma_{\bf q},\vec{\frak w}_{{\bf q},\frak p})}$, at $(\Sigma_{\bf q},\vec{\frak w}_{{\bf q},\frak p})$,
we obtain an embedding
$$
\widehat{\Phi}_{{\bf p},{\bf q};\Xi_{(\Sigma_{\bf q},\vec{\frak w}_{{\bf q},\frak p})}} :
\Sigma_{\bf q} \setminus
\text{(neck region)} \to \Sigma_{\bf p}
$$
such that
\begin{equation}\label{form7122}
d_{C^2,\widehat{\Phi}_{{\bf q}_{\rm v},{\frak p};\Xi_{\frak p}}({\rm Supp}(E_{\frak p}))}
\left(
u_{{\bf p}} \circ \widehat{\Phi}_{{\bf p},{\bf q};\Xi_{(\Sigma_{\bf q},\vec{\frak w}_{{\bf q},\frak p})}},u_{{\bf q}}
\right)
< \delta/10,
\end{equation}
where $\widehat{\Phi}_{{\bf p},{\bf q};\Xi_{(\Sigma_{\bf q},\vec{\frak w}_{{\bf q},\frak p})}}$
is a map defined in the way similar to \eqref{form6363}
using the data
$\Xi_{(\Sigma_{\bf q},\vec{\frak w}_{{\bf q},\frak p})}$.
We put
$$
(\Sigma_{\bf p},\vec{\frak w}_{{\bf p},\frak p}) = \Phi(\vec{\frak x},\vec{\rho}),
$$
where $\vec{\frak x}$ is the parameter to 
deform the complex structure of the irreducible components of
${\bf q}$ and $\vec{\rho}$ is a smoothing parameter of the nodes of ${\bf q}$.
We use $\Xi_{(\Sigma_{\bf q},\vec{\frak w}_{{\bf q},\frak p})}$ to define the map
$\Phi$ in a way similar to (\ref{fprmmapF}).
\par
We take $\vec{\frak x}_0$ such that
$\Phi(\vec{\frak x}_0,\vec{0}) = (\Sigma_{\bf q},\vec{\frak w}_{{\bf q},{\frak p}})$.
(Namely $\vec{\frak x}_0$ corresponds to the complex structure of $\Sigma_{\bf q}$ itself.)
\begin{sublem}\label{sublem777}
If $(\Sigma_{\bf p},\vec{\frak w}_{{\bf p},{\frak p}}) = \Phi(\vec{\frak x}_0,\vec{
\rho})$ for some $\vec{
\rho}$ and if the analytic families of coordinates, which are part of the data
$\Xi_{(\Sigma_{\bf q},\vec{\frak w}_{{\bf q},\frak p})}$, are extendable,
then
$$
\widehat{\Phi}_{{\bf p},{\bf q};\Xi_{(\Sigma_{\bf q},\vec{\frak w}_{{\bf q},\frak p})}}\circ \widehat{\Phi}_{{\bf q}_{\rm v},{\frak p};\Xi_{\frak p}}
=
\widehat{\Phi}_{{\bf p},{\frak p};\Xi_{\frak p}}
$$
on ${\rm Supp}(E_{\frak p})$.
\end{sublem}
\begin{proof}
There exists a biholomorphic map ${\frak I} : (\Sigma_{\bf p},\vec{\frak w}_{{\bf p},{\frak p}})
\to (\Sigma_{{\bf q}_{\rm v}},\vec{\frak w}_{{\bf q},{\frak p}})$
such that ${\frak I} \circ \widehat{\Phi}_{{\bf p},{\bf q};\Xi_{(\Sigma_{\bf q},\vec{\frak w}_{{\bf q},\frak p})}}$
is the identity map on $\Sigma_{{\bf q}_{\rm v}}$ minus a small neighborhood of
boundary nodes, 
if all the components of $\vec{\rho}$ are nonzero.
(If some components are zero the domain is smaller but still 
contains the image of $\widehat{\Phi}_{{\bf p},{\frak p};\Xi_{\frak p}}$.) This is a consequence of the extendability of the analytic family of
coordinates there.
(See Figure \ref{lastfigure}
and also \cite[Lemma 12.33]{FuFu6}.)
The sublemma is an easy consequence of this fact.
\end{proof}
\begin{figure}[h]
\centering
\includegraphics[scale=0.8]{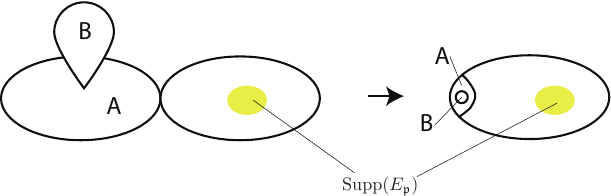}
\caption{Sublemma \ref{sublem777}}
\label{lastfigure}
\end{figure}
In the case of $(\Sigma_{\bf p},\vec{\frak w}_{{\bf p},{\frak p}}) = \Phi(\vec{\frak x}_0,\vec{
\rho})$ the second inequality of (\ref{form64640}) follows from
(\ref{form711}), (\ref{form7122}) and Sublemma \ref{sublem777}.
The general case follows by taking $\epsilon_1$
(which estimates $d({\bf p},{\bf q})$ and hence $d(\vec{\frak x},\vec{\frak x}_0)$)
sufficiently small compared to the distance between ${\rm Supp}(E_{\frak p})$ and nodes.
\end{proof}
Using Lemma \ref{lem6767} we construct a map
\begin{equation}\label{form66666}
\aligned
\mathcal P_{{\bf x},{\bf p},{\bf q},{\frak p};\Xi_{\frak p}} : &
C^2(\widehat{\Phi}_{{\bf x},{\frak p};\Xi_{\frak p}}(U({\rm Supp}(E_{\frak p})));u_{\bf x}^*TX \otimes \Lambda^{0,1}) \\
&\to C^2(U({\rm Supp}(E_{\frak p}));u_{\frak p}^*TX \otimes \Lambda^{0,1})
\endaligned
\end{equation}
as follows.
(Here $U({\rm Supp}(E_{\frak p}))$ is a sufficiently small open neighborhood of
${\rm Supp}(E_{\frak p})$.)
The map (\ref{form66666}) is similar to the map \cite[(8.1)]{diskconst1}.
\par
Let $z \in U({\rm Supp}(E_{\frak p}))$.
By (\ref{form64640}) $d(u_{\frak p}(z),u_{\bf x}(\hat\Phi_{{\bf x},{\frak p};\Xi_{\frak p}}(z)))$
is smaller than the injectivity radius of $X$.\footnote{
Here $\hat\Phi_{{\bf x},{\frak p};\Xi_{\frak p}}$ is as in (\ref{form6363}).  It actually depends not only 
on ${\bf x}$ and $\frak p$ but also on 
$\bf p$ and $\bf q$. Such an example is given in Remark \ref{rem714}.}
So there exists a
unique minimal geodesic joining them.
We fix a unitary connection of $TX$. Using the parallel transport along the
minimal geodesic we obtain
$T_{\hat\Phi_{{\bf x},{\frak p};\Xi_{\frak p}}(z)}X \to T_{u_{\frak p}(z)}X$.
We thus obtain a bundle map
\begin{equation}\label{form67}
u_{\bf x}^*TX \to u_{\frak p}^*TX
\end{equation}
over $\hat\Phi_{{\bf x},{\frak p};\Xi_{\frak p}}^{-1}$.
\par
On the other hand, the complex linear part of the differential of $\hat\Phi_{{\bf x},{\frak p}}^{-1}$
induces a bundle map
\begin{equation}\label{form68}
\Lambda^{0,1}(\Sigma_{\bf x}) \to \Lambda^{0,1}(\Sigma_{\frak p})
\end{equation}
over $\hat\Phi_{{\bf x},{\frak p};\Xi_{\frak p}}^{-1}$.
((\ref{form65650}) implies that this map is a bundle isomorphism on $U({\rm Supp}(E_{\frak p}))$.)
The bundle map which is a tensor product (over $\C$) of (\ref{form67})
and (\ref{form68}) induces the map (\ref{form66666}),
which is an isomorphism.

We also consider ${\mathcal P}_{\mathbf p,\mathbf p,\mathbf q,\mathfrak p;\Xi_{\mathfrak p}}$
and ${\mathcal P}_{\mathbf q,\mathbf q,\mathbf q,\mathfrak p;\Xi_{\mathfrak p}}$.
\par
We define:
\begin{equation}\label{form6969}
E_{{\bf p},{\bf q},{\frak p};\Xi_{\frak p}}({\bf x})
=
(\mathcal P_{{\bf x},{\bf p},{\bf q},{\frak p};\Xi_{\frak p}})^{-1}(E_{\frak p}).
\end{equation}
\begin{lem}\label{lem618}
${\bf x} \mapsto E_{{\bf p},{\bf q},{\frak p};\Xi_{\frak p}}({\bf x})$
is smooth in the sense of \cite[Definition 8.7]{diskconst1}.
\end{lem}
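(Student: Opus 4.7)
The plan is to reduce the claim to the smoothness of the isomorphism $\mathcal P_{{\bf x},{\bf p},{\bf q},{\frak p};\Xi_{\frak p}}$ in its dependence on ${\bf x}$, and then to check this smoothness piece by piece from the three ingredients of its construction: the embedding $\widehat{\Phi}_{{\bf x},{\frak p};\Xi_{\frak p}}$, the parallel transport bundle map \eqref{form67}, and the complex linear part of $d\widehat{\Phi}_{{\bf x},{\frak p};\Xi_{\frak p}}^{-1}$ used in \eqref{form68}. This parallels the proof of the analogous statement for a single moduli space carried out in \cite{diskconst1}, so the main work is to check that insertion of the auxiliary data $({\bf p},{\bf q})$ does not affect the smooth dependence on ${\bf x}$.

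First, I would pick a basis $\{e_1,\dots,e_d\}$ of the fixed finite dimensional space $E_{\frak p}$ and define $e_i({\bf x}) := \mathcal P_{{\bf x},{\bf p},{\bf q},{\frak p};\Xi_{\frak p}}^{-1}(e_i)$. By the definition of smoothness of a family of finite dimensional subspaces (\cite[Definition 8.7]{diskconst1}), it suffices to verify that each section $e_i({\bf x})$ varies smoothly in ${\bf x}$ with respect to the smooth structure on the ambient space of $C^2$ sections given by the stabilization and trivialization data. The supports of $e_i$ lie in ${\rm Supp}(E_{\frak p})$, which is disjoint from the boundary, the nodes and the marked points of ${\frak p}$, so the construction takes place entirely away from the neck regions and we can use the smooth trivialization in $\Xi_{\frak p}$ to transport everything to a fixed reference domain.

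The key step is the smoothness of the map ${\bf x} \mapsto \widehat{\Phi}_{{\bf x},{\frak p};\Xi_{\frak p}}$ as a family of smooth embeddings ${\rm Supp}(E_{\frak p}) \to \Sigma_{\bf x}$. This follows from two inputs. The added marked points $\vec{\frak w}_{{\bf x};{\frak p}}$ produced by Lemma \ref{lem65} depend smoothly on ${\bf x}$: they are obtained by applying the Implicit Function Theorem to the transverse intersection condition in Definition \ref{defn62}(4), and both the data $(\mathcal N_{{\frak p},i})$ and the map $u_{\bf x}$ vary smoothly. Once $\vec{\frak w}_{{\bf x};{\frak p}}$ is chosen, the embedding $\widehat{\Phi}_{{\bf x},{\frak p};\Xi_{\frak p}}$ is obtained from the trivialization $\widehat\Phi_{\frak r,\vec\epsilon}$ of \cite[Lemma 3.9]{diskconst1}, whose smooth dependence on the parameter $(\frak x;\vec\epsilon) \in \mathcal V_{\frak p}^{{\rm d}}\times \prod_{\frak v}\mathcal V_{{\frak p},{\frak v}}^{{\rm s}}\times (D^2(c))^{m_{{\rm s}}}$ is built into Definition \ref{defn63}(7)(8). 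Composing with $\frak{forget}_{k+1,\ell+\ell';0,\ell'}$, which is itself smooth, we conclude that ${\bf x} \mapsto \widehat{\Phi}_{{\bf x},{\frak p};\Xi_{\frak p}}$ is smooth.

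With the smoothness of $\widehat{\Phi}_{{\bf x},{\frak p};\Xi_{\frak p}}$ in hand, the bundle map \eqref{form67} is smooth in ${\bf x}$ because parallel transport with respect to a fixed unitary connection is a smooth function of its endpoints, and both $u_{\bf x}\circ\widehat{\Phi}_{{\bf x},{\frak p};\Xi_{\frak p}}$ and $u_{\frak p}$ vary smoothly (the latter is constant). The bundle map \eqref{form68} is the complex linear part of $d\widehat{\Phi}_{{\bf x},{\frak p};\Xi_{\frak p}}^{-1}$, which is also smooth in ${\bf x}$; invertibility on $U({\rm Supp}(E_{\frak p}))$ is guaranteed by the estimate \eqref{form65650}. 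Their tensor product then gives a smooth family of bundle isomorphisms defining $\mathcal P_{{\bf x},{\bf p},{\bf q},{\frak p};\Xi_{\frak p}}$, so $e_i({\bf x})$ varies smoothly and the lemma follows. I expect the main obstacle to be bookkeeping of what `smooth dependence on ${\bf x}$' means in the setting where $\Sigma_{\bf x}$ itself varies; this is handled in \cite{diskconst1} by pulling everything back to the fixed reference domain via the trivialization built into $\Xi_{\frak p}$, and the same device works here because ${\rm Supp}(E_{\frak p})$ stays in the thick part.
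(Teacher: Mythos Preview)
Your proposal is correct and follows the same approach that the paper refers to: the paper's proof simply states that it is the same as \cite[Subsection 11.2]{diskconst1} and omits details, and what you have written is precisely an expansion of that argument, checking smoothness of each ingredient of $\mathcal P_{{\bf x},{\bf p},{\bf q},{\frak p};\Xi_{\frak p}}$ (the embedding via the trivialization, the parallel transport, and the complex-linear part of the differential) after reducing to a fixed basis of $E_{\frak p}$. Your observation that the support lies in the thick part so that the trivialization of $\Xi_{\frak p}$ handles the varying domain is exactly the point; the role of ${\bf p},{\bf q}$ is only to locate the marked points $\vec{\frak w}_{{\bf x};\frak p}$ via Lemma~\ref{lem65}, and once those are fixed the rest is identical to the single-moduli-space case.
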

\begin{proof}
The proof is the same as the one given in  \cite[Subsection 11.2]{diskconst1} and so is omitted.
\end{proof}

\subsection{Iteration of the construction of Subsection \ref{subsec:staint}}
\label{subsec:itera2}

The obstruction bundle data, which we will construct for the proof of
Theorem \ref{thm43}, is obtained by taking an appropriate
direct sum of the ones defined as in (\ref{form6969}).
More precisely we also need its variant which includes the iteration of the
process appearing in Situation \ref{situ61}.
In this subsection we will explain this variant.
In the case $n=1$, Situation \ref{situ69} becomes Situation \ref{situ61}.
\par
Let $\frak T \in \mathcal G(k+1,\ell,\beta)$.
Let $\frak T' > \frak T$, ${\rm v}$ an interior vertex of $\frak T'$, and
let $\pi : \mathcal T \to \mathcal T'$ be the projection
canonically defined by a sequence of edge contraction.
We say $\frak S$ is a {\it subtree} of $\frak T$ if it is obtained
from $\pi^{-1}({\rm v})$ with data induced by $\frak T$.
A subtree $\frak S$ is an element of some  $\mathcal G(k'+1,\ell',\beta')$.
If ${\bf p} \in \mathcal M^{\circ}_{k+1,\ell}(X,L,J;\beta)(\frak T)$
and $\frak S$ is a subtree of $\frak T$, we obtain ${\bf p}_{\frak S}
\in \mathcal M_{k'+1,\ell'}(X,L,J;\beta')$
by gluing ${\bf p}_{\rm v}$ for vertices ${\rm v}$ of $\frak S$ in the way
described by $\frak S$. See Lemma \ref{lem3232} and its proof.

\begin{shitu}\label{situ69}
Consider the data ${\bf p}$, ${\bf q}_j$, ${\frak p}$,
$\frak T_j$, $\frak S_j$, $\epsilon_{1,j}>0$ ($j=1,\dots,n$ for some $n=0,1,2,\dots$\footnote{When $n=0$, $j$ is absent.}) and
$\epsilon_{2}>0$ with the following properties.
(See Figure \ref{zu4}.)
\begin{enumerate}
\item
${\bf p} \in \mathcal M_{k+1,\ell}(X,L,J;\beta)$.
\item
${\bf q}_j \in \mathcal M^{\circ}_{k_j+1,\ell_j}(X,L,J;\beta_j)(\frak T_j)$.
Here
$\frak T_j \in \mathcal G(k_j+1,\ell_j,\beta_j)$.
\item
$d({\bf p},{\bf q}_1) < \epsilon_{1,1}$.
($(k,\ell,\beta) = (k_1,\ell_1,\beta_1)$ in particular.)
\item
$\frak S_j$ is a subtree of $\frak T_j$. We define ${\bf q}_{j,\frak S_j}$
as above. We assume ${\bf q}_{j,\frak S_j} \ne {\bf q}_{j}$ for $j=1,\dots,n-1$.
\item
For $j \le n-1$
we require ${\bf q}_{j,\frak S_j} \in \mathcal M_{k_{j+1}+1,\ell_{j+1}}(X,L,J;\beta_{j+1})$
and
\begin{equation}
d({\bf q}_{j,\frak S_j}, {\bf q}_{j+1}) < \epsilon_{1,j+1}.
\end{equation}
\item
We require ${\frak p}, {\bf q}_{n,\frak S_{n}} \in \mathcal M^{\circ}_{k_{n+1}+1,\ell_{n+1}}(X,L,J;\beta_{n+1})$ and
\begin{equation}
d({\bf q}_{n,\frak S_{n}},{\frak p}) < \epsilon_2.
\end{equation}
(Note that $\frak S_{n}$ has only one interior vertex by this condition.)
\item
We require $\frak S_j > \frak T_{j+1}$. $\diamond$
\end{enumerate}
\end{shitu}
\begin{figure}[h]
\centering
\includegraphics[scale=0.8]{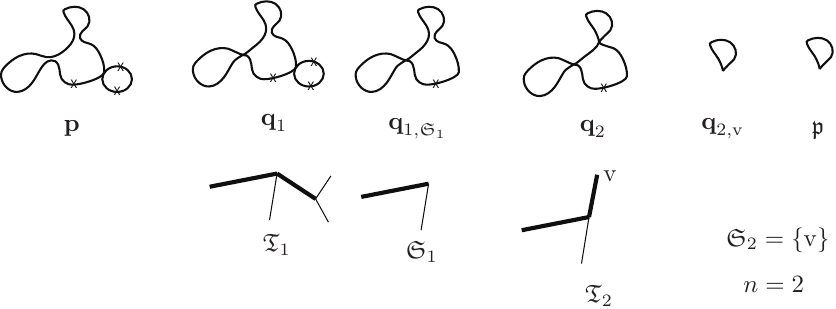}
\caption{Situation \ref{situ69}}
\label{zu4}
\end{figure}
We take type I strong stabilization data and an obstruction space at $\frak p$,
which we denote by $\Xi_{\frak p}$. (See Definition \ref{defn63}.)
Let
\begin{equation}\label{form610}
{\bf x} \in B_{\epsilon_0}({\mathcal X},{\bf p}).
\end{equation}
Let $\delta_j$, $j=0,\dots,n+1$ be 
positive numbers.
In the sequel we will find positive numbers:
\begin{equation}\label{form611}
\aligned
&\epsilon_{0}(\delta_{0};{\bf p},{\bf q}_{1},\dots,{\bf q}_n,{\frak p},\Xi_{\frak p}), \\
&\epsilon_{1,j}(\delta_{j};{\bf q}_{j},\dots,{\bf q}_n,{\frak p},\Xi_{\frak p}), (j=1,2,\dots,n)\\
&\epsilon_{2}(\delta_{n+1};{\frak p},\Xi_{\frak p}),
\endaligned
\end{equation}
which depend on the data in the parenthesis
and assume
\begin{equation}\label{form612}
\aligned
\epsilon_0 &< \epsilon_{0}(\delta_{0};{\bf p},{\bf q}_{1},\dots,{\bf q}_n,{\frak p},\Xi_{\frak p}), \\
\epsilon_{1,j} &< \epsilon_{1,j}(\delta_{j};{\bf q}_{j},\dots,{\bf q}_n,{\frak p},\Xi_{\frak p}), \\
\epsilon_{2} &< \epsilon_{2}(\delta_{n+1};{\frak p},\Xi_{\frak p}).
\endaligned
\end{equation}
\begin{lem}\label{lem61060}
There exist positive numbers as in (\ref{form611}) with the following properties:
Suppose we are in Situation \ref{situ69} (1)-(7), (\ref{form610}) and
(\ref{form612}).
Then there exists
a collection of marked points
$$\vec{\frak w}_{{\bf x};\frak p}, \quad \vec{\frak w}_{{\bf p};{\frak p }},
\quad
\vec{\frak w}_{{\bf q}_j,{\frak p}} \enskip (j=1,\dots,n),
$$
such that
\begin{enumerate}
\item
${\frak w}_{{\bf x};{\frak p},i} \in \Sigma_{\bf x}$,
${\frak w}_{{\bf p};{\frak p},i} \in \Sigma_{\bf p}$ and
${\frak w}_{{\bf q}_j;{\frak p},i} \in \Sigma_{{\bf q}_{j,\frak S_j}}$.
\item
$u_{\bf x}({\frak w}_{{\bf x};{\frak p},i}) \in \mathcal N_{{\frak p},i}$,
$u_{\bf p}({\frak w}_{{\bf p};{\frak p},i}) \in \mathcal N_{{\frak p},i}$ and
$u_{{\bf q}_j}({\frak w}_{{\bf q}_j;{\frak p},i}) \in \mathcal N_{{\frak p},i}$.
\item
$$
\aligned
d((\Sigma_{{\bf q}_{j+1}},\vec z_{{\bf q}_{j+1}},\vec{\frak z}_{{\bf q}_{j+1}}\cup \vec{\frak w}_{{\bf q}_{j+1},{\frak p}}),
(\Sigma_{{\bf q}_{j,\frak S_j}},\vec z_{{\bf q}_{j,\frak S_j}},\vec{\frak z}_{{\bf q}_{j,\frak S_j}}\cup
\vec{\frak w}_{{\bf q}_j,{\frak p}})) &< \delta_{j+1}, \\
d((\Sigma_{{\bf q}_1},\vec z_{{\bf q}_1},\vec{\frak z}_{{\bf q}_1}\cup\vec {\frak w}_{{\bf q}_1;{\frak p}}),(\Sigma_{\bf p},
\vec z_{{\bf p}},\vec{\frak z}_{{\bf p}}\cup\vec{\frak w}_{{\bf p};{\frak p}}) &< \delta_{1}, \\
d((\Sigma_{{\bf p}},\vec z_{{\bf p}},\vec{\frak z}_{{\bf p}}\cup\vec{\frak w}_{{\bf p},{\frak p}}),(\Sigma_{\bf x},
\vec{z}_{{\bf x}},\vec{\frak z}_{{\bf x}}\cup\vec{\frak w}_{{\bf x},{\frak p}})) &< \delta_{0}, \\
d((\Sigma_{{\bf q}_n,{\frak S_n}},\vec z_{{\bf q}_n,{\frak S_n}},
\vec{\frak z}_{{\bf q}_n,{\frak S_n}}\cup\vec{\frak w}_{{\bf q}_n,{\frak p}}),(\Sigma_{\frak p},
\vec{z}_{\frak p},\vec{\frak z}_{\frak p}\cup\vec{\frak w}_{{\frak p}})) &< \delta_{n+1}.
\endaligned
$$
Here $d$'s are the metrics on various moduli spaces of marked stable curves which we fix.
$\vec z_{{\bf q}_{j,\frak S_j}}$ (resp. $\vec{\frak z}_{{\bf q}_{j,\frak S_j}}$) are boundary  (resp. interior)
marked or nodal points of ${\bf q}_j$ contained in $\Sigma_{{\bf q}_{j,\frak S_j}}$.
\par
When $n=0$ we have
$$
d((\Sigma_{\bf p},\vec z_{\bf p},
\vec{\frak z}_{\bf p}\cup\vec{\frak w}_{{\bf p};\frak p}),(\Sigma_{\frak p},
\vec{z}_{\frak p},\vec{\frak z}_{\frak p}\cup\vec{\frak w}_{\frak p})) < \delta_1,
$$
instead.
\end{enumerate}
\end{lem}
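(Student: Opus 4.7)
The strategy is to iterate the construction of Lemma \ref{lem65} in the natural order imposed by Situation \ref{situ69}, starting from the innermost object $\frak p$ and propagating outward through ${\bf q}_n, {\bf q}_{n-1}, \ldots, {\bf q}_1, {\bf p}, {\bf x}$. At each step we apply the implicit function theorem to the transversal intersection conditions $u(\cdot) \in \mathcal N_{\frak p,i}$, using Definition \ref{defn62}~(4) (transversality of $u_{\frak p}(U_i)$ with $\mathcal N_{\frak p,i}$) and the smooth embedding property of $u_{\frak p}$ on each $U_i$. The constants in (\ref{form612}) are chosen so that the radii shrink in the order
$\epsilon_{2} \gg \epsilon_{1,n} \gg \epsilon_{1,n-1} \gg \cdots \gg \epsilon_{1,1} \gg \epsilon_{0}$, each new radius being small enough (depending on all previously fixed data) to keep us inside the domain of applicability of the implicit function theorem with respect to $\vec{\mathcal N}_{\frak p}$.

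The base step is essentially Lemma \ref{lem65} applied to the pair $({\bf q}_{n,\frak S_n},\frak p)$: since $d({\bf q}_{n,\frak S_n},\frak p) < \epsilon_2$ and $\frak S_n$ has a unique interior vertex (condition (6) of Situation \ref{situ69}), the argument of \cite[Lemma 9.9]{diskconst1} produces unique points $\vec{\frak w}_{{\bf q}_n;\frak p}$ in $\Sigma_{{\bf q}_{n,\frak S_n}} \subset \Sigma_{{\bf q}_n}$ satisfying conditions (1), (2) for $j=n$ and the last inequality in (3) with error controlled by $\epsilon_2$. The inductive step from $j+1$ to $j$ uses condition (5) of Situation \ref{situ69}, namely $d({\bf q}_{j,\frak S_j},{\bf q}_{j+1}) < \epsilon_{1,j+1}$. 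Once $\vec{\frak w}_{{\bf q}_{j+1};\frak p}$ are located on $\Sigma_{{\bf q}_{j+1}}$, condition~(7) $\frak S_j > \frak T_{j+1}$ implies that each ${\frak w}_{{\bf q}_{j+1};\frak p,i}$ lies on an irreducible component that is also an irreducible component of (or a limit of deformations inside) $\Sigma_{{\bf q}_{j,\frak S_j}}$; then the implicit function theorem, applied to the transversal submanifolds $\mathcal N_{\frak p,i}$ near $u_{{\bf q}_{j+1}}({\frak w}_{{\bf q}_{j+1};\frak p,i})$, produces the unique ${\frak w}_{{\bf q}_j;\frak p,i}$ on $\Sigma_{{\bf q}_{j,\frak S_j}}$, with the corresponding proximity in the moduli of marked stable curves given by continuity of the universal family together with (5). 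Finally, the same argument applied to the pair $({\bf p},{\bf q}_1)$ with $d({\bf p},{\bf q}_1) < \epsilon_{1,1}$ produces $\vec{\frak w}_{{\bf p};\frak p}$ on $\Sigma_{\bf p}$, and then to ${\bf x} \in B_{\epsilon_0}(\mathcal X,{\bf p})$ (using partial topology as in \cite[Definition 4.12]{diskconst1}) produces $\vec{\frak w}_{{\bf x};\frak p}$ on $\Sigma_{\bf x}$; both of these steps satisfy the remaining two inequalities of (3).

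The main obstacle is the bookkeeping of the dependence of the constants. The implicit function theorem at each step requires the input datum to be close (uniformly in the step) to its reference point in a $C^0$- and $C^1$-sense as a map into $X$, on neighborhoods of the candidate marked points. Because the radius $\epsilon_{1,j}$ must be chosen \emph{after} all of $\epsilon_{1,j+1},\dots,\epsilon_{1,n},\epsilon_2$ are fixed, and because the proximity transferred across one step is controlled by a modulus of continuity that depends on the already-constructed marked-point data, we must let $\epsilon_{1,j}(\delta;{\bf q}_j,\dots,{\bf q}_n,\frak p,\Xi_{\frak p})$ depend on all these data, and similarly for $\epsilon_0$. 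This is exactly the dependence structure displayed in (\ref{form611}), so the statement is formulated in a way that accommodates the induction.

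Two small points need care. First, uniqueness: at each application of the implicit function theorem we restrict to the preimage of a small neighborhood of $u_{\frak p}(\frak w_{\frak p,i})$ on the component $U_i$ from Definition \ref{defn62}~(4); shrinking $\epsilon_2,\epsilon_{1,j},\epsilon_0$ further we guarantee that the obtained $\frak w_{\cdot;\frak p,i}$ lie in the corresponding image under the trivialization $\widehat\Phi$, which forces uniqueness. Second, the exceptional case of Remark \ref{rem73} (when the disk component of $\Sigma_{\frak p}$ is unstable and $u_{\frak p}$ is constant there) is handled just as in Remark \ref{rem78}: no $\frak w_{\frak p,i}$ lies on the unstable disk component, so the implicit function theorem is applied only on stable (sphere) components, where it has no monodromy from the residual $S^1$-action. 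Combining these ingredients, the $n+2$ inductive applications yield the required data and the four inequalities in (3), completing the proof.
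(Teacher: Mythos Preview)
Your proposal is correct and follows essentially the same approach as the paper: an induction starting from $\frak p$ through ${\bf q}_n,\ldots,{\bf q}_1,{\bf p},{\bf x}$, applying the implicit function theorem at each step exactly as in Lemma~\ref{lem65}. Your write-up is in fact more thorough than the paper's two-sentence proof, making explicit the dependence hierarchy of the constants, the uniqueness argument, and the handling of the exceptional case of Remark~\ref{rem73}.
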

\begin{proof}
Using the Implicit Function Theorem and the assumptions,
we can find $\vec{\frak w}_{{\bf q}_j,{\frak p}}$ ($j=1,\dots,n$)
which have the required properties, by a downward induction on $j$.
Then we can find $\vec{\frak w}_{{\bf x};{\frak p}}$, $\vec{\frak w}_{{\bf p};{\frak p}}$
in the same way as the proof of Lemma \ref{lem65}.
\end{proof}
Let $\delta'_j$ be a sequence of positive numbers for
$j=0,\dots,n+1$.
\begin{lem}\label{lem611}
We can take $\delta_j = \delta(\delta'_j,{\bf q}_j)$
(for $j\ne 0,n+1$), $\delta_0 = \delta(\delta'_0,\bf p)$,
$\delta_{n+1} = \delta(\delta'_{n+1},\frak p)$ 
with the following properties.
Under the assumption of Lemma \ref{lem61060}\footnote{
We may replace positive numbers as in (\ref{form611}) by 
smaller numbers if necessary, in this subsection. 
The same remark applies to 
Lemma \ref{lem6767rev}.}
we have:
\begin{equation}\label{newform22}
\aligned
d((\Sigma_{\frak p},\vec{\frak w}_{\frak p}),\frak{forget}_{k^1_{j+1}+1,\ell^1_{j+1}+\ell';0,\ell'}
(\Sigma_{{\bf q}_{j,\frak S_j}},
\vec z_{{\bf q}_{j,\frak S_j}},\vec{\frak z}_{{\bf q}_{j,\frak S_j}} \cup \vec {\frak w}_{{\bf q};{\frak p}})) &< \delta'_{j+1}+\dots + \delta'_{n+1}, \\
d((\Sigma_{\frak p},\vec{\frak w}_{\frak p}),\frak{forget}_{k^1_j+1,\ell^1_j+\ell';0,\ell'}(\Sigma_{{\bf q}_j},
\vec z_{{\bf q}_j},
\vec{\frak z}_{{\bf q}_j}\cup\vec{\frak w}_{{\bf q}_j,{\frak p}})) &<  \delta'_{j+1}+\dots + \delta'_{n+1}, \\
d((\Sigma_{\frak p},\vec{\frak w}_{\frak p}),\frak{forget}_{k+1,\ell+\ell';0,\ell'}(\Sigma_{{\bf p}},
\vec z_{{\bf p}},\vec{\frak z}_{{\bf p}}\cup\vec{\frak w}_{{\bf p},{\frak p}})) &<  \delta'_{1}+\dots + \delta'_{n+1}, \\
d((\Sigma_{\frak p},\vec{\frak w}_{\frak p}),\frak{forget}_{k+1,\ell+\ell';0,\ell'}(\Sigma_{{\bf x}},
\vec z_{{\bf x}},\vec{\frak z}_{{\bf x}}\cup\vec{\frak w}_{{\bf x},{\frak p}})) &<  \delta'_{0}+\dots + \delta'_{n+1}.
\endaligned
\end{equation}
\end{lem}
\begin{proof}
Using the fact that
$$
\aligned
&\frak{forget}_{k^1_{j+1}+1,\ell^1_{j+1}+\ell';0,\ell'}(\Sigma_{{\bf q}_{j,\frak S_j}},
\vec z_{{\bf q}_{j,\frak S_j}},\vec{\frak z}_{{\bf q}_{j,\frak S_j}} \cup \vec {\frak w}_{{\bf q}_j;{\frak p}})
\\
&=
\frak{forget}_{k^1_j+1,\ell^1_j+\ell';0,\ell'}(\Sigma_{{\bf q}_j},\vec z_{{\bf q}_j},
\vec{\frak z}_{{\bf q}_j}\cup\vec{\frak w}_{{\bf q}_j,{\frak p}})
\endaligned
$$
and the continuity of the forgetful map,
we can prove the lemma by  Lemma \ref{lem61060} and triangle inequality.
\end{proof}
Let $\delta(\frak p)$ be a positive number depending only on $\frak p$ 
and $\Xi_{\frak p}$,
and $\delta'_j$ positive numbers such that 
\begin{equation}\label{newnew723}
\delta'_{0}+\dots + \delta'_{n+1} < \delta(\frak p).
\end{equation}
We then obtain $\delta_j$ by Lemma \ref{lem611}.
We then apply Lemma \ref{lem61060} to obtain 
$\epsilon_0, \epsilon_{1,j}, \epsilon_{2}$.
Suppose the assumption (and hence the conclusion) of 
Lemma 
\ref{lem61060} is satisfied.
By Lemma \ref{lem611} and the trivialization of the universal family,
we obtain smooth embeddings
\begin{equation}\label{form6363rev}
\aligned
\widehat{\Phi}_{{\bf q}_j,\vec{\bf q},{\frak p};\Xi_{\frak p}} &:
{\rm Supp}(E_{\frak p}) \to \Sigma_{{\bf q}_j}, \\
\widehat{\Phi}_{{\bf p},\vec{\bf q},{\frak p};\Xi_{\frak p}} &:
{\rm Supp}(E_{\frak p}) \to \Sigma_{\bf p}, \\
\widehat{\Phi}_{{\bf x},\vec{\bf q},{\frak p};\Xi_{\frak p}} &:
{\rm Supp}(E_{\frak p}) \to \Sigma_{{\bf x}},
\endaligned
\end{equation}
in the same way as we obtained (\ref{form6363}). (Strictly speaking,
construction of  $\widehat{\Phi}_{{\bf q}_j,\vec{\bf q},{\frak p};\Xi_{\frak p}}$
involves only the subset $\{{\bf q}_{j+1}, \ldots, {\bf q}_n\}$ of
$\vec {\bf q}$ but for the simplicity of notation, we suppress this in our
notation which should not confuse readers.)
Note here we choose  sufficiently small $\delta(\frak p)$ 
depending on $\frak p$.
\begin{lem}\label{lem6767rev}
We can choose the positive numbers (\ref{form611}) so that
under the assumption of Lemma \ref{lem61060} the maps (\ref{form6363rev})
have the following properties:
\begin{enumerate}
\item We have
\begin{equation}\label{form6464}
\aligned
d_{C^2,{\rm Supp}(E_{\frak p})}
\left(
u_{{\bf q}_j} \circ \widehat{\Phi}_{{\bf q}_j,\vec{\bf q},{\frak p};\Xi_{\frak p}},u_{{\frak p}}
\right)
&<  \delta'_{j+1}+\dots + \delta'_{n+1},
\\
d_{C^2,{\rm Supp}(E_{\frak p})}
\left(
u_{{\bf p}} \circ \widehat{\Phi}_{{\bf p},\vec{\bf q},{\frak p};\Xi_{\frak p}},u_{{\frak p}}
\right)
&<  \delta'_{1}+\dots + \delta'_{n+1},
\\
d_{C^2,{\rm Supp}(E_{\frak p})}
\left(
u_{{\bf x}} \circ \widehat{\Phi}_{{\bf x},\vec{\bf q},{\frak p};\Xi_{\frak p}},u_{{\frak p}}
\right)
&<  \delta'_{0}+\dots + \delta'_{n+1}.
\endaligned
\end{equation}
\item We have
\begin{equation}\label{form6565}
\aligned
\vert \partial \,\widehat{\Phi}_{{\bf q}_j,\vec{\bf q},{\frak p};\Xi_{\bf q}}(x)\vert
&>
10\vert \overline\partial \,\widehat{\Phi}_{{\bf q}_j,\vec{\bf q},{\frak p};\Xi_{\frak p}}(x)\vert,
\\
\vert \partial \,\widehat{\Phi}_{{\bf p},\vec{\bf q},{\frak p};\Xi_{\frak p}}(x)\vert
&>
10\vert \overline\partial \,\widehat{\Phi}_{{\bf p},\vec{\bf q},{\frak p};
\Xi_{\bf q}}(x)\vert,
\\
\vert \partial \,\widehat{\Phi}_{{\bf x},\vec{\bf q},{\frak p};
\Xi_{\bf q}}(x)\vert
&>
10\vert \overline\partial \,\widehat{\Phi}_{{\bf x},\vec{\bf q},{\frak p};
\Xi_{\bf q}}(x)\vert
\endaligned
\end{equation}
for each $x \in {\rm Supp}(E_{\frak p})$.
\end{enumerate}
\end{lem}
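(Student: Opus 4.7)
The plan is to reduce Lemma \ref{lem6767rev} to an iterated application of its one-step predecessor, Lemma \ref{lem6767}. More precisely, I would carry out a downward induction on $j$ (equivalently, on the depth of the chain $\mathfrak p, {\bf q}_n, {\bf q}_{n-1},\dots,{\bf q}_1,{\bf p},{\bf x}$) to establish the embeddings in (\ref{form6363rev}) together with the required $C^2$ and anti-holomorphic bounds. Once the chain of one-step estimates is in place, the inequalities for ${\bf q}_j$, ${\bf p}$, and ${\bf x}$ follow by composing the embeddings constructed at each step and applying the triangle inequality to the $C^2$ distances.

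The anti-holomorphic bound (\ref{form6565}) is the easier half. Given Lemma \ref{lem611}, the maps $\widehat\Phi_{{\bf q}_j,\vec{\bf q},\mathfrak p;\Xi_{\mathfrak p}}$, etc., are defined by composing the trivialization map $\widehat\Phi_{\mathfrak r,\vec\epsilon}$ attached to the strong stabilization data $\Xi_{\mathfrak p}$ with the identification of the forgetful image of $(\Sigma_{\bullet},\vec z_{\bullet},\vec{\mathfrak z}_{\bullet}\cup\vec{\mathfrak w}_{\bullet,\mathfrak p})$ with an element of the universal family over $\mathcal V_{\mathfrak p}^{\mathrm d}\times\prod_{\mathfrak v}\mathcal V_{\mathfrak p,\mathfrak v}^{\mathrm s}\times(D^2(c))^{m_{\mathrm s}}$. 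Because $\mathrm{Supp}(E_{\mathfrak p})$ is disjoint from the nodes (Definition \ref{defn631} (1)) and the trivializations in Definition \ref{defn63} (7) are smooth, shrinking the parameters in (\ref{form611}) makes the complex anti-linear part of $d\widehat\Phi$ arbitrarily small in comparison with its linear part, giving (\ref{form6565}). The exceptional case from Remark \ref{rem78} is handled exactly as in Lemma \ref{lem6767} using Definition \ref{defn631} (6).

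For the $C^2$ bound (\ref{form6464}) I would proceed in reverse along the chain. For the innermost step (from ${\bf q}_{n,\mathfrak S_n}$ to $\mathfrak p$) the estimate is exactly Lemma \ref{lem6767} applied to the pair $({\bf q}_n,\mathfrak p)$ together with the inequality $d({\bf q}_{n,\mathfrak S_n},\mathfrak p)<\epsilon_2$ from Situation \ref{situ69} (6). For the inductive step, suppose the estimate holds with $\mathfrak p$ replaced by the composed inner chain up to ${\bf q}_{j+1}$. Then, exactly as in Sublemma \ref{sublem777} and its figure, one uses the extendability of the analytic families of coordinates (Definition \ref{defn122812}) to show that, up to the rescaling produced by the neck parameters, the composition of the two embeddings
\[
\widehat\Phi_{{\bf q}_j,\vec{\bf q},\mathfrak p;\Xi_{\mathfrak p}} = \widehat\Phi_{{\bf q}_j,{\bf q}_{j+1};\Xi_{{\bf q}_{j+1}}}\circ \widehat\Phi_{{\bf q}_{j+1},\vec{\bf q},\mathfrak p;\Xi_{\mathfrak p}}
\]
holds on $\mathrm{Supp}(E_{\mathfrak p})$ up to an error that one absorbs into $\delta$ by shrinking $\epsilon_{1,j}$ relative to the distance of $\mathrm{Supp}(E_{\mathfrak p})$ from the nodes. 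The triangle inequality then combines the inductive estimate for ${\bf q}_{j+1}$ with the one-step estimate of Lemma \ref{lem6767} to yield the inequality for ${\bf q}_j$. The same scheme, applied one more layer out, gives the inequalities for ${\bf p}$ and ${\bf x}$.

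The main obstacle is bookkeeping: the small constants in (\ref{form611}) have to be chosen so that the compounded error across $n$ steps stays below $\delta$, and so that at each step the support of $E_{\mathfrak p}$ remains in the region where the trivializations are defined and agree up to extendable biholomorphisms. This is exactly the mechanism behind the nested order of dependence $\epsilon_2(\delta;\mathfrak p,\Xi_{\mathfrak p})$, $\epsilon_{1,j}(\delta;{\bf q}_j,\dots,{\bf q}_n,\mathfrak p,\Xi_{\mathfrak p})$, $\epsilon_0(\delta;{\bf p},\vec{\bf q},\mathfrak p,\Xi_{\mathfrak p})$ in (\ref{form611}). I would fix the constants by first choosing $\epsilon_2$ small enough for the base case of Lemma \ref{lem6767}, then choosing each $\epsilon_{1,j}$ downward inductively so that the one-step Sublemma \ref{sublem777}-type composition produces an extra error bounded by $\delta/2^j$, and finally choosing $\epsilon_0$ to control the last composition. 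Smoothness of $E_{{\bf p},\vec{\bf q},\mathfrak p;\Xi_{\mathfrak p}}(\cdot)$, analogous to Lemma \ref{lem618}, is immediate from these constructions since the parallel transport and bundle maps underlying (\ref{form67})--(\ref{form68}) depend smoothly on the parameters of $\mathcal V_{\mathfrak p}^{\mathrm d}\times\prod_{\mathfrak v}\mathcal V_{\mathfrak p,\mathfrak v}^{\mathrm s}\times(D^2(c))^{m_{\mathrm s}}$.
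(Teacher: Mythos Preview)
Your proposal is correct and follows the same route as the paper. The paper's own proof is the single sentence ``The proof is the same as the proof of Lemma \ref{lem6767}''; what you have written is precisely the unpacking of that sentence in the iterated setting, namely: the bound (\ref{form6565}) is immediate from Lemma \ref{lem611} and the direct definition of the embeddings via $\Xi_{\frak p}$, while (\ref{form6464}) is obtained by running the Sublemma \ref{sublem777}-type composition argument down the chain $\frak p,{\bf q}_n,\dots,{\bf q}_1,{\bf p},{\bf x}$, introducing auxiliary trivialization data at each intermediate $(\Sigma_{{\bf q}_j},\vec{\frak w}_{{\bf q}_j,\frak p})$ and absorbing the error at each step by the nested choice of constants in (\ref{form611}). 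One small notational point: your $\Xi_{{\bf q}_{j+1}}$ should really be written as $\Xi_{(\Sigma_{{\bf q}_{j+1}},\vec{\frak w}_{{\bf q}_{j+1},\frak p})}$, i.e.\ auxiliary data on the source curve with the transported marked points, exactly as in the proof of Lemma \ref{lem6767}; but you clearly have the right object in mind.
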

The proof is the same as the proof of Lemma \ref{lem6767}.

Now using $\widehat{\Phi}_{{\bf x},\vec{\bf q},{\frak p};\Xi_{\frak p}}$
and Lemma \ref{lem6767rev}, we obtain a map

\begin{equation}\label{form66666rev}
\aligned
\mathcal P_{{\bf x},{\bf p},\vec{\bf q},{\frak p};\Xi_{\frak p}} : &
\,\, C^2(\widehat{\Phi}_{{\bf x},\vec{\bf q},{\frak p};\Xi_{\frak p}}
(U({\rm Supp}(E_{\frak p})));u_{\bf x}^*TX \otimes \Lambda^{0,1}) \\
&\to C^2(U({\rm Supp}(E_{\frak p}));u^*_{\frak p}TX \otimes \Lambda^{0,1})
\endaligned
\end{equation}
in the same way as we obtained (\ref{form66666}).
Remark \ref{rem78} also applies here.
We define:
\begin{equation}\label{form6969rev}
E_{{\bf p},\vec{\bf q},{\frak p};\Xi_{\frak p}}({\bf x})
:=
(\mathcal P_{{\bf x},{\bf p},\vec{\bf q},{\frak p};\Xi_{\frak p}})^{-1}
(E_{\frak p}).
\end{equation}
\begin{lem}\label{lem613}
${\bf x} \mapsto E_{{\bf p},\vec{\bf q},{\frak p};\Xi_{\frak p}}({\bf x})$
is smooth in the sense of \cite[Definition 8.7]{diskconst1}.
\end{lem}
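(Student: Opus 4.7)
The proof will essentially repeat that of Lemma \ref{lem618}, which was established in \cite[Subsection 11.2]{diskconst1}, with only additional bookkeeping caused by iterating through the chain ${\bf q}_1,\dots,{\bf q}_n$. The crucial simplification is that the data ${\bf q}_1,\dots,{\bf q}_n, {\frak p}$ and the stabilization data $\Xi_{\frak p}$ are all held fixed, so only ${\bf x} \in B_{\epsilon_0}({\mathcal X},{\bf p})$ varies and smoothness has to be verified in ${\bf x}$ alone. My plan is to follow the three-stage decomposition of the construction of $E_{{\bf p},\vec{\bf q},{\frak p};\Xi_{\frak p}}({\bf x})$ and verify smoothness at each stage.

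First, I would show that the auxiliary marked points $\vec{\frak w}_{{\bf x};{\frak p}}$ produced by the Implicit Function Theorem argument in Lemma \ref{lem61060} depend smoothly on ${\bf x}$. The points $\vec{\frak w}_{{\bf q}_j;{\frak p}}$ for $j=1,\dots,n$ are fixed since the ${\bf q}_j$'s are fixed; only $\vec{\frak w}_{{\bf p};{\frak p}}$ and $\vec{\frak w}_{{\bf x};{\frak p}}$ require analysis. The transversality condition of Definition \ref{defn62}(4), combined with the smooth dependence of $u_{\bf x}$ on ${\bf x}$ in the sense of \cite[Definition 8.7]{diskconst1}, gives the required smoothness via the Implicit Function Theorem, exactly as in \cite[Lemma 9.9]{diskconst1}.

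Next, I would analyze the embedding $\widehat{\Phi}_{{\bf x},\vec{\bf q},{\frak p};\Xi_{\frak p}}$ of \eqref{form6363rev}. By Lemma \ref{lem611}, the forgetful image $\frak{forget}_{k+1,\ell+\ell';0,\ell'}(\Sigma_{\bf x},\vec z_{\bf x},\vec{\frak z}_{\bf x}\cup\vec{\frak w}_{{\bf x};{\frak p}})$ lies in a prescribed neighborhood of $(\Sigma_{\frak p},\vec{\frak w}_{\frak p})$ in $\mathcal M_{0,\ell'}^{\rm d}$; the trivialization $\phi_{\frak p}^{\rm d}$, $\phi_{\frak p,\frak v}^{\rm s}$ of Definition \ref{defn63}(7) together with the analytic families of coordinates of Definition \ref{defn63}(8) (and Remark \ref{rem78} in the exceptional case of Figure \ref{Figure2-5}) then determine $\widehat{\Phi}_{{\bf x},\vec{\bf q},{\frak p};\Xi_{\frak p}}$ as a smooth function of ${\bf x}$ on the compact set $U({\rm Supp}(E_{\frak p}))$, which is disjoint from nodes by Definition \ref{defn631}(1). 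The combination of the previous step and the smoothness of the trivialization gives smoothness of $\widehat{\Phi}_{{\bf x},\vec{\bf q},{\frak p};\Xi_{\frak p}}$ in ${\bf x}$.

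Finally, the transport map $\mathcal P_{{\bf x},{\bf p},\vec{\bf q},{\frak p};\Xi_{\frak p}}$ of \eqref{form66666rev} is the tensor product of the parallel transport \eqref{form67} along the (smoothly varying) minimal geodesic from $u_{{\frak p}}(z)$ to $u_{\bf x}(\widehat{\Phi}_{{\bf x},\vec{\bf q},{\frak p};\Xi_{\frak p}}(z))$, and the bundle map \eqref{form68} given by the complex linear part of the differential of $\widehat{\Phi}_{{\bf x},\vec{\bf q},{\frak p};\Xi_{\frak p}}^{-1}$; both are smooth in ${\bf x}$ by the preceding steps, smoothness of parallel transport, and \eqref{form6565}. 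Pulling back the fixed finite-dimensional space $E_{\frak p}$ yields a smoothly-varying family $E_{{\bf p},\vec{\bf q},{\frak p};\Xi_{\frak p}}({\bf x})$ in the sense of \cite[Definition 8.7]{diskconst1}. The main technical point is to check that the bookkeeping across the chain ${\bf q}_1,\dots,{\bf q}_n$ does not introduce any new difficulty beyond that handled in \cite[Subsection 11.2]{diskconst1}; since each ${\bf q}_j$ and each subtree $\frak S_j$ is fixed, the dependence on ${\bf x}$ factors through the same forgetful-map-plus-trivialization machinery used there, and that argument applies verbatim.
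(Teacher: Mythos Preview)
Your proposal is correct and follows the same approach as the paper, which simply records that the proof is the same as \cite[Subsection 11.2]{diskconst1} and omits the details. Your three-stage decomposition (smoothness of $\vec{\frak w}_{{\bf x};{\frak p}}$ via the Implicit Function Theorem, then of $\widehat{\Phi}_{{\bf x},\vec{\bf q},{\frak p};\Xi_{\frak p}}$ via the fixed trivialization, then of the parallel-transport map) is precisely what that reference carries out, and your observation that the chain $\vec{\bf q}$ is held fixed so no new dependence enters is the reason the paper can dismiss the proof in one line.
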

\begin{proof}
The proof is the same as \cite[Subsection 11.2]{diskconst1} and so is omitted.
\end{proof}
\begin{rem}\label{rem714}
The subspace $E_{{\bf p},\vec{\bf q},{\frak p};\Xi_{\frak p}}({\bf x})$
depends not only on ${\bf x}$, ${\bf p}$, ${\frak p}$, $\Xi_{\frak p}$
but also on
all of $\vec{\bf q}$, $\vec{\frak S}$.
In fact, we consider an element ${\bf x}$ close to ${\bf p}$ which degenerates to
${\bf q}$ as in Figure 7.
\begin{figure}[h]
\centering
\includegraphics[scale=0.4]{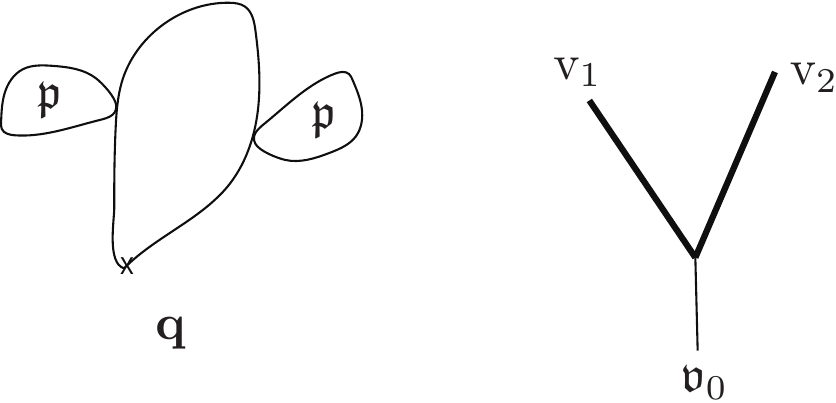}
\caption{Remark \ref{rem714}}
\label{zu5}
\end{figure}
The two (almost) bubbles appearing in the figure are supposed to be
close to ${\bf q}$. We have two choices
(${\bf p},{\bf q},{\rm v}_1$) and $({\bf p},{\bf q},{\rm v}_2)$.
Then for these choices the resulting  $E_{{\bf p},\vec{\bf q},{\frak p};\Xi_{\frak p}}({\bf x})$
are supported in the different part of $\Sigma_{\bf x}$ and so are linearly independent in the
obstruction bundle datum $E_{\bf p}({\bf x})$ to be defined later.
\end{rem}
On the other hand, $E_{{\bf p},\vec{\bf q},{\frak p};\Xi_{\frak p}}({\bf x})$
is independent of small perturbation of ${\bf p}$.
\begin{lem}\label{lem715}
Suppose ${\bf p}$, $\vec{\bf q}$, ${\frak p}$, $\Xi_{\frak p}$
satisfy the assumptions of Lemma \ref{lem61060}.
Then there exists $\frak o$ such that if ${\bf p}'
\in \mathcal M_{k+1,\ell}(X,L,J;\beta)$,
$d({\bf p},{\bf p}') \le \frak o$ then
$$
E_{{\bf p},\vec{\bf q},{\frak p};\Xi_{\frak p}}({\bf x})
=
E_{{\bf p}',\vec{\bf q},{\frak p};\Xi_{\frak p}}({\bf x})
$$
when both  sides are defined.
\end{lem}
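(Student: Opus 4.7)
The plan is to unwind the definition \eqref{form6969rev} and isolate precisely how the right-hand side depends on ${\bf p}$; one then shows this dependence is trivial on a neighborhood by the uniqueness part of the implicit function theorem. By definition $E_{{\bf p},\vec{\bf q},{\frak p};\Xi_{\frak p}}({\bf x}) = (\mathcal P_{{\bf x},{\bf p},\vec{\bf q},{\frak p};\Xi_{\frak p}})^{-1}(E_{\frak p})$, and the isomorphism $\mathcal P$ of \eqref{form66666rev} is built out of (i) the embedding $\widehat{\Phi}_{{\bf x},\vec{\bf q},{\frak p};\Xi_{\frak p}}$ of \eqref{form6363rev}, and (ii) parallel transport along minimal geodesics with respect to a fixed unitary connection on $TX$. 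Only item (i) has any possibility of depending on ${\bf p}$, so the whole question reduces to proving that $\widehat{\Phi}_{{\bf x},\vec{\bf q},{\frak p};\Xi_{\frak p}}$ is unchanged when ${\bf p}$ is replaced by a nearby ${\bf p}'$.

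The embedding $\widehat{\Phi}_{{\bf x},\vec{\bf q},{\frak p};\Xi_{\frak p}}$ is the restriction of a universal-family trivialization $\widehat{\Phi}_{\frak r,\vec{\epsilon}}$ determined by the analytic coordinate/trivialization part of $\Xi_{\frak p}$ (data (7),(8) of Definition \ref{defn63}), evaluated at the parameter $(\frak x;\vec{\epsilon})$ characterized by
$$
\frak{forget}_{k+1,\ell+\ell';0,\ell'}(\Sigma_{{\bf x}},\vec z_{{\bf x}},\vec{\frak z}_{{\bf x}}\cup\vec{\frak w}_{{\bf x};{\frak p}}) = \Phi(\frak x;\vec{\epsilon}).
$$
The trivialization data in $\Xi_{\frak p}$, the map $\Phi$, and the forgetful map $\frak{forget}$ are all intrinsic objects completely independent of ${\bf p}$. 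Therefore the only ${\bf p}$-dependence in $(\frak x;\vec\epsilon)$ enters through the marked points $\vec{\frak w}_{{\bf x};{\frak p}}\subset\Sigma_{\bf x}$, so it suffices to show that these marked points do not move when ${\bf p}$ is perturbed to ${\bf p}'$.

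I would then invoke Lemma \ref{lem61060}: the points $\vec{\frak w}_{{\bf x};{\frak p}}$, $\vec{\frak w}_{{\bf q}_j;{\frak p}}$, $\vec{\frak w}_{{\bf p};{\frak p}}$ are constructed by applying the implicit function theorem upward along the chain, where at each stage the defining equations $u(\cdot)\in\mathcal N_{{\frak p},i}$ and the transversality condition (4) of Definition \ref{defn62} are intrinsic to $(\Xi_{\frak p},u_{\frak p})$ and do not involve ${\bf p}$. By the uniqueness clause of the implicit function theorem, there is a definite open neighborhood of each transverse intersection point on $\Sigma_{\frak p}$ inside which the solution is unique. The strategy is therefore to choose $\frak o>0$ so small (in terms of the IFT radii appearing in the proof of Lemma \ref{lem61060} and of the numbers $\epsilon_{1,1}(\delta;\cdot)$, $\epsilon_{1,j}(\delta;\cdot)$) that when $d({\bf p},{\bf p}')\le\frak o$, the IFT solutions $\vec{\frak w}_{{\bf q}_j;{\frak p}}$ produced using ${\bf p}$ coincide with those produced using ${\bf p}'$, by an upward induction on $j=n,n-1,\dots,1$. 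Once $\vec{\frak w}_{{\bf q}_1;{\frak p}}$ is common to both, the same IFT uniqueness forces $\vec{\frak w}_{{\bf x};{\frak p}}$ to agree, and the conclusion follows.

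The only delicate point I foresee is bookkeeping for the chain of IFT neighborhoods: one must verify that the $\delta$-balls selecting the marked points at each stage are contained in a single injectivity region on $\Sigma_{\frak p}$ for both base points ${\bf p}$ and ${\bf p}'$ simultaneously, so that the two IFT problems have the same unique solution. This is guaranteed by shrinking $\frak o$ appropriately after $\Xi_{\frak p}$ and the chain $\vec{\bf q}$ are fixed, exactly in the spirit of \eqref{form612}. Aside from this, the proof is a direct unpacking of definitions together with the observation that the only piece of the recipe in Subsection \ref{subsec:itera2} which ever involves ${\bf p}$ is the use of ${\bf p}$ as the basepoint of the $\epsilon_0$-ball in \eqref{form610}; this role is purely auxiliary and is washed out by the IFT uniqueness statement.
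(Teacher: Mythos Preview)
Your proposal is correct and follows essentially the same approach as the paper: reduce to showing that the added marked points $\vec{\frak w}_{{\bf x};{\frak p}}$ on $\Sigma_{\bf x}$ are unchanged under ${\bf p}\mapsto{\bf p}'$, and deduce this from the uniqueness clause of the implicit function theorem applied to the transversal constraint $u_{\bf x}(\cdot)\in\mathcal N_{{\frak p},i}$ (Lemma~\ref{lem61060}~(2)). One minor simplification: your upward induction showing that the intermediate markings $\vec{\frak w}_{{\bf q}_j;{\frak p}}$ coincide for the two choices is unnecessary, since by the proof of Lemma~\ref{lem61060} these are constructed starting from ${\frak p}$ and proceeding through ${\bf q}_n,{\bf q}_{n-1},\dots,{\bf q}_1$ without any reference to ${\bf p}$; the dependence on ${\bf p}$ enters only at the last step when one passes from $\vec{\frak w}_{{\bf q}_1;{\frak p}}$ to $\vec{\frak w}_{{\bf p};{\frak p}}$ and then to $\vec{\frak w}_{{\bf x};{\frak p}}$, and it is exactly there that the IFT uniqueness argument does the work.
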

\begin{proof}
By assumption, the added marked points $\vec{\frak w}_{{\bf x},{\frak p}}$, which are
defined by two different choices ${\bf p},\vec{\bf q},{\frak p},\Xi_{\frak p}$
and  ${\bf p}',\vec{\bf q},{\frak p},\Xi_{\frak p}$,
are close to each other. Then by applying the Implicit Function Theorem
using Lemma \ref{lem61060} (2), we derive they coincide. The lemma is a consequence of this fact
and the way  the space $E_{{\bf p},\vec{\bf q},{\frak p};\Xi_{\frak p}}({\bf x})$
is defined in \eqref{form6969}.
\end{proof}

\section{Existence of a disk-component-wise system of obstruction bundle data 2}
\label{subsec;exi2}

In this section we will choose an appropriate set of equivalence classes
of the choices of $\vec{\bf q},{\frak p}, \Xi_{\frak p}$ for each
${\bf p}\in \mathcal M_{k+1,\ell}(X,L,J;\beta)$,
and then the obstruction bundle data $E_{\bf p}({\bf x})$ will be a direct
sum of $E_{{\bf p},\vec{\bf q},{\frak p};\Xi_{\frak p}}({\bf x})$ for such choices.
Such an equivalence class will be called a {\it quasi-component}.
Finding a good choice of a set of quasi-components for each ${\bf p}$ is
the main task to carry out.
\begin{defn}
Let $\mathscr{TC}$ be the set of triples $(k,\ell,\beta)$
such that
$k,\ell \in \Z_{\ge 0}$,
$\beta \in H_2(X,L;\Z)$ and $\mathcal M_{k+1,\ell}(X,L,J;\beta)
\ne \emptyset$.
We define a partial order $<$ on $\mathscr{TC}$ as follows.
\par
Let $(k_i,\ell_i,\beta_i) \in \mathscr{TC}$ ($i=1,2$). We say
$(k_1,\ell_1,\beta_1) \,>' \, (k_2,\ell_2,\beta_2)$ if there 
exists ${\bf p} \in \mathcal M_{k_1+1,\ell_1}(X,L,J;\beta_1)(\frak T)$ and 
${\bf q} \in \mathcal M_{k_2+1,\ell_2}(X,L,J;\beta_2)$ such that 
${\bf q} = {\bf p}_{\frak S}$ for a certain subtree $\frak S$ of $\frak T$.
\par
We say $(k,\ell,\beta) > (k',\ell',\beta')$ if there exist
$(k_j,\ell_j,\beta_j)$ such that $(k,\ell,\beta) =(k_1,\ell_1,\beta_1)$, 
$(k',\ell',\beta') =(k_n,\ell_n,\beta_n)$ and 
$(k_j,\ell_j,\beta_j) \,>' \, (k_{j+1},\ell_{j+1},\beta_{j+1})$.
\end{defn}
We note that the following is an immediate consequence of Gromov compactness.
\begin{lem}\label{lem72}
For any $(k,\ell,\beta) \in \mathscr{TC}$,
there exists only a finite number of $(k',\ell',\beta') \in \mathscr{TC}$ such that
$(k',\ell',\beta') < (k,\ell,\beta)$.
\end{lem}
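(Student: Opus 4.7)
The plan is a direct two-step reduction to Gromov compactness. First, I note that each of the three clauses in the definition of $<$ forces $\omega(\beta') \le \omega(\beta)$. Applying clause (VIII) of Theorem \ref{therem274} (Gromov compactness, equivalent to \cite[Theorem 21.10 (VIII)]{foootech21}) with $E_0 = \omega(\beta)$, only finitely many homology classes $\beta' \in H_2(X,L;\Z)$ with $\omega(\beta') \le \omega(\beta)$ admit a non-empty moduli space $\mathcal M_{k'+1,\ell'}(X,L,J;\beta')$ for some choice of $(k',\ell')$. Consequently, the set of admissible classes $\beta'$ is finite.

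Second, for each such $\beta'$ the three cases in the definition of $<$ confine the pair $(k',\ell')$ to a finite range: case (3) forces $\ell' = \ell$ and $k' < k$; case (2) forces $\ell' < \ell$; and case (1)---read in the componentwise manner required for the order to well-behave in the inductive constructions of Sections \ref{subsec;exi1} and \ref{subsec;exi2}---constrains $(k',\ell') \le (k,\ell)$. In every case $(k',\ell')$ lies in the finite set $\{0,\dots,k\}\times\{0,\dots,\ell\}$, and combining this with the first step yields the finiteness claimed. There is no substantive obstacle: the lemma is pure bookkeeping, amounting to Gromov compactness together with the fact that initial segments of $\Z_{\ge 0}$ are finite, which is why the authors label it an immediate consequence.
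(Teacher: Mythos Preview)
Your first step is correct and matches the paper's one-line justification: all three clauses force $\omega(\beta') \le \omega(\beta)$, and Gromov compactness then confines $\beta'$ to a finite set.

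Your second step, however, has a real gap in its handling of case (1). Clause (1) of the definition is simply $\omega(\beta') < \omega(\beta)$; it imposes \emph{no} restriction whatsoever on $k'$ or $\ell'$. Your phrase ``read in the componentwise manner required for the order to well-behave'' is not an argument---it is an appeal to what you would like the definition to say rather than what it says. Indeed, taken literally the lemma fails whenever there exists $\beta'$ with $0 < \omega(\beta') < \omega(\beta)$ and some $\mathcal M_{k_0'+1,\ell_0'}(X,L,J;\beta') \ne \emptyset$: adding further boundary marked points gives $(k',\ell_0',\beta') \in \mathscr{TC}$ for every $k' \ge k_0'$, and each of these satisfies $(k',\ell_0',\beta') < (k,\ell,\beta)$ by clause (1).

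What the paper actually uses, and what \emph{is} an immediate consequence of Gromov compactness, is weaker than the literal statement. For the induction in the proof of Proposition \ref{prop722} one only needs $<$ to be well-founded, which holds because the realized $\omega$-values below $\omega(\beta)$ form a finite set and $k,\ell \in \Z_{\ge 0}$. For the finiteness invoked in the proof of Lemma \ref{lem87} one only needs that the triples $(k',\ell',\beta')$ arising as parameters of the pieces ${\bf q}_{j,\frak S_j}$ in a quasi-splitting sequence issuing from a fixed $(k,\ell,\beta)$ are finite in number; this does follow from Gromov compactness, since for a subtree one has $\omega(\beta') \le \omega(\beta)$, $\ell' \le \ell$, and $k'$ bounded by $k$ plus the number of interior edges of the ambient tree, which is itself bounded via stability and the minimal positive energy. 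Your instinct that something ``componentwise'' is needed was sound, but that amounts to a correction of the statement, not a proof of it as written.
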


We will associate various objects to $\mathcal M_{k+1,\ell}(X,L,J;\beta)$ inductively on this 
partial order $<$.
The objects we will construct are as follows.
\begin{enumerate}
\item[(ob1)]
A finite subset $\frak P(k,\ell,\beta) \subset \mathcal M^{\circ}_{k+1,\ell}(X,L,J;\beta)$.
\item[(ob2)]
For each $\frak p \in \frak P(k,\ell,\beta)$, we take its open neighborhood $K_{\circ}(\frak p)$
in $\mathcal M^{\circ}_{k+1,\ell}(X,L,J;\beta)$ so that its closure $K(\frak p)$ in $\mathcal M^{\circ}_{k+1,\ell}(X,L,J;\beta)$
is a compact subset contained in $\mathcal M^{\circ}_{k+1,\ell}(X,L,J;\beta)$.
\item[(ob3)] We take type I strong stabilization data together with an obstruction space
$E_{\frak p}$ at $\frak p$, which we denote by $\Xi_{\frak p}$.
\end{enumerate}
We will construct other objects $\mathscr F_{k+1,\ell,\beta}$, $\mathscr F^{\circ}_{k+1,\ell,\beta}$ 
in addition which will be described later.
(See (ob4) right above Condition \ref{cond717}.)
\par
We start with describing the conditions we require for them.
(Existence of the objects satisfying those conditions will be
proved in Proposition \ref{prop722} later.)
Let
$$\frak p \in \frak P(k,\ell,\beta),
\qquad {\bf q} \in K(\frak p).
$$
We require $K(\frak p)$ to be sufficiently small so that Condition \ref{cond73}
below holds.
Let $\vec{\frak w}_{\frak p}$ be the additional interior marked points
which are parts of $\Xi_{\frak p}$.
If $K(\frak p)$ is sufficiently small, there exists
$\vec{w}_{{\bf q},\frak p}$ such that
${\bf q} \cup \vec{w}_{{\bf q},\frak p}$ is $\epsilon$-close to
${\frak p} \cup \vec{\frak w}_{\frak p}$.
In particular,
\begin{equation}\label{form71}
(\Sigma_{\bf q},\vec{w}_{{\bf q},\frak p})
\in
\Phi
\left(
\mathcal V_{{\frak p}}^{\rm d} \times \prod_{\frak v}
\mathcal V_{{\frak p},{\frak v}}^{\rm s}
\times (D^2(c))^{m_{\rm s}}
\right)
\end{equation}
where $\Phi$ is the map (\ref{fprmmapF}) induced by $\Xi_{\frak p}$.
Therefore in the same way as in Section \ref{subsec;exi1}, we obtain
an embedding
\begin{equation}\label{form72}
\hat{\Phi}_{{\bf q},{\frak p};\Xi_{\frak p}} : \rm{Supp}(E_{\frak p}) \to \Sigma_{\bf q}
\end{equation}
and then a map
\begin{equation}\label{form73}
\aligned
\mathcal P_{{\bf q},{\frak p};\Xi_{\frak p}} :\,\,\,
&C^2(\hat{\Phi}_{{\bf q},{\frak p};\Xi_{\frak p}}(U({\rm Supp}(E_{\frak p}))),u_{\bf q}^*TX
\otimes \Lambda^{0,1}) \\
&\to
C^2(U({\rm Supp}(E_{\frak p})),u_{\frak p}^*TX
\otimes \Lambda^{0,1})
\endaligned
\end{equation}
where $U({\rm Supp}(E_{\frak p}))$ is a sufficiently small open neighborhood of
${\rm Supp}(E_{\frak p})$.
Then we define
\begin{equation}\label{form6969revrev}
E_{{\bf q},{\frak p};\Xi_{\frak p}}({\bf q})
=
(\mathcal P_{{\bf q},{\frak p};\Xi_{\frak p}})^{-1}(E_{\frak p}).
\end{equation}

\begin{conds}\label{cond73}
We require that $K(\frak p)$ is so small that
(\ref{form71}) holds, (\ref{form72}),(\ref{form73}) are well defined, and
Definition \ref{defn631} (2)(3) hold with $E_{\frak p}$ replaced by
$E_{{\bf q},{\frak p};\Xi_{\frak p}}({\bf q})$.
\end{conds}
We remark that if ${\frak p} = {\bf q}$ then Definition \ref{defn631} (2)(3) hold
by assumption. Therefore Condition \ref{cond73} holds if $K(\frak p)$
is sufficiently small.
\par
We denote by $\frak T_0  \in \mathcal G(k+1,\ell,\beta)$ the unique element
that has only one interior vertex. We call it the {\it trivial element}.
\par
Now we will discuss the relationship between the data given above and the construction
of Subsection \ref{subsec:itera2}.
Suppose we are given a finite set $\frak P(k,\ell,\beta)$ as in (ob1)
for each $(k,\ell,\beta)$ and $\Xi_{\frak p}$ as in (ob3) for $\frak p \in \frak P(k,\ell,\beta)$.

\begin{shitu}\label{situ69rev}
We consider
${\bf p}$, ${\bf q}_j$, ${\frak p}$,
$\frak T_j$, $\frak S_j$ ($j=1,\dots,n$ for some $n\ge 1$)
as in Situation \ref{situ69},
except we require
\begin{enumerate}
\item[(3)']
$d({\bf p},{\bf q}_1) < \epsilon(k,\ell,\beta) = \epsilon(k_1,\ell_1,\beta_1)$.
\item[(5)']
For $j \le n-1$
we require ${\bf q}_{j,\frak S_j} \in \mathcal M_{k_{j+1}+1,\ell_{j+1}}(X,L,J;\beta_{j+1})$
and
\begin{equation}
d({\bf q}_{j,\frak S_j}, {\bf q}_{j+1}) < \epsilon(k_{j+1},\ell_{j+1},\beta_{j+1}).
\end{equation}
\item[(6)']
${{\frak p}} \in \frak P(k_{n+1},\ell_{n+1},\beta_{n+1})$, 
${\bf q}_{n,{\frak S}_n} \in \mathcal M_{k_{n+1}+1,\ell_{n+1}}(X,L,J;\beta_{n+1})$,
and
$$
d({\bf q}_{n,\frak S_n}, \frak p) < \epsilon(\frak p)
$$
\end{enumerate}
instead of (3), (5), (6).
\par
Here $\epsilon(k,\ell,\beta)$ is a sufficiently small positive number
depending only on $k,\ell,\beta$,
and the set $\frak P(k',\ell',\beta')$ 
with $(k,\ell,\beta) > (k',\ell',\beta')$, 
and $\epsilon(\frak p)$ is a positive number depending only on $\frak p$, $\Xi_{\frak p}$.
They are to be determined later during 
the proof of Lemma \ref{lem710}.
\par
In case $n=0$ we assume 
\begin{equation}\label{nform86}
d({\bf p},\frak p) < \epsilon(\frak p).
\end{equation}
 We require that (\ref{nform86}) implies ${\bf p} \in K_0(\frak p)$.
$\diamond$

\end{shitu}
We consider ${\bf x} \in  B_{\epsilon_0}(\mathcal X,{\bf p})$.
(Here $\epsilon_0$ depends on ${\bf p}$ etc. and will be determined later.)
Note that the conditions on the distance appearing in (3)',(5)',(6)' are
similar to but slightly different from (3),(5),(6) in Situation \ref{situ69}.
The next lemma claims that we can use (3)',(5)',(6)' in place of (3),(5),(6).
\begin{lem}\label{lem710}
Suppose $n\ge 1$ in Situation \ref{situ69rev}.
We may choose $\epsilon(k,\ell,\beta)$ so that
if (1),(2),(3)',(4),(5)',(6)',(7) above hold then
the conclusions of Lemmas \ref{lem611},\ref{lem6767rev} hold with the 
right hand sides of (\ref{newform22}),(\ref{form6464}) replaced by $\delta(\frak p)$ in 
(\ref{newnew723}).
\par
In case $n=0$ the same conclusion holds under the assumption 
(\ref{nform86}).
\end{lem}
Namely,
Lemma \ref{lem710} claims  uniformity of the
constants $\epsilon_{1,j}(\delta;{\bf q}_{j},\dots,{\bf q}_n,{\frak p},\Xi_{\frak p})$ and $\epsilon_{2}(\delta;{\frak p},\Xi_{\frak p})$.
In (\ref{form611}) they depend on
${\bf p},{\bf q}_{1},\dots,{\bf q}_n,\frak p$.
However, Lemma \ref{lem710} asserts that we can choose them so that they depend only on $k_j$, $\ell_j$, $\beta_j$
and that
the conclusions of Lemmas \ref{lem61060}, 
\ref{lem611} and \ref{lem6767rev} hold.
We prove this technical Lemma \ref{lem710} at the end of this section
using compactness of various spaces.
\par
Now we apply Lemma \ref{lem61060} to obtain
$\vec{\frak w}_{{\bf x};{\frak p}}$, $\vec{\frak w}_{{\bf p};{\frak p}}$
$\vec{\frak w}_{{\bf q}_j,{\frak p}}$ ($j=1,\dots,n$).
Then we use Lemma \ref{lem611} to obtain smooth embeddings
\begin{equation}\label{form6363revsuper}
\widehat{\Phi}_{{\bf x},{\bf p},\vec{\bf q},{\frak p};\Xi_{\frak p}} :
{\rm Supp}(E_{\frak p}) \to \Sigma_{{\bf x}}
\end{equation}
as in \eqref{form6363}.
Note $\vec{\frak w}_{{\bf p};{\frak p}}$ also depends on $\vec{\bf q}$
so we write $\vec{\frak w}_{{\bf p},\vec{\bf q};{\frak p}}$.
Then Lemma \ref{lem710} enables us to define the following notion.
\begin{defn}
We call $({\bf p},\vec{\bf q},\frak p;\vec{\frak T},\vec{\frak S})$ as in Situation \ref{situ69rev}
a {\it quasi-splitting sequence}.
Suppose $({\bf p},\vec{\bf q}_{(c)},\frak p;\vec{\frak T}_{(c)},\vec{\frak S}_{(c)})$
for $c=1,2$ are quasi-splitting sequences (with the same ${\bf p},\frak p$).
We say that they are {\it equivalent} if
$$
\vec{\frak w}_{{\bf p},\vec{\bf q}_{(1)},{\frak p}}
=
\vec{\frak w}_{{\bf p},\vec{\bf q}_{(2)},{\frak p}}.
$$
An equivalence class of quasi-splitting sequences is called a {\it quasi-component}.
\par
Let ${\mathscr{QC}}_{k+1,\ell}(X,L,J;\beta)$ be the set of all quasi-components. There is a map
\begin{equation}\label{map87}
\Pi=\Pi_{\beta} : {\mathscr{QC}}_{k+1,\ell}(X,L,J;\beta) \to \mathcal M_{k+1,\ell}(X,L,J;\beta)
\end{equation}
which assigns ${\bf p}$ to an equivalence class of $({\bf p},\vec{\bf q},\frak p;\vec{\frak T},\vec{\frak S})$.
We say an element $\xi$ of ${\mathscr{QC}}_{k+1,\ell}(X,L,J;\beta)$ is a
{\it quasi-component of ${\bf p}$}
if $\Pi(\xi) = {\bf p}$.
We write an element of
${\mathscr{QC}}_{k+1,\ell}(X,L,J;\beta)$
as
$$
({\bf p},\xi,\frak p)
$$
where ${\bf p}$ and ${\frak p}$ are
the first and the last element of the sequence.
We put
$$
\vec{\frak w}_{{\bf p},\xi,\frak p}
= \vec{\frak w}_{{\bf p},\vec{\bf q},{\frak p}}
$$
if $\xi$ is the equivalence class of $({\bf p},\vec{\bf q},{\frak p};\vec{\frak T},\vec{\frak S})$.
\end{defn}
We note that
$$
\widehat{\Phi}_{{\bf x},{\bf p},\vec{\bf q}_{(1)},{\frak p};\Xi_{\frak p}}
=
\widehat{\Phi}_{{\bf x},{\bf p},\vec{\bf q}_{(2)},{\frak p};\Xi_{\frak p}},
$$
if $({\bf p},\vec{\bf q}_{(1)},{\frak p};\vec{\frak T}_{(1)},\vec{\frak S}_{(1)})$ is
equivalent to
$({\bf p},\vec{\bf q}_{(2)},{\frak p};\vec{\frak T}_{(2)},\vec{\frak S}_{(2)})$.
Therefore for each quasi-component $({\bf p},\xi,\frak p)$ we can associate an embedding
\begin{equation}\label{form88}
\widehat{\Phi}_{({\bf x},{\bf p},\xi,\frak p)} : {\rm Supp}(E_{\frak p}) \to \Sigma_{{\bf x}}.
\end{equation}
It then induces a map
\begin{equation}\label{form66666revrev}
\aligned
\mathcal P_{({\bf x},{\bf p},\xi,\frak p)} : &C^2(\widehat{\Phi}_{({\bf x},{\bf p},\xi,\frak p)}(U({\rm Supp}(E_{\frak p})));u_{\bf x}^*TX \otimes \Lambda^{0,1}) \\
&\to C^2(U({\rm Supp}(E_{\frak p}));u^*_{\frak p}TX \otimes \Lambda^{0,1})
\endaligned
\end{equation}
in the same way as (\ref{form66666rev}). We define
\begin{equation}\label{form6969revrev}
E_{{\bf p},\xi,{\frak p}}({\bf x})
=
(\mathcal P_{({\bf x},{\bf p},\xi,\frak p)})^{-1}(E_{\frak p}).
\end{equation}
Compare this with (\ref{form6969rev}). Here $E_{\frak p}$ is a part of the data $\Xi_{\frak p}$.
\par
The obstruction bundle data $\{E_{{\bf p}}({\bf x})\}$ we will construct are the direct sum
of $E_{{\bf p},\xi,{\frak p}}({\bf x})$ for an appropriate set of
quasi-components of ${\bf p}$. We need
a careful choice of the set of the quasi-components so that
it satisfies the required properties.
The discussion of the process of choosing such a set of quasi-components
will follow.
We first observe:
\begin{lem}\label{lem87}
For each ${\bf p}  \in \mathcal M_{k+1,\ell}(X,L,J;\beta)$, the set $\Pi^{-1}({\bf p})$ of quasi-components of ${\bf p}$
is a finite set.
\end{lem}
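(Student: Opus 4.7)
The plan is to decompose a quasi-component $({\bf p},\xi,{\frak p})$ of ${\bf p}$ into two pieces of data -- the endpoint ${\frak p}$ and the tuple of marked points $\vec{\frak w}_{{\bf p},\xi,{\frak p}}$ on $\Sigma_{\bf p}$ -- and argue that each piece takes only finitely many values. The observation that makes this decomposition useful is that, by the very definition of the equivalence relation on quasi-splitting sequences, the assignment $\xi\mapsto (\frak p,\vec{\frak w}_{{\bf p},\xi,{\frak p}})$ is injective on $\Pi^{-1}({\bf p})$. So it suffices to bound the number of possible pairs $(\frak p,\vec{\frak w})$.

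First I would handle the finiteness of ${\frak p}$. Any quasi-splitting sequence for ${\bf p}$ produces a chain $(k,\ell,\beta) = (k_1,\ell_1,\beta_1),(k_2,\ell_2,\beta_2),\dots,(k_n,\ell_n,\beta_n)$ where each passage from $(k_j,\ell_j,\beta_j)$ to $(k_{j+1},\ell_{j+1},\beta_{j+1})$ arises by restricting ${\bf q}_j\in\mathcal M^\circ_{k_j+1,\ell_j}(X,L,J;\beta_j)(\frak T_j)$ to the subtree $\frak S_j$; in particular $\beta_{j+1}$ is a sum of classes $\beta({\rm v})$ for interior vertices ${\rm v}$ of $\frak S_j\subset \frak T_j$, giving $\omega(\beta_{j+1})\le\omega(\beta_j)$, while $\ell_{j+1}\le\ell_j$ since the $l(\cdot)$-labels partition, and $k_{j+1}$ is controlled by the tree combinatorics. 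Hence $(k_n,\ell_n,\beta_n)\le (k,\ell,\beta)$ in the order of $\mathscr{TC}$, and by Lemma \ref{lem72} only finitely many such triples can occur. Since each $\frak P(k_n,\ell_n,\beta_n)$ is finite by (ob1), this forces $\frak p$ to range over a finite subset of $\bigcup\frak P(k_n,\ell_n,\beta_n)$.

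Next I would show that, for each fixed ${\frak p}$, the tuple $\vec{\frak w}_{{\bf p},\xi,{\frak p}}$ on $\Sigma_{\bf p}$ takes only finitely many values. By the iterated Implicit Function Theorem construction of Lemmas \ref{lem65} and \ref{lem61060}, each component $\frak w_{{\bf p},\xi,{\frak p},i}$ lies in the preimage $u_{\bf p}^{-1}(\mathcal N_{{\frak p},i})\subset \Sigma_{\bf p}$, with the local transversality of $u_{\bf p}$ to $\mathcal N_{{\frak p},i}$ at this point inherited from the transversality at ${\frak p}$ required by Definition \ref{defn62}(4). The tuple therefore takes values in the finite product $\prod_{i=1}^{\ell'} u_{\bf p}^{-1}(\mathcal N_{{\frak p},i})$, and Steps 1 and 2 together give the lemma.

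The main obstacle will be ensuring that each $u_{\bf p}^{-1}(\mathcal N_{{\frak p},i})$ is a finite subset of $\Sigma_{\bf p}$. Local transversality alone only gives discreteness near the actual marked point, so one must preclude the pathology where $u_{\bf p}$ maps an entire irreducible component of $\Sigma_{\bf p}$ into $\mathcal N_{{\frak p},i}$ or is tangent to it along a curve. This can be arranged by an appropriate choice of the codimension-$2$ submanifolds $\mathcal N_{{\frak p},i}$ in the type I strong stabilization data $\Xi_{\frak p}$ (for instance by taking them pseudo-holomorphic in a suitable chart, so that positivity of intersection forces discreteness), after which the compactness of $\Sigma_{\bf p}$ converts discreteness into finiteness and closes the argument.
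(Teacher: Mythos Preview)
Your decomposition and Step~1 (finiteness of the set of possible $\frak p$, via Lemma~\ref{lem72} and finiteness of each $\frak P(k',\ell',\beta')$) match the paper exactly.

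Step~2 is where you diverge. You want to bound the number of tuples $\vec{\frak w}_{{\bf p},\xi,\frak p}$ by bounding each factor $u_{\bf p}^{-1}(\mathcal N_{\frak p,i})$. You correctly identify the obstacle---nothing in Definition~\ref{defn62} prevents a component of $\Sigma_{\bf p}$ from being mapped into $\mathcal N_{\frak p,i}$, or $u_{\bf p}$ from having infinitely many (even transversal) intersections with it---and you propose to repair this by choosing $\mathcal N_{\frak p,i}$ pseudo-holomorphic so that positivity of intersections applies. But this is an additional hypothesis not present in the paper's type~I stabilization data, and for a general tame almost complex structure $J$ on $X$ the existence of local $J$-complex codimension-$2$ submanifolds through prescribed points is itself a nontrivial statement. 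So your fix, as written, imports an assumption the paper does not make.

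The paper bypasses this entirely with an energy argument. For each quasi-component $({\bf p},\xi,\frak p)$ and each $i$, the image $U'_{i,\xi}:=\widehat\Phi_{({\bf p},\xi,\frak p)}(U_i)\subset\Sigma_{\bf p}$ is an embedded disk on which $u_{\bf p}$ is $C^2$-close to the embedding $u_{\frak p}\vert_{U_i}$, hence
\[
\int_{U'_{i,\xi}} u_{\bf p}^*\omega \;>\; \tfrac12\int_{U_i} u_{\frak p}^*\omega \;>\; c \;>\; 0
\]
for a constant $c$ independent of $\xi$. If $\xi\ne\xi'$ then some coordinate satisfies $\frak w_{{\bf p},\xi,\frak p,i}\ne\frak w_{{\bf p},\xi',\frak p,i}$, and a short covering-map argument (both $U'_{i,\xi}$ and $U'_{i,\xi'}$ project homeomorphically to $U_i$ via the respective $\widehat\Phi^{-1}$, so if they met they would coincide) shows $U'_{i,\xi}\cap U'_{i,\xi'}=\emptyset$. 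Since the total symplectic area $\int_{\Sigma_{\bf p}}u_{\bf p}^*\omega$ is finite, only finitely many such disjoint disks can fit, and finiteness follows. This argument uses only the $C^2$-closeness already established in Lemmas~\ref{lem6767} and \ref{lem6767rev} and requires no special choice of $\mathcal N_{\frak p,i}$; that is what it buys over your approach.
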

\begin{proof}
It follows from Lemma \ref{lem65} and the transversality imposed on $\mathcal N_{\frak p,i}$ in Definition \ref{defn62}
that each $\frak w_{\frak p,i}$ carries its sufficiently small connected neighborhood $U_i \subset \Sigma_{\frak p}$
such that $U_i \cap \vec{\frak w}_{\frak p}$ is a single point, i.e.,
$$
U_i \cap \vec{\frak w}_{\frak p} = \{\frak w_{\frak p,i}\}
$$
and  $u_{\bf p} \circ \widehat \Phi_{({\bf p},\xi,\frak p)}: U_i \to X$ and
$u_{\frak p}: U_i \to X$ are $C^2$-close embeddings for \emph{all} possible such choices $({\bf p},\xi,\frak p)$.
Here and hereafter we denote $\widehat \Phi_{({\bf p},\xi,\frak p)} = \widehat \Phi_{({\bf p},{\bf p},\xi,\frak p)}$
and the right hand side is the case ${\bf p} = {\bf x}$ of (\ref{form88}).
Recall that, for given ${\bf p}$,
the number of possible objects $\frak p$
which appear in the quasi-component $({\bf p},\xi,\frak p)$
of ${\bf p}$ is finite by Lemma
\ref{lem72} and finiteness of $\frak P(k,\ell,\beta)$.

For any $({\bf p},\xi,\frak p)$, we put $U'_{i,\xi} = \widehat \Phi_{({\bf p},\xi,\frak p)}(U_i)$.
Then by the above mentioned $C^2$-closeness we have
\begin{equation}\label{areabound}
\int_{U'_{i,\xi}}u_{\bf p}^* \omega_X
> \frac{1}{2} \int_{U_{i}}u_{\frak p}^* \omega_X > c
\end{equation}
for some positive number $c$ independent of $i,\xi$.
\par
Now consider two different quasi-components $({\bf p},\xi,\frak p)$ and $({\bf p},\xi',\frak p)$. We put
$U'_{i,\xi'} = \widehat \Phi_{({\bf p},\xi',\frak p)}(U_i)$ similarly as
$U'_{i,\xi}$.
Then there exists $i$ such that
\begin{equation}\label{eq:2frakwi}
{\frak w}_{{\bf p},\xi,\frak p,i} \neq {\frak w}_{{\bf p},\xi',\frak p,i}, \quad
{\frak w}_{{\bf p},\xi,\frak p,i} \in U'_{i,\xi}, \,  {\frak w}_{{\bf p},\xi',\frak p,i} \in U'_{i,\xi'}
\end{equation}
by the definition of the equivalence class $\xi$.
It follows from the $C^2$-closeness of $u_{\bf p} \circ \widehat \Phi_{({\bf p},\xi,\frak p)}$ and $u_{\bf p} \circ \widehat \Phi_{({\bf p},\xi',\frak p)}$, the transversality
imposed on $\mathcal N_{\frak p,i}$ and the embedding properties of $\widehat \Phi_{({\bf p},\xi,\frak p)}$,
$\widehat \Phi_{({\bf p},\xi,\frak p)}$ that we have a covering map 
$$
(\widehat \Phi_{({\bf p},\xi,\frak p)})^{-1}  \cup (\widehat \Phi_{({\bf p},\xi',\frak p)})^{-1} : 
U'_{i,\xi} \cup U'_{i,\xi'} \to U_i
$$
which is a homeomorphism on each of $U'_{i,\xi}$ and on $U'_{i,\xi'}$ respectively.
Therefore \eqref{eq:2frakwi} implies $U'_{i,\xi} \cap U'_{i,\xi'} = \emptyset$.
This clearly implies that the number of such $({\bf p},\xi,\frak p)$ must be finite. In fact
otherwise we would have
$$
\infty> \int_{\Sigma_{\bf p}} u_{\bf p}^* \omega_X
\geq \sum_{({\bf p},\xi,\frak p)} \int_{U'_{i,\xi}}u_{\bf p}^* \omega_X =
\infty,
$$
a contradiction. Here we use (\ref{areabound}).
This finishes the proof.
\end{proof}
Let $\mathscr F$ be a subset of ${\mathscr{QC}}_{k+1,\ell}(X,L,J;\beta)$.
For a given point ${\bf p} \in \mathcal M_{k+1,\ell}(X,L,J;\beta)$ we put
$$
\mathscr F({\bf p}) = \Pi^{-1}({\bf p}) \cap \mathscr F,
$$
which is a finite set. In this way we regard the assignment
$\mathscr F: {\bf p} \mapsto \mathscr F({\bf p})$
as a map which assigns
a finite set of quasi-components of ${\bf p}$ to an element ${\bf p}$ of ${\mathcal M}_{k+1,\ell}(X,L,J;\beta)$.
We call $\mathscr F$ a {\it quasi-component choice map}.
\par
We next define a topology on ${\mathscr{QC}}_{k+1,\ell}(X,L,J;\beta)$
using the next lemma.
\par
Let ${\bf p} \in \mathcal M_{k+1,\ell}(X,L,J;\beta)$.
We fix a stabilization and trivialization data at ${\bf p}$
in the sense of \cite[Definition 4.9]{diskconst1}.
Then if ${\bf p}' \in \mathcal M_{k+1,\ell}(X,L,J;\beta)$ is close to ${\bf p}$,
we can define a map
$\hat{\Phi}_{{\bf p}'{\bf p}} : \Sigma_{\bf p}(\vec{\epsilon}) \to \Sigma_{{\bf p}'}$.
(See \cite[Lemma 3.9]{diskconst1}.)
\begin{lem}\label{lem712}
There are neighborhoods $U({\bf p})$ of ${\bf p}$
in $\mathcal M_{k+1,\ell}(X,L,J;\beta)$
and $U({\frak w}_{{\bf p},\xi,{\frak p},i})$ of ${\frak w}_{{\bf p},\xi,{\frak p},i}$
in $\Sigma_{\bf p}$
such that if ${\bf p}' \in U({\bf p})$ and $({\bf p},\xi,\frak p)$ is an
quasi-component of ${\bf p}$, and there exists  a unique
quasi-component $({\bf p}',\xi',\frak p)$ of ${\bf p}'$ with the same
${\frak p}$ such that
${\frak w}_{{\bf p}',\xi',{\frak p},i}\in
\hat{\Phi}_{{\bf p}'{\bf p}}(U({\frak w}_{{\bf p},\xi,{\frak p},i}))$.
\end{lem}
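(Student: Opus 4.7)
The plan is to exhibit $\xi'$ by reusing the splitting sequence data of $\xi$, verify its added marked points lie in the prescribed neighborhoods by the continuity built into Lemma \ref{lem61060}, and extract uniqueness from the transversality of $u_{\frak p}$ with $\mathcal N_{\frak p,i}$ in Definition \ref{defn62}(4).

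First, fix a representative $({\bf p},\vec{\bf q},\frak p;\vec{\frak T},\vec{\frak S})$ of $\xi$. The only conditions of Situation \ref{situ69rev} that involve the element $\bf p$ itself (rather than the fixed data $\vec{\bf q},\frak p,\vec{\frak T},\vec{\frak S}$) are (1) and (3)$'$. By the triangle inequality, if $U({\bf p})$ is chosen small enough that every ${\bf p}'\in U({\bf p})$ satisfies $d({\bf p}',{\bf q}_1) < \epsilon(k,\ell,\beta)$, then $({\bf p}',\vec{\bf q},\frak p;\vec{\frak T},\vec{\frak S})$ is again a quasi-splitting sequence. Its equivalence class is the candidate $\xi'$, and Lemma \ref{lem61060} applied to $\bf p'$ yields the added marked points $\vec{\frak w}_{{\bf p}',\xi',\frak p}$.

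Second, Lemma \ref{lem61060} constructs these points by repeated application of the Implicit Function Theorem to the transversality condition in Definition \ref{defn62}(4); in particular the output depends continuously on the input stable map. Hence $\hat\Phi_{{\bf p}'{\bf p}}^{-1}(\frak w_{{\bf p}',\xi',\frak p,i}) \to \frak w_{{\bf p},\xi,\frak p,i}$ in $\Sigma_{\bf p}$ as ${\bf p}'\to{\bf p}$, and after shrinking $U({\bf p})$ depending on the prescribed $U(\frak w_{{\bf p},\xi,\frak p,i})$ one obtains $\frak w_{{\bf p}',\xi',\frak p,i}\in \hat\Phi_{{\bf p}'{\bf p}}(U(\frak w_{{\bf p},\xi,\frak p,i}))$ for every $i$.

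The main point is uniqueness. Let $({\bf p}',\xi'',\frak p)$ be any quasi-component of $\bf p'$ with the same $\frak p$ whose added marked points satisfy $\frak w_{{\bf p}',\xi'',\frak p,i}\in \hat\Phi_{{\bf p}'{\bf p}}(U(\frak w_{{\bf p},\xi,\frak p,i}))$. Represent $\xi''$ by some quasi-splitting sequence $({\bf p}',\vec{\bf q}'',\frak p;\vec{\frak T}'',\vec{\frak S}'')$. Among the points of $\hat\Phi_{{\bf p}'{\bf p}}(U(\frak w_{{\bf p},\xi,\frak p,i}))$, the point $\frak w_{{\bf p}',\xi'',\frak p,i}$ is characterized by the incidence $u_{{\bf p}'}(\frak w_{{\bf p}',\xi'',\frak p,i})\in\mathcal N_{\frak p,i}$, guaranteed by Lemma \ref{lem61060}(2). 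Since $u_{{\bf p}'}\circ\hat\Phi_{{\bf p}'{\bf p}}$ is $C^2$-close to $u_{\bf p}$ on $U(\frak w_{{\bf p},\xi,\frak p,i})$, and $u_{\bf p}$ meets $\mathcal N_{\frak p,i}$ transversely at the single point $\frak w_{{\bf p},\xi,\frak p,i}$, shrinking $U({\bf p})$ further ensures that the preimage consists of exactly one point. Thus $\frak w_{{\bf p}',\xi'',\frak p,i}=\frak w_{{\bf p}',\xi',\frak p,i}$ for all $i$, so by the very definition of the equivalence relation on quasi-splitting sequences $\xi''=\xi'$. Note that this argument makes no use of any coincidence between $\vec{\bf q}''$ and $\vec{\bf q}$: equivalence is determined purely by the tuple of added marked points, which is what the transversality argument pins down. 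The final shrinking of $U({\bf p})$ is carried out for each of the finitely many quasi-components of $\bf p$ supplied by Lemma \ref{lem87} and then intersected, using the uniform constants furnished by Lemma \ref{lem710}.
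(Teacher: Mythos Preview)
Your proof is correct and follows the same approach as the paper: pick a representative $({\bf p},\vec{\bf q},\frak p)$ of $\xi$, replace ${\bf p}$ by ${\bf p}'$ to obtain a new quasi-splitting sequence, and deduce uniqueness from the fact that a point near $\frak w_{{\bf p},\xi,\frak p,i}$ with $u_{{\bf p}'}$-image in $\mathcal N_{\frak p,i}$ is unique by transversality. The paper's proof is a terse two-paragraph sketch of exactly this; you have filled in the details (the triangle inequality for (3)$'$, continuity of the added marked points, the Implicit Function Theorem step, and the appeal to Lemma~\ref{lem87} for the final intersection over finitely many quasi-components) that the paper leaves implicit.
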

\begin{proof}
Let $({\bf p},\vec{\bf q},\frak p)$ be a quasi-splitting sequence
representing $({\bf p},\xi,\frak p)$.
It follows from definition that
$({\bf p}',\vec{\bf q},\frak p)$ is also a quasi-splitting sequence
if ${\bf p}'$ is sufficiently close to ${\bf p}$.
We thus obtain $({\bf p}',\xi',\frak p)$.
\par
The quasi-component $\xi'$ is independent of the choice of a representative of $\xi$
because the point
${\frak w}_{{\bf p}',\xi',{\frak p},i}$ which is close to
${\frak w}_{{\bf p},\xi,{\frak p},i}$ and
$u_{{\bf p}'}({\frak w}_{{\bf p}',\xi',{\frak p},i})
\in \mathcal N_{{\frak p},i}$ is unique. The
uniqueness in the statement of Lemma \ref{lem712} follows
from the same fact.
\end{proof}
\begin{defn}\label{defn89}
We define a topology on ${\mathscr{QC}}_{k+1,\ell}(X,L,J;\beta)$ as follows.
\par
Let $({\bf p},\xi,\frak p) \in {\mathscr{QC}}_{k+1,\ell}(X,L,J;\beta)$.
By Lemma \ref{lem712} we obtain
a neighborhood $U({\bf p})$ and an injective map
$U({\bf p}) \to {\mathscr{QC}}_{k+1,\ell}(X,L,J;\beta)$
which sends ${\bf p}'$ to $({\bf p}',\xi,\frak p)$.
We define a neighborhood system of  $({\bf p},\xi,\frak p)$ by sending one of ${\bf p}$ by this map.
\end{defn}
\begin{lem} This topology is Hausdorff.
The map $\Pi : {\mathscr{QC}}_{k+1,\ell}(X,L,J;\beta)
\to \mathcal M_{k+1,\ell}(X,L,J;\beta)$ is a local homeomorphism.
Namely for each point ${\frak x} \in {\mathscr{QC}}_{k+1,\ell}(X,L,J;\beta)$ there
exist neighborhoods of $\frak x$ and of $\Pi(\frak x)$ so that $\Pi$ induces
a homeomorphism between them.
\end{lem}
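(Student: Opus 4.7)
The plan is to treat the local homeomorphism statement first, since the Hausdorff argument will piggyback on it. By construction in Definition \ref{defn89}, a basic neighborhood of $\frak x = ({\bf p},\xi,\frak p)$ is the image $\iota_{\xi,\frak p}(U({\bf p}))$ of the injection ${\bf p}' \mapsto ({\bf p}',\xi',\frak p)$ provided by Lemma \ref{lem712}. The composition $\Pi \circ \iota_{\xi,\frak p}$ is the identity inclusion of $U({\bf p})$ into $\mathcal M_{k+1,\ell}(X,L,J;\beta)$, so on this basic neighborhood $\Pi$ is a continuous bijection to $U({\bf p})$. For the inverse, I would observe that $\iota_{\xi,\frak p}$ is continuous by the very definition of the topology on $\mathscr{QC}_{k+1,\ell}(X,L,J;\beta)$, because the image of a neighborhood of ${\bf p}'$ in $U({\bf p})$ under $\iota_{\xi,\frak p}$ agrees with the basic neighborhood of $({\bf p}',\xi',\frak p)$ described in Definition \ref{defn89}, using the uniqueness assertion of Lemma \ref{lem712} to identify the two descriptions.

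For the Hausdorff property I would split into two cases. If $\frak x_1 = ({\bf p}_1,\xi_1,\frak p_1)$ and $\frak x_2 = ({\bf p}_2,\xi_2,\frak p_2)$ with $\Pi(\frak x_1) \neq \Pi(\frak x_2)$, then Hausdorffness of the stable map topology on $\mathcal M_{k+1,\ell}(X,L,J;\beta)$ (recalled from \cite[Theorem 7.1.43]{fooobook2}) yields disjoint neighborhoods $U({\bf p}_1)$, $U({\bf p}_2)$, whose $\Pi$-preimages inside the basic neighborhoods produced by Definition \ref{defn89} are disjoint. The nontrivial case is ${\bf p}_1 = {\bf p}_2 =: {\bf p}$ with $\frak x_1 \neq \frak x_2$. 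If $\frak p_1 \neq \frak p_2$, then since each basic neighborhood of $\frak x_c$ consists of quasi-components with last entry $\frak p_c$, disjointness is immediate. If $\frak p_1 = \frak p_2 = \frak p$ but $\xi_1 \neq \xi_2$, then by the defining equivalence relation there is an index $i$ with $\frak w_{{\bf p},\xi_1,\frak p,i} \neq \frak w_{{\bf p},\xi_2,\frak p,i}$, and I would choose disjoint open neighborhoods $U(\frak w_{{\bf p},\xi_c,\frak p,i}) \subset \Sigma_{\bf p}$ of these two points.

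The separation then propagates to nearby points via Lemma \ref{lem712}: shrinking $U({\bf p})$ if necessary, the marked points $\frak w_{{\bf p}',\xi'_c,\frak p,i}$ depend continuously on ${\bf p}'$ through the Implicit Function Theorem argument of Lemma \ref{lem65}, so they stay in the respective disjoint open sets $\widehat{\Phi}_{{\bf p}'{\bf p}}(U(\frak w_{{\bf p},\xi_c,\frak p,i}))$ for all ${\bf p}' \in U({\bf p})$. Combined with the uniqueness of the lift provided by Lemma \ref{lem712}, this forces the two basic neighborhoods $\iota_{\xi_1,\frak p}(U({\bf p}))$ and $\iota_{\xi_2,\frak p}(U({\bf p}))$ to be disjoint in $\mathscr{QC}_{k+1,\ell}(X,L,J;\beta)$. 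The main technical point, which I expect to be the chief obstacle, is verifying that the inverse $\iota_{\xi,\frak p}$ is indeed continuous with respect to the topology of Definition \ref{defn89}, that is, showing the two descriptions of basic neighborhoods around $\iota_{\xi,\frak p}({\bf p}')$ — one as the image under $\iota_{\xi,\frak p}$ of a $\mathcal M$-neighborhood of ${\bf p}'$, the other as the $\iota_{\xi',\frak p}$-image associated to $({\bf p}',\xi',\frak p)$ itself — are mutually cofinal; this reduces to the uniqueness clause of Lemma \ref{lem712} applied at ${\bf p}'$.
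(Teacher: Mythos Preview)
Your proposal is correct. The paper simply writes ``The proof is easy and is omitted,'' so there is no argument to compare against; you have supplied the routine details that the authors consider evident from Definition \ref{defn89} and Lemma \ref{lem712}. Your identification of the cofinality check (that the basic neighborhood of $({\bf p}',\xi',\frak p)$ obtained via $\iota_{\xi',\frak p}$ agrees with the one obtained as $\iota_{\xi,\frak p}$ of a small $\mathcal M$-neighborhood of ${\bf p}'$) as the one nontrivial point is accurate, and your reduction of it to the uniqueness clause in Lemma \ref{lem712} is the right move.
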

The proof is easy and is omitted.
\begin{defn}
A quasi-component choice map $\mathscr F$ is said to be {\it open} (resp. {\it closed})
if it is open (resp. closed) as a subset of ${\mathscr{QC}}_{k+1,\ell}(X,L,J;\beta)$.
\par
The {\it closure} of a quasi-component choice map is defined to be the closure as
a subset of ${\mathscr{QC}}_{k+1,\ell}(X,L,J;\beta)$.
\par
$\mathscr F$ is said to be {\it proper} if the restriction of $\Pi$ to $\mathscr F$
is a proper map (to $\mathcal M_{k+1,\ell}(X,L,J;\beta)$).
\end{defn}
Other objects mentioned right after Lemma \ref{lem72} we will construct are:
\begin{enumerate}
\item[(ob4)]
Quasi-component choice maps $\mathscr F^{\circ}_{k+1,\ell,\beta}$ and
$\mathscr F_{k+1,\ell,\beta}$ for each
$(k,\ell,\beta) \in \mathscr{TC}$.
\end{enumerate}
We describe the conditions we require for $\mathscr F^{\circ}_{k+1,\ell,\beta}$ and
$\mathscr F_{k+1,\ell,\beta}$ below.
\begin{conds}\label{cond717}
$\mathscr F^{\circ}_{k+1,\ell,\beta}$ is open.
$\mathscr F_{k+1,\ell,\beta}$ contains its closure and is proper.
$\mathscr F^{\circ}_{k+1,\ell,\beta}$ and $\mathscr F_{k+1,\ell,\beta}$ are invariant under the action of
extended automorphism group in an obvious sense.
\end{conds}
We require three more conditions.
The first one is
Condition \ref{cond718} which describes $\mathscr F^{\circ}_{k+1,\ell,\beta}$, $\mathscr F_{k+1,\ell,\beta}$ at the boundary points
of $\mathcal M_{k+1,\ell}(X,L,J;\beta)$.
Let ${\bf p} \in \mathcal M_{k+1,\ell}^{\circ}(X,L,J;\beta)(\frak T)$
where $\frak T=({\mathcal T}, \beta(\cdot), l(\cdot))$ is nontrivial, i.e.,
$\mathcal T$ has at least two interior vertices.
(See Definition \ref{defn274} and \eqref{stratumemb2} for this notation.)
For an interior vertex ${\rm v}$ of $\mathcal T$,
we obtain ${\bf p}_{\rm v} \in  
\mathcal M_{k_{\rm v}+1,\ell_{\rm v}}^{\circ}(X,L,J;\beta({\rm v}))$.
Let us define a map
\begin{equation}
\mathscr I_{{\bf p},{\rm v}} : \Pi_{\beta({\rm v})}^{-1}({\bf p}_{\rm v}) \to 
\Pi^{-1}_{\beta}({\bf p}).
\end{equation}
Note that $\Pi_{\beta}$, $\Pi_{\beta({\rm v})}$  are the maps (\ref{map87}) and $\Pi_{\beta}^{-1}({\bf p}_{\rm v})$ is the
set of all quasi-components of ${\bf p}_{\rm v}$.
Consider a quasi-component
$({\bf p}_{\rm v},\xi,\frak p;\vec{\frak T},\vec{\frak S})$ of ${\bf p}_{\rm v}$.
We put $\frak S'_1 = \{{\rm v}\}$
and ${\bf q}'_1 = {\bf p}_{\rm v}$.
If $\xi$ is represented by a sequence ${\bf q}_i$, $\frak S_i$,
we put ${\bf q}'_{i+1} = {\bf q}_{i}$ and shift the index of $
\vec{\frak S}$ by $1$
to obtain $\vec{\frak S}'$. $\vec{\frak T}'$ is obtained from  ${\bf q}'_i$
automatically.
We thus obtain a quasi-component $({\bf p},\xi',\frak p;\vec{\frak T}',\vec{\frak S}')$ of ${\bf p}$.
We define
$$
\mathscr I_{{\bf p},{\rm v}}({\bf p}_{\rm v},\xi,\frak p;\vec{\frak T},\vec{\frak S})
:=
({\bf p},\xi',\frak p;\vec{\frak T}',\vec{\frak S}').
$$

\begin{conds}\label{cond718}
({\bf Boundary stratifications})
For ${\bf p} \in \mathcal M_{k+1,\ell}^{\circ}(X,L,J;\beta)(\frak T)$
with nontrivial $\frak T=({\mathcal T}, \beta(\cdot), l(\cdot))$,
$\mathscr F^{\circ}_{k+1,\ell,\beta}({\bf p})$
is the set of all equivalence classes of quasi-components $({\bf p},\xi',\frak p)$
as above,
where we take all possible choices of ${\rm v}$ and
$({\bf p}_{\rm v},\xi,\frak p) \in
\mathscr F^{\circ}_{k_{\rm v},\ell_{\rm v},\beta({\rm v})}({\bf p}_{\rm v})$.
In other words, we have
\begin{equation}\label{form812}
\mathscr F^{\circ}_{k+1,\ell,\beta}({\bf p})
=
\bigcup_{{\rm v} \in C_0^{\rm int}(\mathcal T)}
\mathscr I_{{\bf p},{\rm v}}(\mathscr F^{\circ}_{k_{\rm v}+1,\ell_{\rm v},\beta_{\rm v}}({\bf p}_{\rm v})).
\end{equation}
The same holds if we replace $\mathscr F^{\circ}$ by $\mathscr F$.
Namely, we have
\begin{equation}\label{form81322}
\mathscr F_{k+1,\ell,\beta}({\bf p})
=
\bigcup_{{\rm v} \in C_0^{\rm int}(\mathcal T)}
\mathscr I_{{\bf p},{\rm v}}(\mathscr F_{k_{\rm v}+1,\ell_{\rm v},\beta_{\rm v}}({\bf p}_{\rm v})).
\end{equation}
\end{conds}
It is easy to see that both of the right hand sides of (\ref{form812}) and (\ref{form81322}) are disjoint 
unions.
\par
\medskip
The remaining two conditions we require are related to transversality.
For ${\bf p} \in \mathcal M_{k+1,\ell}(X,L,J;\beta)$ we consider the sum:
\begin{equation}\label{form712}
\sum_{({\bf p},\xi,\frak p) \in \mathscr F_{k+1,\ell,\beta}({\bf p})}E_{{\bf p},\xi,{\frak p}}({\bf p})
\subset C^{\infty}(\Sigma_{\bf p};u_{\bf p}^*TX \otimes \Lambda^{0,1}).
\end{equation}
Here we note that we have
$$
\widehat \Phi_{({\bf p},{\bf p},\xi,{\frak p})}(U(\text{\rm Supp}(E_{{\frak p}}))) \subset \Sigma_{\bf p}
$$
in \eqref{form88} with ${\bf x} ={\bf p}$.

\begin{conds}\label{cond719}
({\bf Direct sum})
The sum (\ref{form712}) is a direct sum which we denote by:
\begin{equation}\label{form712rev}
E_{{\bf p};\mathscr F}({\bf p}) =
\bigoplus_{({\bf p},\xi,\frak p) \in \mathscr F_{k+1,\ell,\beta}({\bf p})}E_{{\bf p},\xi,{\frak p}}({\bf p})
\subset C^{\infty}(\Sigma_{\bf p};u_{\bf p}TX \otimes \Lambda^{0,1}).
\end{equation}
\end{conds}
\par\smallskip
For a given point ${\bf p} \in \mathcal M_{k+1,\ell}(X,L,J;\beta)$,
we may take $\epsilon_0({\bf p})>0$ so small  that if
${\bf x} \in B_{\epsilon_0({\bf p})}(\mathcal  X,{\bf x})$, the subspace $E_{{\bf p},\xi,{\frak p}}({\bf x})$ as in
(\ref{form6969revrev}) is defined and we have the sums
$$\aligned
E_{{\bf p};\mathscr F}({\bf x}) &=
\bigoplus_{({\bf p},\xi,\frak p) \in \mathscr F_{k+1,\ell,\beta}({\bf p})}E_{{\bf p},\xi,{\frak p}}({\bf x})
\subset C^{2}(\Sigma_{\bf x};u_{\bf x}TX \otimes \Lambda^{0,1}),
\\
E_{{\bf p};\mathscr F}^{\circ}({\bf x}) &=
\bigoplus_{({\bf p},\xi,\frak p) \in \mathscr F^{\circ}_{k+1,\ell,\beta}({\bf p})}E_{{\bf p},\xi,{\frak p}}({\bf x})
\subset C^{2}(\Sigma_{\bf x};u_{\bf x}TX \otimes \Lambda^{0,1}).
\endaligned
$$
Note that the sums in the right hand sides are direct sums by Condition \ref{cond719}.
\begin{conds}\label{cond720}
({\bf Transversality})
We consider the operator
$$
D_{u_{\bf p}}\overline{\partial} :
W^2_{m+1}((\Sigma_{\bf p},\partial\Sigma_{\bf p});u_{\bf p}^*TX,u_{\bf p}^*TL)
\to L^2_m(\Sigma_{\bf p},u_{\bf p}^*TX\otimes \Lambda^{0,1})
$$
as in \cite[(5.1)]{diskconst1}.  Then we require
$$
{\rm Im} D_{u_{\bf p}}\overline{\partial} + E^{\circ}_{{\bf p};\mathscr F}({\bf p})
= L^2_m(\Sigma_{\bf },u_{\bf p}^*TX\otimes \Lambda^{0,1}).
$$
Moreover for the evaluation map
$$
{\mathcal{EV}} : W^2_{m+1}((\Sigma_{\bf p},\partial\Sigma_{\bf p});u_{\bf p}^*TX,u_{\bf p}^*TL)
\to T_{u_{\bf p}(z_0)}L
$$
at $z_0$, the restriction
$$
{\mathcal{EV}} : (D_{u_{\bf p}}\overline{\partial})^{-1}(E^{\circ}_{{\bf p};\mathscr F}({\bf p})) \to T_{u_{\bf p}(z_0)}L
$$
is surjective, and
the action of ${\rm Aut}({\bf p})$ on $(D_{u_{\bf p}}\overline{\partial})^{-1}(E^{\circ}_{{\bf p};\mathscr F}({\bf p}))/ {\frak {aut}} (\Sigma_{\bf p}, \vec z_{\bf p},
\vec {\frak z}_{\bf p})$ is effective.
\end{conds}
\begin{rem}
Note that in case $D_{u_{\bf p}}\overline{\partial}$ is 
not surjective, Condition \ref{cond720} 
implies that $\mathscr F^{\circ}_{k+1,\ell,\beta}({\bf p})$ 
is non-empty.
We may take $E^{\circ}_{{\mathbf p}, {\mathcal F}} = 0$, 
when $D_{u_{\mathbf p}}\overline{\partial}$ is surjective and the restriction of $\mathcal{EV}$ 
to $\rm{Ker} \, D_{u_{\mathbf p}}\overline{\partial}$ is surjective.  
\end{rem}
Now we have the following two results.
\begin{prop}\label{prop721}
Suppose we have the objects as in (ob1)-(ob4) and the constants $\epsilon(k,\ell,\beta) >0$,
$\epsilon(\frak p) > 0$
such that Conditions \ref{cond73}, \ref{cond717}, \ref{cond718}, \ref{cond719}, \ref{cond720} are satisfied and 
that $\epsilon(k,\ell,\beta)$, $\epsilon(\frak p) > 0$ are as in Lemma \ref{lem710}.
Then we can choose $\epsilon_0({\bf p})>0$ for each ${\bf p}$ such that
$\mathscr U_{\bf p} =  B_{\epsilon_0({\bf p})}(\mathcal X,{\bf p})$ and
$\{E_{{\bf p};\mathscr F}({\bf x})\}$ is
a system of obstruction bundle data that is disk-component-wise.
\end{prop}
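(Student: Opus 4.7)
The plan is to verify, for a suitable choice of $\epsilon_0({\bf p})>0$, the five conditions of Definition \ref{defn51} together with the disk-component-wise equality \eqref{form410} of Definition \ref{defn5151}, for the assignment
$$
{\bf x}\mapsto E_{{\bf p};\mathscr F}({\bf x})=\bigoplus_{({\bf p},\xi,\frak p)\in\mathscr F_{k+1,\ell,\beta}({\bf p})} E_{{\bf p},\xi,\frak p}({\bf x})
$$
built from \eqref{form6969revrev} and \eqref{form712rev}. By Lemma \ref{lem87} and the properness assumption in Condition \ref{cond717}, the set $\mathscr F_{k+1,\ell,\beta}({\bf p})$ is finite, and we choose $\epsilon_0({\bf p})$ so small that every summand $E_{{\bf p},\xi,\frak p}({\bf x})$ is simultaneously well-defined for ${\bf x}\in B_{\epsilon_0({\bf p})}(\mathcal X,{\bf p})$, as explained in Section \ref{subsec;exi1}.

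I would first dispose of the direct conditions. Item (1) of Definition \ref{defn51} is immediate from Definition \ref{defn631}(1) and the fact that the embeddings $\widehat\Phi_{({\bf x},{\bf p},\xi,\frak p)}$ of \eqref{form88} are built from trivializations respecting nodes and the original marked points $\vec z_{\bf x},\vec{\frak z}_{\bf x}$. Smoothness (item (2)) is Lemma \ref{lem613} applied to each of the finitely many summands. Transversality (item (3)) follows from Condition \ref{cond720}: since $\mathscr F$ contains the closure of $\mathscr F^{\circ}$ by Condition \ref{cond717}, we have $E^{\circ}_{{\bf p};\mathscr F}({\bf p})\subseteq E_{{\bf p};\mathscr F}({\bf p})$, and enlarging the obstruction space only makes both the surjectivity of $D_{u_{\bf p}}\overline\partial$ modulo obstruction and the surjectivity of the linearized evaluation at $z_0$ easier. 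Invariance under extended automorphisms (item (5)) follows from the ${\rm Aut}^+({\bf p})$-invariance of $\mathscr F_{k+1,\ell,\beta}({\bf p})$ built into Condition \ref{cond717}, together with the ${\rm Aut}^+(\frak p)$-invariance of each $E_{\frak p}$ in Definition \ref{defn631}(4); the group action permutes the quasi-components preserving $\frak p$, hence permutes the summands.

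The most delicate step is semi-continuity (item (4)): for ${\bf p}'$ close to ${\bf p}$ in $\mathcal M_{k+1,\ell}(X,L,J;\beta)$ and ${\bf x}$ close to ${\bf p}'$, the required inclusion $E_{{\bf p}';\mathscr F}({\bf x})\subseteq E_{{\bf p};\mathscr F}({\bf x})$ should follow by combining the properness of $\Pi:\mathscr F\to\mathcal M_{k+1,\ell}(X,L,J;\beta)$ with Lemmas \ref{lem712} and \ref{lem715}. Properness together with the local-homeomorphism property of $\Pi$ produces a neighborhood $V$ of ${\bf p}$ and a neighborhood $W\supseteq\mathscr F({\bf p})$ in $\mathscr F$ with $\mathscr F({\bf p}')=\Pi^{-1}({\bf p}')\cap W$ for all ${\bf p}'\in V$; Lemma \ref{lem712} then matches each element of $\mathscr F({\bf p}')$ with a unique element of $\mathscr F({\bf p})$ having the \emph{same} $\frak p$; and Lemma \ref{lem715} identifies the two corresponding obstruction spaces wherever both are defined. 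Shrinking $\epsilon_0({\bf p}')$ so that $B_{\epsilon_0({\bf p}')}(\mathcal X,{\bf p}')\subseteq B_{\epsilon_0({\bf p})}(\mathcal X,{\bf p})$ then yields the inclusion summand by summand.

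To verify disk-component-wise-ness, fix $\frak T=(\mathcal T,\beta(\cdot),l(\cdot))$ and ${\bf p}=({\bf p}_{\rm v})\in\mathcal M_{k+1,\ell}(X,L,J;\beta)(\frak T)$ together with ${\bf x}=({\bf x}_{\rm v})\in\prod\mathscr U_{{\bf p}_{\rm v}}$. Condition \ref{cond718} gives the disjoint decomposition
$$
\mathscr F_{k+1,\ell,\beta}({\bf p})=\bigsqcup_{{\rm v}\in C_0^{\rm int}(\mathcal T)}\mathscr I_{{\bf p},{\rm v}}\bigl(\mathscr F_{k_{\rm v}+1,\ell_{\rm v},\beta_{\rm v}}({\bf p}_{\rm v})\bigr).
$$
By the construction of $\mathscr I_{{\bf p},{\rm v}}$ just above Condition \ref{cond718}, the added marked points $\vec{\frak w}_{{\bf p},\mathscr I_{{\bf p},{\rm v}}(\xi),\frak p}$ coincide with $\vec{\frak w}_{{\bf p}_{\rm v},\xi,\frak p}$ inside $\Sigma_{{\bf p}_{\rm v}}\subset\Sigma_{\bf p}$, and similarly for ${\bf x}$, so the embedding $\widehat\Phi_{({\bf x},{\bf p},\mathscr I_{{\bf p},{\rm v}}(\xi),\frak p)}$ is the composition of $\widehat\Phi_{({\bf x}_{\rm v},{\bf p}_{\rm v},\xi,\frak p)}$ with the tautological inclusion $\Sigma_{{\bf x}_{\rm v}}\hookrightarrow\Sigma_{\bf x}$; the parallel-transport identification \eqref{form66666revrev} factors accordingly. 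Consequently $E_{{\bf p},\mathscr I_{{\bf p},{\rm v}}(\xi),\frak p}({\bf x})=E_{{\bf p}_{\rm v},\xi,\frak p}({\bf x}_{\rm v})$ under the decomposition $C^2(\Sigma_{\bf x},u_{\bf x}^*TX\otimes\Lambda^{0,1})=\bigoplus_{\rm v}C^2(\Sigma_{{\bf x}_{\rm v}},u_{{\bf x}_{\rm v}}^*TX\otimes\Lambda^{0,1})$, and summing over ${\rm v}$ yields precisely \eqref{form410}. I expect semi-continuity to be the main obstacle, since one must coordinate the local-homeomorphism structure of $\mathscr{QC}_{k+1,\ell}(X,L,J;\beta)$, the closed/proper data of Condition \ref{cond717}, and Lemma \ref{lem715} carefully enough that all the neighborhood-size constants fit together consistently.
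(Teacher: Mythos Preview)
Your proposal is correct and follows essentially the same approach as the paper's proof: both verify items (1)--(5) of Definition \ref{defn51} by invoking, respectively, the construction itself, Lemma \ref{lem613}, Condition \ref{cond720}, Lemma \ref{lem715} together with the properness in Condition \ref{cond717}, and the invariance in Condition \ref{cond717}, and then derive disk-component-wise-ness from Condition \ref{cond718}. Your write-up is considerably more detailed than the paper's (particularly the semi-continuity argument via Lemma \ref{lem712} and the explicit factorization of $\widehat\Phi$ for \eqref{form410}), but the logical skeleton is identical.
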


\begin{prop}\label{prop722}
There exist objects as in (ob1)-(ob4) and constants $\epsilon(k,\ell,\beta)>0$,
$\epsilon(\frak p) > 0$
such that Conditions \ref{cond73}, \ref{cond717}, \ref{cond718}, \ref{cond719}, \ref{cond720}  are satisfied
and  $\epsilon(k,\ell,\beta)$, $\epsilon(\frak p) > 0$ are as in  Lemma \ref{lem710}.
\end{prop}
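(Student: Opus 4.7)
The plan is to construct the objects (ob1)--(ob4) by induction on the well-founded partial order $<$ on $\mathscr{TC}$ guaranteed by Lemma \ref{lem72}. Assume inductively that $\frak P(k',\ell',\beta')$, $K_{\circ}(\frak p)$, $\Xi_{\frak p}$, the quasi-component choice maps $\mathscr F^{\circ}_{k'+1,\ell',\beta'}$, $\mathscr F_{k'+1,\ell',\beta'}$, and the constants $\epsilon(k',\ell',\beta')$ have been chosen for all $(k',\ell',\beta') < (k,\ell,\beta)$ so as to satisfy all the required conditions. At a boundary point ${\bf p} \in \mathcal M^{\circ}_{k+1,\ell}(X,L,J;\beta)(\frak T)$ with nontrivial $\frak T$, every irreducible component ${\bf p}_{\rm v}$ lies in a strictly lower moduli space, so Condition \ref{cond718} forces the definition of $\mathscr F^{\circ}_{k+1,\ell,\beta}({\bf p})$ and $\mathscr F_{k+1,\ell,\beta}({\bf p})$ to be the unions $\bigcup_{\rm v}\mathscr I_{{\bf p},{\rm v}}(\mathscr F^{\circ}_{k_{\rm v}+1,\ell_{\rm v},\beta({\rm v})}({\bf p}_{\rm v}))$ and the analogous expression for $\mathscr F$. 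By Lemma \ref{lem715} together with the local homeomorphism property of $\Pi$ (Definition \ref{defn89}), these assignments extend by continuity to an open neighborhood $\mathcal U^{\partial}$ of the boundary $\mathcal M_{k+1,\ell}(X,L,J;\beta)\setminus\mathcal M^{\circ}_{k+1,\ell}(X,L,J;\beta)$ inside $\mathcal M_{k+1,\ell}(X,L,J;\beta)$.

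Next we treat the interior stratum $\mathcal M^{\circ}_{k+1,\ell}(X,L,J;\beta)$. For each point ${\bf p}$ where the boundary-inherited obstruction space does not yet satisfy the transversality Condition \ref{cond720}, one adjoins a finite-dimensional obstruction space $E_{\bf p}$ after choosing type I strong stabilization data at ${\bf p}$, as in Definition \ref{defn631}. Since transversality is an open property, it persists in a neighborhood of ${\bf p}$. Using the compactness of $\mathcal M_{k+1,\ell}(X,L,J;\beta)$ we extract a finite collection of such base points and designate them as $\frak P(k,\ell,\beta)$, with corresponding shrunken neighborhoods $K_{\circ}(\frak p)$. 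We then define $\mathscr F^{\circ}_{k+1,\ell,\beta}({\bf p})$ to be the union of the boundary-inherited quasi-components and the quasi-components arising from the trivial ($n=0$) quasi-splitting sequence $({\bf p},\frak p)$ for each $\frak p \in \frak P(k,\ell,\beta)$ with ${\bf p} \in K_{\circ}(\frak p)$, and $\mathscr F_{k+1,\ell,\beta}$ as its topological closure inside $\mathscr{QC}_{k+1,\ell}(X,L,J;\beta)$. The constant $\epsilon(k,\ell,\beta) > 0$ is then chosen small enough so that the conclusion of Lemma \ref{lem710} holds; the required uniformity is available because only finitely many moduli spaces appear in any quasi-splitting sequence, by Lemma \ref{lem72} and the finiteness of each $\frak P(k',\ell',\beta')$.

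Verification of Condition \ref{cond717} (openness, closedness, properness, invariance under extended automorphisms) is essentially automatic from the construction, using the local homeomorphism property of $\Pi$ together with the equivariance built into Definition \ref{defn62}(5) and Definition \ref{defn63}(10). Condition \ref{cond720} holds on $\mathcal M^{\circ}_{k+1,\ell}(X,L,J;\beta)$ by design and extends to the boundary strata via the inductive hypothesis combined with Lemma \ref{lem715}. The main obstacle, and the step requiring the most care, will be Condition \ref{cond719} (direct sum). The strategy, adapted from the proof of Lemma \ref{lem87}, is that the sections in $E_{{\bf p},\xi,\frak p}({\bf p})$ are supported near the embedded images $\widehat\Phi_{({\bf p},{\bf p},\xi,\frak p)}({\rm Supp}(E_{\frak p})) \subset \Sigma_{\bf p}$, and distinct quasi-components produce disjoint such images by the transversality of the divisors $\mathcal N_{\frak p,i}$ fixed in Definition \ref{defn62}. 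To enforce this globally, one must simultaneously shrink the neighborhoods $K_{\circ}(\frak p)$, the constants $\epsilon(k,\ell,\beta)$, and the sizes of the supports ${\rm Supp}(E_{\frak p})$ so that these images are pairwise disjoint for every point ${\bf p}$ and every finite collection of quasi-components appearing in $\mathscr F_{k+1,\ell,\beta}({\bf p})$. This may entail further refinement of previously chosen data, but the inductive structure is preserved because each refinement remains within the freedom allowed in the lower-level constructions.
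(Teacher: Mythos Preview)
Your inductive skeleton matches the paper's: induct on $(k,\ell,\beta)$, define $\mathscr F^{\circ}$ and $\mathscr F$ on the boundary strata by the formulas forced by Condition \ref{cond718}, extend to a neighborhood of the boundary, then add finitely many new centers $\frak p\in\frak P(k,\ell,\beta)$ in the interior. The paper also proves two nontrivial lemmas (Lemmas \ref{lem818} and \ref{lem819}) establishing openness and properness of the boundary-defined $\mathscr F^{\circ\prime}$ and $\mathscr F^{\prime}$, and gives an explicit metric prescription (Definition \ref{defn820}) for the extension; your ``extend by continuity'' is in the right spirit but glosses over this work.

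The genuine gap is your treatment of Condition \ref{cond719}. You assert that distinct quasi-components $({\bf p},\xi,\frak p)$ and $({\bf p},\xi',\frak p')$ produce disjoint images $\widehat\Phi_{({\bf p},{\bf p},\xi,\frak p)}({\rm Supp}(E_{\frak p}))$ in $\Sigma_{\bf p}$, invoking the argument of Lemma \ref{lem87}. But that lemma only shows that for two quasi-components with the \emph{same} $\frak p$ there is \emph{some} index $i$ for which the images of the small neighborhoods $U_i$ of $\frak w_{\frak p,i}$ are disjoint; it says nothing about the full supports ${\rm Supp}(E_{\frak p})$, and nothing at all about quasi-components with different $\frak p$. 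There is no reason to expect these supports to be separable by shrinking alone, and your concession that this ``may entail further refinement of previously chosen data'' undermines the induction: once you go back and alter $E_{\frak p}$ or $K_\circ(\frak p)$ at a lower level, all the quasi-component choice maps built on top of them change as well.

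The paper handles Condition \ref{cond719} differently and more simply. On the boundary it holds by the inductive hypothesis and the decomposition \eqref{form812}--\eqref{form81322}, hence also on a small neighborhood since direct-sum is an open condition. For the newly added centers $\frak p\in\frak P(k,\ell,\beta)$ one does not attempt to make supports disjoint; instead one \emph{perturbs} the finite-dimensional subspaces $E_{\frak p}$ by an arbitrarily small amount (as in \cite[Lemma 11.7]{diskconst1}) so that the finitely many summands in \eqref{form712} become linearly independent. This is a standard genericity argument in a finite-dimensional setting, requires no modification of lower-level data, and is compatible with the induction.
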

It is obvious that Propositions \ref{prop721} and \ref{prop722} imply
Theorem \ref{thm43}.
Thus to prove Theorem \ref{thm43} it remains to prove Propositions \ref{prop721} and \ref{prop722}
and Lemma \ref{lem710}.

\begin{proof}[Proof of Proposition \ref{prop721}]
We first check that $\{E_{{\bf p};\mathscr F}({\bf x})\}$ is an obstruction bundle data.
Definition \ref{defn51} (1) is obvious from construction.
Definition \ref{defn51} (2) (smoothness) is a consequence of Lemma \ref{lem613}.
\par
Definition \ref{defn51} (3) (transversality) is a consequence of
Condition \ref{cond720} for ${\bf x} = {\bf p}$.
By taking $\epsilon_0({\bf p})>0$ small enough, we can prove the same property for
${\bf x} \in B_{\epsilon_0({\bf p})}(\mathcal X,{\bf p})$.
\par
Definition \ref{defn51} (4) (semi-continuity) is a consequence of
Lemma \ref{lem715} and the
properness of $\mathscr F_{k+1,\ell,\beta}$, which is a part of
Condition \ref{cond717}.
Definition \ref{defn51} (5) (invariance under the extended automorphisms)
is a consequence of the invariance of $\mathscr F_{k+1,\ell,\beta}$,
which is a part of Condition \ref{cond717}.
\par
We then observe that disk-component-wise-ness (\ref{form410})
is an immediate consequence of Condition \ref{cond718} and the definition.
\end{proof}

\begin{proof}[Proof of Proposition \ref{prop722}]
The proof is by induction on $(k,\ell,\beta)$ with respect to the
partial order $<$.
We first consider the case when $(k,\ell,\beta)$ is minimal.
In this case $\mathcal G(k+1,\ell,\beta)$ consists of one element,
the trivial element $\frak T_0$.
We can construct $\frak P(k,\ell,\beta)$, $\Xi_{\frak p}$ and $K_{\circ}({\frak p})$
for its element $\frak p$ in the same way as in \cite[Section 11]{diskconst1}.
In fact, the set $\frak P(k,\ell,\beta)$ is the set
$\{{\bf p}_1,\dots,{\bf p}_{\mathscr P}\}$ appearing right above
\cite[(11.7)]{diskconst1}.
Here $K_{\circ}(\frak p)$ is the same as that of \cite[Section 11]{diskconst1}.
Condition \ref{cond73} is obviously satisfied from construction.
\par
In this case, ${\mathscr{QC}}_{k+1,\ell}(X,L,J;\beta)$ consists of the pair
$({\bf p},\frak p)$ such that ${\bf p} \in K_{\circ}(\frak p)$.
We define  ${\mathscr{F}}_{k+1,\ell,\beta}$ as follows.
We take compact subsets $K_{-}(\frak p) \subset K_{\circ}(\frak p)$
such that
\begin{equation}\label{form714}
\bigcup_{\frak p} {\rm Int}K_{-}(\frak p) = \mathcal M_{k+1,\ell}(X,L,J;\beta).
\end{equation}
We then put
$$
\aligned
{\mathscr{F}}^{\circ}_{k+1,\ell,\beta}({\bf p})
&=
\{({\bf p},\frak p) \mid {\bf p} \in {\rm Int}K_{-}(\frak p)\},
\\
{\mathscr{F}}_{k+1,\ell,\beta}({\bf p})
&=
\{({\bf p},\frak p) \mid {\bf p} \in K_{-}(\frak p)\}.
\endaligned
$$
Condition \ref{cond717} is immediate. Condition \ref{cond718} is void in this case.
Note that $({\bf p},\frak p)$ is the case $n=0$ of Situation \ref{situ69}.

In the same way as \cite[Lemma 11.7]{diskconst1} we can perturb $E_{\frak p}$ by an
arbitrary small amount so that Condition \ref{cond719} holds.
Condition \ref{cond720} is a consequence of (\ref{form714}).
We have thus completed the proof for the minimal $(k,\ell,\beta)$, that is
the first step of the induction.
\par
Next, we assume that we have already obtained
$\mathscr F^{\circ}_{k'+1,\ell',\beta'}$ and
$\mathscr F_{k'+1,\ell',\beta'}$ satisfying the required conditions
for all $(k',\ell',\beta')$ with $(k',\ell',\beta') < (k,\ell,\beta)$.
We will prove the same conclusion for the case of
$\mathscr F^{\circ}_{k+1,\ell,\beta}$ and
$\mathscr F_{k+1,\ell,\beta}$.
\par
We will first define an open subset $\mathscr F^{\circ \prime}_{k+1,\ell,\beta}$ and 
a compact subset
$\mathscr F^{\prime}_{k+1,\ell,\beta}$ of ${\mathscr{QC}}_{k+1,\ell}(X,L,J;\beta)$.
After that, we will modify them to obtain the desired
$\mathscr F^{\circ}_{k+1,\ell,\beta}$ and
$\mathscr F_{k+1,\ell,\beta}$.
\par
First we consider the case ${\bf p} \in \partial \mathcal M_{k+1,\ell}(X,L,J;\beta)$ and
define
$\mathscr F^{\circ \prime}_{k+1,\ell,\beta}({\bf p})$ and $\mathscr F^{\prime}_{k+1,\ell,\beta}({\bf p})$
to be the right hand sides of \eqref{form812}, \eqref{form81322} respectively.
Then we can show the following.
\begin{lem}\label{lem818}
$$
\bigcup_{{\bf p} \in \partial \mathcal M_{k+1,\ell}(X,L,J;\beta)}\mathscr F^{\circ \prime}_{k+1,\ell,\beta}({\bf p})
$$
is an open subset of
$\Pi^{-1}(\partial \mathcal M_{k+1,\ell}(X,L,J;\beta))
\subset {\mathscr{QC}}_{k+1,\ell}(X,L,J;\beta)$.
\end{lem}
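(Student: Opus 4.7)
The plan is to combine Lemma \ref{lem712}, the stratification of Section \ref{subsec;stratifi}, and the inductive hypothesis of Proposition \ref{prop722}. Start with a point $({\bf p},\xi',\frak p)$ in the union. Since ${\bf p}\in\partial\mathcal M_{k+1,\ell}(X,L,J;\beta)$, its combinatorial type $\frak T$ (unique by Lemma \ref{lem333}) is nontrivial, and by (\ref{form812}) there exist a unique interior vertex ${\rm v}$ of $\frak T$ and a quasi-component $\xi\in\mathscr F^{\circ}_{k_{\rm v}+1,\ell_{\rm v},\beta_{\rm v}}({\bf p}_{\rm v})$ with $\xi'=\mathscr I_{{\bf p},{\rm v}}(\xi)$. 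By Definition \ref{defn89} and Lemma \ref{lem712}, a basic neighborhood of $({\bf p},\xi',\frak p)$ in $\mathscr{QC}_{k+1,\ell}(X,L,J;\beta)$ consists of quasi-components $({\bf p}',\tilde\xi,\frak p)$ for ${\bf p}'$ ranging over a neighborhood $U({\bf p})$, where $\tilde\xi$ is the unique quasi-component of ${\bf p}'$ whose added marked points $\vec{\frak w}_{{\bf p}',\tilde\xi,\frak p}$ track $\vec{\frak w}_{{\bf p},\xi',\frak p}$. I must show $\tilde\xi\in\mathscr F^{\circ\prime}_{k+1,\ell,\beta}({\bf p}')$ whenever additionally ${\bf p}'\in\partial\mathcal M_{k+1,\ell}(X,L,J;\beta)$.

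Let $\frak T'$ be the (necessarily nontrivial) combinatorial type of ${\bf p}'$. Continuity of the stable map topology together with Lemma \ref{lema3434} forces $\frak T\le\frak T'$, so $\frak T'$ is obtained from $\frak T$ by contracting some interior edges. Each interior vertex ${\rm v}''$ of $\frak T'$ is thus the image of a subtree $\frak S_{{\rm v}''}\subseteq\frak T$; the disk components of ${\bf p}_{\frak S_{{\rm v}''}}$ are the ${\bf p}_{\rm w}$ for ${\rm w}\in\frak S_{{\rm v}''}$, their gluing lives in $\mathcal M_{k_{{\rm v}''}+1,\ell_{{\rm v}''},\beta_{{\rm v}''}}$ at combinatorial type $\frak S_{{\rm v}''}$, and ${\bf p}'_{{\rm v}''}$ sits close to ${\bf p}_{\frak S_{{\rm v}''}}$ inside that lower moduli space. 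Let ${\rm v}'$ denote the unique vertex of $\frak T'$ with ${\rm v}\in\frak S_{{\rm v}'}$. The strategy is to exhibit $\tilde\xi$ as $\mathscr I_{{\bf p}',{\rm v}'}(\tilde\eta)$ for a suitable $\tilde\eta\in\mathscr F^{\circ}_{k_{{\rm v}'}+1,\ell_{{\rm v}'},\beta_{{\rm v}'}}({\bf p}'_{{\rm v}'})$, so that membership in $\mathscr F^{\circ\prime}_{k+1,\ell,\beta}({\bf p}')$ follows directly from (\ref{form812}).

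The construction of $\tilde\eta$ splits into two cases. If $\frak S_{{\rm v}'}=\{{\rm v}\}$ is trivial then ${\bf p}_{\frak S_{{\rm v}'}}={\bf p}_{\rm v}\in\mathcal M^{\circ}$, and since $(k_{{\rm v}'},\ell_{{\rm v}'},\beta_{{\rm v}'})<(k,\ell,\beta)$ the inductive openness of $\mathscr F^{\circ}_{k_{{\rm v}'}+1,\ell_{{\rm v}'},\beta_{{\rm v}'}}$ combined with Lemma \ref{lem712} applied at this lower level provides, after shrinking $U({\bf p})$, the tracked quasi-component $\tilde\eta\in\mathscr F^{\circ}_{k_{{\rm v}'}+1,\ell_{{\rm v}'},\beta_{{\rm v}'}}({\bf p}'_{{\rm v}'})$. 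If $\frak S_{{\rm v}'}$ is nontrivial then ${\bf p}_{\frak S_{{\rm v}'}}\in\partial\mathcal M_{k_{{\rm v}'}+1,\ell_{{\rm v}'},\beta_{{\rm v}'}}$; I first invoke the inductive form of Condition \ref{cond718} at that level to place $\eta:=\mathscr I_{{\bf p}_{\frak S_{{\rm v}'}},{\rm v}}(\xi)$ in $\mathscr F^{\circ}_{k_{{\rm v}'}+1,\ell_{{\rm v}'},\beta_{{\rm v}'}}({\bf p}_{\frak S_{{\rm v}'}})$, and then the same openness-plus-Lemma-\ref{lem712} argument produces the tracked $\tilde\eta$.

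The principal obstacle is the final identification $\tilde\xi=\mathscr I_{{\bf p}',{\rm v}'}(\tilde\eta)$. The left-hand side is given abstractly through the marked points furnished by Lemma \ref{lem712} at the top level $(k,\ell,\beta)$, whereas the right-hand side is assembled by splicing a quasi-splitting sequence that prepends $\frak S_{{\rm v}'}$ to one representing $\tilde\eta$. Since equivalence of quasi-splitting sequences is determined solely by the added marked points $\vec{\frak w}_{{\bf p}',\cdot,\frak p}$, the identification reduces to showing that top-level tracking commutes with the $\mathscr I$-construction at the level of marked points; this in turn is forced by the uniqueness clause of Lemma \ref{lem65}, applied with the uniform constants $\epsilon_0,\epsilon_{1,j},\epsilon_2$ guaranteed by Lemma \ref{lem710}, since both procedures are characterized by the transversality equations $u_{{\bf p}'}({\frak w}_i)\in\mathcal N_{\frak p,i}$ on the corresponding pieces of $\Sigma_{{\bf p}'}$.
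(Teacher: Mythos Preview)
Your argument is correct and follows essentially the same strategy as the paper's proof: identify the vertex ${\rm v}$ in the combinatorial type of ${\bf p}$ via $\mathscr I_{{\bf p},{\rm v}}$, pass to the image vertex in the coarser type of the nearby boundary point, invoke the inductive openness of $\mathscr F^{\circ}$ at that lower level, and conclude via (\ref{form812}). The only cosmetic difference is that the paper phrases the argument sequentially (taking $({\bf p}_j,\xi_j,\frak p_j)\to({\bf p},\xi,\frak p)$ and passing to a subsequence with fixed type $\frak T_{(1)}$), whereas you work directly with neighborhoods; your explicit case split on whether $\frak S_{{\rm v}'}$ is trivial and your more careful treatment of the marked-point compatibility in the final paragraph are points the paper leaves implicit.
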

\begin{proof}
Let ${\bf p} \in \partial \mathcal M_{k+1,\ell}(X,L,J;\beta)$
and ${\bf p}_j \in \partial \mathcal M_{k+1,\ell}(X,L,J;\beta)$.
Suppose $({\bf p}_j,\xi_j,\frak p_j)$ is a sequence of
quasi-components converging to  $({\bf p},\xi,\frak p)
\in \mathscr F^{\circ \prime}_{k+1,\ell,\beta}({\bf p})$.
It suffices to show that
$({\bf p}_j,\xi_j,\frak p_j) \in \mathscr F^{\circ \prime}_{k+1,\ell,\beta}({\bf p}_j)$
for sufficiently large $j$.
It is easy to see that $\frak p_j = \frak p$ for sufficiently large $j$.
\par
We take marked decorated rooted ribbon trees $\frak T_{(0)}$ and $\frak T_{(1)}$
such that
\begin{equation}\label{form813}
\aligned
{\bf p} \in &\mathcal M^{\circ}_{k+1,\ell}(X,L,J;\beta)(\frak T_{(0)}),  \\
{\bf p}_j \in &\mathcal M^{\circ}_{k+1,\ell}(X,L,J;\beta)(\frak T_{(1)}).
\endaligned
\end{equation}
(We may take $\frak T_{(1)}$ to be independent of $j$ by taking a subsequence
if necessary.)
\par
By \eqref{form812}, \eqref{form81322}, there exists an interior vertex ${\rm v}_0$ of $\frak T_{(0)}$ such that
$$
({\bf p},\xi,\frak p)
= \mathscr I_{{\bf p},{\rm v}_0}({\bf p}_{{\rm v}_0},\xi'_0,\frak p)
$$
for  $({\bf p}_{{\rm v}_0},\xi'_0,\frak p) \in
\mathscr F^{\circ \prime}_{k''+1,\ell'',\beta''}({\bf p}_{{\rm v}_0})$. More specifically
$\xi = ({\bf p}_{{\rm v}_0}, \xi_0')$.
\par
Note $\frak T_{(0)} \le \frak T_{(1)}$. Therefore
there exists a surjective map $\mathcal T_{(0)} \to \mathcal T_{(1)}$.
Let ${\rm v}$ be the image of ${\rm v}_0$ under this map.
\par
Using the fact that
$({\bf p}_j,\xi_j,\frak p)$ converges to $({\bf p},\xi,\frak p)$,
we can easily show that
there exists a sequence of
quasi-components $(({\bf p}_j)_{\rm v},\xi'_j,\frak p)$,
which determines $({\bf p}_j,\xi_j,\frak p)$ in the same way as above.
\par
Let $\frak S$ be the inverse image of the vertex ${\rm v}$
under the map $\mathcal T_{(0)} \to \mathcal T_{(1)}$.
Then again by \eqref{form812} 
there exists $({\bf p}_{\frak S},\xi_{\infty},\frak p)
\in \mathscr F^{\circ \prime}_{k'+1,\ell',\beta'}({\bf p}_{\frak S})$
which is determined by $({\bf p}_{{\rm v}_0},\xi'_0,\frak p)$.
Moreover we can show that $(({\bf p}_j)_{\rm v},\xi'_j,\frak p)$
converges to $({\bf p}_{\frak S},\xi_{\infty},\frak p)$.
\par
Since $\frak T_{(1)}$ is nontrivial,
$(k',\ell',\beta') < (k,\ell,\beta)$.
Therefore by the induction hypothesis we have $(({\bf p}_j)_{\rm v},\xi'_j,\frak p)
\in \mathscr F^{\circ \prime}_{k'+1,\ell',\beta'}(({\bf p}_j)_{\rm v})$
for all sufficiently large $j$.
Therefore again by \eqref{form812}, 
we have $({\bf p}_j,\xi_j,\frak p_j) \in \mathscr F^{\circ \prime}_{k+1,\ell,\beta}({\bf p}_j)$
for sufficiently large $j$.
\end{proof}
\begin{lem}\label{lem819}
The restriction of $\Pi$ to the subset
$$
\bigcup_{{\bf p} \in \partial \mathcal M_{k+1,\ell}(X,L,J;\beta)}\mathscr F^{\prime}_{k+1,\ell,\beta}({\bf p})
$$
is a proper map to $\partial \mathcal M_{k+1,\ell}(X,L,J;\beta)$.
\end{lem}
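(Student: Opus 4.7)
The plan is to extract a convergent subsequence from any sequence in $\Pi^{-1}(K)$ for a compact $K \subset \partial \mathcal M_{k+1,\ell}(X,L,J;\beta)$, using compactness and finiteness arguments together with the induction hypothesis for $(k',\ell',\beta') < (k,\ell,\beta)$. Let $\{({\bf p}_j, \xi_j, \frak p_j)\}$ be such a sequence.

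First, pass to a subsequence so ${\bf p}_j \to {\bf p}_\infty \in K$. The possible $\frak p_j$ lie in the finite union $\bigcup_{(k',\ell',\beta') \le (k,\ell,\beta)} \frak P(k',\ell',\beta')$ (by Lemma \ref{lem72} and finiteness of each $\frak P$), so we may arrange $\frak p_j = \frak p$ constant. Since $\mathcal M_{k+1,\ell}(X,L,J;\beta)$ is a finite disjoint union of the strata $\mathcal M^{\circ}_{k+1,\ell}(X,L,J;\beta)(\frak T')$, we may assume ${\bf p}_j \in \mathcal M^{\circ}_{k+1,\ell}(X,L,J;\beta)(\frak T)$ for a single nontrivial $\frak T$; let ${\bf p}_\infty \in \mathcal M^{\circ}_{k+1,\ell}(X,L,J;\beta)(\frak T_\infty)$, so $\frak T \le \frak T_\infty$. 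By the definition \eqref{form81322} of $\mathscr F'$, we have $({\bf p}_j, \xi_j, \frak p) = \mathscr I_{{\bf p}_j, {\rm v}_j}(({\bf p}_j)_{{\rm v}_j}, \xi'_j, \frak p)$ with $(({\bf p}_j)_{{\rm v}_j}, \xi'_j, \frak p) \in \mathscr F_{k_{{\rm v}_j}+1, \ell_{{\rm v}_j}, \beta({\rm v}_j)}(({\bf p}_j)_{{\rm v}_j})$; finiteness of the vertex set of $\mathcal T$ lets us take ${\rm v}_j = {\rm v}$ constant.

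Let $\pi : \mathcal T_\infty \to \mathcal T$ be the canonical contraction and $\frak S = \pi^{-1}({\rm v}) \subset \mathcal T_\infty$. By the definition of the stable map topology, the components $({\bf p}_j)_{\rm v}$ converge in $\mathcal M_{k_{\rm v}+1,\ell_{\rm v}}(X,L,J;\beta({\rm v}))$ to the glued point ${\bf p}_{\infty,\frak S}$. Since $\frak T$ is nontrivial, $(k_{\rm v}, \ell_{\rm v}, \beta({\rm v})) < (k,\ell,\beta)$, so the induction hypothesis provides properness of $\mathscr F_{k_{\rm v}+1, \ell_{\rm v}, \beta({\rm v})}$; the set $\{({\bf p}_j)_{\rm v}\}_j \cup \{{\bf p}_{\infty,\frak S}\}$ is compact in the base, so after a further subsequence $(({\bf p}_j)_{\rm v}, \xi'_j, \frak p) \to ({\bf p}_{\infty,\frak S}, \xi'_\infty, \frak p) \in \mathscr F_{k_{\rm v}+1, \ell_{\rm v}, \beta({\rm v})}({\bf p}_{\infty,\frak S})$.

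It remains to produce the limit in $\mathscr F'_{k+1,\ell,\beta}({\bf p}_\infty)$. When $\frak T_\infty = \frak T$, the subtree $\frak S$ is just $\{{\rm v}\}$, and $\mathscr I_{{\bf p}_\infty, {\rm v}}({\bf p}_{\infty,{\rm v}}, \xi'_\infty, \frak p)$ is the desired limit by \eqref{form81322}; the neighborhood structure of Lemma \ref{lem712} implies convergence in the topology of Definition \ref{defn89}. When $\frak S$ contains more than one vertex, ${\bf p}_{\infty,\frak S}$ is a boundary point of its moduli space, and applying Condition \ref{cond718} at the smaller level we may write $({\bf p}_{\infty,\frak S}, \xi'_\infty, \frak p) = \mathscr I_{{\bf p}_{\infty,\frak S}, {\rm v}'}(({\bf p}_\infty)_{{\rm v}'}, \xi''_\infty, \frak p)$ for some vertex ${\rm v}'$ of $\frak S$, viewed as a vertex of $\mathcal T_\infty$; concatenating the underlying quasi-splitting sequences then exhibits $\mathscr I_{{\bf p}_\infty, {\rm v}'}(({\bf p}_\infty)_{{\rm v}'}, \xi''_\infty, \frak p)$ as an element of $\mathscr F'_{k+1,\ell,\beta}({\bf p}_\infty)$ and as the limit of the $({\bf p}_j, \xi_j, \frak p)$. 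The main delicate step, which I expect to require the most careful bookkeeping, is this concatenation compatibility of the $\mathscr I$ maps along with the tracking of the added marked points $\vec{\frak w}_{{\bf p},\xi,\frak p}$ across the limit; this in turn reduces to the uniqueness statement in Lemma \ref{lem61060} and to unwinding the equivalence relation defining the quasi-components.
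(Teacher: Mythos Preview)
Your argument is correct and follows essentially the same route as the paper's proof: pass to a subsequence to fix $\frak p$, the stratum, and the vertex ${\rm v}$; use the induction hypothesis (properness of $\mathscr F$ at the strictly smaller level) on the component $({\bf p}_j)_{\rm v} \to {\bf p}_{\infty,\frak S}$; then invoke Condition~\ref{cond718} (twice when $\frak S$ is nontrivial) to lift the limit back to $\mathscr F'_{k+1,\ell,\beta}({\bf p}_\infty)$. One small slip: with the paper's convention (edge contraction \emph{increases} in the order $\le$) the correct inequality is $\frak T_\infty \le \frak T$, not $\frak T \le \frak T_\infty$; your contraction map $\pi:\mathcal T_\infty \to \mathcal T$ is already in the right direction, so this is only a typo and does not affect the argument.
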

\begin{proof}
Let ${\bf p}_j,{\bf p} \in \partial \mathcal M_{k+1,\ell}(X,L,J;\beta)$
and suppose that ${\bf p}_j$ converges to ${\bf p}$.
Let $({\bf p}_j,\xi_j,\frak p) \in \mathscr F^{\prime}_{k+1,\ell,\beta}({\bf p}_j)$.
It suffices to show that $({\bf p}_j,\xi_j,\frak p)$
has a subsequence converging to an element $({\bf p},\xi,\frak p)
\in \mathscr F^{\prime}_{k+1,\ell,\beta}({\bf p})$.
\par
We take marked decorated rooted ribbon trees
$\frak T_{(0)}$ and $\frak T_{(1)}$ such that (\ref{form813}) holds.
(We take a subsequence of $\{{\bf p}_j\}$ if necessary.)
We may assume that there exists a sequence of interior vertices $\{{\rm v}_j\}$ contained in $\frak T_{(1)}$
such that
$$
\mathscr I_{{\bf p}_j,{\rm v}_j}({({\bf p}_j)}_{{\rm v}_j},\xi_j,\frak p) = ({\bf p}_j,\xi_j,\frak p).
$$
Then we may assume ${\rm v}_j = {\rm v}$ is independent of $j$.
Let $\frak S$ be the subgraph which is the inverse image of ${\rm v}$ in $\frak T_{(0)}$.
Using Lemma \ref{lem393939}, we find that $({\bf p}_j)_{\rm v}$ converges
to ${\bf p}_{\frak S}$.
The non-triviality of $\frak T_{(1)}$
and the induction hypothesis show that
we have a subsequence such that $({({\bf p}_j)}_{{\rm v}},\xi_j,\frak p)$
converges to $({\bf p}_{\frak S},\xi',\frak p)
\in \mathscr F^{\prime}_{k'+1,\ell',\beta'}({\bf p}_{\frak S})$.
Using \eqref{form81322}
twice, $({\bf p}_{\frak S},\xi',\frak p)$ determines an element $({\bf p},\xi,\frak p)
\in \mathscr F^{\prime}_{k+1,\ell,\beta}({\bf p})$,
to which $({\bf p}_j,\xi_j,\frak p)$ converges.
\end{proof}
We have thus defined $\mathscr F^{\prime}_{k+1,\ell,\beta}({\bf p})$,
$\mathscr F^{\circ \prime}_{k+1,\ell,\beta}({\bf p})$
for ${\bf p} \in \partial \mathcal M_{k+1,\ell}(X,L,J;\beta)$.
We next extend their definitions to a neighborhood of the boundary.
We take a sufficiently small $\rho > 0$,
with the following properties.
Let $({\bf p},\xi,\frak p)$ be an element
 of $\mathscr F^{\prime}_{k+1,\ell,\beta}({\bf p})$
 with
${\bf p} \in \partial \mathcal M_{k+1,\ell}(X,L,J;\beta)$
and ${\bf p}' \in \mathcal M_{k+1,\ell}(X,L,J;\beta)$
with $d({\bf p},{\bf p}') < \rho$.
Then there exists a representative
$({\bf p},\vec{\bf q},\frak p)$ of $({\bf p},\xi,\frak p)$
such that $({\bf p}',\vec{\bf q},\frak p)$ is a quasi-splitting sequence.
Existence of such $\rho$ is a consequence of Lemma \ref{lem818}.
We may take $\rho >0$ so small that $({\bf p}',\vec{\bf q},\frak p)$
does not depend on the representative.
We denote it by $({\bf p}',\xi,\frak p)$.
\par
Now for ${\bf p}' \in \mathcal M_{k+1,\ell}(X,L,J;\beta)$
we define:
\begin{defn}\label{defn820}
For $\rho > 0$ as above,
$\mathscr F^{\prime}_{k+1,\ell,\beta}({\bf p}')$ is defined to be
the set of all $({\bf p}',\xi,\frak p)$ such that
\begin{enumerate}
\item There exists ${\bf p} \in \partial \mathcal M_{k+1,\ell}(X,L,J;\beta)$.
\item There exists $({\bf p},\xi,\frak p) \in \mathscr F^{\prime}_{k+1,\ell,\beta}({\bf p})$.
\item
$$
d({\bf p}',{\bf p}) \le 2 d({\bf p}',\partial \mathcal M_{k+1,\ell}(X,L,J;\beta))
\le \rho/10.
$$
\end{enumerate}
We define $\mathscr F^{\circ\prime}_{k+1,\ell,\beta}({\bf p}')$
to be the set of all $({\bf p}',\xi,\frak p)$ such that
\begin{enumerate}
\item There exists ${\bf p} \in \partial \mathcal M_{k+1,\ell}(X,L,J;\beta)$.
\item There exists $({\bf p},\xi,\frak p) \in \mathscr F^{\circ\prime}_{k+1,\ell,\beta}({\bf p})$.
\item
 ${\bf p} = {\bf p}'$ or
$$
d({\bf p}',{\bf p}) < 2 d({\bf p}',\partial \mathcal M_{k+1,\ell}(X,L,J;\beta))
< \rho/10.
$$
\end{enumerate}
\end{defn}
Lemma \ref{lem818} implies that $\mathscr F^{\circ\prime}_{k+1,\ell,\beta}$
is open and Lemma \ref{lem819} implies that
$\mathscr F^{\prime}_{k+1,\ell,\beta}$ is proper.
By Item (3), Definition \ref{defn820} coincides with the
previously defined $\mathscr F^{\circ\prime}_{k+1,\ell,\beta}$,
$\mathscr F^{\prime}_{k+1,\ell,\beta}$ on the boundary.
Therefore $\mathscr F^{\circ\prime}_{k+1,\ell,\beta}$  satisfies
\eqref{form812}, \eqref{form81322}.
\par
We claim that we can choose $\rho$ so small that
$\mathscr F^{\circ\prime}_{k+1,\ell,\beta}$ and
$\mathscr F^{\prime}_{k+1,\ell,\beta}$ satisfy
Conditions \ref{cond719} and \ref{cond720} in a
small neighborhood of $\partial\mathcal M_{k+1,\ell}(X,L,J;\beta)$.
Indeed, this is an immediate consequence of \eqref{form812}, \eqref{form81322}
and the induction hypothesis on $\partial\mathcal M_{k+1,\ell}(X,L,J;\beta)$.
Then it holds on its small neighborhood.
\par
Now we choose (ob1), (ob2), (ob3) for $(k,\ell,\beta)$.
Then including them and quasi-splitting sequence of the form $({\bf p},\frak p)$
with $\frak p \in \frak P(k,\ell,\beta)$ we define
$\mathscr F_{k+1,\ell,\beta}$,
$\mathscr F^{\circ}_{k+1,\ell,\beta}$ 
and  $K_0({\frak p})$. 
This step is mostly the same as the first step of induction. The only difference is
we require that
$$
\bigcup_{\frak p \in \frak P(k,\ell,\beta)} {\rm Int}K_{-}(\frak p)
$$
contains  the complement of a small neighborhood of $\partial\mathcal M_{k+1,\ell}(X,L,J;\beta)$ 
in $\mathcal M_{k+1,\ell}(X,L,J;\beta)$,
instead of (\ref{form714})
and $K(\frak p) \subset \mathcal M_{k+1,\ell}^{\circ}(X,L,J;\beta)$.
Here the small neighborhood above is taken so that
$\mathscr F^{\circ\prime}_{k+1,\ell,\beta}$ and
$\mathscr F^{\prime}_{k+1,\ell,\beta}$ satisfy
Condition \ref{cond720} there.
\par
The proof of Proposition \ref{prop722} is now complete.
\end{proof}

\begin{rem}
Note that the number $\epsilon(k,\ell,\beta)$ depends 
on the set $\frak P(k',\ell',\beta')$
and that we use Lemma \ref{lem710} during the proof
of Proposition \ref{prop722}.
However the above proof is not circular.
This is because during the construction of the set
$\frak P(k,\ell,\beta)$ we use 
only $\epsilon(k',\ell',\beta')$ with 
$(k,\ell,\beta) \ge (k',\ell',\beta')$,
and such $\epsilon(k',\ell',\beta')$ 
depends only on $\frak P(k'',\ell'',\beta'')$
with $(k',\ell',\beta') > (k'',\ell'',\beta'')$.
\end{rem}

\begin{proof}[Proof of Lemma \ref{lem710}]
Note that in Lemma \ref{lem710} we are given 
finite sets $\frak P(k,\ell,\beta)$. 
We fix $\delta(\frak p)$ for each $\frak p \in \frak P(k,\ell,\beta)$ 
so that the map (\ref{form6363rev}) exists 
for this choice of  $\delta(\frak p)$.

We next take $\delta'(k,\ell,\beta) > 0$ for each $(k,\ell,\beta)$ so that the following 
holds.
Let $(k,\ell,\beta) = (k_1,\ell_1,\beta_1) > \dots > (k_{n},\ell_{n},\beta_{n})
\ge (k_{n+1},\ell_{n+1},\beta_{n+1})$ 
and $\frak p \in \frak  P(k_{n+1},\ell_{n+1},\beta_{n+1})$.
Then
\begin{equation}\label{new820}
\sum_{j=1}^{n+1} \delta'(k_j,\ell_j,\beta_j)
< \delta(\frak p).
\end{equation}
Note that (\ref{new820}) implies (\ref{newnew723}) when 
$\delta'_0 = \delta'({\bf p})$, $\delta'_j =  \delta'(k_j,\ell_j,\beta_j)$.
Here $\delta'({\bf p})$ is a sufficiently small positive number 
which may depend on ${\bf p}$.
We can find such $\delta'(k,\ell,\beta)$ by taking them to decay
sufficiently rapidly as $(k,\ell,\beta)$ increases with respect to the partial order $<$.
\par
We next claim the following.
There exists $\delta(k,\ell,\beta) > 0$ for each $(k,\ell,\beta)$
with the following properties.
Suppose the conclusions (1), (2), (3) of Lemma \ref{lem61060}
hold with $\delta_0 = \delta({\bf p})$, 
$\delta_j = \delta(k_j,\ell_j,\beta_j)$,
where $\delta_0({\bf p})$ is a small constant depending on ${\bf p}$.
Then (\ref{newform22}) holds with 
$\delta'_0 = \delta'({\bf p})$, $\delta'_j =  \delta'(k_j,\ell_j,\beta_j)$.
We can prove the existence of such $\delta(k,\ell,\beta) > 0$ in the same 
way as the proof of Lemma \ref{lem611}.
\par
Now we apply Lemma \ref{lem61060}.
Let  
$\delta_j = \delta(k_j,\ell_j,\beta_j)$ for $j=1,\dots,n+1$,
and $\delta_0$ a small constant depending on ${\bf p}$.
Then there exist constants as in (\ref{form611}) 
so that Lemma \ref{lem61060} holds.
\par
We remark that
the constants 
$
\epsilon_{1,j}(\delta_j;{\bf q}_j,\dots,{\bf q}_n,\frak p,\Xi_{\frak p})$ 
(appearing in (\ref{form611})) at this stage still depend on ${\bf q}_j$, $\frak p$.
Lemma \ref{lem710} which we are proving claims it depends only 
on $(k_j,\ell_j,\beta_j)$.
\par
For this purpose we prove the 
next sublemma by induction on $(k(0),\ell(0),\beta(0))$.
\begin{sublem}\label{sublemuni}
For any $(k(0),\ell(0),\beta(0)) \in \mathscr{TC}$, there 
exists $\epsilon(k,\ell,\beta)$ for $(k,\ell,\beta) \le (k(0),\ell(0),\beta(0))$ 
and $\epsilon(\frak p)$ for $\frak p \in \frak P(k,\ell,\beta)$  
such that the following holds.
\par
Suppose ${\bf p}$, ${\bf q}_j$, ${\frak p}$,
$\frak T_j$, $\frak S_j$ ($j=1,\dots,n$) $n \ge 1$ are
as in Situation \ref{situ69} (1)(2)(4)(7) and
\begin{enumerate}
\item[(3)'']
$d({\bf p},{\bf q}_1) \le \epsilon(k_1,\ell_1,\beta_1)$,
\item[(5)'']
For $j \le n-1$
we require ${\bf q}_{j,\frak S_j} \in \mathcal M_{k_{j+1}+1,\ell_{j+1}}(X,L,J;\beta_{j+1})$
and
$$
d({\bf q}_{j,\frak S_j}, {\bf q}_{j+1}) \le \epsilon(k_{j+1},\ell_{j+1},\beta_{j+1}),
$$
\item[(6)'']
We require ${\frak p}, {\bf q}_{n,\frak S_{n}} \in \mathcal M^{\circ}_{k_{n+1}+1,\ell_{n+1}}(X,L,J;\beta_{n+1})$ and
\begin{equation}
d({\bf q}_{n,\frak S_{n}},{\frak p}) \le \epsilon(\frak p),
\end{equation}
\end{enumerate}
with ${\bf p} \in \mathcal M_{k+1,\ell}(X,L,J;\beta)$
and $(k,\ell,\beta) \le (k(0),\ell(0),\beta(0))$.
Then the conclusions of Lemmas \ref{lem611} and \ref{lem6767rev} hold with the 
right hand sides of (\ref{newform22}) and (\ref{form6464}) replaced by $\delta(\frak p)$.
\par
In case $n=0$, $(k,\ell,\beta) \le (k(0),\ell(0),\beta(0))$ and
${\bf p},\frak p \in \mathcal M_{k+1,\ell}(X,L,J;\beta)$, the same holds under the assumption 
(\ref{nform86}).
\end{sublem}
\begin{proof}
We prove the sublemma by an upward induction on $(k(0),\ell(0),\beta(0))$.
\par
Suppose the sublemma is proved for $(k'(0),\ell'(0),\beta'(0)) < (k(0),\ell(0),\beta(0))$.
We prove the case of $(k(0),\ell(0),\beta(0))$. The case $n=0$ is easy.
\par
Let  ${\bf p}$, ${\bf q}_j$, ${\frak p}$,
$\frak T_j$, $\frak S_j$ ($j=1,\dots,n$), $n \ge 1$ be as in the 
assumption of the sublemma. Let ${\bf p} \in \mathcal M_{k+1,\ell}(X,L,J;\beta)$.
If   $(k,\ell,\beta) < (k(0),\ell(0),\beta(0))$ then the conclusion holds
by induction hypothesis. 
Let ${\bf p} \in \mathcal M_{k(0)+1,\ell(0)}(X,L,J;\beta(0))$.
\par
We first prove the part of the statement where ${\bf x}$ does not appear.
We apply the induction hypothesis to  the sequence
${\bf q}_{1,\frak S_1}$, ${\bf q}_{2}$,\dots, ${\bf q}_{n}$, $\frak p$.
Namely, ${\bf q}_{1,\frak S_1}$ plays the role of ${\bf p}$,
${\bf q}_{2}$  plays the role of ${\bf q}_{1}$ etc.
Then we obtain 
$\vec{\frak w}_{{\bf q}_j,{\frak p}} \enskip (j=1,\dots,n)$ such that
$$
d((\Sigma_{{\bf q}_{j+1}},\vec z_{{\bf q}_{j+1}},\vec{\frak z}_{{\bf q}_{j+1}}\cup \vec{\frak w}_{{\bf q}_{j+1},{\frak p}}),
(\Sigma_{{\bf q}_{j,\frak S_j}},\vec z_{{\bf q}_{j,\frak S_j}},\vec{\frak z}_{{\bf q}_{j,\frak S_j}}\cup
\vec{\frak w}_{{\bf q}_j,{\frak p}})) < \delta_{{j+1}}.
$$
Here $\delta_{{j+1}}= \delta(k_{j+1},\ell_{j+1},\beta_{j+1})$.
\par
Also we apply Lemma \ref{lem61060} to obtain the following.
There exists $\epsilon_1(\delta_1;{\bf q}_1,\dots,{\bf q}_n,\frak p,\Xi_{\frak p})$
with the following properties.
\par
If $d({\bf p},{\bf q}_1) < \epsilon_1(\delta_1;{\bf q}_1,\dots,{\bf q}_n,\frak p,\Xi_{\frak p})$
then there exists $\vec{\frak w}_{{\bf p};{\frak p }}$ such that 
$$
d((\Sigma_{{\bf q}_1},\vec z_{{\bf q}_1},\vec{\frak z}_{{\bf q}_1}\cup\vec {\frak w}_{{\bf q}_1;{\frak p}}),(\Sigma_{\bf p},
\vec z_{{\bf p}},\vec{\frak z}_{{\bf p}}\cup\vec{\frak w}_{{\bf p};{\frak p}})) < \delta_{{1}}. 
$$
We claim that we may take $\epsilon_1(\delta_1;{\bf q}_1,\dots,{\bf q}_n,\frak p,\Xi_{\frak p})$
which is independent of the choices of 
${\bf q}_1,\dots,{\bf q}_n,\frak p,\Xi_{\frak p}$.
\par
This follows from the following two observations.
The set of the sequences ${\bf q}_1$,\dots,${\bf q}_n$,$\frak p$ such that
$$
\aligned
d({\bf q}_{j,\frak S_j}, {\bf q}_{j+1}) &\le \epsilon(k_{j+1},\ell_{j+1},\beta_{j+1}) \\
d({\bf q}_{n,\frak S_n}, \frak p) &\le \epsilon(\frak p)
\endaligned
$$
is compact.  (We note that in (3)'', (5)'', (6)'' we replace 
the strict inequalities $<$, which are used in (3), (5), (6) in Situation \ref{situ69}, by the inequalities 
$\le$.)
\par
Moreover if ${\bf q}'_j$ is in a small neighborhood of  ${\bf q}_j$ we may take
$$
\epsilon_1(\delta_1;{\bf q}'_1,\dots,{\bf q}'_n,\frak p,\Xi_{\frak p})
=
\epsilon_1(\delta_1;{\bf q}_1,\dots,{\bf q}_n,\frak p,\Xi_{\frak p}).
$$
\par
Thus we completed the induction step except the 
statement related to ${\bf x}$. 
But actually it follows in the same way. 
In fact,  the constant 
$\epsilon_{0}(\delta_{0};{\bf p},{\bf q}_{1},\dots,{\bf q}_n,{\frak p},\Xi_{\frak p})$
which estimates $d({\bf x},{\bf p})$ {\it may} depend on 
${\bf p},{\bf q}_{1},\dots,{\bf q}_n,{\frak p},\Xi_{\frak p}$.
(In other words we do not claim its uniformity in Lemma \ref{lem710}.)
The proof of Sublemma \ref{sublemuni} is complete.
\end{proof}
\par
The proof of Lemma \ref{lem710} is complete.
\end{proof}
Therefore the proof of Theorem \ref{thm43} is now complete.
\qed

\section{Uniqueness of the Kuranishi structure up to pseudo isotopy}
\label{subsec;unique1}

\subsection{The case of a single $K$-space}
\label{subsec:unique1}

We first recall the notion of KK-embedding of Kuranishi structures.
\begin{defn}(\cite[Definition 3.20]{foootech2}, 
\cite[Definition 3.19]{Springer}).\label{def81}
Let $\mathcal M$ be a compact metrizable space and
$\widehat{\mathcal U^{(i)}} =(\{\mathcal U^{(i)}_{{\bf p}}\},\{\Phi^{(i)}_{{\bf p}{\bf q}}\})$
a Kuranishi structure on it, for $i=1,2$.
A {\it strict KK-embedding} $\widehat{\mathcal U^{(1)}} \to \widehat{\mathcal U^{(2)}}$
assigns an embedding of Kuranishi charts $\Phi_{\bf p} : \mathcal U^{(1)}_{\bf p}
\to \mathcal U^{(2)}_{\bf p}$ to each ${\bf p} \in \mathcal M$ such that
\begin{equation}\label{form8181}
\Phi_{\bf p} \circ \Phi^{(1)}_{{\bf p}{\bf q}}\vert_{U^{(1)}_{{\bf p}{\bf q}} \cap \varphi_{\bf q}^{-1}(U^{(2)}_
{{\bf p}{\bf q}})}
=
\Phi^{(2)}_{{\bf p}{\bf q}}\circ \Phi_{\bf q}\vert_{U^{(1)}_{{\bf p}{\bf q}} \cap \varphi_{\bf q}^{-1}(U^{(2)}_{{\bf p}{\bf q}})}.
\end{equation}
\par
A {\it KK embedding} of germs of Kuranishi structures is by definition the germ of a strict KK embedding between the representatives.
We can compose two KK-embeddings in an obvious way.
(\cite[Definition 5.16]{foootech2}, \cite[Definition 5.14]{Springer}.)
\end{defn}
An explanation of the notations in Definition \ref{def81} is in order.
A {\it Kuranishi chart} $\mathcal U^{(i)}_{{\bf p}}$ is given by $(U^{(i)}_{\bf p},E^{(i)}_{\bf p},s^{(i)}_{\bf p},\psi^{(i)}_{\bf p})$
where $U^{(i)}_{\bf p}$ is a Kuranishi neighborhood (an orbifold), $E^{(i)}_{\bf p}$ is an obstruction
bundle (a vector bundle on it), $s^{(i)}_{\bf p}$ is a Kuranishi map (a section of the obstruction bundle)
and $\psi^{(i)}_{\bf p} : (s^{(i)}_{\bf p})^{-1}(0) \to \mathcal M$ is a parametrization map
(a homeomorphism onto its image, which is open).
\par
An {\it embedding of Kuranishi charts} $\Phi_{{\bf p}{\bf q}} : (U_{\bf q},E_{\bf q},s_{\bf q},\psi_{\bf q}) \to (U_{\bf p},E_{\bf p},s_{\bf p},\psi_{\bf p})$
is a triple $(U_{{\bf p}{\bf q}},\varphi_{{\bf p}{\bf q}},\hat\varphi_{{\bf p}{\bf q}})$,
where $U_{{\bf p}{\bf q}} \subset U_{\bf q}$ is an open subset,
$\varphi_{{\bf p}{\bf q}} : U_{{\bf p}{\bf q}} \to U_{{\bf p}}$ is an embedding of orbifolds,
and $\hat\varphi_{{\bf p}{\bf q}} : E_{\bf q}\vert_{U_{{\bf p}{\bf q}}} \to E_{{\bf p}}$
is an embedding of vector bundles\footnote{orbibundles} which covers $\varphi_{{\bf p}{\bf q}}$.
We require the embedding $\Phi_{{\bf p}{\bf q}}$ to satisfy
certain compatibility with Kuranishi map and parametrization map. See \cite[Definition 3.2]{foootech2}, \cite[Definition 3.2]{Springer}.
\par
For a system $\widehat{\mathcal U^{(i)}} =(\{\mathcal U^{(i)}_{{\bf p}}\},\{\Phi^{(i)}_{{\bf p}{\bf q}}\})$
of Kuranishi charts $\mathcal U^{(i)}_{{\bf p}}$
and embeddings $\Phi^{(i)}_{{\bf p}{\bf q}} : \mathcal U^{(i)}_{{\bf q}} \to \mathcal U^{(i)}_{{\bf p}}$
to form a Kuranishi structure, we require appropriate compatibility
conditions. See \cite[Definition 3.8]{foootech2}, \cite[Definition 3.9]{Springer}.

\begin{defn}\label{defn8282}
$ $
\begin{enumerate}
\item
Let $\mathscr E^{(i)} = \{E^{(i)}_{\bf p}(\bf x)\}$ be two obstruction bundle data of
${\mathcal M}_{k+1,\ell}(X,L,J;\beta)$ for $i=1,2$.
We say $\mathscr E^{(1)}$ is {\it contained in} $\mathscr E^{(2)}$
and write $\mathscr E^{(1)} \subseteq \mathscr E^{(2)}$
if for each ${\bf p} \in {\mathcal M}_{k+1,\ell}(X,L,J;\beta)$ there exists its neighborhood $\mathscr U_{\bf p}$
in ${\mathcal X}_{k+1,\ell}(X,L;\beta)$ such that $E^{(1)}_{\bf p}({\bf x})
\subseteq E^{(2)}_{\bf p}({\bf x})$ for ${\bf x} \in \mathscr U_{\bf p}$.
\item
We say two obstruction bundle data $\mathscr E$,  $\mathscr E'$ are {\it equivalent}
if there exist obstruction bundle data $\mathscr E^{i}$, $i=0,\dots,2m$ such that:
\begin{enumerate}
\item $\mathscr E^{0} = \mathscr E$, $\mathscr E^{2m} = \mathscr E'$.
\item
$\mathscr E^{2j-1} \supseteq \mathscr E^{2j} \subseteq \mathscr E^{2j+1}$.
\end{enumerate}
\end{enumerate}
\end{defn}
\begin{prop}\label{prop83}
Let $\mathscr E^{(i)} = \{E^{(i)}_{\bf p}(\bf x)\}$ be  obstruction bundle data of
${\mathcal M}_{k+1,\ell}(X,L,J;\beta)$
and $\widehat{\mathcal U^{(i)}}$ a Kuranishi structure on ${\mathcal M}_{k+1,\ell}(X,L,J;\beta)$
associated to $\mathscr E^{(i)}$ by \cite[Theorem 7.1]{diskconst1}
for $i=1,2,3$ respectively.
If $\mathscr E^{(1)} \subseteq \mathscr E^{(2)}$, then there exists a KK-embedding
$\widehat{\mathcal U^{(1)}} \to \widehat{\mathcal U^{(2)}}$.
Furthermore
suppose $\mathscr E^{(1)} \subseteq \mathscr E^{(2)} \subseteq \mathscr E^{(3)}$.
Let $\Phi_{ij} : \widehat{\mathcal U^{(j)}} \to \widehat{\mathcal U^{(i)}}$ be the above
KK embeddings for $i>j$. Then we have
$$
\Phi_{32} \circ \Phi_{21} = \Phi_{31}.
$$
\end{prop}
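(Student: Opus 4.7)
The plan is to exploit the explicit description \eqref{Knhd} of the Kuranishi neighborhoods as cut-out loci of $\overline{\partial}$ inside the obstruction spaces, which makes inclusions of obstruction bundle data visible as inclusions of Kuranishi neighborhoods. First I would fix, for each ${\bf p} \in {\mathcal M}_{k+1,\ell}(X,L,J;\beta)$, a common neighborhood $\mathscr U_{\bf p}^{\rm com} \subset {\mathcal X}_{k+1,\ell}(X,L;\beta)$ on which both $E^{(1)}_{\bf p}({\bf x})$ and $E^{(2)}_{\bf p}({\bf x})$ are defined and satisfy $E^{(1)}_{\bf p}({\bf x}) \subseteq E^{(2)}_{\bf p}({\bf x})$. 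Shrinking if necessary, I regard the Kuranishi neighborhoods $U^{(i)}_{\bf p}$ as subsets of $\mathscr U_{\bf p}^{\rm com}$ cut out by $\overline{\partial} u_{\bf x} \in E^{(i)}_{\bf p}({\bf x})$. The inclusion of subspaces then yields a set-theoretic inclusion $U^{(1)}_{\bf p} \subseteq U^{(2)}_{\bf p}$, which I take as the underlying map of the orbifold embedding $\varphi_{\bf p} : U^{(1)}_{\bf p} \to U^{(2)}_{\bf p}$. The bundle embedding $\hat{\varphi}_{\bf p} : E^{(1)}_{\bf p} \to E^{(2)}_{\bf p}$ is defined fiberwise by the linear inclusion $E^{(1)}_{\bf p}({\bf x}) \hookrightarrow E^{(2)}_{\bf p}({\bf x})$.

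Next I would verify that $\Phi_{\bf p} = (\varphi_{\bf p},\hat{\varphi}_{\bf p})$ is an embedding of Kuranishi charts in the sense of \cite[Definition~3.2]{foootech2}. Smoothness of both $\varphi_{\bf p}$ and $\hat{\varphi}_{\bf p}$ follows from the smoothness of the assignment ${\bf x} \mapsto E^{(i)}_{\bf p}({\bf x})$ (condition (2) of Definition~\ref{defn51}, as used in \cite[Subsection~12.1]{diskconst1}) combined with the fact that both charts are constructed using the very same implicit-function-theorem argument applied to the same $\overline{\partial}$ operator. Compatibility with the Kuranishi maps $s^{(i)}_{\bf p} = \overline{\partial} u$ is automatic because both sections are the same operator regarded as a section of different bundles, and $\hat{\varphi}_{\bf p}$ is the tautological inclusion of the smaller bundle. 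Compatibility with the parametrization maps is immediate from $\psi^{(i)}_{\bf p}({\bf x}) = [{\bf x}]$ on the common vanishing locus.

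The compatibility condition \eqref{form8181} with coordinate changes is the key step. Here I would use that the coordinate changes $\Phi^{(i)}_{{\bf p}{\bf q}}$ of \cite[Theorem~7.1]{diskconst1} are themselves constructed as restrictions of an inclusion $\mathscr U_{\bf q} \cap \mathscr U_{\bf p} \hookrightarrow \mathscr U_{\bf p}$ on the level of sets: a point of $U^{(i)}_{\bf q}$ which also lies in $\mathscr U_{\bf p}$ automatically satisfies $\overline{\partial} u \in E^{(i)}_{\bf q} \subseteq E^{(i)}_{\bf p}$, where the last inclusion is part of the semi-continuity data of Definition~\ref{defn51}(4). Since both $\Phi_{\bf p}$, $\Phi_{\bf q}$ and both $\Phi^{(i)}_{{\bf p}{\bf q}}$ are inclusions of subsets of the common ambient $\mathscr U_{\bf p}^{\rm com}$ covered by fiberwise linear inclusions, the square \eqref{form8181} commutes on the nose after choosing the common neighborhoods compatibly for the finitely many ${\bf p},{\bf q}$ appearing in any fixed germ representative.

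For the composition statement $\Phi_{32}\circ\Phi_{21} = \Phi_{31}$, the verification is tautological: on the underlying orbifold the map is the inclusion $U^{(1)}_{\bf p} \subseteq U^{(2)}_{\bf p} \subseteq U^{(3)}_{\bf p}$, and on bundles it is the composition of the linear inclusions $E^{(1)}_{\bf p}({\bf x}) \subseteq E^{(2)}_{\bf p}({\bf x}) \subseteq E^{(3)}_{\bf p}({\bf x})$. The main obstacle I anticipate is a bookkeeping one, namely arranging the various germ representatives so that the sets $\mathscr U_{\bf p}$, $U^{(i)}_{\bf p}$, and $U^{(i)}_{{\bf p}{\bf q}}$ used for $i=1,2,3$ are simultaneously nested in the ambient partial topology; but since everything is a germ and only finitely many pairs $({\bf p},{\bf q})$ matter in each local argument, this can be handled by the same shrinking technique used throughout \cite[Sections~11--12]{diskconst1}.
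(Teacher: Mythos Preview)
Your proposal is correct and follows essentially the same approach as the paper: both exploit the explicit description \eqref{Knhd} to obtain the set-theoretic inclusion $U^{(1)}_{\bf p} \subseteq U^{(2)}_{\bf p}$, reduce smoothness and the various compatibilities to the same arguments used for the coordinate changes in \cite{diskconst1}, and note that the composition statement is tautological. The paper is terser and pins the smoothness of the inclusion specifically to \cite[Subsection 12.2]{diskconst1} together with the exponential decay estimate \cite[Theorem 6.4]{foooexp}, which you may want to cite explicitly rather than just alluding to the implicit function theorem.
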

\begin{proof}
We recall
$$
U^{(i)}_{\bf p} = \{{\bf x} = [(\Sigma_{{\bf x}},\vec z_{{\bf x}},\vec{\frak z}_{{\bf x}}),
u_{{\bf x}}]  \in \mathscr U_{{\bf p}} \mid
\overline\partial u_{{\bf x}} \in E^{(i)}_{\bf p}({\bf x})\}.
$$
Therefore $E^{(1)}_{\bf p}({\bf x})
\subseteq E^{(2)}_{\bf p}({\bf x})$ implies
$U^{(1)}_{\bf p} \subseteq U^{(2)}_{\bf p}$ set theoretically.\footnote{We remark
that $U^{(i)}_{\bf p}$ is a subset of ${\mathcal X}_{k+1,\ell}(X,L;\beta)$ for $i=1,2$.}
The fact that the inclusion map is induced by a smooth embedding of orbifolds
can be proved in the same way as the smoothness of the coordinate change
given in \cite[Subsection 12.2]{diskconst1}\footnote{There was a minor typographical error in the statement of \cite[Lemma 10.11]{diskconst1}
which is corrected in the recent arXiv version.}
using \cite[Theorem 6.4]{foooexp}.
Since $E^{(i)}_{\bf p}({\bf x})$ is the fiber of the obstruction bundle, 
the embedding $U^{(1)}_{\bf p} \to U^{(2)}_{\bf p}$ is covered by an
embedding of obstruction bundles.
Compatibility with Kuranishi map, parametrization map, and coordinate change
can be proved in the same way as the corresponding statement for the
coordinate change (\cite[Subsections 7.3 and 7.4]{diskconst1}).
The second half is obvious from definition.
\end{proof}
\begin{defn}\label{defn94}
Let $\widehat{\mathcal U^{(i)}}$ be germs of oriented Kuranishi structures on $\mathcal M$
without boundary for
$i=1,2$. We say  $\widehat{\mathcal U^{(1)}}$ is {\it isotopic}
to  $\widehat{\mathcal U^{(2)}}$ if there exists a Kuranishi structure
with boundary $\widehat{\mathcal U}$ on $[1,2] \times \mathcal M$
with the following properties.
\begin{enumerate}
\item
$$
\partial ([1,2] \times \mathcal M,\widehat{\mathcal U})
= -(\mathcal M,\widehat{\mathcal U^{(1)}})
\sqcup (\mathcal M,\widehat{\mathcal U^{(2)}}).
$$
Here the underlying topological space of
$(\mathcal M,\widehat{\mathcal U^{(i)}})$ is identified with
$\{i\} \times \mathcal M$, for $i=1,2$.
\item
There exists $\epsilon > 0$ such that there exist
isomorphisms of germs of Kuranishi structures:
$$
\aligned
&([1,1+\epsilon] \times \mathcal M,\widehat{\mathcal U}\vert_{[1,1+\epsilon] \times \mathcal M})
\cong  [1,1+\epsilon] \times (\mathcal M,\widehat{\mathcal U^{(1)}})   \\
&([2-\epsilon,2] \times \mathcal M,\widehat{\mathcal U}\vert_{[2-\epsilon,2] \times \mathcal M})
\cong  [2-\epsilon,2]  \times (\mathcal M,\widehat{\mathcal U^{(2)}})
\endaligned
$$
See \cite[Subsection 4.1]{foootech2}, 
\cite[Section 4.1]{Springer} for the definition of product of Kuranishi structures.
\end{enumerate}
\end{defn}
\begin{rem}
The definition of pseudo-isotopy in \cite[Definition 21.15]{foootech21},
\cite[Definition 21.15]{Springer}
is similar to Definition \ref{defn94} but we did not assume (2).
The reason why we assume (2) here is because  it is then obvious that
`isotopic' becomes an equivalence relation.
\end{rem}
\begin{lem}\label{lem8686}
Let $\widehat{\mathcal U^{(i)}}$ be germs of oriented Kuranishi structures on $\mathcal M$
without boundary for
$i=1,2$.
Suppose there exists an orientation preserving  KK-embedding of Kuranishi structures
$\widehat{\mathcal U^{(1)}} \to \widehat{\mathcal U^{(2)}}$.
Then $\widehat{\mathcal U^{(1)}}$ is isotopic
to $\widehat{\mathcal U^{(2)}}$.
\end{lem}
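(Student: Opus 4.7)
The plan is to construct explicitly an interpolating Kuranishi structure $\widehat{\mathcal V}$ on $[1,2] \times \mathcal M$ by piecing together two ``time slabs'' using the embedding. The input is the orientation preserving KK-embedding $\Phi$, which at each ${\bf p} \in \mathcal M$ provides an embedding of Kuranishi charts $\Phi_{\bf p} = (U_{{\bf p}{\bf p}},\varphi_{\bf p},\hat\varphi_{\bf p}) : \mathcal U^{(1)}_{\bf p} \hookrightarrow \mathcal U^{(2)}_{\bf p}$ intertwining coordinate changes via \eqref{form8181}.

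First I would fix $t_0 \in (1,2)$ and a small $\delta > 0$ and declare two families of Kuranishi charts on $[1,2] \times \mathcal M$. For $(t,{\bf p})$ with $t \le t_0$, take the ``left'' chart $\mathcal V^L_{\bf p} := [1,t_0+\delta) \times \mathcal U^{(1)}_{\bf p}$, with obstruction bundle pulled back from $E^{(1)}_{\bf p}$ and Kuranishi map $(t,x) \mapsto s^{(1)}_{\bf p}(x)$. For $(t,{\bf p})$ with $t > t_0$, take the ``right'' chart $\mathcal V^R_{\bf p} := (t_0-\delta, 2] \times \mathcal U^{(2)}_{\bf p}$ built from $s^{(2)}_{\bf p}$ similarly. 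Coordinate changes between two charts of the same type are defined as $\mathrm{id}_{[1,2]} \times \Phi^{(i)}_{{\bf p}{\bf q}}$. The mixed coordinate change, from a left chart at ${\bf q}$ to a right chart at ${\bf p}$, is defined on the time overlap $(t_0 - \delta, t_0 + \delta)$ by the common value of $\Phi^{(2)}_{{\bf p}{\bf q}} \circ \Phi_{\bf q}$ and $\Phi_{\bf p} \circ \Phi^{(1)}_{{\bf p}{\bf q}}$ in the chart factor, these two compositions matching precisely by \eqref{form8181}.

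I would then verify the axioms of a Kuranishi structure for $\widehat{\mathcal V}$. The cocycle condition for triples involving a mixture of left and right types reduces to \eqref{form8181} combined with the cocycle conditions for $\widehat{\mathcal U^{(1)}}$ and $\widehat{\mathcal U^{(2)}}$ individually. The tangent bundle condition at mixed overlaps follows because the definition of an embedding of Kuranishi charts in \cite[Definition 3.2]{foootech2} forces the normal bundle of $\varphi_{\bf p}(U^{(1)}_{\bf p})$ in $U^{(2)}_{\bf p}$ to be identified, via the linearization of $s^{(2)}_{\bf p}$, with the quotient $E^{(2)}_{\bf p}/\hat\varphi_{\bf p}(E^{(1)}_{\bf p})$; smoothness of mixed coordinate changes is granted by \cite[Theorem 6.4]{foooexp} in the same manner used in the proof of Proposition \ref{prop83}. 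Once $\widehat{\mathcal V}$ is established, condition (2) of the isotopy definition is built in: for $t \in [1, t_0 - \delta)$ only left charts appear and the structure is literally $[1, t_0 - \delta) \times \widehat{\mathcal U^{(1)}}$, and symmetrically near $t = 2$; condition (1), together with matching of boundary orientation, follows from the orientation preserving hypothesis and the standard boundary orientation convention on $[1,2] \times \mathcal M$.

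The main obstacle I anticipate is the bookkeeping inside the overlap slab $(t_0 - \delta, t_0 + \delta) \times \mathcal M$, where the two chart families coexist and both the dimensions of Kuranishi neighborhoods and the ranks of obstruction bundles jump under a mixed coordinate change. Checking the cocycle condition and the tangent bundle condition for triples involving one left and two right charts (or the converse) requires iterating \eqref{form8181} while carefully tracking the domains $U^{(i)}_{{\bf p}{\bf q}}$ so that the compositions are defined on the intended open sets. This is precisely the type of gluing argument formalized in \cite[Corollary 14.28]{foootech2}, so in practice the conclusion of Lemma \ref{lem8686} may alternatively be obtained by quoting that general result.
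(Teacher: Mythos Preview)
Your overall strategy matches the paper's, but there is a genuine gap in the chart assignment. Your left chart $\mathcal V^L_{\bf p} = [1, t_0 + \delta) \times \mathcal U^{(1)}_{\bf p}$, assigned to a point $(t, {\bf p})$ with $t \le t_0$, has footprint containing points $(s, {\bf q})$ with $t_0 < s < t_0 + \delta$. Such $(s, {\bf q})$ is by your rule a right-chart point, with Kuranishi chart $(t_0 - \delta, 2] \times \mathcal U^{(2)}_{\bf q}$. The axioms of a Kuranishi structure then demand a coordinate change from the chart at $(s,{\bf q})$ into the chart at $(t,{\bf p})$, i.e.\ an embedding of Kuranishi charts $\mathcal U^{(2)}_{\bf q} \to \mathcal U^{(1)}_{\bf p}$. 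But the KK-embedding goes the other way, and when $\dim U^{(2)}_{\bf q} > \dim U^{(1)}_{\bf p}$ no such embedding exists. You only wrote down the left-to-right mixed change; given your chart footprints that is not sufficient.

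The paper fixes this by an asymmetry in the time intervals. It assigns to $(t, {\bf p})$ the chart $(t - \delta, t + \delta) \times \mathcal U^{(i)}_{\bf p}$ with $i = 1$ if $t < 3/2$ and $i = 2$ if $t \ge 3/2$, and in the first case chooses $\delta < 3/2 - t$ so that the left chart's time interval lies entirely in $[1, 3/2)$. Then a left chart's footprint never contains a right-chart point, and the only mixed coordinate change ever required is from a left chart at $(s, {\bf q})$ into a right chart at $(t, {\bf p})$ with $t \ge 3/2 > s$, supplied by $\mathrm{id} \times (\varphi^{(2)}_{{\bf p}{\bf q}} \circ \varphi_{\bf q})$. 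With this one correction your argument goes through; the cocycle and tangent-bundle checks are immediate from \eqref{form8181} and the compatibility already built into $\widehat{\mathcal U^{(1)}}$, $\widehat{\mathcal U^{(2)}}$, and no appeal to \cite[Theorem 6.4]{foooexp} is needed since the coordinate changes are direct products of an interval map with maps already known to be smooth.
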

\begin{proof}
We use the notation of Definition \ref{def81} and the explanation thereafter
and will construct Kuranishi structure $\widehat{\mathcal U}$ on $[1,2] \times \mathcal M$.
\par
Let $(t,{\bf p}) \in  [1,2] \times \mathcal M$.

Suppose $t < 3/2$. We take $\delta < \min\{ 3/2-t, t-1\}$
and
$$
\mathcal U_{(t,{\bf p})}
= (t-\delta,t+\delta) \times \mathcal U^{(1)}_{{\bf p}}.
$$
Suppose $t \ge 3/2$. We take $\delta < 2-t$ and
$$
\mathcal U_{(t,{\bf p})}
= (t-\delta,t+\delta) \times \mathcal U^{(2)}_{{\bf p}}.
$$
We next define a coordinate change between them.
Let $(t,{\bf p}), (s,{\bf q}) \in [1,2] \times  \mathcal M$
such that
$$
(s,{\bf q}) \in \rm{Im} \psi_{(t,{\bf p})}.
$$
There are three cases.
\par\smallskip
\noindent(Case 1): $t,s < 3/2$.
In this case we put
$$
\aligned
& U_{(t,{\bf p})(s,{\bf q})} =
((t-\delta,t+\delta)\cap (s-\delta',s+\delta')) \times
U^{(1)}_{{\bf p}{\bf q}} \\
& \varphi_{(t,{\bf p})(s,{\bf q})} = {\rm id} \times \varphi^{(1)}_{{\bf p}{\bf q}}, \quad
\hat\varphi_{(t,{\bf p})(s,{\bf q})} = {\rm id} \times \hat\varphi^{(1)}_{{\bf p}{\bf q}}.
\endaligned
$$
Here $\delta$ and $\delta'$ are chosen for $(t,{\bf p})$ and $(s,{\bf q})$ as above, respectively.
\par
\noindent(Case 2): $t,s \ge 3/2$.
In this case we put:
$$
\aligned
& U_{(t,{\bf p})(s,{\bf q})} = ((t-\delta,t+\delta)\cap (s-\delta',s+\delta'))\times  U^{(2)}_{{\bf p}{\bf q}} \\
& \varphi_{(t,{\bf p})(s,{\bf q})} = {\rm id} \times \varphi^{(2)}_{{\bf p}{\bf q}}, \quad
\hat\varphi_{(t,{\bf p})(s,{\bf q})} = {\rm id} \times \hat\varphi^{(2)}_{{\bf p}{\bf q}}.
\endaligned
$$
\noindent(Case 3): $t \ge 3/2 > s$.
In this case we put:
$$
\aligned
& U_{(t,{\bf p})(s,{\bf q})} = ((t-\delta,t+\delta)\cap (s-\delta',s+\delta')) \times
(\varphi_{\bf q}^{-1}(U^{(2)}_{{\bf p}{\bf q}}) \cap U_{{\bf p}{\bf q}}^{(1)})\\
& \varphi_{(t,{\bf p})(s,{\bf q})} = {\rm id} \times \varphi^{(2)}_{{\bf p}{\bf q}}\circ \varphi_{\bf q}, \quad
\hat\varphi_{(t,{\bf p})(s,{\bf q})} = {\rm id}
 \times \hat\varphi^{(2)}_{{\bf p}{\bf q}}\circ \hat\varphi_{\bf q}.
\endaligned
$$
The compatibility with Kuranishi map and parametrization map of them
follows easily from the fact that $\Phi^{(i)}_{{\bf p}{\bf q}}$ is a coordinate change
and $\Phi_{{\bf q}}$ is an embedding of Kuranishi charts.
\par
Using (\ref{form8181}) and the compatibilities of coordinate changes for $\widehat{\mathcal U^{(i)}}$,
we can show the compatibility for the above system to be a Kuranishi structure.
The properties (1),(2) above are immediate from construction.
\end{proof}
Proposition \ref{prop83} and Lemma \ref{lem8686} imply that if ${\mathcal M}_{k+1,\ell}(X,L,J;\beta)$
has no boundary then the Kuranishi structure we obtain from \cite[Theorem 7.1]{diskconst1}
depends only on the equivalence class of obstruction bundle data up to isotopy.
Moreover we can show the next result.

\begin{thm}\label{them8787}
Any two obstruction bundle data of ${\mathcal M}_{k+1,\ell}(X,L,J;\beta)$
are equivalent in the sense of Definition \ref{defn8282} (2).
\end{thm}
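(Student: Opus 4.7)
The plan is to establish equivalence by exhibiting a single common enlargement: given two obstruction bundle data $\mathscr E^{(1)}$ and $\mathscr E^{(2)}$ of ${\mathcal M}_{k+1,\ell}(X,L,J;\beta)$, I aim to construct a third obstruction bundle data $\widetilde{\mathscr E}$ with $\mathscr E^{(1)} \subseteq \widetilde{\mathscr E} \supseteq \mathscr E^{(2)}$. The length-two chain $\mathscr E^{(1)} \subseteq \widetilde{\mathscr E} \supseteq \mathscr E^{(2)}$ immediately produces the zig-zag required by Definition \ref{defn8282}(2) with $m = 1$, so the entire theorem reduces to the existence of such a $\widetilde{\mathscr E}$.

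The obvious candidate is the pointwise sum $\widetilde E_{\bf p}({\bf x}) := E^{(1)}_{\bf p}({\bf x}) + E^{(2)}_{\bf p}({\bf x})$ defined on the intersection $\mathscr U^{(1)}_{\bf p} \cap \mathscr U^{(2)}_{\bf p}$. Of the five defining properties in Definition \ref{defn51}, four pass to this sum for free: the support conditions (1), transversality (3) (which only improves when one enlarges the obstruction space), semi-continuity (4) (which holds summand-wise), and invariance under the extended automorphism group (5) (since a sum of invariant subspaces is invariant). The single nontrivial condition is smoothness (2) in the sense of \cite[Definition 8.7]{diskconst1}, which demands that $\dim \widetilde E_{\bf p}({\bf x})$ be locally constant in ${\bf x}$; for the pointwise sum the dimension drops whenever $E^{(1)}_{\bf p}({\bf x}) \cap E^{(2)}_{\bf p}({\bf x})$ increases, and there is no reason for this intersection to have constant rank.

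To repair smoothness I would first replace $\mathscr E^{(2)}$ by an equivalent $\mathscr E^{(2)\prime}$ whose support inside $\Sigma_{\bf x}$ is disjoint from that of $\mathscr E^{(1)}$; then $\widetilde E_{\bf p}({\bf x}) = E^{(1)}_{\bf p}({\bf x}) \oplus E^{(2)\prime}_{\bf p}({\bf x})$ is a direct sum of constant rank and smoothness is inherited factor-wise exactly as in the construction of \cite[Subsection 11.2]{diskconst1}. This separation can be arranged because, by the construction in Sections \ref{subsec;exi1}--\ref{subsec;exi2}, the support of $E^{(2)}_{{\bf p},\xi,{\frak p}}({\bf x})$ is the $\widehat\Phi_{({\bf x},{\bf p},\xi,{\frak p})}$-image of the compact set ${\rm Supp}(E_{\frak p})$ in $\Sigma_{\frak p}$, and each of $\vec{\frak w}_{\frak p}$, $\vec{\mathcal N}_{\frak p}$ and the finite-dimensional subspace $E_{\frak p}$ from Definitions \ref{defn62} and \ref{defn631} admits small smooth perturbations within ${\rm Int}(\Sigma_{\frak p}) \setminus \vec{\frak z}_{\frak p}$. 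A smooth one-parameter family of such perturbations connecting $\mathscr E^{(2)}$ to a shifted $\mathscr E^{(2)\prime}$ can be covered by finitely many short sub-intervals on each of which two consecutive members already have disjoint supports in $\Sigma_{\bf x}$, and applying the direct-sum construction on each sub-interval yields a finite zig-zag of the form required by Definition \ref{defn8282}(2) between $\mathscr E^{(2)}$ and $\mathscr E^{(2)\prime}$; composing with the final direct-sum enlargement completes the proof.

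The step I expect to be the hard part is ensuring that Definition \ref{defn51}(3) (transversality) is preserved uniformly along the perturbation, together with a uniform lower bound on the distance between the shifted supports in $\Sigma_{\bf x}$ as ${\bf x}$ varies over $B_{\epsilon_0}({\mathcal X},{\bf p})$. Once those uniformities are secured, finiteness of the zig-zag follows by a compactness-and-covering argument based on compactness of ${\mathcal M}_{k+1,\ell}(X,L,J;\beta)$, the finiteness of $\Pi^{-1}({\bf p})$ from Lemma \ref{lem87}, and the openness of the transversality condition; the logical structure of this last step is parallel to the inductive argument carried out in the proof of Proposition \ref{prop722}.
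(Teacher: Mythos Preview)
Your plan has a genuine gap at the point where you invoke ``the construction in Sections \ref{subsec;exi1}--\ref{subsec;exi2}'' to perturb the support of $\mathscr E^{(2)}$. The theorem is about \emph{arbitrary} obstruction bundle data satisfying only Definition \ref{defn51}; such data need not arise from the $\Xi_{\frak p}$--based construction of Section \ref{subsec;exi2}, so there is no underlying $(\vec{\frak w}_{\frak p},\vec{\mathcal N}_{\frak p},E_{\frak p})$ to perturb. You therefore have no mechanism for moving the support of a general $E^{(2)}_{\bf p}({\bf x})$ at all, and the disjoint-support step collapses.

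A second, smaller problem is your zig-zag along a one-parameter family: for nearby parameter values the supports are \emph{close}, not disjoint, so the direct-sum trick does not bridge consecutive members. You would need some other way of showing $\mathscr E^{(2)}_{t_0}\sim\mathscr E^{(2)}_{t_0+\delta}$, and this is essentially the original problem again on a smaller scale.

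The paper's proof avoids both issues by not trying to modify either given datum. Instead it constructs, from scratch via the parallel-transport method of \cite[Section 11]{diskconst1}, a \emph{third} obstruction bundle system $\{E^0_{{\bf q}:{\bf p}_i}({\bf x})\}$; because this one is built explicitly, it \emph{can} be perturbed (Lemma \ref{lem1116}) to intersect both $E_{\bf q}({\bf q})$ and $E'_{\bf q}({\bf q})$ trivially. The resulting zig-zag has length four ($m=2$):
\[
\mathscr E \;\subseteq\; \mathscr E\oplus\mathscr E^0 \;\supseteq\; \mathscr E^0 \;\subseteq\; \mathscr E^0\oplus\mathscr E' \;\supseteq\; \mathscr E'.
\]
The extra step is exactly what buys you freedom: $\mathscr E^0$ is generic relative to both given data, whereas your approach tries to make one given datum generic relative to the other, which you cannot do without structural control you do not have.
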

\begin{proof}
Let $\mathscr E = \{E_{\bf p}({\bf x})\}$ and
$\mathscr E' = \{E'_{\bf p}({\bf x})\}$ be two obstruction bundle data of ${\mathcal M}_{k+1,\ell}(X,L,J;\beta)$.
We will show that $\mathscr E$ is equivalent to $\mathscr E'$.
\begin{lem}\label{lem8888}
For any ${\bf p} \in {\mathcal M}_{k+1,\ell}(X,L,J;\beta)$
there exist obstruction bundle data $E^0_{\bf p}({\bf x})$ at ${\bf p}$
(in the sense of \cite[Definition 5.1]{diskconst1}) and a neighborhood $\mathscr U_{\bf p}$ of ${\bf p}$
in ${\mathcal X}_{k+1,\ell}(X,L;\beta)$ such that the following holds in addition.
\begin{equation}\label{formula82}
E_{\bf p}({\bf x}) \cap E^0_{\bf p}({\bf x}) = \{0\},
\qquad
E'_{\bf p}({\bf x}) \cap E^0_{\bf p}({\bf x}) = \{0\}
\end{equation}
if ${\bf x} \in \mathscr U_{\bf p}$.
\end{lem}
\begin{proof}
The proof is mostly the same as that of \cite[Proposition 11.4]{diskconst1}.
We first take a finite dimensional subspace $E^0_{\bf p}({\bf p})$
satisfying \cite[Lemma 11.2 (1)(2)(3)]{diskconst1}.
We may take the subspace $E^0_{\bf p}({\bf p})$ so that (\ref{formula82}) holds at ${\bf x} = {\bf p}$.
We then choose additional marked points
 to stabilize the 
domain of ${\bf p}$ and also codimension 2 submanifolds 
which intersect transversally 
with the image of $u_{\bf p}$ at those 
marked points.
They determine the corresponding marked points 
of the domain of ${\bf x}$, that is nothing but the points sent to the codimension 2 submanifolds 
by $u_{\bf x}$.
Therefore the domains of ${\bf p}$ and ${\bf x}$ are stabilized.
We now use the local trivialization of the 
universal family of the domains and the parallel transport
to define $E^0_{\bf p}({\bf x})$.
See \cite[(11.1)]{diskconst1} for detail.
Since (\ref{formula82}) is an open condition we can take a
neighborhood $\mathscr U_{\bf p}$ of ${\bf p}$
small so that (\ref{formula82}) holds for ${\bf x} \in \mathscr U_{\bf p}$. The fact that
${\bf x} \mapsto E^0_{\bf p}({\bf x})$ is obstruction bundle data at ${\bf p}$
is proved in \cite[Subsections 11.1 and 11.2]{diskconst1}.
\end{proof}
Let $\mathscr U_{\bf p}$ be as in Lemma \ref{lem8888}. We put:
\begin{equation}
\mathfrak U_{\bf p} = \mathscr U_{\bf p} \cap {\mathcal M}_{k+1,\ell}(X,L,J;\beta).
\end{equation}
This is an open neighborhood of ${\bf p}$ in
${\mathcal M}_{k+1,\ell}(X,L,J;\beta)$.
For ${\bf q} \in \mathfrak U_{\bf p}$ we take its neighborhood $\mathscr U_{{\bf q}:{\bf p}}$
in ${\mathcal X}_{k+1,\ell}(X,L;\beta)$ such that
$\mathscr U_{{\bf q}:{\bf p}} \subset \mathscr U_{{\bf p}}$.
For ${\bf x} \in \mathscr U_{{\bf q}:{\bf p}}$ we define
$$
E^0_{{\bf q}:{\bf p}}({\bf x}) = E^0_{{\bf p}}({\bf x}).
$$
By \cite[Proposition 11.4]{diskconst1}, ${\bf x} \mapsto E^0_{{\bf q}:{\bf p}}({\bf x})$
is obstruction bundle data at ${\bf q}$.
\par
For ${\bf p} \in {\mathcal M}_{k+1,\ell}(X,L,J;\beta)$ we take its open neighborhood
$K_{0}({\bf p})$  in ${\mathcal M}_{k+1,\ell}(X,L,J;\beta)$
such that its closure $K({\bf p})$ is contained in $\mathfrak U_{\bf p}$.
Since $\mathcal M_{k+1,\ell}(X,L,J;\beta)$ is compact, we can find a
finite subset $\{{\bf p}_1,\dots,{\bf p}_{\mathscr P}\}$ of $\mathcal M_{k+1,\ell}(X,L,J;\beta)$
such that
\begin{equation}\label{2199}
\bigcup_{i=1}^{\mathscr P} K_{0}({\bf p}_i) = \mathcal M_{k+1,\ell}(X,L,J;\beta).
\end{equation}
For ${\bf q} \in \mathcal M_{k+1,\ell}(X,L,J;\beta)$ we put
$
I({\bf q}) = \{ i \in \{1,\dots,\mathscr P\} \mid {\bf q} \in K({\bf p}_i) \}.
$\index{00I1qq@$I({\bf q})$}
\begin{lem}\label{lem1116}
We may perturb $E^0_{{\bf p}_i}({\bf p}_i)$ by an arbitrary small amount in $C^{2}$ norm
so that the following holds.
For each ${\bf q} \in \mathcal M_{k+1,\ell}(X,L,J;\beta)$
the sum $\sum_{i \in I({\bf q})} E^{0}_{{\bf q},{\bf p}_i}({\bf q})$
of vector subspaces in $C^{\infty}(\Sigma_{\bf q}(\vec{\epsilon'});u_{{\bf q}}^*TX\otimes \Lambda^{0,1})$
is a direct sum
\begin{equation}\label{formula85}
\bigoplus_{i \in I({\bf q})} E^{0}_{{\bf q},{\bf p}_i}({\bf q}).
\end{equation}
Moreover
\begin{equation}\label{formula86}
\bigoplus_{i \in I({\bf q})} E^{0}_{{\bf q},{\bf p}_i}({\bf q})
\cap E'_{\bf q}({\bf q})
= \{0\},
\quad
\bigoplus_{i \in I({\bf q})} E^{0}_{{\bf q},{\bf p}_i}({\bf q})
\cap E_{\bf q}({\bf q})
= \{0\}.
\end{equation}
\end{lem}
\begin{proof}
The proof is the same as that of
\cite[Lemma 11.7]{diskconst1}.
Actually (\ref{formula85}) is proved there.
We can perturb $E^0_{{\bf p}_i}({\bf p}_i)$ more so that the other condition (\ref{formula86}) is satisfied
by the same argument.
\end{proof}
Now we define $\mathscr E^{(j)} = \{E^{(j)}_{\bf q}({\bf x})\}$,
$j=0,1,2,3,4$ as follows:
$\mathscr E^{(0)} = \mathscr E$, $\mathscr E^{(4)} = \mathscr E'$
$$
\aligned
E^{(1)}_{\bf q}({\bf x}) &=
\bigoplus_{i \in I({\bf q})} E^{0}_{{\bf q},{\bf p}_i}({\bf x})
\oplus E_{\bf q}({\bf x}), \\
E^{(2)}_{\bf q}({\bf x}) &=
\bigoplus_{i \in I({\bf q})} E^{0}_{{\bf q},{\bf p}_i}({\bf x}),
\\
E^{(3)}_{\bf q}({\bf x}) &=
\bigoplus_{i \in I({\bf q})} E^{0}_{{\bf q},{\bf p}_i}({\bf x})
\oplus E'_{\bf q}({\bf x}).
\endaligned
$$
It is easy to check that $\{\mathscr E^{(j)} \mid j=0,\dots,4\}$
satisfies Definition \ref{defn8282} (2), (a) and (b).
The proof of Theorem \ref{them8787} is complete.
\end{proof}

\subsection{The case of a system of $K$-spaces}
\label{subsec:unique2}

So far in this section we have studied a single moduli space
${\mathcal M}_{k+1,\ell}(X,L,J;\beta)$.
In the rest of this section we provide its tree-like K-system version.
The way to do so is rather straightforward so we will be sketchy sometimes.
\begin{defn}\label{81010}
Let $\mathbb U^{(i)} = \{\widehat{\mathcal U^{(i)}_{k+1,\ell,\beta}}\} =\{(\{\mathcal U^{(i)}_{{\bf p},k+1,\ell,\beta}\},
\{\Phi^{(i)}_{{\bf p}{\bf q},k+1,\ell,\beta}\})\}$ $(i=1,2)$
be two systems of Kuranishi structures on $\{\mathcal M_{k+1,\ell}(X,L,J;\beta)\}$
which consist of tree-like K-systems with interior marked points
(Definition \ref{defn2167}).
\par
An {\it embedding from the system $\mathbb U^{(1)}$ to $\mathbb U^{(2)}$} is a system of
embeddings
from $\widehat{\mathcal U^{(1)}_{k+1,\ell,\beta}}$ to $\widehat{\mathcal U^{(2)}_{k+1,\ell,\beta}}$
such that they commute with the evaluation maps in Theorem \ref{therem274} Item (III),
preserve orientations in Item (VII) and the corner compatibility isomorphisms
in Items (IX) (X).
\end{defn}
\begin{defn}\label{defn8111}
Let $\mathscr E^{(i)} = \{E^{(i)}_{\bf p}({\bf x})\}$ $(i=1,2)$ be two disk-component-wise systems of obstruction bundle data.
We say {\it $\mathscr E^{(1)}$ is contained in $\mathscr E^{(2)}$} if
$E^{(1)}_{\bf p}({\bf x}) \subseteq E^{(2)}_{\bf p}({\bf x})$ when both hand sides are defined.
\par
We define an equivalence of two disk-component-wise systems of obstruction bundle data
in the same way as Definition \ref{defn8282} (2).
\end{defn}
\begin{lem}
Let $\mathscr E^{(i)}$ $(i=1,2)$ be two disk-component-wise systems of obstruction bundle data.
For each $i=1,2$, let $\mathbb U^{(i)}$ be the system of Kuranishi structures on $\{\mathcal M_{k+1,\ell}(X,L,J;\beta)\}$
consisting of a  tree-like K-system with interior marked points, which is produced by
Theorem \ref{therem274}.
\par
If $\mathscr E^{(1)}$ is contained in $\mathscr E^{(2)}$, then
there exists an embedding from the system $\mathbb U^{(1)}$ to $\mathbb U^{(2)}$.
The same statement as the second half of Proposition \ref{prop83} holds.
\end{lem}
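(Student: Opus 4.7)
The plan is to apply Proposition~\ref{prop83} separately to each moduli space $\mathcal M_{k+1,\ell}(X,L,J;\beta)$ and then verify that the resulting KK-embeddings assemble into a system-level embedding. Since $\mathscr E^{(1)} \subseteq \mathscr E^{(2)}$ holds at each $\bf p$, Proposition~\ref{prop83} produces, for every triple $(k,\ell,\beta)$, a KK-embedding
\begin{equation*}
\Phi_{k+1,\ell,\beta}:\widehat{\mathcal U^{(1)}_{k+1,\ell,\beta}}\longrightarrow\widehat{\mathcal U^{(2)}_{k+1,\ell,\beta}}
\end{equation*}
whose underlying set-theoretic map is the identity (both $U^{(1)}_{\bf p}$ and $U^{(2)}_{\bf p}$ sit inside $\mathscr U_{\bf p}\subset\mathcal X_{k+1,\ell}(X,L;\beta)$, and the inclusion $E^{(1)}_{\bf p}({\bf x})\subseteq E^{(2)}_{\bf p}({\bf x})$ cuts out $U^{(1)}_{\bf p}$ as a smooth suborbifold of $U^{(2)}_{\bf p}$ via \eqref{Knhd}). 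Compatibility with the evaluation maps of Theorem~\ref{therem274}(III) is automatic because each evaluation factors through the underlying map $((\Sigma_{\bf x},\vec z_{\bf x},\vec{\frak z}_{\bf x}),u_{\bf x})\mapsto u_{\bf x}(z_i)$, which does not depend on the choice of obstruction bundle data. Orientation preservation (VII) follows because the determinant line of $D_{u_{\bf p}}\overline\partial$ is intrinsic and $E^{(2)}/E^{(1)}$ is a complex-linear quotient, so the orientations induced from $\mathscr E^{(1)}$ and $\mathscr E^{(2)}$ agree under the embedding.

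The main point is to verify that $\{\Phi_{k+1,\ell,\beta}\}$ respects the corner compatibility isomorphisms (IX)(X) of Theorem~\ref{therem274}, and this is where disk-component-wise-ness is crucial. For ${\bf p}=({\bf p}_{\rm v})\in\mathcal M^{\circ}_{k+1,\ell}(X,L,J;\beta)(\frak T)$, equation~\eqref{form410} gives
\begin{equation*}
E^{(i)}_{\bf p}({\bf x})=\bigoplus_{{\rm v}\in C_0^{\rm int}(\mathcal T)}E^{(i)}_{{\bf p}_{\rm v}}({\bf x}_{\rm v})\qquad(i=1,2),
\end{equation*}
so the inclusion $E^{(1)}_{\bf p}({\bf x})\subseteq E^{(2)}_{\bf p}({\bf x})$ is the direct sum of the inclusions $E^{(1)}_{{\bf p}_{\rm v}}({\bf x}_{\rm v})\subseteq E^{(2)}_{{\bf p}_{\rm v}}({\bf x}_{\rm v})$. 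Under the identification~\eqref{form52} used in the proof of Theorem~\ref{thm42}, this says that the restriction of $\Phi_{k+1,\ell,\beta}$ to the stratum $U_{\bf p}\cap\mathcal X_{k+1,\ell}(X,L;\beta)(\frak T)$ coincides, via the corner compatibility diffeomorphism~\eqref{map5353}, with the fiber product of the lower-dimensional embeddings $\Phi_{k_{\rm v}+1,\#l({\rm v}),\beta({\rm v})}$. Extending this from a single stratum to the full normalized corner and to all strata follows from Lemma~\ref{lem52} and the direct-sum decomposition of the obstruction bundle along the coordinate $[0,1)^{m'}$ that parametrizes smoothing of boundary nodes. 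Invariance under the $\mathrm{Perm}(\ell)$-action of (XII) is automatic from the same decomposition, since both systems treat interior marked points symmetrically through the invariance clause of Definition~\ref{defn51}(5).

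The main technical obstacle I anticipate is bookkeeping at deeper corners: one must check that the identifications of Kuranishi neighborhoods on iterated normalized corners $\widehat S_{m_2}(\widehat S_{m_1}(\,\cdot\,))$ of Theorem~\ref{therem274}(XI) are intertwined by the system of embeddings. This reduces to showing that the fiber-product identification of~\eqref{form52} is strictly functorial in the obstruction bundle data, which in turn uses the canonical nature of the germ construction in \cite[Theorem 7.1]{diskconst1} together with the fact that the representatives chosen to realize~\eqref{form51} can be taken compatibly for the finitely many $(k,\ell,\beta)$ involved in each corner diagram (as discussed in the remark after~\eqref{form51}).

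Finally, the analogue of the second half of Proposition~\ref{prop83}, namely $\Phi_{32}\circ\Phi_{21}=\Phi_{31}$ for a chain $\mathscr E^{(1)}\subseteq\mathscr E^{(2)}\subseteq\mathscr E^{(3)}$, is inherited pointwise from Proposition~\ref{prop83} itself, since the underlying maps on $\mathcal X_{k+1,\ell}(X,L;\beta)$ are all the identity and the only content is the compatibility of obstruction bundle inclusions, which is transitive. Thus the system-level statement follows at once from the single-space statement applied at each $(k,\ell,\beta)$.
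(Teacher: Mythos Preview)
Your proposal is correct and follows the same approach as the paper: apply Proposition~\ref{prop83} on each moduli space and then check that the resulting KK-embeddings are compatible with the tree-like K-system structure, which follows from disk-component-wise-ness via \eqref{form410} and the identification \eqref{form52}. The paper's own proof is two sentences (``use Proposition~\ref{prop83}; compatibility is obvious from construction''), so you have simply spelled out what ``obvious from construction'' means---one small caution is that your justification for orientation preservation via ``$E^{(2)}/E^{(1)}$ is a complex-linear quotient'' is not quite right in general (the obstruction spaces need not be complex subspaces), though the conclusion still holds by the standard orientation conventions coming from the relative spin structure.
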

\begin{proof}
The embedding of Kuranishi structures on each  $\mathcal M_{k+1,\ell}(X,L,J;\beta)$
can be constructed by Proposition \ref{prop83}.
The compatibility of this embedding required in Definition \ref{81010}
is obvious from construction.
\end{proof}
The notion of pseudo-isotopy between two tree-like $K$-systems (= $A_{\infty}$
correspondences) is given in \cite[Definition 21.15]{foootech21}, 
\cite[Definition 21.15]{Springer} and Definition \ref{def218}. 
It gives a system of K-spaces $\mathcal M_{k+1}(\beta;[1,2])$
with some compatibility conditions.   
\begin{defn}\label{defn913}
We consider the case when the underlying topological spaces of
two tree-like  $K$-systems
are the same
and are $\mathcal M_{k+1,\ell}(X,L,J;\beta)$.
(But their Kuranishi structures may be different.)
\par
We say a pseudo-isotopy (in the sense of Definition \ref{def218}) between them is
an {\it isotopy} if:
\begin{enumerate}
\item $\mathcal M_{k+1,\ell}(X,L,J;\beta;[1,2])$ is homeomorphic to $
[1,2] \times \mathcal M_{k+1,\ell}(X,L,J;\beta)$.
\item  The restriction of Kuranishi structure on $\mathcal M_{k+1,\ell}(X,L,J;\beta;[1,2])$
to the subspaces $\mathcal M_{k+1,\ell}(X,L,J;\beta;[1,1+\epsilon])$ and $\mathcal M_{k+1,\ell}(X,L,J;\beta;[2-\epsilon,2])$
are isomorphic to the direct product of the Kuranishi structure on $\mathcal M_{k+1,\ell}(X,L,J;\beta)$
and a trivial Kuranishi structure on the subset of the interval.
\item The isomorphism in (2) commutes with various isomorphisms describing compatibility
at the corner etc..
\end{enumerate}
\end{defn}
\begin{lem}
Let $\mathbb U^{(i)}$ ($i=1,2$) be two objects as in Definition \ref{81010}.
Suppose there exists an embedding $\mathbb U^{(1)} \to \mathbb U^{(2)}$.
Then those systems are isotopic.
\end{lem}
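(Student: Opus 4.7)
The plan is to apply the construction of Lemma \ref{lem8686} parameter-wise to each moduli space $\mathcal M_{k+1,\ell}(X,L,J;\beta)$ in the system and then verify that the resulting family of Kuranishi structures on $[1,2] \times \mathcal M_{k+1,\ell}(X,L,J;\beta)$ assembles into an isotopy of tree-like K-systems with interior marked points in the sense of Definition \ref{defn913}. More concretely, let $\Phi_{k+1,\ell,\beta} : \widehat{\mathcal U^{(1)}_{k+1,\ell,\beta}} \to \widehat{\mathcal U^{(2)}_{k+1,\ell,\beta}}$ be the given embedding on each moduli space. For each $(k,\ell,\beta)$, I would produce a Kuranishi structure $\widehat{\mathcal U_{k+1,\ell,\beta}}$ on $[1,2] \times \mathcal M_{k+1,\ell}(X,L,J;\beta)$ by taking, at a point $(t,{\bf p})$, the chart $(t-\delta,t+\delta)\times\mathcal U^{(1)}_{{\bf p},k+1,\ell,\beta}$ when $t<3/2$ and the chart $(t-\delta,t+\delta)\times\mathcal U^{(2)}_{{\bf p},k+1,\ell,\beta}$ when $t\ge 3/2$, with coordinate changes defined exactly as in Cases 1--3 of the proof of Lemma \ref{lem8686}. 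Applied to a single moduli space, the cited lemma already guarantees that this data forms a Kuranishi structure on $[1,2]\times\mathcal M_{k+1,\ell}(X,L,J;\beta)$ whose boundary pieces near $t=1,2$ are isomorphic to products with $\widehat{\mathcal U^{(1)}}$ and $\widehat{\mathcal U^{(2)}}$ respectively.

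The next step is to upgrade this pointwise construction to a compatible family, namely to verify items (III), (VII), (IX), (X), (XI), (XII) of Theorem \ref{therem274} with $P=[1,2]$. The evaluation maps in (III) are inherited from $\mathbb U^{(i)}$ trivially on the $[1,2]$ factor and are preserved by the embeddings by Definition \ref{81010}, so strong smoothness and weak submersivity of $({\rm ev}_P,{\rm ev}_0)$ follow (note that the $[1,2]$ factor is a direct product in every chart, which automatically gives weak submersivity of ${\rm ev}_P$). Orientations in (VII) are preserved by hypothesis, and the product decomposition in the chart construction transports orientations in the obvious way. For the corner compatibility (X), I would use that the embeddings $\Phi_{k+1,\ell,\beta}$ commute with the corner compatibility isomorphisms of $\mathbb U^{(1)}$ and $\mathbb U^{(2)}$: a chart of $\widehat{\mathcal U_{k+1,\ell,\beta}}$ at a boundary point $(t,{\bf p})$ with ${\bf p}\in{\mathcal M}_{k+1,\ell}(X,L,J;\beta)(\frak T)$ is, by construction of Cases 1--3, the product of $(t-\delta,t+\delta)$ with a chart which is itself (via the corner compatibility of $\mathbb U^{(1)}$, $\mathbb U^{(2)}$, and the commutativity of the embedding with both) identified with the fiber product of the corresponding $(t-\delta,t+\delta)\times U_{{\bf p}_{\rm v}}$'s. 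This gives the required identification (\ref{cornercompiso}) with $P=[1,2]$. Consistency (XI) and symmetric group equivariance (XII) follow similarly because all constructions in Lemma \ref{lem8686} commute with products and with the action of ${\rm Perm}(\ell)$.

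Finally, I would verify the three conditions (1), (2), (3) of Definition \ref{defn913}. Condition (1) is built in, since the underlying topological space of $\mathcal M_{k+1,\ell}(X,L,J;\beta;[1,2])$ is by construction $[1,2]\times\mathcal M_{k+1,\ell}(X,L,J;\beta)$. For (2), one observes that on $[1,1+\epsilon]\times\mathcal M_{k+1,\ell}(X,L,J;\beta)$ (with $\epsilon<1/2$) only Case 1 of the coordinate change construction is used, so the Kuranishi structure is literally the product of $\widehat{\mathcal U^{(1)}_{k+1,\ell,\beta}}$ with the trivial Kuranishi structure on $[1,1+\epsilon]$; the same argument applies near $t=2$ with $\widehat{\mathcal U^{(2)}_{k+1,\ell,\beta}}$. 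Condition (3), compatibility of the product isomorphisms with the corner and symmetry isomorphisms, follows from the fact that the identifications in Cases 1 and 2 are tautological (they are nothing but the identity on the second factor).

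The main obstacle I anticipate is the bookkeeping in the second step: namely verifying that the Case 3 coordinate change $(t-\delta,t+\delta)\times(\varphi_{\bf q}^{-1}(U^{(2)}_{{\bf p}{\bf q}})\cap U_{{\bf p}{\bf q}}^{(1)})$ arising from a point ${\bf p}$ in a deeper stratum $\mathcal M_{k+1,\ell}(X,L,J;\beta)(\frak T)$ respects the fiber product structure of (\ref{cornecomAinf1rev}). This amounts to showing that the three embeddings ($\Phi^{(1)}_{{\bf p}{\bf q}}$, $\Phi^{(2)}_{{\bf p}{\bf q}}$, and $\Phi_{\bf q}$) each separately factor through the fiber product decomposition provided by disk-component-wise-ness, which is precisely the content of Definition \ref{81010} combined with Theorem \ref{thm42}; once this is unpacked, compatibility of the Case 3 coordinate change with the corner fiber product is automatic. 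The remaining items of Theorem \ref{therem274} are then routine consequences of the single-moduli-space construction.
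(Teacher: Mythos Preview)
Your proposal is correct and follows essentially the same approach as the paper: apply the chart-by-chart construction of Lemma \ref{lem8686} to each $\mathcal M_{k+1,\ell}(X,L,J;\beta)$, and then observe that the required compatibilities with evaluation maps, orientations, corners, and the ${\rm Perm}(\ell)$-action all follow because the embedding of Definition \ref{81010} is assumed to commute with these structures and the Case 1--3 coordinate changes are built from $\Phi^{(i)}_{{\bf p}{\bf q}}$ and $\Phi_{\bf q}$ by direct product with an interval. The paper compresses all of your verification into the single sentence ``Various compatibility conditions required to show that they are isotopies are obvious from construction''; your write-up simply unpacks what that sentence means, and the one minor point is that invoking Theorem \ref{thm42} in your final paragraph is unnecessary, since corner compatibility of $\mathbb U^{(i)}$ is already part of the hypothesis rather than something to be re-derived.
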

\begin{proof}
The Kuranishi structure on $\mathcal M_{k+1,\ell}(X,L,J;\beta;[1,2])$
for each $k,\ell,\beta$
is constructed during the proof of Lemma \ref{lem8686}.
Various compatibility conditions required to show that they are isotopies
are obvious from construction.
\end{proof}
\begin{prop}
Any two disk-component-wise systems of obstruction bundle data
are equivalent in the sense of Definition \ref{defn8111}.
\end{prop}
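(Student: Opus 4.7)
The plan is to mimic the argument for Theorem \ref{them8787}, upgrading the pointwise construction to one that is carried out inductively on $\mathscr{TC}$ in the style of Proposition \ref{prop722}, so that disk-component-wiseness is preserved throughout. Given two disk-component-wise systems $\mathscr E^{(1)} = \{E^{(1)}_{\bf p}({\bf x})\}$ and $\mathscr E^{(2)} = \{E^{(2)}_{\bf p}({\bf x})\}$, I will construct a third disk-component-wise system $\mathscr E^{\circ} = \{E^{\circ}_{\bf p}({\bf x})\}$ with the property that for each ${\bf p} \in \mathcal M_{k+1,\ell}(X,L,J;\beta)$, the vector subspaces $E^{\circ}_{\bf p}({\bf p})$, $E^{(1)}_{\bf p}({\bf p})$, $E^{(2)}_{\bf p}({\bf p})$ of $C^{\infty}(\Sigma_{\bf p};u_{\bf p}^*TX \otimes \Lambda^{0,1})$ are in \emph{direct sum position} and the sums $\mathscr E^{(i)} \oplus \mathscr E^{\circ}$ ($i = 1,2$) remain disk-component-wise systems of obstruction bundle data. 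Once such $\mathscr E^{\circ}$ is available, the chain
\[
\mathscr E^{(1)} \;\subseteq\; \mathscr E^{(1)} \oplus \mathscr E^{\circ} \;\supseteq\; \mathscr E^{\circ} \;\subseteq\; \mathscr E^{(2)} \oplus \mathscr E^{\circ} \;\supseteq\; \mathscr E^{(2)}
\]
realizes the equivalence as in Definition \ref{defn8111} (with $m=2$).

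To construct $\mathscr E^{\circ}$ I will rerun the induction of Proposition \ref{prop722} on $(k,\ell,\beta)$ with respect to the order $<$ on $\mathscr{TC}$. At each step, this produces a finite set $\frak P^{\circ}(k,\ell,\beta)$, compact neighborhoods $K(\frak p)$, strong stabilization data with an obstruction space $\Xi_{\frak p}$, and quasi-component choice maps $\mathscr F^{\circ}_{k+1,\ell,\beta}$, $\mathscr F_{k+1,\ell,\beta}$ satisfying Conditions \ref{cond73}, \ref{cond717}, \ref{cond718}, \ref{cond719}, \ref{cond720}. The only modification of Proposition \ref{prop722} is that when $E_{\frak p}$ is chosen (and slightly perturbed as in Lemma \ref{lem1116}), I impose the \emph{additional} open, small-perturbation-achievable condition
\[
E_{\frak p} \cap E^{(1)}_{\frak p}(\frak p) \;=\; E_{\frak p} \cap E^{(2)}_{\frak p}(\frak p) \;=\; \{0\},
\]
exactly as in Lemma \ref{lem8888}. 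Semicontinuity then propagates this pointwise transversality to a neighborhood of $\frak p$, and invariance under extended automorphisms is preserved because $E^{(1)}_{\bf p}({\bf p}), E^{(2)}_{\bf p}({\bf p})$ are themselves automorphism-invariant. The resulting sums $E^{(i)}_{\bf p}({\bf x}) \oplus E^{\circ}_{{\bf p};\mathscr F}({\bf x})$ are then direct, and by Proposition \ref{prop721} both define systems of obstruction bundle data.

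It remains to verify that $\mathscr E^{(i)} \oplus \mathscr E^{\circ}$ ($i=1,2$) is disk-component-wise. Let ${\bf p} \in \mathcal M^{\circ}_{k+1,\ell}(X,L,J;\beta)(\frak T)$ with $\frak T$ nontrivial and ${\bf p} = ({\bf p}_{\rm v})_{\rm v}$. By hypothesis, for $i=1,2$,
\[
E^{(i)}_{\bf p}({\bf x}) \;=\; \bigoplus_{{\rm v} \in C_0^{\rm int}(\mathcal T)} E^{(i)}_{{\bf p}_{\rm v}}({\bf x}_{\rm v}),
\]
and by the analogue of Theorem \ref{thm42} (whose argument only uses Condition \ref{cond718}, automatically satisfied for $\mathscr E^{\circ}$ by construction) the same splitting holds for $\mathscr E^{\circ}$. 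Taking direct sums indexed by ${\rm v}$ gives the required splitting for $\mathscr E^{(i)} \oplus \mathscr E^{\circ}$. The semicontinuity, smoothness, and automorphism-invariance conditions of Definition \ref{defn51} for $\mathscr E^{(i)} \oplus \mathscr E^{\circ}$ follow immediately from those of each summand.

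The main obstacle will be in the inductive step: verifying that the open transversality condition $E_{\frak p} \cap E^{(i)}_{\frak p}(\frak p) = \{0\}$, imposed at finitely many $\frak p \in \frak P^{\circ}(k,\ell,\beta)$, can be maintained throughout the induction without destroying Condition \ref{cond718} (boundary stratification) — this amounts to showing that the inductively chosen data at lower strata, when transported to $\frak p$ through the quasi-component formalism, do not a priori lie inside $E^{(1)}_{\frak p}(\frak p) + E^{(2)}_{\frak p}(\frak p)$. Since at each $\frak p$ we are free to enlarge $E_{\frak p}$ by a finite-dimensional subspace supported in the interior of $\Sigma_{\frak p}^{\rm d}$ (as in the proof of Lemma \ref{lem1116} and \cite[Lemma 11.7]{diskconst1}), and the transversality condition is generic under such perturbations, this obstacle is handled by a finite-dimensional genericity argument carried out inductively and uniformly on the compact neighborhoods $K(\frak p)$. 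Once this verification is in place, the proposition follows.
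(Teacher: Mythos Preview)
Your proposal is correct and follows essentially the same route as the paper: construct a third disk-component-wise system $\mathscr E^{\circ}$ by rerunning the induction of Proposition \ref{prop722} while imposing the generic open condition $E_{\frak p}\cap E^{(i)}_{\frak p}(\frak p)=\{0\}$, then use the zig-zag chain exactly as in Theorem \ref{them8787}. The paper's own proof is a two-sentence sketch of precisely this argument; your write-up simply unpacks the details that the paper leaves implicit.
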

\begin{proof}
Let
$\mathscr E^{(i)} = \{E^{(i)}_{\bf p}({\bf x})\}$ be two disk-component-wise systems of obstruction bundle data
for $i=1,2$.
In the same way as the proof of Theorem \ref{thm43} we can find another
disk-component-wise system of obstruction bundle data $\{E^{(0)}_{\bf p}({\bf x})\}$
such that
$$
E^{(0)}_{\bf p}({\bf x}) \cap E^{(i)}_{\bf p}({\bf x}) = \{0\}
$$
for $i=1,2$ if ${\bf x}$ is in a small neighborhood of ${\bf p}$.
The rest of the proof is the same as the last part of the proof of
Theorem \ref{them8787}.
\end{proof}
Combining the above results we obtain:
\begin{thm}\label{thm96}
The system of Kuranishi structures obtained in Theorem \ref{thm42}
depends only on $(X,\omega,J)$ and $(L,\sigma)$ up to isotopy, where
$\sigma$ is a relative spin structure on $L$.
In particular, it
is independent of the choice of obstruction bundle data up to isotopy.
\end{thm}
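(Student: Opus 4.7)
The plan is to combine the three preceding results of this subsection into a zig-zag argument. Given two disk-component-wise systems of obstruction bundle data $\mathscr E^{(1)}$ and $\mathscr E^{(2)}$ producing, via Theorem \ref{thm42}, tree-like K-systems with interior marked points $\mathbb U^{(1)}$ and $\mathbb U^{(2)}$, I would first invoke the preceding proposition to obtain a finite chain
$$
\mathscr E^{(1)} = \mathscr E^0 \supseteq \mathscr E^1 \subseteq \mathscr E^2 \supseteq \cdots \subseteq \mathscr E^{2m} = \mathscr E^{(2)}
$$
of disk-component-wise systems in which each adjacent pair is related by containment. Each containment yields an embedding of tree-like K-systems in the sense of Definition \ref{81010} by the first lemma of Subsection \ref{subsec:unique2}, and each such embedding produces an isotopy between its source and target by the subsequent lemma.

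The next step is to compose these finitely many isotopies into a single isotopy between $\mathbb U^{(1)}$ and $\mathbb U^{(2)}$. I would rescale the $j$-th isotopy, defined a priori on $[1,2]$, onto the subinterval $[1+(j-1)/m,\,1+j/m]$ and concatenate. The collar condition in Definition \ref{defn913} (2) guarantees that near each subdivision point the Kuranishi structure on the isotopy is an honest direct product with a trivial Kuranishi structure on the interval factor, so the pieces glue smoothly across the subdivision points. Carrying this out uniformly in $(k,\ell,\beta)$ produces a Kuranishi structure on $[1,2] \times \mathcal M_{k+1,\ell}(X,L,J;\beta)$ for each $(k,\ell,\beta)$ that realizes the required isotopy.

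The principal difficulty will be verifying that the concatenation respects the full compatibility package imposed on tree-like K-systems: evaluation maps in Theorem \ref{therem274} (III), orientations (VII), the corner compatibility isomorphisms (IX)--(XI), and the symmetric-group action (XII). However, each individual isotopy in the zig-zag is built from an embedding of obstruction bundle data, and the explicit formula \eqref{Knhd} together with the fiber-product description of corners in Section \ref{subsec;cornercompa} shows that this construction is functorial with respect to inclusions of obstruction spaces. Hence each step of the zig-zag preserves all the relevant structure, and because the collars are honest products this structure extends unambiguously across the subdivisions. The ``In particular'' clause then follows immediately, and dependence only on $(X,\omega,J)$ and $(L,\sigma)$ holds because every other choice made during the construction---the added interior marked points, the codimension-$2$ transversals, the stabilization data, the finite sets $\frak P(k,\ell,\beta)$, and the quasi-component choice maps $\mathscr F^\circ, \mathscr F$---has been absorbed into the notion of a disk-component-wise system.
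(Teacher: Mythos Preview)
Your proposal is correct and follows exactly the approach the paper intends: the paper states Theorem \ref{thm96} with the preface ``Combining the above results we obtain,'' leaving the zig-zag argument implicit, and your write-up spells out precisely that combination (equivalence of any two disk-component-wise systems $\Rightarrow$ chain of containments $\Rightarrow$ chain of embeddings $\Rightarrow$ chain of isotopies, concatenated via the collar condition of Definition \ref{defn913}). The only cosmetic discrepancy is that your zig-zag convention has even-indexed terms large and odd-indexed terms small, whereas Definition \ref{defn8282} (2) uses the opposite parity; this is immaterial to the argument.
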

It implies independence of Kuranishi structures obtained in  Theorem \ref{therem274}
up to isotopy.

\subsection{Independence of almost complex structure up to pseudo-isotopy}
\label{subsec;unique2}

In this subsection we prove Theorem \ref{therem274}
in the case $P$ is not necessarily a point
and
Theorems \ref{pisotopyexixsts}, \ref{thm219}.
Suppose we are in Situation \ref{situ213}.
We define a stable map topology on the parametrized moduli space
$\mathcal M_{k+1,\ell}(X,L,\mathcal J;\beta)$ in the same way as the case we fix one almost complex structure.
We define its ambient set as follows.
$$
\mathcal X_{k+1,\ell}(X,L;\beta;P)
=
P \times \mathcal X_{k+1,\ell}(X,L;\beta).
$$
Note $\mathcal X_{k+1,\ell}(X,L;\beta)$ is independent of
the almost complex structure.
Obviously
$$
\mathcal M_{k+1,\ell}(X,L,\mathcal J;\beta)
\subset
\mathcal X_{k+1,\ell}(X,L;\beta;P).
$$
\begin{lem}
There is a partial topology
of the pair
$$
(\mathcal X_{k+1,\ell}(X,L;\beta;P),
\mathcal M_{k+1,\ell}(X,L,\mathcal J;\beta)).
$$
\end{lem}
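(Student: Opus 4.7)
The plan is to define the partial topology by taking the ``product'' of a standard metric topology on $P$ with the partial topology on $\mathcal X_{k+1,\ell}(X,L;\beta)$ constructed in \cite[Subsection 4.3]{diskconst1}. First, I would fix a smooth Riemannian metric $d_P$ on the compact manifold with corners $P$ (obtained from a double or a collar construction). For each ${\bf p} = (t_{\bf p}, {\bf p}_0) \in \mathcal M_{k+1,\ell}(X,L,\mathcal J;\beta)$, where ${\bf p}_0 \in \mathcal M_{k+1,\ell}(X,L,J_{t_{\bf p}};\beta)$, I would choose a stabilization and trivialization datum $\frak W_{{\bf p}_0}$ in the sense of \cite[Definition 4.9]{diskconst1}. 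A crucial observation is that these data concern only the source curve, its marked points, analytic coordinates at nodes, trivializations of the universal family, and a Riemannian metric on $\Sigma_{{\bf p}_0}$; nothing depends on the almost complex structure $J_{t_{\bf p}}$ on $X$. Hence the non-parametrized $\epsilon$-neighborhood $B_\epsilon(\mathcal X,{\bf p}_0) \subset \mathcal X_{k+1,\ell}(X,L;\beta)$ of \cite[Definition 4.12]{diskconst1} is available at every point of every $\mathcal M_{k+1,\ell}(X,L,J_t;\beta)$ simultaneously.

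I would then define
$$
B_\epsilon(\mathcal X; {\bf p}) := \{ (s,{\bf x}) \in \mathcal X_{k+1,\ell}(X,L;\beta;P) \mid d_P(s,t_{\bf p}) < \epsilon,\; {\bf x} \in B_\epsilon(\mathcal X,{\bf p}_0)\}.
$$
Next, I would check the axioms of partial topology listed in \cite[Definition 4.1]{diskconst1}: monotonicity in $\epsilon$, the ``triangle''-style inclusion that for ${\bf q} = (s,{\bf q}_0) \in B_{\epsilon/2}(\mathcal X;{\bf p})$ a sufficiently small $\delta$ gives $B_\delta(\mathcal X;{\bf q}) \subset B_\epsilon(\mathcal X;{\bf p})$, and the fact that the $B_\epsilon(\mathcal X;{\bf p})$'s form a neighborhood basis of ${\bf p}$ in the stable map topology of $\mathcal M_{k+1,\ell}(X,L,\mathcal J;\beta)$. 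Each of these reduces to the corresponding statement for the non-parametrized partial topology (already established in \cite[Subsection 4.3]{diskconst1}) combined with the elementary metric geometry on $P$; in particular the non-trivial ``triangle''-style estimate follows by taking the minimum of $\epsilon/2$ and the radius produced by the non-parametrized version applied to $({\bf p}_0,{\bf q}_0)$ after possibly adjusting the stabilization and trivialization data.

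Finally, I would verify independence of choices: different selections of $\frak W_{{\bf p}_0}$ and of the metric $d_P$ give equivalent partial topologies, since the non-parametrized equivalence is provided by \cite[Lemma 4.14]{diskconst1} and any two smooth Riemannian metrics on the compact $P$ are bi-Lipschitz equivalent. The main point to keep track of is uniformity: as $t_{\bf p}$ varies over $P$ the choices of $\frak W_{{\bf p}_0}$ should be made in a locally consistent way so that the collapse of the product structure at boundary and corners of $P$ is compatible with the isomorphisms in Theorem \ref{therem274}(IX)--(XI). This compatibility is exactly the bookkeeping that one needs later for the pseudo-isotopy statements (Theorems \ref{pisotopyexixsts} and \ref{thm219}), but for the mere existence of a partial topology no obstruction arises: the product definition works directly. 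The main (modest) difficulty is purely notational—propagating the non-parametrized framework pointwise through the parameter $P$ while keeping track of the corner stratification of $P$ itself.
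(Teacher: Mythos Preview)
Your proposal is correct and follows essentially the same approach as the paper: define the partial topology as the product of a metric ball in $P$ with the non-parametrized $\epsilon$-neighborhood $B_\epsilon(\mathcal X,{\bf p}_0)$, then reduce the axioms of \cite[Definition 4.1]{diskconst1} to the non-parametrized case established in \cite[Section 4]{diskconst1}. Your write-up is in fact more detailed than the paper's own proof, which simply states the product definition and asserts that the required properties follow ``in the same way as \cite[Section 4]{diskconst1}''; your remarks about corner-stratification compatibility, while correct, go beyond what is needed for this lemma itself.
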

\begin{proof}
We fix a metric on $P$.
Let $(t,{\bf p}) \in \mathcal M_{k+1,\ell}(X,L,\mathcal J;\beta)$.
We put
$$
B_{\epsilon}(\mathcal X,(t,{\bf p}))
=
B_{\epsilon}(t,P)  \times  B_{\epsilon}(\mathcal X,{\bf p}).
$$
Here $B_{\epsilon}(\mathcal X,{\bf p})$ is the partial topology of
$(\mathcal X_{k+1,\ell}(X,L;\beta),
\mathcal M_{k+1,\ell}(X,L,J_t;\beta))$ and
$B_{\epsilon}(t,P)$ is the metric $\epsilon$ ball 
in $P$
centered at $t$.
(See \cite[Proposition 4.3]{diskconst1}.)
In the same way as \cite[Section 4]{diskconst1} we can prove that this satisfies the required
properties.
\end{proof}
\begin{defn}
{\it Obstruction bundle data} for
$\mathcal M_{k+1,\ell}(X,L,\mathcal J;\beta)$ 
assign $E_{(t,{\bf p})}(\frak t,{\bf x})
\subset C^2(\Sigma_{\bf x};u_{\bf x}^*TX \otimes \Lambda^{0,1})$
to each $(t,{\bf p}) \in \mathcal M_{k+1,\ell}(X,L,\mathcal J;\beta)$
and $(\frak t,{\bf x}) \in B_{\epsilon}(\mathcal X,(t,{\bf p};\beta))$
such that:
\begin{enumerate}
\item
$E_{(t,{\bf p})}(\frak t,{\bf x})$
is a finite dimensional linear subspace.
 The supports of its elements are away from nodal or marked points and the boundary.
\item {\bf (Smoothness)}
$E_{(t,{\bf p})}(\frak t,{\bf x})$ depends smoothly on $(\frak t,{\bf x})$
in the same sense as
\cite[Definition 8.7]{diskconst1}.
\item {\bf (Transversality)}
$E_{(t,{\bf p})}(\frak t,{\bf x})$ satisfies the transversality condition in the
same sense as \cite[Definition 5.5]{diskconst1}.
Note this condition concerns only the case $(t,{\bf p}) = (\frak t,{\bf x})$.
We use the almost complex structure $J_{t}$ to define the linearized
Cauchy-Riemann equation which appears in \cite[Definition 5.5]{diskconst1}.
\item {\bf (Semi-continuity)}
$E_{(t,{\bf p})}(\frak t,{\bf x})$ is semi-continuous on  $(t,{\bf p})$ in
the same sense as \cite[Definition 5.2]{diskconst1}.
\item {\bf (Invariance under extended automorphisms)}
$E_{(t,{\bf p})}(\frak t,{\bf x})$ is invariant under the extended automorphism group of
${\bf x}$ in the same sense as in \cite[Condition 5.6]{diskconst1}.
\item[(6)]{\bf (Effectivity)}
The action of ${\rm Aut}({\bf p})$ on $(D_{u_{\bf p}}\overline{\partial})^{-1}(E_{\bf p})/ {\frak {aut}} (\Sigma_{\bf p}, \vec z_{\bf p}, \vec {\frak z}_{\bf p})$ is effective.
\end{enumerate}
\end{defn}
\begin{lem}\label{lem919}
Obstruction bundle data of
$\mathcal M_{k+1,\ell}(X,L,\mathcal J;\beta)$
induce a Kuranishi structure on it,
which is independent of the choices
in the sense of germs.
\end{lem}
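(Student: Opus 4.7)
The plan is to follow the same scheme as \cite[Theorem 7.1]{diskconst1}, treating the parameter $t \in P$ as an additional smooth parameter throughout. For a given $(t,{\bf p}) \in \mathcal M_{k+1,\ell}(X,L,\mathcal J;\beta)$ we define the Kuranishi neighborhood set-theoretically by
\begin{equation*}
U_{(t,{\bf p})} = \{(\frak t,{\bf x}) \in \mathscr U_{(t,{\bf p})} \mid \overline\partial_{J_{\frak t}} u_{\bf x} \in E_{(t,{\bf p})}(\frak t,{\bf x})\},
\end{equation*}
in analogy with \eqref{Knhd}, with the obstruction bundle being the pullback of $(\frak t,{\bf x}) \mapsto E_{(t,{\bf p})}(\frak t,{\bf x})$, the Kuranishi map being $(\frak t,{\bf x}) \mapsto \overline\partial_{J_{\frak t}} u_{\bf x}$ (a section by construction), and the parametrization being the inclusion $s^{-1}(0)\hookrightarrow \mathcal M_{k+1,\ell}(X,L,\mathcal J;\beta)$.

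First I will establish the smooth (orbifold) structure on $U_{(t,{\bf p})}$. The smoothness, transversality and invariance axioms of the obstruction bundle data are formally identical to those in \cite{diskconst1} except for the extra variable $\frak t \in P$; since the family $\{J_{\frak t}\}$ is smooth in $\frak t$, the linearization of $(\frak t,{\bf x}) \mapsto \overline\partial_{J_{\frak t}}u_{\bf x}$ at $(t,{\bf p})$ is the sum of the usual linearized Cauchy-Riemann operator $D_{u_{\bf p}}\overline\partial_{J_t}$ and a bounded operator from $T_t P$ given by differentiating $J$ in $t$. The transversality axiom, applied at $(t,{\bf p})$, then gives surjectivity of the extended linearized operator, and the smooth chart structure of $U_{(t,{\bf p})}$ is obtained by the same Implicit Function Theorem / exponential decay arguments used in \cite[Section 12]{diskconst1} (together with \cite[Theorem 6.4]{foooexp}); the only modification is that the domain carries an extra $P$-factor which appears as an additional parameter to be glued in trivially. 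The projection $U_{(t,{\bf p})} \to P$ realizes the strong smoothness and weak submersivity of $\mathrm{ev}_P$.

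Next I will construct the coordinate changes $\Phi_{(t,{\bf p})(s,{\bf q})}: \mathcal U_{(s,{\bf q})} \to \mathcal U_{(t,{\bf p})}$ for $(s,{\bf q}) \in \mathrm{Im}\,\psi_{(t,{\bf p})}$, defined exactly as in \cite[Subsections 7.3--7.4]{diskconst1} by restricting the inclusion $E_{(s,{\bf q})}(\frak s,{\bf y}) \subseteq E_{(t,{\bf p})}(\frak s,{\bf y})$ (which holds by semi-continuity after shrinking $\mathscr U$) and using parallel transport exactly as in the construction of the map \eqref{form66666}. Smoothness and compatibility of coordinate changes is proved verbatim as in \cite[Subsection 12.2]{diskconst1}; the $P$-factor plays a passive role.

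Independence of the choice of obstruction bundle data up to germ will be proved by the argument of Subsection \ref{subsec:unique1}: given two choices $\mathscr E^{(1)}, \mathscr E^{(2)}$, produce an auxiliary $\mathscr E^{(0)}$ with $E^{(0)}_{(t,{\bf p})}(\frak t,{\bf x}) \cap E^{(i)}_{(t,{\bf p})}(\frak t,{\bf x}) = \{0\}$ locally (using the analog of Lemma \ref{lem8888} with $t$ as a passive parameter), form the direct sums $\mathscr E^{(i)} \oplus \mathscr E^{(0)}$, and apply Proposition \ref{prop83} in this parametrized context to obtain KK-embeddings relating the Kuranishi structures; the resulting germs agree. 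The hardest step is the smoothness/gluing analysis establishing that $U_{(t,{\bf p})}$ is a smooth orbifold in the combined variables $(\frak t, {\bf x})$ when $\frak t$ varies in $P$ and the almost complex structure varies with it, since it requires checking that all the Newton-iteration and exponential-decay estimates of \cite{diskconst1,foooexp} are uniform in the parameter $\frak t$; however, since $\{J_{\frak t}\}$ is a smooth compact family, this uniformity follows by standard compactness arguments, and no genuinely new analytic input is required.
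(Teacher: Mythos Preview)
Your construction of $U_{(t,{\bf p})}$ and the ensuing smooth structure and coordinate changes (your first three paragraphs) are correct and match the paper's approach exactly: the paper defines
\[
U_{(t,{\bf p})} = \{(\frak t,{\bf x}) \in B_{\epsilon}(\mathcal X,(t,{\bf p})) \mid \overline\partial_{J_{\frak t}} u_{\bf x} \in E_{(t,{\bf p})}(\frak t,{\bf x})\}
\]
and then declares that the rest of the proof is the same as \cite[Theorem 7.1]{diskconst1}, which is what you have unpacked.

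Your final paragraph, however, misreads the statement. The phrase ``independent of the choices in the sense of germs'' does \emph{not} mean independence from the obstruction bundle data $\mathscr E$. It means that, for \emph{fixed} obstruction bundle data, the resulting germ of Kuranishi structure is independent of the auxiliary choices made in extracting it (sizes of neighborhoods, stabilization and trivialization data, etc.); this is precisely the content of \cite[Theorem 7.1 (2)]{diskconst1}, and its proof carries over verbatim with the extra $P$-factor. Different obstruction bundle data do \emph{not} in general yield the same germ of Kuranishi structure; the argument of Subsection~\ref{subsec:unique1} that you invoke shows only that they are related by KK-embeddings and hence \emph{isotopic}, which is a strictly weaker relation. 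So your claim that ``the resulting germs agree'' is false, and in any case is not what the lemma asserts. Simply drop your fourth paragraph and note instead that germ-independence from auxiliary choices follows as in \cite[Theorem 7.1]{diskconst1}.
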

\begin{proof}
We define a Kuranishi neighborhood of $(t,{\bf p})$
(as a set) by
$$
U_{(t,{\bf p})} = \{(\frak t,{\bf x})
\in B_{\epsilon}(\mathcal X,(t,{\bf p}))
\mid \overline{\partial}_{J_{\frak t}} u_{\bf x} \in E_{(t,{\bf p})}(\frak t,{\bf x})\}.
$$
Here we use the almost complex structure $J_{\frak t}$ to define
$\overline{\partial}_{J_{\frak t}}$.
The rest of the proof is the same as the proof of \cite[Theorem 7.1]{diskconst1}.
\end{proof}
\begin{lem}
We consider the case $P = [1,2]$.
Let $t_0 = 1$ or $2$. The restriction of obstruction bundle data of
$\mathcal M_{k+1,\ell}(X,L,\mathcal J;\beta)$ to $t=t_0$
defines obstruction bundle data of
$\mathcal M_{k+1,\ell}(X,L,J_{t_0};\beta)$.
By \cite[Theorem 7.1]{diskconst1}, the Kuranishi structure induced from the restriction
 coincides with the
restriction of the Kuranishi structure of Lemma \ref{lem919}
to the corresponding component of the normalized boundary.
\end{lem}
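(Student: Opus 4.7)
The plan is to verify the two claims of the lemma directly from the set-theoretic definitions of the Kuranishi neighborhoods, with smoothness handled in the same way as in \cite[Theorem 7.1]{diskconst1}. First I would check that if $\{E_{(t,{\bf p})}(\frak t,{\bf x})\}$ is obstruction bundle data for $\mathcal M_{k+1,\ell}(X,L,\mathcal J;\beta)$, then setting $E_{{\bf p}}({\bf x}) := E_{(t_0,{\bf p})}(t_0,{\bf x})$ yields obstruction bundle data for $\mathcal M_{k+1,\ell}(X,L,J_{t_0};\beta)$. Conditions (1), (4), (5) of \cite[Definition 5.1]{diskconst1} (support, semi-continuity, invariance under extended automorphisms) are immediate, since these are pointwise conditions at $(t_0,{\bf p})$ and make no reference to the parameter. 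Smoothness (2) follows because \cite[Definition 8.7]{diskconst1} for the family in particular requires smoothness in ${\bf x}$ with $\frak t$ held fixed. For transversality (3), the parametrized transversality condition is stated at $(t,{\bf p}) = (\frak t,{\bf x})$ using $J_{\frak t}$, and at $\frak t = t_0$ this linearized Cauchy-Riemann operator is precisely the one used in defining obstruction bundle data for the $J_{t_0}$-moduli space.

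Next I would compare the two Kuranishi structures neighborhood by neighborhood. The Kuranishi neighborhood of $(t_0,{\bf p})$ in Lemma \ref{lem919} is set-theoretically
$$
U_{(t_0,{\bf p})} = \{(\frak t,{\bf x}) \in B_{\epsilon}(\mathcal X,(t_0,{\bf p})) \mid \overline{\partial}_{J_{\frak t}} u_{\bf x} \in E_{(t_0,{\bf p})}(\frak t,{\bf x})\},
$$
and by construction $B_{\epsilon}(\mathcal X,(t_0,{\bf p})) = B_{\epsilon}(\mathcal X,{\bf p}) \times B_{\epsilon}(t_0,P)$. The normalized boundary component at $t_0 = 1$ or $2$ is obtained by restricting to $\frak t = t_0$ (a single endpoint of $[1,2]$), which produces
$$
\{{\bf x} \in B_{\epsilon}(\mathcal X,{\bf p}) \mid \overline{\partial}_{J_{t_0}} u_{\bf x} \in E_{{\bf p}}({\bf x})\};
$$
but this is exactly the Kuranishi neighborhood associated by \cite[Theorem 7.1]{diskconst1} to the restricted obstruction bundle data $\{E_{{\bf p}}({\bf x})\}$. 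The same identification passes to the obstruction bundle (whose fiber is $E_{(t,{\bf p})}(\frak t,{\bf x}) = E_{{\bf p}}({\bf x})$ at $\frak t = t_0$), to the Kuranishi map $\overline{\partial}_{J_{\frak t}}$ which restricts to $\overline{\partial}_{J_{t_0}}$, and to the parametrization map.

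Finally I would promote this set-theoretic coincidence to an identification of germs of Kuranishi structures by invoking the characterization of the smooth structures in terms of gluing data (\cite[Subsection 12.2]{diskconst1} together with \cite[Theorem 6.4]{foooexp}); both constructions use the \emph{same} stabilization and trivialization data restricted to $\frak t = t_0$, so the resulting orbifold charts and their coordinate changes are identical on the nose. The only real subtlety, and what I would flag as the main obstacle, is bookkeeping around the normalized boundary: one must confirm that the product structure $B_{\epsilon}(\mathcal X,{\bf p}) \times B_{\epsilon}(t_0,P)$ near $\{t_0\} \times \mathcal M_{k+1,\ell}(X,L,J_{t_0};\beta)$ is indeed respected by the Kuranishi charts of the family (in the sense that the $P$-direction factors off cleanly near the endpoint), so that restricting to $\frak t = t_0$ really does produce the normalized boundary in the sense of \cite[Definition 24.17]{foootech21}; this however is automatic from the product form of the partial topology and the fact that $J_t$ depends smoothly on $t$, so no new analytic input is needed beyond what is already present in the construction.
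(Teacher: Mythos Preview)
Your proposal is correct and is precisely the detailed unpacking of what the paper records as ``obvious from construction.'' The paper gives no argument beyond that one sentence, so your verification of conditions (1)--(5) upon restriction and your set-theoretic comparison of Kuranishi neighborhoods is exactly the content implicit in the author's claim.
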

The proof is obvious from construction.
The moduli space
$\mathcal M_{k+1,\ell}(X,L,\mathcal J;\beta)$ comes
with evaluation maps:
$$
{\rm ev} = ({\rm ev}_0,\dots,{\rm ev}_{k},{\rm ev}^{\rm int}_1,\dots,
{\rm ev}^{\rm int}_{\ell}):
\mathcal M_{k+1,\ell}(X,L,\mathcal J;\beta)
\to L^{k+1} \times X^{\ell}.
$$
There is also an evaluation map which assigns the $P$ factor.
We denote it by
$$
{\rm ev}_{P} : \mathcal M_{k+1,\ell}(X,L,\mathcal J;\beta)  \to P.
$$
\begin{defn}\label{defcollared}
We consider the case $P = [1,2]$.
Suppose that there exists $\epsilon > 0$ such that
$J_{t} = J_1$ for $t \in [1,1+\epsilon]$
and $J_{t} = J_2$ for $t \in [2-\epsilon,2]$.
(We say $\mathcal J$ is collared if this condition is satisfied.) 
We say the obstruction bundle data are {\it collared} if:
\begin{enumerate}
\item
$E_{(t,{\bf p})}(\frak t,{\bf x}) = E_{(1,{\bf p})}(1,{\bf x})$
for $t,\frak t \in [1,1+\epsilon]$.
\item
$E_{(t,{\bf p})}(\frak t,{\bf x}) = E_{(2,{\bf p})}(2,{\bf x})$
for $t,\frak t \in [2-\epsilon,2]$.
\end{enumerate}
\end{defn}
\begin{lem}\label{lem922}
We consider the case $P = [1,2]$.
If the obstruction bundle data are collared, then the
next isomorphisms hold as isomorphisms of K-spaces.
$$
\aligned
&(\mathcal M_{k+1,\ell}(X,L,\mathcal J;\beta;[1,2])\vert_{{\rm ev}_P^{-1}([1,1+\epsilon])}
\cong [1,1+\epsilon] \times \mathcal M_{k+1,\ell}(X,L,J_1;\beta) \\
&(\mathcal M_{k+1,\ell}(X,L,\mathcal J;\beta;[1,2])\vert_{{\rm ev}_P^{-1}([2-\epsilon,2])}
\cong [2-\epsilon,2] \times \mathcal M_{k+1,\ell}(X,L,J_2;\beta)
\endaligned
$$
\end{lem}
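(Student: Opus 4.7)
The plan is to unpack the construction of the Kuranishi structure given in Lemma \ref{lem919} on the collar region and verify that the collaring hypothesis makes the defining equations, the obstruction spaces, and the coordinate changes all factor as products. I will treat only the first isomorphism; the second is handled by the same argument.

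First I would fix $(t,{\bf p}) \in \mathcal{M}_{k+1,\ell}(X,L,\mathcal J;\beta)$ with $t \in [1,1+\epsilon]$, and compute its Kuranishi neighborhood using the formula from the proof of Lemma \ref{lem919}:
\[
U_{(t,{\bf p})} = \bigl\{(\mathfrak{t},{\bf x}) \in B_{\epsilon_0}(\mathcal X,(t,{\bf p})) \,\big|\, \overline{\partial}_{J_{\mathfrak{t}}} u_{\bf x} \in E_{(t,{\bf p})}(\mathfrak{t},{\bf x})\bigr\}.
\]
By the definition of the partial topology, $B_{\epsilon_0}(\mathcal X,(t,{\bf p})) = B_{\epsilon_0}(\mathcal X,{\bf p}) \times B_{\epsilon_0}(t,[1,2])$. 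For $\mathfrak{t}$ in the collar the family is constant, so $\overline{\partial}_{J_{\mathfrak{t}}} u_{\bf x} = \overline{\partial}_{J_1} u_{\bf x}$, and by Definition \ref{defcollared} (1) we have $E_{(t,{\bf p})}(\mathfrak{t},{\bf x}) = E_{(1,{\bf p})}(1,{\bf x})$. Consequently the cutting equation no longer involves $\mathfrak{t}$, and we obtain the set-theoretic identification
\[
U_{(t,{\bf p})}\bigl|_{{\rm ev}_P^{-1}([1,1+\epsilon])} \;=\; \bigl(U^{(1)}_{\bf p} \cap B_{\epsilon_0}(\mathcal X,{\bf p})\bigr) \times \bigl([1,1+\epsilon] \cap B_{\epsilon_0}(t,[1,2])\bigr),
\]
where $U^{(1)}_{\bf p}$ denotes the Kuranishi neighborhood of ${\bf p}$ in $\mathcal{M}_{k+1,\ell}(X,L,J_1;\beta)$ attached to the restricted obstruction bundle data. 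The obstruction bundle, Kuranishi map, and parametrization map on $U_{(t,{\bf p})}$ over the collar likewise pull back from the corresponding data on $U^{(1)}_{\bf p}$, tensored with the trivial bundle over the interval factor, because none of them depend on $\mathfrak{t}$ once the collared condition is imposed.

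Next I would check that the coordinate changes $\Phi_{(t,{\bf p})(s,{\bf q})}$ for two charts lying over the collar respect this product form. Since those coordinate changes are manufactured (as in \cite[Subsections 7.3, 7.4]{diskconst1}) from the defining equation $\overline{\partial}_{J_{\mathfrak{t}}} u_{\bf x} \in E(\mathfrak{t},{\bf x})$ and the embeddings of stabilization and trivialization data, and since on the collar neither the equation nor the obstruction space sees $\mathfrak{t}$, the coordinate change is the product of the coordinate change between $U^{(1)}_{\bf p}$ and $U^{(1)}_{\bf q}$ in $\mathcal{M}_{k+1,\ell}(X,L,J_1;\beta)$ with the identity on the interval factor. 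This shows the induced Kuranishi structure is, as a germ, the direct product of the Kuranishi structure on $\mathcal{M}_{k+1,\ell}(X,L,J_1;\beta)$ with the trivial Kuranishi structure on $[1,1+\epsilon]$.

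Finally I would verify compatibility with the evaluation maps and the corner compatibility isomorphisms. The evaluation maps $({\rm ev}_P, {\rm ev}, {\rm ev}^{\rm int})$ separate by construction into ${\rm ev}_P =$ projection to the interval factor, and $({\rm ev},{\rm ev}^{\rm int})$ depending only on the ${\bf x}$ factor, so they factor through the product structure. The corner compatibility isomorphism (X) relates the product structure on the collar of $\mathcal{M}_{k+1,\ell}(X,L,\mathcal J;\beta;[1,2])$ to the analogous product structures on the fiber products appearing in \eqref{cornecomAinf1rev}, and the check reduces inductively to the same statement for each factor, which is immediate from the same argument applied to the smaller moduli spaces. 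The main point where one has to be careful is that ``isomorphism of K-spaces'' is a condition at the level of germs of Kuranishi structures, so one must arrange representatives of the germs on both sides so that the identification above is literal rather than up to shrinking; this is the only place where a mild amount of bookkeeping is required, but it is formally identical to the bookkeeping already carried out in the proof of Lemma \ref{lem8686}.
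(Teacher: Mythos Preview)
Your proposal is correct and is precisely the unpacking of what the paper means by ``The proof is obvious from construction.'' The paper gives no further detail, so your argument---showing that on the collar both $\overline{\partial}_{J_{\mathfrak t}}$ and $E_{(t,{\bf p})}(\mathfrak t,{\bf x})$ are independent of $\mathfrak t$, hence the Kuranishi chart, obstruction bundle, Kuranishi map, parametrization, and coordinate changes all split as products---is exactly the intended reasoning, just written out in full.
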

The proof is obvious from construction.
\par
We next discuss the family version of disk-component-wiseness.
We use the evaluation maps $({\rm ev}_{P},{\rm ev}_i)$
and $({\rm ev}_{P},{\rm ev}_0)$ to $P \times L$ to define the
next fiber product.
$$
\mathcal M_{k_1+1,\ell_1}(X,L,\mathcal J;\beta_1) {}_{({\rm ev}_{P},{\rm ev}_i)} \times_{({\rm ev}_{P},{\rm ev}_0)} \mathcal M_{k_2+1,\ell_2}(X,L,\mathcal J;\beta_2).
$$
We call it the parameter-wise fiber product.
Let ${\frak T} \in \mathcal G(k+1,\ell,\beta)$. Using 
the parameter-wise fiber product
we can modify Definition \ref{defn3131} in an obvious way to define:
$$
\prod^{\rm{pw}}_{(\mathcal T,\beta(\cdot),l(\cdot))} {\mathcal M}_{k_{\rm v}+1,
\#l({\rm v})}(X,L,\mathcal J;\beta({\rm v})).
$$
We also denote this space  by
\begin{equation}\label{form9898}
\mathcal M_{k+1,\ell}(X,L,\mathcal J;\beta)(\frak T).
\end{equation}
In the same way as in Lemma \ref{lem3232} this space is
embedded into $\mathcal M_{k+1,\ell}(X,L,\mathcal J;\beta)$
as one of the components of normalized corners.
A component of normalized corners of $\mathcal M_{k+1,\ell}(X,L,\mathcal J;\beta;[1,2])$
is of the form
$$
\mathcal M_{k+1,\ell}(X,L,\mathcal J\vert_{\widehat S_{m'}P};\beta)(\frak T).
$$
Here $\mathcal J\vert_{\widehat S_{m'}P}$ is the restriction of $\mathcal J$ to
$\widehat S_{m'}P$ in an
obvious sense.
We can define a stratification
$$
\widehat S_{m'}P \times
\mathcal X_{k+1,\ell}(X,L;\beta)(\frak T)
$$
of the ambient set in the same way.
We can show the parametrized version of Lemma \ref{lem393939}
in the same way.
Now the parametrized version of Definition \ref{defn5151} is as follows.
\begin{defn}\label{defn5151para}
Suppose we are given obstruction bundle data $\{E_{(t,{\bf p})}(\frak t,{\bf x})\}$
for each $\mathcal M_{k+1,\ell}(X,L,\mathcal J;\beta)$.
We say that they form a {\it disk-component-wise system of obstruction bundle data}
of $\{\mathcal M_{k+1,\ell}(X,L,\mathcal J;\beta) ~\vert~ k,\ell,\beta\}$
if the following holds.
\par
Let
$(t,{\bf p}) \in \mathcal M_{k+1,\ell}(X,L,\mathcal J;\beta)$ and
$(t,{\bf p}) = (t,{\bf p}_{{\rm v}})_{{\rm v} \in C_{0}^{{\rm int}}(\mathcal T)} \in \mathcal M_{k+1,\ell}(X,L,J;\beta)(\frak T)$.
Then for sufficiently small neighborhoods
$$
\mathscr U_{(t,{\bf p})}
\subset {\mathcal X}_{k+1,\ell}(X,L;\beta;[1,2]),
\quad
\mathscr U_{(t,{\bf p}_{\rm v})}
\subset {\mathcal X}_{k_{\rm v}+1,l({\rm v})}(X,L;\beta_{\rm v};P)
$$
with
$$
\prod^{\rm{pw}}_{(\mathcal T,\beta(\cdot),l(\cdot))} \mathscr U_{(t,{\bf p}_{\rm v})}
\subseteq
\mathscr U_{(t,{\bf p})},
$$
the equality
\begin{equation}\label{form41}
E_{(t,{\bf p})}(\frak t,{\bf x}) = \bigoplus_{{\rm v} \in C_{0}^{{\rm int}}(\mathcal T)}
E_{(t,{\bf p}_{\rm v})}(\frak t,{\bf x}_{\rm v})
\end{equation}
holds, where $(\frak t,{\bf x}) = (\frak t,{\bf x}_{{\rm v}})_{{\rm v}
\in C_{0}^{{\rm int}}(\mathcal T)}$
is an arbitrary element of $\prod^{\rm{pw}}_{(\mathcal T,\beta(\cdot),l(\cdot))} \mathscr U_{(t,{\bf p}_{\rm v})}$.
\end{defn}
\begin{lem}
Suppose $\{E_{(t,{\bf p})}(\frak t,{\bf x})\}$
is a disk-component-wise system of obstruction bundle data.
It induces a Kuranishi structure on ${\mathcal M}_{k+1,\ell}(X,L,\mathcal J;\beta)$
for each $k,\ell,\beta$ by Lemma \ref{lem919}.
\begin{enumerate}
\item
The conclusion of Theorem \ref{therem274} holds.
\item
If $P = [1,2]$, they define a pseudo-isotopy of tree-like K-systems with
interior marked points.
\item
If $P = [1,2]$ and the obstruction bundle data are collared,
then the isomorphisms in Lemma \ref{lem922} commute with
various isomorphisms appearing in the definition of pseudo-isotopy.
\end{enumerate}
\end{lem}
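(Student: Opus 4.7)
The plan is to reduce this to the parametrized versions of the statements already established in Sections~\ref{subsec;cornercompa} and~\ref{subsec;unique1}, treating the parameter $t \in P$ essentially as an extra coordinate that behaves transparently under all of the constructions. First, for item~(1), I would fix representatives of the germs of Kuranishi structures on each ${\mathcal M}_{k+1,\ell}(X,L,{\mathcal J};\beta)$ so that, in analogy with~\eqref{form51}, one has
\[
\mathscr U_{(t,{\bf p})} \cap \bigl(P \times {\mathcal X}_{k+1,\ell}(X,L;\beta)(\frak T)\bigr)
= \prod^{\rm pw}_{(\mathcal T,\beta(\cdot),l(\cdot))} \mathscr U_{(t,{\bf p}_{\rm v})}
\]
for each stratum $\frak T$. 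Then the set-theoretic identity $U_{(t,{\bf p})} = \{(\frak t,{\bf x}) \in \mathscr U_{(t,{\bf p})} \mid \overline\partial_{J_{\frak t}} u_{\bf x} \in E_{(t,{\bf p})}(\frak t,{\bf x})\}$ from Lemma~\ref{lem919}, combined with \eqref{form41}, yields the parametrized analog of~\eqref{form52}, which produces the corner compatibility isomorphism~\eqref{cornercompiso} (item~(X)). Items (IX) and (XI) then follow as in the proof of Theorem~\ref{thm42}. The remaining items (I)--(VIII) and (XII) require only minor verifications: the dimension formula (VI) picks up $\dim P$ because the Kuranishi neighborhood $U_{(t,{\bf p})}$ is, up to the section cut out by the extended Cauchy-Riemann operator, locally modeled on $U_{(1,{\bf p})} \times P$; orientation (VII) is determined by the product of the fiber orientation with that of $P$; Gromov compactness (VIII) follows from compactness of~$P$ together with the usual compactness argument; (XII) is inherited from invariance under the symmetric group of each fiber $E_{(t,{\bf p})}$.

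For item~(2), the statement amounts to verifying the axioms of \cite[Definition 21.15]{foootech21} (as recalled in Definition~\ref{def218}). The underlying K-spaces $\mathcal M_{k+1,\ell}(X,L,\mathcal J;\beta;[1,2])$ are those produced by item~(1) with $P=[1,2]$. The restrictions to $t=1$ and $t=2$ carry the Kuranishi structures induced by the restricted obstruction bundle data $\{E_{(j,{\bf p})}(j,{\bf x})\}$ for $j=1,2$, which by construction are themselves disk-component-wise, and hence, by Theorem~\ref{thm42}, define tree like K-systems with interior marked points. The compatibility of the corner decomposition \eqref{form9898} with the boundary decomposition~\eqref{formula1660rev} is exactly the $m=1$ case of item~(1), applied separately to the two types of boundary contributions (internal edge smoothing and endpoint evaluation at $\widehat\partial P = \{1\}\sqcup\{2\}$).

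For item~(3), the collaring hypothesis from Definition~\ref{defcollared} together with disk-component-wiseness implies that the Kuranishi neighborhoods over ${\rm ev}_P^{-1}([1,1+\epsilon])$ literally factor as $[1,1+\epsilon]\times U_{(1,{\bf p})}$ with a trivial $J$-dependence, and likewise near $t=2$. Therefore the isomorphisms of Lemma~\ref{lem922} are induced by product decompositions of each Kuranishi chart, and the same factorization applies to the identifications of strata in the corner compatibility isomorphism~\eqref{cornercompiso} and to the boundary decomposition~\eqref{formula1660rev}. The required commutativity with all structural isomorphisms of the pseudo isotopy is then a direct inspection, since each of these isomorphisms is built out of the same local fiber products that now split off a trivial $[1,1+\epsilon]$-factor.

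I expect the main obstacle to be bookkeeping rather than conceptual: ensuring that one can choose representatives for the germs of Kuranishi structures simultaneously on all the finitely many pieces used in a given compatibility diagram so that the identifications hold on the nose rather than only as germs (cf. the Remark following Theorem~\ref{thm42}). As in that earlier argument, this is handled by restricting to a finite collection of $(k,\ell,\beta)$ at a time, which suffices for the eventual application to the homotopy inductive limit used in \cite[Theorem 21.35]{foootech21}. Beyond this bookkeeping, every step is a faithful parametrized transcription of the unparametrized arguments already given in Sections~\ref{subsec;cornercompa} and~\ref{subsec;unique1}, and no new geometric input is needed.
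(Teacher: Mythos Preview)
Your proposal is correct and follows essentially the same approach as the paper, which simply states that the proof is the same as the proof of Theorem~\ref{thm42}. You have spelled out in detail what that one-line referral means in the parametrized setting---the parameter-wise fiber product version of~\eqref{form51} and~\eqref{form52}, the routine verification of the remaining items of Theorem~\ref{therem274}, and the collared product structure for item~(3)---none of which the paper makes explicit, but all of which is exactly what is implicit in the referral.
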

The proof is the same as the proof of Theorem \ref{thm42}.
\begin{prop}\label{prop925}
There exists a disk-component-wise system of obstruction bundle data of
$\{{\mathcal M}_{k+1,\ell}(X,L,\mathcal J;\beta) \mid k,\ell,\beta\}$.
\par
If $P = [1,2]$ and $\mathcal J$ is collared, we may choose
the pseudo-isotopy to be collared.
\end{prop}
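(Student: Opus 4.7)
The plan is to run the inductive construction of Sections \ref{subsec;exi1} and \ref{subsec;exi2} in a parametrized form, replacing every occurrence of $\mathcal M_{k+1,\ell}(X,L,J;\beta)$ by $\mathcal M_{k+1,\ell}(X,L,\mathcal J;\beta)$, every use of Definition \ref{defn3131} by its parameter-wise analogue leading to (\ref{form9898}), and every application of Lemma \ref{lem393939} by the obvious parametrized version that uses the product metric $d_{P}\times d_{\mathcal X}$ on the ambient set $\mathcal X_{k+1,\ell}(X,L;\beta;P)=P\times\mathcal X_{k+1,\ell}(X,L;\beta)$. Concretely, I will induct on $(k,\ell,\beta)\in\mathscr{TC}$ under the order $<$ and, at each step, produce the parametrized analogues of the objects (ob1)--(ob4): a finite set $\frak P(k,\ell,\beta)\subset\mathcal M^{\circ}_{k+1,\ell}(X,L,\mathcal J;\beta)$ of base points $(t_{\frak p},\frak p)$, relatively compact open neighborhoods $K_\circ(t_{\frak p},\frak p)$ in that disk-component-wise stratum, type I strong stabilization data $\Xi_{(t_{\frak p},\frak p)}$ with an obstruction space $E_{(t_{\frak p},\frak p)}$ chosen so that $\mathrm{Im}\, D_{u_{\frak p},J_{t_{\frak p}}}\overline\partial + E_{(t_{\frak p},\frak p)}=L^2_m$, and quasi-component choice maps $\mathscr F^{\circ}_{k+1,\ell,\beta}$, $\mathscr F_{k+1,\ell,\beta}$ on ${\mathscr{QC}}_{k+1,\ell}(X,L,\mathcal J;\beta)$.

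The parametrized versions of Lemmas \ref{lem65}--\ref{lem715} go through verbatim once we note that the Implicit Function Theorem arguments only need $J_{\frak t}$ to vary in a $C^\infty$-compact family, which holds by Situation \ref{situ213} and compactness of $P$. In particular, the $C^2$ neighborhoods of the graphs $u_{\frak p}$ on $\mathrm{Supp}(E_{\frak p})$ on which the parallel transport in (\ref{form67}) and the $(0,1)$-projection in (\ref{form68}) are defined can be taken uniformly in $t$, so the map $\mathcal P_{({\bf x},{\bf p},\xi,\frak p)}$ and the space $E_{({\bf p},\xi,\frak p)}(\frak t,{\bf x})$ are defined on open neighborhoods in $\mathcal X_{k+1,\ell}(X,L;\beta;P)$. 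The analogue of Lemma \ref{lem87} uses the same area-bound argument, noting that $\omega$ is $t$-independent. Then the induction scheme of Proposition \ref{prop722} produces $\mathscr F^{\circ}$, $\mathscr F$ satisfying the parametrized versions of Conditions \ref{cond73}, \ref{cond717}, \ref{cond718}, \ref{cond719}, \ref{cond720}; transversality at $(t,{\bf p})$ is checked against the $t$-dependent operator $D_{u_{\bf p},J_t}\overline\partial$, and (\ref{form812})--(\ref{form81322}) are applied with respect to the stratification (\ref{form9898}) of the parametrized moduli space.

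For the collared statement, suppose $\mathcal J$ is collared with constant $\epsilon>0$ as in Definition \ref{defcollared}. I would first apply the unparametrized case of the proposition (i.e.\ Theorem \ref{thm43}) at $t=1$ and at $t=2$ to obtain disk-component-wise systems $\{E^{1}_{\bf p}({\bf x})\}$ and $\{E^{2}_{\bf p}({\bf x})\}$ on $\mathcal M_{k+1,\ell}(X,L,J_j;\beta)$, $j=1,2$, together with their underlying data $\frak P^{j}(k,\ell,\beta)$, $K^{j}_\circ(\frak p)$, $\Xi^{j}_{\frak p}$ and $\mathscr F^{\circ,j}$, $\mathscr F^{j}$. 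On the collars $[1,1+\epsilon]\times\mathcal M_{k+1,\ell}(X,L,J_1;\beta)$ and $[2-\epsilon,2]\times\mathcal M_{k+1,\ell}(X,L,J_2;\beta)$, I simply declare $E_{(t,{\bf p})}(\frak t,{\bf x})\;:=\;E^{j}_{\bf p}({\bf x})$ (for $j=1$ or $j=2$ accordingly), which is well-defined since $J_{\frak t}=J_j$ there, so all of the transversality and smoothness conditions are inherited from the unparametrized data. The remaining portion of $P=[1,2]$, namely $[1+\epsilon/2,2-\epsilon/2]$, is treated by running the induction above, with the extra bookkeeping constraint that in each step the finite set $\frak P(k,\ell,\beta)$ contains $\frak P^{1}(k,\ell,\beta)$ at $t=1$ and $\frak P^{2}(k,\ell,\beta)$ at $t=2$, that $K_\circ$ extends the prescribed collar neighborhoods, and that $\mathscr F$ agrees on the collars with the pullbacks of $\mathscr F^{1}$, $\mathscr F^{2}$.

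The main obstacle will be precisely this last compatibility in the collared case: one must choose the ``interior'' base points and quasi-component choice maps so that they match the prescribed boundary data on the collars while still satisfying the openness, properness, Direct Sum (\ref{cond719}) and Transversality (\ref{cond720}) conditions globally. The tool for this is the openness of the relevant transversality and direct-sum conditions: once the collared data on $[1,1+\epsilon]\sqcup[2-\epsilon,2]$ are fixed, they induce disk-component-wise obstruction bundle data on an open collar neighborhood in $\mathcal M_{k+1,\ell}(X,L,\mathcal J;\beta)$, and by shrinking the collar slightly inside $[1+\epsilon/2,2-\epsilon/2]$ one may regard that data as the first boundary stratum of the induction (analogous to the role played by $\partial\mathcal M_{k+1,\ell}(X,L,J;\beta)$ in Definition \ref{defn820}). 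The perturbation step in the parametrized analogue of Lemma \ref{lem1116} is then carried out only away from the collar neighborhoods, which preserves the collared form of the final obstruction bundle data and hence the collared form of the resulting pseudo isotopy, completing the proof.
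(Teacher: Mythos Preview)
Your proposal is correct and follows essentially the same approach as the paper: both parametrize the entire machinery of Sections \ref{subsec;exi1}--\ref{subsec;exi2} over $P$, define quasi-splitting sequences and quasi-components with $\hat{\bf p}=(t,{\bf p})$, $\hat{\frak p}=(t_{\frak p},\frak p)$, and run the induction of Proposition \ref{prop722} verbatim to produce $\widehat{\mathscr F}^{\circ}_{k+1,\ell,\beta}$, $\widehat{\mathscr F}_{k+1,\ell,\beta}$. For the collared case the paper simply asserts in one sentence that one can choose $\widehat{\mathscr F}_{k+1,\ell,\beta}$ so that the resulting data is collared, whereas you spell out a concrete relative construction (fix the unparametrized data at $t=1,2$, pull it back over the collars, and perform the inductive perturbation only on the interior $[1+\epsilon/2,2-\epsilon/2]$, treating the collar boundary like $\partial\mathcal M$ in Definition \ref{defn820}); this is a legitimate and slightly more informative way to justify the same claim.
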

\begin{proof}
The proof is mostly the same as that of Theorem \ref{thm43}.
Recalling Situations \ref{situ69} and \ref{situ69rev} together with
Lemma \ref{lem710},
let $\frak T_1,\dots, \frak T_n, \frak S_1,\dots, \frak S_n$
be sequences of marked decorated rooted ribbon trees
such that
$$\frak T_i \supseteq \frak S_i, \quad
\frak S_i > \frak T_{i+1},
$$
and
$$
\aligned
\hat{{\bf q}}_i & = (t_i,{\bf q}_i) \in \mathcal M_{k_i+1,\ell_i}(X,L,\mathcal J;\beta_i)
(\frak T_i),\\
\hat{\bf p} & = (t_0,{\bf p}) \in \mathcal M_{k+1,\ell}(X,L,\mathcal J;\beta),\\
\hat{\frak p} & = (t_{\frak p},{\frak p})
\in \mathcal M^{\circ}_{k'+1,\ell'}(X,L,\mathcal J;\beta').
\endaligned
$$
We will fix a finite subset $\hat{\frak P}(k',\ell',\beta')$ of $\mathcal M^{\circ}_{k'+1,\ell'}(X,L,\mathcal J;\beta')$.
We fix type I strong stabilization and trivialization data, and an obstruction
space at ${\frak p}$
for each $\hat{\frak p} = (t_{\frak p},{\frak p}) \in \hat{\frak P}(k',\ell',\beta')$
and denote it by $E_{\hat{\frak p}}$.
We assume
$$
\aligned
d((t_0,{\bf p}),(t_1,{\bf q}_1)) &< \epsilon(k,\ell,\beta), \\
d((t_j,{\bf q}_{j,\frak S_j}), (t_{j+1},{\bf q}_{j+1})) &< \epsilon(k_{j+1},\ell_{j+1},\beta_{j+1}), \\
d((t_n,{\bf q}_{n,\frak S_n}), \hat{\frak p}) &< \epsilon(k',\ell',\beta'), \\
(\frak t,{\bf x}) & \in B_{\epsilon_0}(\mathcal X,(t_0,{\bf p})).
\endaligned
$$
We may choose $\epsilon(k_{j+1},\ell_{j+1},\beta_{j+1})>0$
small so that this condition induces smooth embeddings:
$$
\widehat{\Phi}_{(\hat{\bf x},\hat{\bf p},\vec{\hat{\bf q}},\hat{\frak p})} : {\rm Supp}(E_{\hat{\frak p}}) \to \Sigma_{{\bf x}},
\quad
\widehat{\Phi}_{(\hat{\bf p},\vec{\hat{\bf q}},,\hat{\frak p})} : {\rm Supp}(E_{\hat{\frak p}}) \to \Sigma_{{\bf p}}.
$$
Then we use them to obtain
$
E_{(\hat{\bf p},\vec{\hat{\bf q}},\hat{\frak p})}(\hat{\bf x})
$
as in (\ref{form6969rev}).
\par
We call such system $(\hat{{\bf p}},\vec{\hat{{\bf q}}},\hat{\frak p};\vec{\frak T},\vec{\frak S})$
a {\it quasi-splitting sequence}.
We fix type I strong stabilization data at each $\frak p$.
Then we obtain additional interior marked points
$
\vec{\frak w}_{\hat{\bf p},\vec{\hat{\bf q}},\hat{\frak p}}
$
on $\Sigma_{\bf p}$ in the same way as in Lemma \ref{lem61060}.
We say two quasi-splitting sequences are {\it equivalent}
if
$
\vec{\frak w}_{\hat{\bf p},\vec{\hat{\bf q}},\hat{\frak p}}
$
coincides.
An equivalence class of quasi-splitting sequences is called a
{\it quasi-component} and is written as $(\hat{\bf p},\hat\xi,\hat{\frak p})$.
$E_{(\hat{\bf p},\vec{\hat{\bf q}},\hat{\frak p})}(\hat{\bf x})$ depends
only on the equivalence class of
$(\hat{{\bf p}},\vec{\hat{{\bf q}}},\hat{\frak p};\vec{\frak T},\vec{\frak S})$
and we write it as
$
E_{(\hat{\bf p},\vec{\xi},\hat{\frak p})}(\hat{\bf x}).
$
\par
Now let $\widehat{\mathscr{QC}}_{k+1,\ell}(X,L,{\mathcal J};\beta)$ be the set of all
quasi-components. We define a map
$$
\widehat\Pi : \widehat{\mathscr{QC}}_{k+1,\ell}(X,L,{\mathcal J};\beta)
\to {\mathcal M}_{k+1,\ell}(X,L,\mathcal J;\beta)
$$
by $\widehat\Pi(\hat{\bf p},\hat\xi,\hat{\frak p}) = \hat{\bf p}$.
In the same way as in Lemma \ref{lem87} we can show that
each fiber of $\widehat\Pi$ is a finite set.
In the same way as Definition \ref{defn89}
we can define a Hausdorff topology on $\widehat{\mathscr{QC}}_{k+1,\ell}(X,L,{\mathcal J};\beta)$
such that $\hat\Pi$ is a local homeomorphism.
\par
We define an open subset $\widehat{\mathscr F}^{\circ}_{k+1,\ell,\beta}$
of  $\widehat{\mathscr{QC}}_{k+1,\ell}(X,L,{\mathcal J};\beta)$
and a closed subset
$\widehat{\mathscr F}_{k+1,\ell,\beta}$ of
$\widehat{\mathscr{QC}}_{k+1,\ell}(X,L,{\mathcal J};\beta)$
so that it contains the closure of $\widehat{\mathscr F}^{\circ}_{k+1,\ell,\beta}$ and
the restriction of $\widehat\Pi$ to $\widehat{\mathscr F}_{k+1,\ell,\beta}$ is proper.
We require conditions for them similar to Conditions \ref{cond717}
and \ref{cond718}.
Actually, the construction is by induction on $(k,\ell,\beta)$
and is the same as the proof of Proposition \ref{prop722}.
Now we put
\begin{equation}\label{form712revrefv}
E_{\hat{\bf p};\widehat{\mathscr F}}({\bf x}) =
\bigoplus_{(\hat{\bf p},\hat\xi,\hat{\frak p}) \in \widehat{\mathscr F}_{k+1,\ell,\beta}({\bf p})}
E_{(\hat{\bf p},\hat\xi,\hat{\frak p})}({\bf p})
\subset C^{\infty}(\Sigma_{\bf x};u_{\bf x}TX \otimes \Lambda^{0,1}).
\end{equation}
In the same way as in Proposition \ref{prop721}
we can show that it defines a disk-component-wise system of obstruction bundle data of
$\{{\mathcal M}_{k+1,\ell}(X,L,\mathcal J;\beta)~\vert~ k,\ell,\beta\}$.
\par
In the case $P = [1,2]$ and $\mathcal J$ is collared, it is easy to see that we can choose
$\widehat{\mathscr F}_{k+1,\ell,\beta}$ so that (\ref{form712revrefv})
is collared in the sense of Definition \ref{defcollared}. The proof of Proposition \ref{prop925} is complete.
\end{proof}
Now we can prove Theorem  \ref{thm219}.
Suppose we are in the situation of Theorem \ref{pisotopyexixsts}.
Then by Proposition \ref{prop925} we can find disk-component-wise systems of obstruction bundle data
of $\{\mathcal M_{k+1,\ell}(X,L,J_1;\beta)\}$ and $\{\mathcal M_{k+1,\ell}(X,L,J_2;\beta)\}$
so that there exists a pseudo-isotopy
between two   tree-like K-systems obtained by them.
We then use Theorem \ref{thm96} to prove that
there exists a pseudo-isotopy
between two  tree-like K-systems obtained by
any choice of obstruction bundle data.
Since our pseudo isotopies are collared, we can glue them easily.
Thus Theorem \ref{thm219} is proved.
Theorem \ref{pisotopyexixsts} is its special case where $\ell = 0$.
\qed

\bibliographystyle{amsalpha}

\begin{thebibliography}{FOOOXX}

\bibitem[DF]{DF} A. Daemi and K. Fukaya,
{\em Monotone Lagrangian Floer theory in smooth divisor complement: III},
submitted, arXiv:2211.02095.
\bibitem[Fu]{FuFu6}
K. {Fukaya},
\emph{Unobstructed immersed Lagrangian correspondence and
filtered $A_{\infty}$ functor}, submitted, arXiv:1706.02131.


\bibitem[FOOO1]{fooobook} K. Fukaya, Y.-G. Oh, H. Ohta and K. Ono,
{\em Lagrangian intersection Floer theory-anomaly and
obstruction, Part I,} AMS/IP Studies in Advanced Math. vol. 46.1, International Press/
Amer. Math. Soc. (2009). MR2553465.

\bibitem[FOOO2]{fooobook2} K. Fukaya, Y.-G. Oh, H. Ohta and K. Ono,
{\em Lagrangian intersection Floer theory-anomaly and
obstruction, Part II,} AMS/IP Studies in Advanced Math. vol. 46.2, International Press/
Amer. Math. Soc. (2009). MR2548482.

\bibitem[FOOO3]{foootoric32}
K. Fukaya, Y.-G. Oh, H. Ohta and K. Ono,
{\em Floer theory and mirror symmetry on compact toric manifolds},  Ast\'erisque, No. 376, (2016),
arXiv:1009.1648v2.

\bibitem[FOOO4]{foootech}
K. Fukaya, Y.-G. Oh, H. Ohta and K. Ono,
{\em Technical details on Kuranishi structure and virtual fundamental chain},
arXiv:1209.4410.

\bibitem[FOOO5]{foootech2}
K. Fukaya, Y.-G. Oh, H. Ohta and K. Ono,
{\em Kuranishi structure,
Pseudo-holomorphic curve,
and Virtual fundamental chain; Part 1}, 
arXiv:1503.07631.

\bibitem[FOOO6]{foootech21}
K. Fukaya, Y.-G. Oh, H. Ohta and K. Ono,
{\em Kuranishi structure,
Pseudo-holomorphic curve,
and Virtual fundamental chain; Part 2}, 
arXiv:1704.01848.

\bibitem[FOOO7]{foooexp}
K. Fukaya, Y.-G. Oh, H. Ohta and K. Ono,
{\em Exponential decay estimates and smoothness of the moduli space of pseudoholomorphic curves},
to appear in Mem. Amer. Math. Soc.,
arXiv:1603.07026.

\bibitem[FOOO8]{diskconst1}
K. Fukaya, Y.-G. Oh, H. Ohta and K. Ono,
{\em Construction of Kuranishi structures
on the moduli spaces of pseudo-holomorphic disks: I},
Surveys in Differential Geom. 22 (2018) 133--190,
arXiv:1710.01459.

\bibitem[FOOO9]{Springer}
K. Fukaya, Y.-G. Oh, H. Ohta and K. Ono,
{\em Kuranishi structures and Virtual fundamental chains},
Springer Monograph in Math. (2020) Springer, Singapore.

\bibitem[FOn]{FO}
K. Fukaya and K. Ono,
{\em  Arnold conjecture and Gromov-Witten invariant},
Topology 38 (1999), no. 5, 933--1048.

\bibitem[S]{Si} J. C. Sikorav, {\em Some properties of holomorphic curves in almost complex manifolds}, 165--189
in
`Holomorphic Curves in Symplectic Geometry',
ed. M. Audin and J. Lafontaine, Birkh\"auser, 1994.


\end{thebibliography}

\end{document}